%% file: TLtower.tex
\documentclass[11pt]{amsart}

\input{TLmacros}

\begin{document}
%===============

\title{A Graphical Calculus for Induction and Restriction on Temperley--Lieb Modules} 

\author{Alistair Savage}
\address[A.S.]{
    Department of Mathematics and Statistics \\
    University of Ottawa \\
    Ottawa, ON, K1N 6N5, Canada
}
\urladdr{\href{https://alistairsavage.ca}{alistairsavage.ca}, \textrm{\textit{ORCiD}:} \href{https://orcid.org/0000-0002-2859-0239}{orcid.org/0000-0002-2859-0239}}
\email{alistair.savage@uottawa.ca}

\begin{abstract}
    We develop a graphical calculus for induction and restriction along the Temperley--Lieb tower at a generic parameter.  The main object, a diagrammatic 2-category $\Dcat$, has one-step generators that model the usual induction and restriction bimodules and additional two-step generators that model the summands that the cup-cap idempotents cut out in the two-strand Temperley--Lieb algebra.  We construct an incarnation 2-functor from $\Dcat$ to the 2-category of Temperley--Lieb bimodules and prove a basis theorem for all 2-morphism spaces; the basis elements are indexed by bridges, a class of paths generalizing Dyck paths.  The basis theorem implies that the incarnation functor is locally full and faithful; after we pass to the additive Karoubi envelope, this functor becomes an equivalence onto the corresponding 2-category of bimodules.  Thus the calculus gives a concrete diagrammatic model for the functorial representation theory of the Temperley--Lieb tower.  We also compute the split Grothendieck ring of the associated monoidal category and its action on the Grothendieck group of Temperley--Lieb modules.  Under this action, homogenized Chebyshev polynomials represent the classes of standard modules and yield a positive integral basis.  In contrast with Heisenberg categorifications, idempotent completion leaves the Grothendieck ring unchanged.
\end{abstract}

\subjclass[2020]{Primary 18M30; Secondary 18N10, 18N25, 16D20, 16D90}

\keywords{Temperley--Lieb algebras, induction and restriction functors, diagrammatic 2-categories, graphical calculus, bimodules, Grothendieck rings, decategorification, Chebyshev polynomials}

\hypersetup{% Set PDF metadata
  pdftitle={A Graphical Calculus for Induction and Restriction on Temperley\texorpdfstring{--}{–}Lieb Modules},
  pdfauthor={Alistair Savage},
  pdfsubject={Graphical calculus for induction and restriction along the Temperley\texorpdfstring{--}{–}Lieb tower at a generic parameter},
  pdfkeywords={Temperley\texorpdfstring{--}{–}Lieb algebras; induction and restriction functors; diagrammatic 2-categories; graphical calculus; bimodules; Grothendieck rings; decategorification; Chebyshev polynomials; 2020 MSC: 18M30, 18N10, 18N25, 16D20, 16D90},
  pdfcreator={LaTeX with hyperref},
  pdfdisplaydoctitle=true
}

\ifboolexpr{togl{comments} or togl{details}}{%
  {\color{magenta}DETAILS OR COMMENTS ON}
}{%
}

\maketitle
\thispagestyle{empty}

\setcounter{tocdepth}{1}
\tableofcontents

%======================================
\section{Introduction\label{sec:intro}}
%======================================

The Temperley--Lieb algebras are among the most important and useful families of diagram algebras.  Introduced by Temperley and Lieb in the study of lattice models in statistical mechanics \cite{TL71}, they subsequently became central in low-dimensional topology, subfactor theory, and knot theory, most notably through Jones' work on subfactors and the Jones polynomial \cite{Jon83} and through Kauffman's diagrammatic approach to link invariants \cite{Kau87}.  Their representation theory is correspondingly rich, but also unusually concrete: it can be described in terms of planar diagrams, Jones--Wenzl projectors, and standard, or cell, modules in the sense of cellular algebra theory \cite{GL96}.  This combination of explicit diagrammatics and strong representation-theoretic structure makes the Temperley--Lieb algebras a natural testing ground for categorical and diagrammatic methods.

For many purposes, it is more useful to study the Temperley--Lieb algebras as a tower
\[
    \TLalg_0 \subseteq \TLalg_1 \subseteq \TLalg_2 \subseteq \dotsb
\]
than to study the individual algebras in isolation.  The inclusions in the tower give rise to induction and restriction functors between module categories, and these functors encode the branching of representations.  Equally important are the natural transformations between iterated induction and restriction functors.  These natural transformations organize the functorial representation theory of the tower, and a diagrammatic presentation of them gives a concrete calculus for computations that would otherwise be expressed in terms of bimodule maps.

This perspective is not specific to Temperley--Lieb algebras.  A tower of algebras often arises from a monoidal category $\cC$ and an object $X \in \cC$: one considers the algebras
\[
    \End_{\cC}(X^{\otimes n}), \qquad n \in \N,
\]
with inclusions induced by tensoring with the identity morphism of $X$.  In the Temperley--Lieb case, the relevant monoidal category is the Temperley--Lieb category $\TLcat(\delta)$, and
\[
    \TLalg_n = \End_\TLcat(\Tobj^{\otimes n}),
\]
where $\Tobj$ is the generating object of the category; see \cref{subsec:TLcat}.  The goal of the present paper is to describe, diagrammatically and completely in the generic case, the induction and restriction calculus associated to this tower.

The best-studied model for this kind of structure is the tower of group algebras of symmetric groups.  Khovanov introduced a diagrammatic category governing induction and restriction for this tower, leading to a categorification of the Heisenberg algebra \cite{Kho14}.  This initiated a substantial body of work on Heisenberg categories and their variants, including quantum and higher-level versions \cite{LS13,MS18,Bru18,BSW20-qheis}.  These categories have also been connected to Kac--Moody 2-categories and quantum-group categorification \cite{BSW20-HKM}.  In that setting, the diagrammatic calculus does more than package known functors: it exposes the algebraic structure governing the entire tower.

The purpose of this paper is to develop an analogous picture for the tower of Temperley--Lieb algebras.  We work under the genericity assumption \cref{generic}, so that the Temperley--Lieb algebras are split semisimple and their standard modules provide the relevant simple objects; see \cref{subsec:TLstandards}.  Even in this generic setting, the Temperley--Lieb tower differs substantially from the symmetric-group tower.  The resulting graphical calculus is therefore not a formal translation of the Heisenberg category, but rather a new diagrammatic category adapted to the Temperley--Lieb branching rules.

A first step in this direction was taken by Harper and Samuelson in \cite{HS25}.  They initiated a diagrammatic study of induction and restriction along the Temperley--Lieb tower and identified important structural features of the corresponding calculus.  Their construction also revealed some significant technical challenges.  In particular, as discussed in \cite{HS25}, the relations in their presentation are considerably more involved than those appearing in the Heisenberg category; notably, the presentation includes infinite families of relations.  Consequently, several results that one would hope for in a diagrammatic treatment remained conjectural there, including basis theorems for morphism spaces, fullness and faithfulness of the incarnation functor to bimodules, and a description of the Grothendieck ring.  The present paper revisits this program with a modified set of generators, and this modification makes it possible to prove these structural results.

We begin by defining a diagrammatic 2-category $\Dcat$ in \cref{subsec:2cat}.  Its objects are indexed by nonnegative integers, and its generating 1-morphisms
\[
    \Sup 1_n,\quad \Sdown 1_{n+1},\quad \Dup 1_n,\quad \Ddown 1_{n+2}, \qquad n \in \N,
\]
should be understood representation-theoretically as follows.  The generators $\Sup$ and $\Sdown$ model one-step induction and restriction, and correspond to the generators $Q_+$ and $Q_-$ in \cite{HS25}.  The additional generators $\Dup$ and $\Ddown$ model certain two-step summands arising from the cup-cap idempotent in the two-string Temperley--Lieb algebra.  The 2-morphisms are generated by cups, caps, and trivalent vertices, subject to a finite list of local relations.  We then pass from the 2-category $\Dcat$ to a monoidal category $\Dmon$ whose objects encode 1-endomorphisms of the formal direct sum of all objects of $\Dcat$; see \cref{Dmondef}.

The diagrammatic 2-category is related to Temperley--Lieb representation theory by an incarnation 2-functor
\[
    \bF \colon \Dcat \to \Bcat
\]
to the 2-category of Temperley--Lieb bimodules, constructed in \cref{sec:functor}.  Under this functor, the generators $\Sup$ and $\Sdown$ are sent to the usual induction and restriction bimodules, while $\Dup$ and $\Ddown$ are sent to the corresponding two-step summands cut out by the cup-cap idempotents.  Tensor product of bimodules then gives an action on
\[
    \Mcat = \bigoplus_{n \in \N} \TLalg_n\md.
\]
On standard modules, this action is given by the branching formulas of \cref{monkey}; explicitly, $\Sup$ and $\Sdown$ act by the usual two-term Temperley--Lieb branching rule, while $\Dup$ and $\Ddown$ preserve the standard-module label and shift the algebra index by two.

The main technical result of the paper is the basis theorem, \cref{basisthm}.  For any two parallel 1-morphisms $f$ and $g$ in $\Dcat$, we construct an explicit set $\bB(f,g)$ and prove that it is a basis for the 2-morphism space $\Dcat(f,g)$.  These basis elements are naturally labelled by bridges, a class of paths generalizing Dyck paths; see \cref{Bdef}.  The proof proceeds by reducing arbitrary diagrams to controlled normal forms and then proving linear independence through the incarnation functor.  This basis theorem is the key point at which the additional generators $\Dup$ and $\Ddown$ become essential: they allow the diagrammatic relations to be local enough for a spanning argument and rigid enough for a linear-independence argument.

As a consequence of the basis theorem, the incarnation functor is locally full and faithful; this is \cref{fullfaithful}.  After passing to the additive Karoubi envelope, the incarnation functor becomes a 2-equivalence
\[
    \Kar(\bF) \colon \Kar(\Dcat) \xrightarrow{\sim} \Bcat
\]
by \cref{paint}.  Equivalently, using the Eilenberg--Watts theorem, the calculus describes the additive, right-exact, direct-sum-preserving functors between the module categories of the Temperley--Lieb algebras.  Thus, in the generic setting considered here, $\Dcat$ gives a concrete graphical model for the functorial representation theory of the Temperley--Lieb tower.

We also compute the decategorification.  In \cref{subsec:Grothring}, we define a graded ring $\Ring$ and a polynomial representation $\Poly$ of it.  The main decategorification result, \cref{sump}, identifies $\Ring$ with the split Grothendieck ring $K_\oplus(\Add(\Dmon))$ and identifies the action of this Grothendieck ring on $K_\oplus(\Mcat)$ with the action of $\Ring$ on $\Poly$.  Under this identification, the classes of the standard Temperley--Lieb modules correspond to homogenized Chebyshev polynomials; see \cref{Polyisom}.  The formulas of \cref{canaction} then show that these classes form a positive integral basis for the action.

One notable contrast with the setting of Heisenberg categorification is that no idempotent completion is needed to obtain the desired Grothendieck ring.  For Heisenberg categories, passing to the Karoubi envelope is essential for the main categorification statement, and the resulting Grothendieck-ring computation is subtle: it was conjectured in \cite{Kho14} and proved later in \cite{BSW-K0}.  By contrast, in the present Temperley--Lieb setting, passing to the additive Karoubi envelope does not change the Grothendieck category of $\Dcat$ or the Grothendieck ring of $\Dmon$; see \cref{convocation1,convocation2}.  This does not mean that the Temperley--Lieb tower is formally simpler in every respect, but it shows that the complexity visible in previous treatments is not an intrinsic obstruction to a complete diagrammatic categorification.

Let us explain more precisely how the present approach differs from \cite{HS25}.  The 2-category defined in \cite[\S 5]{HS25} is the locally full 2-subcategory of $\Dcat$ generated by the 1-morphisms $\Sobj_\pm 1_n$, denoted $Q_\pm 1_n$ there.  The main new feature of the present paper is that we include the generators $\Dupdown 1_n$ from the beginning.  These generators allow certain morphisms in the calculus of \cite{HS25} to be decomposed into smaller pieces factoring through $\Dup$ and $\Ddown$.  As a result, the presentation of $\Dcat$ is substantially simpler: the infinite families of relations \cite[(23), (24)]{HS25} are replaced by the single relations \cref{coal,wood}, for each admissible region label; see \cref{dune} and \cref{thai}\ref{thai1}.  Moreover, the idempotents denoted $C_k$ in \cite{HS25} can be expressed directly in terms of the objects $\Dupdown$; see \cref{hacky}.  Thus these summands are visible before passing to the idempotent completion.

This reorganization is what makes the basis theorem, local full faithfulness, and the Grothendieck-ring computation accessible.  In particular, while the presentation in \cite{HS25} suggested that the Temperley--Lieb tower might be more difficult to treat diagrammatically than the symmetric-group tower, the present paper shows that the difficulty can be removed by choosing the generators appropriately.  In this sense, the Temperley--Lieb tower admits a graphical calculus that is at least as tractable as the Heisenberg calculus and, in some respects, more economical.

We close this introduction by mentioning several directions for further work.  First, it would be natural to extend the construction beyond the generic semisimple case.  The non-generic Temperley--Lieb algebras have substantially richer representation theory, and even the special case $\delta=0$ should already exhibit new phenomena.  Second, the category constructed here is analogous to the Heisenberg categories at central charge $\pm 1$, or equivalently to Khovanov's original Heisenberg category \cite{Kho14} and its quantum analogue \cite{LS13}.  This suggests the existence of higher-central-charge analogues, in the spirit of \cite{MS18,Bru18,BSW20-qheis}, acting on module categories for higher-level cyclotomic quotients of affine Temperley--Lieb algebras.  Third, since Heisenberg categories give rise to actions of Kac--Moody 2-categories \cite{BSW20-HKM}, it is natural to ask whether analogous structures arise from the present Temperley--Lieb calculus and its higher-central-charge variants.  Finally, one can ask for similar diagrammatic calculi for other towers of diagram algebras, including rook-Brauer algebras, Motzkin algebras, Brauer algebras, rook algebras, planar rook algebras, partition algebras, and planar partition algebras; see \cite{Hu20} for an overview of these diagram categories.

%--------------------------
\subsection*{Use of colour}
%--------------------------

The paper is best viewed in colour.  The diagrams use a colour scheme chosen to be distinguishable for most forms of colour blindness. Readers who have difficulty distinguishing the colours can change the corresponding macros in the \LaTeX\ source, available on the arXiv.

%-----------------------------
\subsection*{Acknowledgements}
%-----------------------------

This research was supported by Discovery Grant RGPIN-2023-03842 from the Natural Sciences and Engineering Research Council of Canada.  The author would like to thank M.~Harper and P.~Samuelson for helpful conversations regarding their work \cite{HS25}.

%====================================
\section{The diagrammatic categories}
%====================================

In this section, we define the diagrammatic categories that are the focus of the current work.  We will use the standard string diagram calculus for 2-categories and monoidal categories, which we always assume to be linear over a commutative ring $\kk$.  For 2-categories, this means that 2-morphisms are linear combinations of string diagrams.  In these string diagrams, regions are labelled by objects and strings labelled by 1-morphisms.  The domain of a 1-morphism labelling a string is the object labelling the region to its right, while the codomain is the object labelling the region to its left.  The conventions for monoidal categories are similar, viewing a monoidal category as a 2-category with one object.  The objects and morphisms of the monoidal category are the 1-morphisms and 2-morphisms, respectively, of the corresponding 2-category.  The regions of string diagrams for monoidal categories are not labelled, corresponding to the fact that the corresponding 2-category has only one object.

If $X$ and $Y$ are objects in a 2-category $\cC$, then $\cC(X,Y)$ denotes the category of morphisms from $X$ to $Y$.  Similarly, if $f$ and $g$ are 1-morphisms in $\cC$, then $\cC(f,g)$ denotes the $\kk$-module of 2-morphisms from $f$ to $g$.  We use analogous notation for monoidal categories.  When writing expressions in algebraic notation (as opposed to string diagrams), we use $\otimes$ to denote horizontal composition of 2-morphisms, and juxtaposition to denote vertical composition.  Composition of 1-morphisms is denoted by juxtaposition.

Throughout the paper, if $P$ is a statement, then we define
\[
    \delta_P :=
    \begin{cases}
        1 & \text{if $P$ is true}, \\
        0 & \text{if $P$ is false}.
    \end{cases}
\]
Thus, for example, $\delta_{i,j} := \delta_{i=j}$ is the usual Kronecker delta function.

%----------------------------------------------------------
\subsection{The diagrammatic 2-category\label{subsec:2cat}}
%----------------------------------------------------------

Let $\kk$ be a commutative ring and fix $\delta \in \kk^\times$.   Define $\Delta_n \in \kk$, $n \in \Z_{\ge -1}$, recursively by
\begin{equation} \label{Delta}
    \Delta_{-1} = 0,\qquad
    \Delta_0 = 1,\qquad
    \Delta_{n+1} = \delta \Delta_n - \Delta_{n-1},\qquad
    n \ge 0.
\end{equation}
These are Chebyshev polynomials of the second kind, evaluated at $\delta/2$.  We assume that
\begin{equation} \label{generic}
    \Delta_n \in \kk^\times \qquad \text{for all } n \in \N.
\end{equation}
Note that
\[
    \Delta_n = \frac{q^{n+1}-q^{-n-1}}{q-q^{-1}} \qquad \text{if } \delta = q+q^{-1} \text{ for some } q \in \kk^\times.
\]

We define a $\kk$-linear 2-category $\Dcat$ as follows.  The set of objects is $\{\obj{n} : n \in \N\}$.  We denote the identity $1$-morphism of $\obj{n}$ by $1_n$.  The $1$-morphisms are generated by
\begin{gather*}
    \Sup 1_n = 1_{n+1} \Sup = 1_{n+1} \Sup 1_n \colon \obj{n} \to \obj{n+1},\quad
    \Sdown 1_{n+1} = 1_n \Sdown = 1_n \Sdown 1_{n+1} \colon \obj{n+1} \to \obj{n},
    \\
    \Dup 1_n = 1_{n+2} \Dup= 1_{n+2} \Dup 1_n \colon \obj{n} \to \obj{n+2},\quad
    \Ddown 1_{n+2} = 1_n \Ddown= 1_n \Ddown 1_{n+2} \colon \obj{n+2} \to \obj{n},
\end{gather*}
for $n \in \N$.  We draw the identity $1$-morphisms of the generating objects as follows:
\begin{align*}
    1_{\Sup} &=
    \upstrandreg[S]{n}
    =
    \begin{tikzpicture}[centerzero]
        \draw[S,->] (0,-0.2) -- (0,0.2);
        \node[anchor=east] (0,0) {\regionlabel{n+1}};
    \end{tikzpicture}
    =
    \begin{tikzpicture}[centerzero]
        \draw[S,->] (0,-0.2) -- (0,0.2);
        \node[anchor=east] (0,0) {\regionlabel{n+1}};
        \node[anchor=west] (0,0) {\regionlabel{n}};
    \end{tikzpicture}
    ,&
    1_{\Sdown} &=
    \downstrandreg[S]{n+1}
    =
    \begin{tikzpicture}[centerzero]
        \draw[S,<-] (0,-0.2) -- (0,0.2);
        \node[anchor=east] (0,0) {\regionlabel{n}};
    \end{tikzpicture}
    =
    \begin{tikzpicture}[centerzero]
        \draw[S,<-] (0,-0.2) -- (0,0.2);
        \node[anchor=east] (0,0) {\regionlabel{n}};
        \node[anchor=west] (0,0) {\regionlabel{n+1}};
    \end{tikzpicture}
    , \\
    1_{\Dup} &=
    \upstrandreg[D]{n}
    =
    \begin{tikzpicture}[centerzero]
        \draw[D,->] (0,-0.2) -- (0,0.2);
        \node[anchor=east] (0,0) {\regionlabel{n+2}};
    \end{tikzpicture}
    =
    \begin{tikzpicture}[centerzero]
        \draw[D,->] (0,-0.2) -- (0,0.2);
        \node[anchor=east] (0,0) {\regionlabel{n+2}};
        \node[anchor=west] (0,0) {\regionlabel{n}};
    \end{tikzpicture}
    ,&
    1_{\Ddown} &=
    \downstrandreg[D]{n+2}
    =
    \begin{tikzpicture}[centerzero]
        \draw[D,<-] (0,-0.2) -- (0,0.2);
        \node[anchor=east] (0,0) {\regionlabel{n}};
    \end{tikzpicture}
    =
    \begin{tikzpicture}[centerzero]
        \draw[D,<-] (0,-0.2) -- (0,0.2);
        \node[anchor=east] (0,0) {\regionlabel{n}};
        \node[anchor=west] (0,0) {\regionlabel{n+2}};
    \end{tikzpicture}
    .
\end{align*}
Here, and in what follows, we omit labels for regions when they can be deduced.

The 2-morphisms are generated by
\begin{align*}
    \rightcapreg[S]{n+1} &\colon \Sup \Sdown 1_{n+1} \to 1_{n+1},&
    \rightcupreg[S]{n} &\colon 1_n \to \Sdown \Sup 1_n,
    \\
    \leftcapreg[S]{n} &\colon \Sdown \Sup 1_n \to 1_n,&
    \leftcupreg[S]{n+1} &\colon 1_{n+1} \to \Sup \Sdown 1_{n+1},
    \\
    \rightcapreg[D]{n+2} &\colon \Dup \Ddown 1_{n+2} \to 1_{n+2},&
    \leftcapreg[D]{n} &\colon \Ddown \Dup 1_n \to 1_n,
    \\
    \rightcupreg[D]{n} &\colon 1_n \to \Ddown \Dup 1_n,&
    \leftcupreg[D]{n+2} &\colon 1_{n+2} \to \Dup \Ddown 1_{n+2},
    \\
    \mergeupreg{n}
    =
    \begin{tikzpicture}[centerzero]
        \draw[S,->-=0.55] (-0.3,-0.3) -- (0,0);
        \draw[S,->-=0.55] (0.3,-0.3) -- (0,0);
        \draw[D,->] (0,0) -- (0,0.4);
        \region{0.2,0.1}{n};
    \end{tikzpicture}
    &\colon \Sup \Sup 1_n \to \Dup 1_n,
    &
    \splitupreg{n}
    =
    \begin{tikzpicture}[centerzero]
        \draw[D,->-=0.55] (0,-0.4) -- (0,0);
        \draw[S,->] (0,0) -- (-0.3,0.3);
        \draw[S,->] (0,0) -- (0.3,0.3);
        \region{0.2,-0.1}{n};
    \end{tikzpicture}
    &\colon \Dup 1_n \to \Sup \Sup 1_n.
\end{align*}
subject to relations given below.  Before stating the relations, we point out some conventions that we have used above.  The orientation of the strands incident to a trivalent vertex are uniquely determined by the orientation of any one of the strands: the thin black strands are either both oriented into the vertex or both oriented out of the vertex, while the thick cyan strand is oriented the other way.  In order to reduce clutter, we will often omit the orientation of some strands when these are uniquely determined by orientations that have been specified.  We will also adopt the convention that any diagram with a negative region label (following the rule that region labels increase by one when crossing $\upstrand[S]$ from right to left, etc.) is zero.  Thus, for example,
\[
    \rightbubreg[S]{0} = \rightbubreg[D]{0} = 0
    \qquad \text{and} \qquad
    \rightbubreg[D]{1} = 0.
\]
Finally, we will sometimes omit the notation $1_n$ indicating the domain or codomain of a 1-morphism when it is clear from the context.  For example $\Sdown \Sup 1_n$ is the composition $(\Sdown 1_{n+1})(\Sup 1_n)$.

The first relations we impose are
\begin{gather}\label{adjunction}
    \begin{tikzpicture}[centerzero,S]
        \draw[->] (-0.3,-0.4) -- (-0.3,0) arc(180:0:0.15) arc(180:360:0.15) -- (0.3,0.4);
    \end{tikzpicture}
    =
    \begin{tikzpicture}[centerzero,S]
        \draw[->] (0,-0.4) -- (0,0.4);
    \end{tikzpicture}
    =
    \begin{tikzpicture}[centerzero,S]
        \draw[<-] (-0.3,0.4) -- (-0.3,0) arc(180:360:0.15) arc(180:0:0.15) -- (0.3,-0.4);
    \end{tikzpicture}
    \ ,\qquad
    \begin{tikzpicture}[centerzero,S]
        \draw[<-] (-0.3,-0.4) -- (-0.3,0) arc(180:0:0.15) arc(180:360:0.15) -- (0.3,0.4);
    \end{tikzpicture}
    =
    \begin{tikzpicture}[centerzero,S]
        \draw[<-] (0,-0.4) -- (0,0.4);
    \end{tikzpicture}
    =
    \begin{tikzpicture}[centerzero,S]
        \draw[->] (-0.3,0.4) -- (-0.3,0) arc(180:360:0.15) arc(180:0:0.15) -- (0.3,-0.4);
    \end{tikzpicture}
    \ ,\qquad
    \begin{tikzpicture}[centerzero,D]
        \draw[->] (-0.3,-0.4) -- (-0.3,0) arc(180:0:0.15) arc(180:360:0.15) -- (0.3,0.4);
    \end{tikzpicture}
    =
    \begin{tikzpicture}[centerzero,D]
        \draw[->] (0,-0.4) -- (0,0.4);
    \end{tikzpicture}
    =
    \begin{tikzpicture}[centerzero,D]
        \draw[<-] (-0.3,0.4) -- (-0.3,0) arc(180:360:0.15) arc(180:0:0.15) -- (0.3,-0.4);
    \end{tikzpicture}
    \ ,\qquad
    \begin{tikzpicture}[centerzero,D]
        \draw[<-] (-0.3,-0.4) -- (-0.3,0) arc(180:0:0.15) arc(180:360:0.15) -- (0.3,0.4);
    \end{tikzpicture}
    =
    \begin{tikzpicture}[centerzero,D]
        \draw[<-] (0,-0.4) -- (0,0.4);
    \end{tikzpicture}
    =
    \begin{tikzpicture}[centerzero,D]
        \draw[->] (-0.3,0.4) -- (-0.3,0) arc(180:360:0.15) arc(180:0:0.15) -- (0.3,-0.4);
    \end{tikzpicture}
    \ ,
    \\ \label{swishy}
    \mergedown
    :=
    \begin{tikzpicture}[anchorbase]
        \draw[D,->] (0,0) -- (0,0.1) arc (0:180:0.2) -- (-0.4,-0.6);
        \draw[S] (0,0) to[out=-45, in=left] (0.2,-0.2) arc(-90:0:0.15) -- (0.35,0.4);
        \draw[S] (0,0) to[out=-135,in=up] (-0.2,-0.3) to[out=down,in=down] (0.6,-0.3) -- (0.6,0.4);
    \end{tikzpicture}
    =
    \begin{tikzpicture}[anchorbase,xscale=-1]
        \draw[D,->] (0,0) -- (0,0.1) arc (0:180:0.2) -- (-0.4,-0.6);
        \draw[S] (0,0) to[out=-45, in=left] (0.2,-0.2) arc(-90:0:0.15) -- (0.35,0.4);
        \draw[S] (0,0) to[out=-135,in=up] (-0.2,-0.3) to[out=down,in=down] (0.6,-0.3) -- (0.6,0.4);
    \end{tikzpicture}
    \ ,\qquad
    \splitdown
    :=
    \begin{tikzpicture}[anchorbase,yscale=-1]
        \draw[D] (0,0) -- (0,0.1) arc (0:180:0.2) -- (-0.4,-0.6);
        \draw[S,->] (0,0) to[out=-45, in=left] (0.2,-0.2) arc(-90:0:0.15) -- (0.35,0.4);
        \draw[S,->] (0,0) to[out=-135,in=up] (-0.2,-0.3) to[out=down,in=down] (0.6,-0.3) -- (0.6,0.4);
    \end{tikzpicture}
    =
    \begin{tikzpicture}[anchorbase,xscale=-1,yscale=-1]
        \draw[D] (0,0) -- (0,0.1) arc (0:180:0.2) -- (-0.4,-0.6);
        \draw[S,->] (0,0) to[out=-45, in=left] (0.2,-0.2) arc(-90:0:0.15) -- (0.35,0.4);
        \draw[S,->] (0,0) to[out=-135,in=up] (-0.2,-0.3) to[out=down,in=down] (0.6,-0.3) -- (0.6,0.4);
    \end{tikzpicture}
    \ .
\end{gather}
Above, and throughout the paper, we omit regional labels in equations that hold for any valid labelling of the regions.  The relations \cref{adjunction,swishy} imply that $\Dcat$ is strict pivotal.  Thus, morphisms are invariant under isotopy.  We define
\begin{gather} \label{fishy1}
    \mergeSW
    :=
    \begin{tikzpicture}[anchorbase]
        \draw[D,->] (0,0) -- (0,0.1) arc (0:180:0.2) -- (-0.4,-0.4);
        \draw[S] (0,0) to[out=-45, in=left] (0.2,-0.2) arc(-90:0:0.15) -- (0.35,0.4);
        \draw[S] (0,0) to[out=-135,in=up] (-0.2,-0.4);
    \end{tikzpicture}
    \ ,\qquad
    \mergeSE
    :=
    \begin{tikzpicture}[anchorbase,xscale=-1]
        \draw[D,->] (0,0) -- (0,0.1) arc (0:180:0.2) -- (-0.4,-0.4);
        \draw[S] (0,0) to[out=-45, in=left] (0.2,-0.2) arc(-90:0:0.15) -- (0.35,0.4);
        \draw[S] (0,0) to[out=-135,in=up] (-0.2,-0.4);
    \end{tikzpicture}
    \ ,\qquad
    \splitNE
    :=
    \begin{tikzpicture}[anchorbase]
        \draw[D] (0,-0.4) -- (0,0);
        \draw[S,->] (0,0) -- (-0.3,0.3);
        \draw[S,->] (0,0) to[out=45,in=left] (0.2,0.2) to[out=right,in=up] (0.4,-0.4);
    \end{tikzpicture}
    \ ,\qquad
    \splitNW
    :=
    \begin{tikzpicture}[anchorbase,xscale=-1]
        \draw[D] (0,-0.4) -- (0,0);
        \draw[S,->] (0,0) -- (-0.3,0.3);
        \draw[S,->] (0,0) to[out=45,in=left] (0.2,0.2) to[out=right,in=up] (0.4,-0.4);
    \end{tikzpicture}
    \ ,
    \\ \label{fishy2}
    \splitSE
    :=
    \begin{tikzpicture}[anchorbase,yscale=-1]
        \draw[D] (0,0) -- (0,0.1) arc (0:180:0.2) -- (-0.4,-0.4);
        \draw[S,->] (0,0) to[out=-45, in=left] (0.2,-0.2) arc(-90:0:0.15) -- (0.35,0.4);
        \draw[S,->] (0,0) to[out=-135,in=up] (-0.2,-0.4);
    \end{tikzpicture}
    \ ,\qquad
    \splitSW
    :=
    \begin{tikzpicture}[anchorbase,yscale=-1,xscale=-1]
        \draw[D] (0,0) -- (0,0.1) arc (0:180:0.2) -- (-0.4,-0.4);
        \draw[S,->] (0,0) to[out=-45, in=left] (0.2,-0.2) arc(-90:0:0.15) -- (0.35,0.4);
        \draw[S,->] (0,0) to[out=-135,in=up] (-0.2,-0.4);
    \end{tikzpicture}
    \ ,\qquad
    \mergeNW
    :=
    \begin{tikzpicture}[anchorbase]
        \draw[D,->] (0,0) -- (0,0.4);
        \draw[S] (0,0) -- (-0.3,-0.3);
        \draw[S] (0,0) to[out=-45,in=left] (0.2,-0.2) to[out=right,in=down] (0.4,0.4);
    \end{tikzpicture}
    \, \qquad
    \mergeNE
    :=
    \begin{tikzpicture}[anchorbase,xscale=-1]
        \draw[D,->] (0,0) -- (0,0.4);
        \draw[S] (0,0) -- (-0.3,-0.3);
        \draw[S] (0,0) to[out=-45,in=left] (0.2,-0.2) to[out=right,in=down] (0.4,0.4);
    \end{tikzpicture}
    \ .
\end{gather}
We then impose the following relations:
\begin{gather} \label{fire}
    \begin{tikzpicture}[centerzero]
        \draw[D] (-0.3,-0.5) -- (0,-0.2);
        \draw[S,<-] (0.3,-0.5) -- (0,-0.2);
        \draw[S] (0,-0.2) -- (0,0.2);
        \draw[S] (0,0.2) -- (0.3,0.5);
        \draw[D,->] (0,0.2) -- (-0.3,0.5);
    \end{tikzpicture}
    =
    \begin{tikzpicture}[centerzero]
        \draw[D,->] (-0.2,-0.5) -- (-0.2,0.5);
        \draw[S,<-] (0.2,-0.5) -- (0.2,0.5);
    \end{tikzpicture}
    \ ,\qquad
    \begin{tikzpicture}[centerzero]
        \draw[D] (0.3,-0.5) -- (0,-0.2);
        \draw[S,<-] (-0.3,-0.5) -- (0,-0.2);
        \draw[S] (0,-0.2) -- (0,0.2);
        \draw[S] (0,0.2) -- (-0.3,0.5);
        \draw[D,->] (0,0.2) -- (0.3,0.5);
    \end{tikzpicture}
    =
    \begin{tikzpicture}[centerzero]
        \draw[S,<-] (-0.2,-0.5) -- (-0.2,0.5);
        \draw[D,->] (0.2,-0.5) -- (0.2,0.5);
    \end{tikzpicture}
    \ ,\qquad
    \begin{tikzpicture}[centerzero]
        \draw[S,->] (-0.2,-0.5) -- (-0.2,-0.2) -- (0.2,-0.2) -- (0.2,0.2) -- (-0.2,0.2) -- (-0.2,0.5);
        \draw[D] (0.2,-0.5) -- (0.2,-0.2);
        \draw[D] (-0.2,-0.2) -- (-0.2,0.2);
        \draw[D,->] (0.2,0.2) -- (0.2,0.5);
    \end{tikzpicture}
    =
    \begin{tikzpicture}[centerzero]
        \draw[S,->] (-0.2,-0.5) -- (-0.2,0.5);
        \draw[D,->] (0.2,-0.5) -- (0.2,0.5);
    \end{tikzpicture}
    \ ,\qquad
    \begin{tikzpicture}[centerzero]
        \draw[D,<-] (-0.2,-0.5) -- (-0.2,-0.3) arc(180:0:0.2) -- (0.2,-0.5);
        \draw[D,->] (-0.2,0.5) -- (-0.2,0.3) arc(180:360:0.2) -- (0.2,0.5);
    \end{tikzpicture}
    =\
    \begin{tikzpicture}[centerzero]
        \draw[D,<-] (-0.2,-0.5) -- (-0.2,0.5);
        \draw[D,->] (0.2,-0.5) -- (0.2,0.5);
    \end{tikzpicture}
    \ ,
    \\ \label{smoke}
    \leftbubreg[S]{n} = \delta 1_n,\qquad
    \leftbubreg[D]{n} = 1_n.
\end{gather}
Recursively defining
\begin{equation} \label{sigma0}
    \freeprojectorreg{0}{n}
    := 1_n - \rightbubreg[D]{n},\qquad
    \freeprojectorreg{r+1}{n}
    := \rightbubmultreg[D]{\freeprojector{r}}{n}
    \qquad r,n \in \N,
\end{equation}
the final relations we impose are the following:
\begin{align} \label{coal}
    \leftbubmultreg[S]{\freeprojector{0}}{n}
    &= \frac{\Delta_{n+1}}{\Delta_n}\ \freeprojectorreg{0}{n},
    & n \in \N,
    \\ \label{wood}
    \begin{tikzpicture}[centerzero]
        \draw[S,<-] (-0.4,0.8) -- (-0.4,0.6) to[out=down,in=down,looseness=2] (0.4,0.6) -- (0.4,0.8);
        \projector{0,0.5}{0};
        \draw[S,->] (-0.4,-0.8) -- (-0.4,-0.6) to[out=up,in=up,looseness=2] (0.4,-0.6) -- (0.4,-0.8);
        \projector{0,-0.5}{0};
        \region{0.4,0}{n};
    \end{tikzpicture}
    &=
    \frac{\Delta_{n-1}}{\Delta_{n-2}}\
    \begin{tikzpicture}[centerzero]
        \draw[S,->] (-0.4,-0.8) -- (-0.4,0.8);
        \draw[S,<-] (0.4,-0.8) -- (0.4,0.8);
        \projector{0,0}{0};
        \projector{-0.8,0}{1};
        \projector{0.8,0}{1};
        \region{0.8,0.5}{n};
    \end{tikzpicture}
    + \frac{\Delta_{n-1}}{\Delta_n}\
    \begin{tikzpicture}[centerzero]
        \draw[S,->] (-0.4,-0.8) -- (-0.4,0.8);
        \draw[S,<-] (0.4,-0.8) -- (0.4,0.8);
        \projector{0,0}{0};
        \projector{-0.8,0}{0};
        \projector{0.8,0}{0};
        \region{0.8,0.5}{n};
    \end{tikzpicture}
    \ , & n \ge 1,
\end{align}
where, in \cref{wood}, we interpret the first term on the right-hand side as zero when $n=1$ (even though the coefficient has a denominator of zero).

%---------------------------------------------------------
\subsection{First relations following from the definition}
%---------------------------------------------------------

Let $\Dcat^{\rev,\op}$ be the reversed, opposite 2-category to $\Dcat$, where both horizontal and vertical composition are reversed.  It follows from the fact that $\Dcat$ is strict pivotal that we have an isomorphism of $\kk$-linear 2-categories
\begin{equation} \label{rotate}
    \bR \colon \Dcat \to \Dcat^{\rev,\op}
\end{equation}
that is the identity on objects, acts on the generating $1$-morphisms as
\[
    \Sup 1_n \mapsto 1_n \Sdown,\quad 1_n \Sdown \mapsto \Sup 1_n,\qquad \Dup 1_n \mapsto 1_n \Ddown,\qquad 1_n \Ddown \mapsto \Dup 1_n,
\]
and acts on 2-morphisms by rotating diagrams by $180\degree$.

\begin{lem}
    The following relations hold in $\Dcat$:
    \begin{gather} \label{spark}
        \begin{tikzpicture}[centerzero]
            \draw[D,->] (-0.2,-0.5) -- (-0.2,-0.3) arc(180:0:0.2) -- (0.2,-0.5);
            \draw[D,<-] (-0.2,0.5) -- (-0.2,0.3) arc(180:360:0.2) -- (0.2,0.5);
        \end{tikzpicture}
        =\
        \begin{tikzpicture}[centerzero]
            \draw[D,->] (-0.2,-0.5) -- (-0.2,0.5);
            \draw[D,<-] (0.2,-0.5) -- (0.2,0.5);
        \end{tikzpicture}
        \ ,
        \\ \label{water}
        \begin{tikzpicture}[centerzero]
            \draw[S] (0,-0.5) -- (0,-0.2);
            \draw[D] (0,-0.2) to[out=135,in=-135,looseness=2] (0,0.2);
            \draw[S] (0,-0.2) to[out=45,in=-45,looseness=2] (0,0.2);
            \draw[S,->] (0,0.2) -- (0,0.5);
        \end{tikzpicture}
        =
        \begin{tikzpicture}[centerzero]
            \draw[S,->] (0,-0.5) -- (0,0.5);
            \bubright[D]{-0.4,0};
        \end{tikzpicture}
        \ ,\qquad
        \begin{tikzpicture}[centerzero]
            \draw[S] (0,-0.5) -- (0,-0.2);
            \draw[S] (0,-0.2) to[out=135,in=-135,looseness=2] (0,0.2);
            \draw[D] (0,-0.2) to[out=45,in=-45,looseness=2] (0,0.2);
            \draw[S,->] (0,0.2) -- (0,0.5);
        \end{tikzpicture}
        =
        \begin{tikzpicture}[centerzero]
            \draw[S,->] (0,-0.5) -- (0,0.5);
        \end{tikzpicture}
        \ ,\qquad
        \begin{tikzpicture}[centerzero]
            \draw[D] (0,-0.5) -- (0,-0.2);
            \draw[S] (0,-0.2) to[out=135,in=-135,looseness=2] (0,0.2);
            \draw[S] (0,-0.2) to[out=45,in=-45,looseness=2] (0,0.2);
            \draw[D,->] (0,0.2) -- (0,0.5);
        \end{tikzpicture}
        = \delta\
        \begin{tikzpicture}[centerzero]
            \draw[D,->] (0,-0.5) -- (0,0.5);
        \end{tikzpicture}
        \ .
    \end{gather}
\end{lem}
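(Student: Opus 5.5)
The plan is to obtain each relation in \cref{water} by taking one of the relations in \cref{fire} and closing off one of its strands. Since $\Dcat$ is strict pivotal (by \cref{adjunction,swishy}), the diagrams can then be isotoped freely, and any closed component can be evaluated using \cref{smoke} or left as a bubble. Concretely, closing a strand means precomposing and postcomposing with a cup and a cap so that its top endpoint is joined to its bottom endpoint around the side of the diagram. On the right-hand side of a relation of the form ``ladder $=$ identity'', closing a strand just produces a bubble next to the remaining strand. On the left-hand side, after an isotopy, the ladder becomes a bigon on the remaining strand.

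I would carry this out as follows.
\begin{itemize}
    \item \emph{Third relation of \cref{water}.} Close the thin $\Sup$ strand in the third relation of \cref{fire} around the left. The right-hand side becomes a counterclockwise $\Sobj$-bubble beside the $\Dup$ strand, which equals $\delta$ by \cref{smoke}. The left-hand side isotopes to a $\Dup$ strand that splits into two thin strands and then merges again. This should give the third relation.
    \item \emph{Second relation of \cref{water}.} Close the thick $\Dup$ strand in the same relation of \cref{fire}, this time around the right. The right-hand side becomes the $\Sup$ strand beside a $\Dobj$-bubble. If this bubble has the left (counterclockwise) orientation, it equals $1$ by \cref{smoke}. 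The left-hand side isotopes to the bigon with the thick edge on the right, which gives the second relation. I still need to check the orientation of the resulting bubble.
    \item \emph{First relation of \cref{water}.} Close the $\Dobj$ strand in the first relation of \cref{fire}, which is a relation on $\Dup$ and $\Sdown$. The right-hand side is now a clockwise $\Dobj$-bubble on the left of an $\Sdown$ strand. This bubble is the one appearing in \cref{sigma0}, so it is not evaluated. Applying the rotation $\bR$ of \cref{rotate} converts the $\Sdown$ strand into an $\Sup$ strand, which gives the first relation. If the orientations do not match, I would use the second relation of \cref{fire} instead.
\end{itemize}
In each case the isotopies needed are exactly those that \cref{swishy} makes available, namely rotating trivalent vertices into the forms \cref{fishy1,fishy2}.

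\emph{Relation \cref{spark}.} This is the step I expect to be the main obstacle. The last relation in \cref{fire} gives the analogous statement for $\Ddown\Dup$, but \cref{spark} concerns $\Dup\Ddown 1_{n+2}$. Rotating by $180^\circ$ sends that relation to itself, so \cref{spark} does not follow by symmetry.

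My first attempt would be the following. Use the third relation of \cref{water} to write the $\Dup$ identity as $\delta^{-1}$ times a merge composed with a split, so that the $\Dup$ strand is expressed through a pair of thin strands. Then rewrite the thick cup-cap in \cref{spark} in terms of these thin strands and trivalent vertices. Finally, apply the first two relations of \cref{fire} (the ladder relations involving $\Sdown$) to slide the thin strands across, so as to reduce to a ladder that collapses to the identity.

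If this does not close up cleanly, the fallback is to show that the left-hand side of \cref{spark} is an idempotent on $\Dup\Ddown 1_{n+2}$. It is an idempotent because the leftward $\Dobj$-bubble equals $1$ by \cref{smoke}. The remaining task would then be to show that its complement vanishes, by expressing that complement through the ladders of \cref{fire}.
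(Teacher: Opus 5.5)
You leave \cref{spark} unproved, and that is the genuine gap. You are right that the $180^\circ$ rotation maps the last relation of \cref{fire} to itself. What you missed is bending just two endpoints, i.e.\ using the adjunction bijection $\Dcat(\Ddown\Dup 1_n,\Ddown\Dup 1_n)\cong\Dcat(\Dup\Ddown 1_{n+2},\Dup\Ddown 1_{n+2})$. This is the paper's two-line proof. On both sides of the last relation of \cref{fire}, attach $\rightcap[D]$ to the top of the rightmost strand and $\leftcup[D]$ to the bottom of the leftmost strand. The identity side becomes the cup-over-cap on the left of \cref{spark}. The cap-then-cup side becomes two zigzags, which straighten to $1_{\Dup\Ddown}$ by \cref{adjunction}. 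Neither of your routes replaces this. The first is too vague to check. In the fallback, squaring the left side of \cref{spark} produces the \emph{clockwise} bubble $\rightbub[D]$, which \cref{smoke} does not evaluate; it can be absorbed using \cref{crisp1}, but not for the reason you give. Moreover, ``showing that the complement vanishes'' is exactly the content of \cref{spark}, and you give no mechanism for it.

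For \cref{water}, closing a strand of a ladder relation is the paper's method, but several of your specific claims are wrong. The $\Dobj$ strand in the first relation of \cref{fire} is leftmost and points up. Closing it (necessarily around the left) therefore gives a counterclockwise bubble, equal to $1$, and after $\bR$ this yields the \emph{second} relation of \cref{water}. The first relation comes from closing the $\Dobj$ strand of the second relation of \cref{fire} around the right, which gives a clockwise bubble, and then applying $\bR$. More seriously, the third relation of \cref{fire} has four trivalent vertices, so closing one of its strands never produces the two-vertex diagrams of \cref{water}. Closing its $\Dobj$ strand gives a clockwise bubble on the right of $\Sup$, not $1$. The other side has four vertices forming two bigons, one sitting on the thin side of the other, so this does not give the second relation. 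Closing its $\Sobj$ strand gives the left side of the third relation with an extra bigon on one thin edge, which must first be removed using the second relation. The paper instead obtains the third relation by closing the $\Sobj$ strand of the first relation of \cref{fire} around the right, which produces a counterclockwise $\Sobj$-bubble equal to $\delta$.
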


\begin{proof}
    The relation \cref{spark} follows from the last relation in \cref{fire} after attaching $\rightcap[D]$ to the top of the rightmost strand, attaching a $\leftcup[D]$ to the bottom of the leftmost strand, and then applying \cref{adjunction}.  For the relations in \cref{water}, we compute
    \begin{gather*}
        \begin{tikzpicture}[centerzero]
            \draw[S] (0,-0.5) -- (0,-0.2);
            \draw[D] (0,-0.2) to[out=135,in=-135,looseness=2] (0,0.2);
            \draw[S] (0,-0.2) to[out=45,in=-45,looseness=2] (0,0.2);
            \draw[S,->] (0,0.2) -- (0,0.5);
        \end{tikzpicture}
        =
        \begin{tikzpicture}[centerzero,xscale=-1]
            \draw[S,->] (-0.2,-0.5) -- (0,-0.2) -- (0,0.2) -- (-0.2,0.5);
            \draw[D,-<-=0.5] (0,0.2) to[out=45,in=left] (0.2,0.4) to[out=right,in=right] (0.2,-0.4) to[out=left,in=-45] (0,-0.2);
        \end{tikzpicture}
        \overset{\cref{fire}}{=}
        \begin{tikzpicture}[centerzero]
            \draw[S,->] (0,-0.5) -- (0,0.5);
            \bubright[D]{-0.4,0};
        \end{tikzpicture}
        \ ,\qquad
        \begin{tikzpicture}[centerzero]
            \draw[S] (0,-0.5) -- (0,-0.2);
            \draw[S] (0,-0.2) to[out=135,in=-135,looseness=2] (0,0.2);
            \draw[D] (0,-0.2) to[out=45,in=-45,looseness=2] (0,0.2);
            \draw[S,->] (0,0.2) -- (0,0.5);
        \end{tikzpicture}
        =
        \begin{tikzpicture}[centerzero]
            \draw[S,->] (-0.2,-0.5) -- (0,-0.2) -- (0,0.2) -- (-0.2,0.5);
            \draw[D,-<-=0.5] (0,0.2) to[out=45,in=left] (0.2,0.4) to[out=right,in=right] (0.2,-0.4) to[out=left,in=-45] (0,-0.2);
        \end{tikzpicture}
        \overset{\cref{fire}}{=}
        \begin{tikzpicture}[centerzero]
            \draw[S,->] (0,-0.5) -- (0,0.5);
            \bubleft[D]{0.4,0};
        \end{tikzpicture}
        \overset{\cref{smoke}}{=}
        \begin{tikzpicture}[centerzero]
            \draw[S,->] (0,-0.5) -- (0,0.5);
        \end{tikzpicture}
        \ ,
        \\
        \begin{tikzpicture}[centerzero]
            \draw[D] (0,-0.5) -- (0,-0.2);
            \draw[S] (0,-0.2) to[out=135,in=-135,looseness=2] (0,0.2);
            \draw[S] (0,-0.2) to[out=45,in=-45,looseness=2] (0,0.2);
            \draw[D,->] (0,0.2) -- (0,0.5);
        \end{tikzpicture}
        =
        \begin{tikzpicture}[centerzero]
            \draw[D] (-0.2,-0.5) -- (0,-0.2);
            \draw[S] (0,-0.2) -- (0,0.2);
            \draw[D,->] (0,0.2) -- (-0.2,0.5);
            \draw[S,-<-=0.5] (0,0.2) to[out=45,in=left] (0.2,0.4) to[out=right,in=right] (0.2,-0.4) to[out=left,in=-45] (0,-0.2);
        \end{tikzpicture}
        \overset{\cref{fire}}{=}
        \begin{tikzpicture}[centerzero]
            \draw[D,->] (0,-0.5) -- (0,0.5);
            \bubleft[S]{0.4,0};
        \end{tikzpicture}
        = \delta
        \begin{tikzpicture}[centerzero]
            \draw[D,->] (0,-0.5) -- (0,0.5);
        \end{tikzpicture}
        \ .\qedhere
    \end{gather*}
\end{proof}

For $r \in \Z$ we recursively define
\begin{equation} \label{bubrec}
    \rightbubmultreg[D]{r}{n} := 1_n \text{ for } r \le 0,
    \qquad \text{and} \qquad
    \rightbubmultreg[D]{r+1}{n} := \rightbubmultreg[D]{\rightbubmult[D]{r}}{n}
    \text{ for } r \ge 0.
\end{equation}
By our convention that diagrams with negatively labeled regions are equal to zero, we have
\begin{equation} \label{freeze}
    \rightbubmultreg[D]{r}{n} = 0 \quad \text{if} \quad n < 2r.
\end{equation}
We will denote $f \in \Dcat(1_n,1_n)$ by a free-floating coupon:
\[
    \begin{tikzpicture}[centerzero]
        \coupon{0,0}{f};
        \region{0.4,0}{n};
    \end{tikzpicture}
    \, .
\]

\begin{prop}
    For all 1-endomorphisms $f$ and $g$ in $\Dcat$,
    \begin{equation} \label{onion}
        \leftbubmult[D]{\freecoupon{f}\ \rightbubmult[D]{\freecoupon{g}}}
        = \leftbubmult[D]{\freecoupon{f}}\ \freecoupon{g}\, ,
        \qquad
        \leftbubmult[D]{\freecoupon{f}\ \rightbubmult[S]{\freecoupon{g}}}
        = \leftbubmult[D]{\freecoupon{f}}\ \leftbubmult[S]{\freecoupon{g}}\, ,
        \qquad
        \leftbubmult[S]{\freecoupon{f}\ \rightbubmult[D]{\freecoupon{g}}}
        = \rightbubmult[S]{\leftbubmult[D]{\freecoupon{f}}\ \freecoupon{g}}\, .
    \end{equation}
\end{prop}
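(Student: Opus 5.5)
The plan is to prove all three identities in \cref{onion} by one local move. In each case two nested bubbles are separated by applying a two-strand relation from \cref{fire} or \cref{spark}, read from right to left, across the annular region between them. The resulting diagram is then simplified using isotopy invariance (strict pivotality) and \cref{water}. Throughout, the coupons $f$ and $g$ just ride along in their regions, since the relations used are local and involve only strands.

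For the first identity I would proceed as follows.
\begin{itemize}
\item Choose a short segment of the outer $\Dobj$-strand (the left bubble surrounding $f$) and a facing segment of the inner $\Dobj$-strand (the right bubble surrounding $g$).
\item Check the orientations. These two segments form an antiparallel pair of thick strands whose middle region is the annulus. I need to determine which of the two cup-cap relations, the last relation in \cref{fire} or \cref{spark}, has exactly this orientation. I expect, from the region labels, that it is \cref{spark}.
\item Replace the identity on this pair by a cap on top of a cup.
\item Observe what this does topologically. It is surgery along a radial arc of the annulus, so the annulus becomes a disc and the two circles merge into a single circle.
\item By isotopy, this circle is a left $\Dobj$-bubble enclosing the region that contains $f$. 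Meanwhile $g$ now lies in the outer region labelled $n$, so the result is the right-hand side.
\end{itemize}
The bookkeeping to check here is that the region labels agree: the annulus label is $n+2$ and the inner label is $n$.

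For the second and third identities I would do the same, but now the two facing segments carry different colours: one $\Dobj$-strand and one $\Sobj$-strand.
\begin{itemize}
\item Replace the identity on the pair of mixed strands by the merge--$\Sobj$-rung--split diagram, using the first two relations of \cref{fire} or the third one, depending on the orientations. For the third identity, I could alternatively apply the rotation $\bR$ of \cref{rotate} to reduce to an orientation already handled.
\item After this replacement, the closed part of the diagram is a graph with two trivalent vertices. Up to isotopy, it consists of a bigon formed by one $\Sobj$-strand and one $\Dobj$-strand, sitting on a strand that closes up into a bubble, with $f$ and $g$ in the appropriate regions.
\item Remove the bigon using \cref{water}. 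The first relation there produces a $\Dobj$-bubble factor, the second is trivial, and the third gives $\delta$.
\item Collect the result into a product of two bubbles, as in the second identity, or into an $\Sobj$-bubble enclosing a $\Dobj$-bubble together with $g$, as in the third.
\end{itemize}

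The main obstacle is the orientation and region-label bookkeeping in the mixed cases. I need to choose precisely which segments to pair, and which version of the rung relation applies, so that after the surgery the bigon is of the type in \cref{water} with the correct coefficient (no stray factor of $\delta$ or extra bubble). My first attempt would be to pair the segments so that the $\Sobj$-rung is oriented consistently with the second relation of \cref{water}, which has coefficient $1$. If the orientations instead force the first relation of \cref{water}, the extra right $\Dobj$-bubble it produces should be exactly the one absorbed into the nesting asserted in the third identity.
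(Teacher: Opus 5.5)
Your first identity is handled as in the paper: one cup--cap move across the annulus, then isotopy. One correction: the annulus is labelled $n+2$ and both neighbouring regions are labelled $n$, so the relation you need is the last one in \cref{fire}. \cref{spark} is the version where the middle region is the lower one, so it does not apply here.

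In the mixed identities your first step, a rung relation from \cref{fire} across the annulus, is also the paper's. The next step fails, because \cref{water} cannot be applied. The resulting theta graph has three faces with three different labels:
\begin{itemize}
\item second identity: $n$ (outer), $n+2$ (containing $f$), $n+1$ (containing $g$);
\item third identity: $n$ (outer), $n+1$ (containing $f$), $n-1$ (containing $g$).
\end{itemize}
The coupons $f$ and $g$ cannot leave their regions, so they occupy the two bounded faces, and the only empty face is the unbounded one. Every relation in \cref{water} needs a bounded bigon with nothing inside. Planar isotopy keeps the outer face unbounded, and there is no move through infinity: $\Dcat$ is pivotal but not spherical, for example left and right $\Sobj$-bubbles evaluate differently.

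There is also a counting obstruction. Removing a bigon from a theta graph leaves a single loop, possibly beside an \emph{empty} right $\Dobj$-bubble. Each right-hand side, however, consists of two loops, each enclosing a coupon. In particular, the nested $\Dobj$-bubble in the third identity is a left bubble around $f$, not the empty bubble produced by \cref{water}.

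The missing idea is a second rung move, read in the opposite direction. Isotope the theta graph so that the other $\Sobj$-edge becomes a vertical rung. This is the $\Sobj$-edge that is not the rung you just created: the remnant of the inner bubble in the second identity, and of the outer bubble in the third. The $\Dobj$-edge should then be attached on one side of it and the remaining $\Sobj$-edge on the other. Replace this merge--rung--split by two parallel strands, using the first two relations in \cref{fire} up to rotation. This reconnects the graph into two loops, each enclosing exactly one coupon: disjoint in the second identity, nested in the third. That is the paper's argument, and no factor of $\delta$ or extra bubble appears.
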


\begin{proof}
    We compute
    \begin{gather*}
        \leftbubmult[D]{\freecoupon{f}\ \rightbubmult[D]{\freecoupon{g}}}
        \overset{\cref{fire}}{=}
        \begin{tikzpicture}[centerzero]
            \coupon{0,0}{g};
            \coupon{-0.65,0}{f};
            \draw[D,->] (-0.3,0) -- (-0.3,0.2) to[out=up,in=up] (0.3,0.2) to[out=down,in=down,looseness=1.5] (0.55,0.2) to[out=up,in=up] (-1,0.2) -- (-1,-0.2) to[out=down,in=down] (0.55,-0.2) to[out=up,in=up,looseness=1.55] (0.3,-0.2) to[out=down,in=down] (-0.3,-0.2) -- (-0.3,0);
        \end{tikzpicture}
        =
        \leftbubmult[D]{\freecoupon{f}}\ \freecoupon{g}\ ,
        \\
        \leftbubmult[D]{\freecoupon{f}\ \rightbubmult[S]{\freecoupon{g}}}
        \overset{\cref{fire}}{=}
        \begin{tikzpicture}[centerzero]
            \coupon{0,0}{g};
            \coupon{-0.65,0}{f};
            \draw[D,->] (0.3,0.2) to[out=60,in=up,looseness=1.5] (-1,0.2) -- (-1,0);
            \draw[D] (-1,0) -- (-1,-0.2) to[out=down,in=-60,looseness=1.5] (0.3,-0.2);
            \draw[S] (0.3,-0.2) -- (0.3,0.2) to[out=up,in=up] (-0.3,0.2) -- (-0.3,-0.2) to[out=down,in=down] (0.3,-0.2);
        \end{tikzpicture}
        =
        \begin{tikzpicture}[centerzero]
            \coupon{-0.4,0}{f};
            \coupon{0.4,0}{g};
            \draw[S] (0,-0.2) -- (0,0.2) to[out=60,in=up,looseness=1.5] (0.8,0) to[out=down,in=-60,looseness=1.5] (0,-0.2);
            \draw[D,->] (0,0.2) to[out=120,in=up,looseness=1.5] (-0.8,0);
            \draw[D] (-0.8,0) to[out=down,in=-120,looseness=1.5] (0,-0.2);
        \end{tikzpicture}
        \overset{\cref{fire}}{=}
        \leftbubmult[D]{\freecoupon{f}}\ \leftbubmult[S]{\freecoupon{g}}\, ,
        \\
        \leftbubmult[S]{\freecoupon{f}\ \rightbubmult[D]{\freecoupon{g}}}
        \overset{\cref{fire}}{=}
        \begin{tikzpicture}[centerzero]
            \coupon{0,0}{g};
            \coupon{-0.65,0}{f};
            \draw[S,->] (0.3,-0.2) -- (0.3,0.2) to[out=60,in=up,looseness=1.5] (-1,0.2) -- (-1,0);
            \draw[S] (-1,0) -- (-1,-0.2) to[out=down,in=-60,looseness=1.5] (0.3,-0.2);
            \draw[D] (0.3,0.2) to[out=up,in=up] (-0.3,0.2) -- (-0.3,-0.2) to[out=down,in=down] (0.3,-0.2);
        \end{tikzpicture}
        =
        \begin{tikzpicture}[centerzero]
            \coupon{-0.4,0}{f};
            \coupon{0.4,0}{g};
            \draw[D] (0,-0.2) -- (0,0.2);
            \draw[S,->] (0,0.2) to[out=60,in=up,looseness=1.5] (0.8,0);
            \draw[S] (0.8,0) to[out=down,in=-60,looseness=1.5] (0,-0.2);
            \draw[S,->] (0,0.2) to[out=120,in=up,looseness=1.5] (-0.8,0);
            \draw[S] (-0.8,0) to[out=down,in=-120,looseness=1.5] (0,-0.2);
        \end{tikzpicture}
        =
        \begin{tikzpicture}[centerzero]
            \coupon{0,0}{f};
            \coupon{0.65,0}{g};
            \draw[S,->] (-0.3,-0.2) -- (-0.3,0.2) to[out=120,in=up,looseness=1.5] (1,0.2) -- (1,0);
            \draw[S] (1,0) -- (1,-0.2) to[out=down,in=-120,looseness=1.5] (-0.3,-0.2);
            \draw[D] (-0.3,0.2) to[out=up,in=up] (0.3,0.2) -- (0.3,-0.2) to[out=down,in=down] (-0.3,-0.2);
        \end{tikzpicture}
        \overset{\cref{fire}}{=}
        \rightbubmult[S]{\leftbubmult[D]{\freecoupon{f}}\ \freecoupon{g}}\, .
        \qedhere
    \end{gather*}
\end{proof}

\begin{cor}
    The following relations hold in $\Dcat$:
    \begin{gather} \label{apple1}
        \leftbubmult[D]{\rightbubmult[D]{r}}
        = \rightbubmult[D]{r-1}
        \qquad \text{for all } r \in \Z,
        \\ \label{apple2}
        \leftbubmult[S]{\rightbubmult[D]{r+1}}
        = \rightbubmult[S]{\rightbubmult[D]{r}}
        \qquad \text{for all } r \in \N.
    \end{gather}
\end{cor}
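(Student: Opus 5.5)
Both relations are specializations of \cref{onion} with $f = 1$ and $g$ an iterated right cyan bubble; the only real care needed is the boundary cases.

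For \cref{apple1}, first take $r \ge 1$. Put $f = 1$ and $g = \rightbubmult[D]{r-1}$ in the first identity of \cref{onion}. The inner diagram is then a right cyan bubble containing $\rightbubmult[D]{r-1}$, which by the recursive definition \cref{bubrec} is exactly $\rightbubmult[D]{r}$. So the left-hand side of \cref{onion} is $\leftbubmult[D]{\rightbubmult[D]{r}}$. The right-hand side is an empty left cyan bubble times $\rightbubmult[D]{r-1}$, and by \cref{smoke} the empty left cyan bubble equals $1_n$. This gives $\rightbubmult[D]{r-1}$, as required.

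For $r \le 0$, \cref{bubrec} sets $\rightbubmult[D]{r} = 1_n$. The left-hand side is therefore an empty left cyan bubble, which equals $1$ by \cref{smoke}. The right-hand side is $\rightbubmult[D]{r-1}$ with $r-1 \le -1 \le 0$, so it is also $1$.

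Region labels need a check: if some region label becomes negative, both sides should vanish together. Since \cref{onion} holds for arbitrary labels, this is automatic in the $r \ge 1$ case, because we are just specializing an identity.

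For \cref{apple2} with $r \in \N$, put $f = 1$ and $g = \rightbubmult[D]{r}$ in the third identity of \cref{onion}. The left-hand side becomes a left black bubble containing a right cyan bubble with $\rightbubmult[D]{r}$ inside, which by \cref{bubrec} is $\leftbubmult[S]{\rightbubmult[D]{r+1}}$. The right-hand side is a right black bubble containing an empty left cyan bubble next to $\rightbubmult[D]{r}$. Using \cref{smoke} to remove the empty left cyan bubble leaves $\rightbubmult[S]{\rightbubmult[D]{r}}$.

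The main point to watch is that the recursion \cref{bubrec} really nests a right cyan bubble around the previous one. This is what matches the shape of the first and third identities in \cref{onion}, where the inner coupon sits inside a right cyan bubble. I expect no further obstacle.
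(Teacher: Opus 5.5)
Your proposal is correct and matches the paper's proof. The paper proves \cref{apple1} for $r \ge 1$ and \cref{apple2} by specializing the first and third identities of \cref{onion} to $f$ the identity and $g$ an iterated right cyan bubble, then using \cref{bubrec,smoke}; your explicit check of the case $r \le 0$ and of the region labels fills in what the paper calls trivial.
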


\begin{proof}
    The relation \cref{apple1} is trivial if $r \le 0$.  For $r \ge 1$, it follows from \cref{bubrec,smoke} and the first relation in \cref{onion} with $f=1_{n+2}$ and $g = 
    \rightbubmultreg[D]{r-1}{n}$.  Similarly, relation \cref{apple2} follows from \cref{bubrec,smoke} and the third relation in \cref{onion}.
\end{proof}

\begin{lem}
    The following relation holds in $\Dcat$:
    \begin{equation} \label{headroom}
        \rightbubmult[D]{r} \rightbubmult[D]{s} = \rightbubmult[D]{\max(r,s)}
        \qquad \text{for all } r,s \in \N.
    \end{equation} 
\end{lem}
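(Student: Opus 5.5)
I would prove \cref{headroom} by induction on $\min(r,s)$. If $r=0$ or $s=0$, then one of the two factors is $1_n$ by \cref{bubrec}, and the identity is immediate. By symmetry of the statement (both factors are endomorphisms of $1_n$ placed side by side), I only need an inductive step that works when $r,s \ge 1$. For the rest of this sketch, write $b_t$ for the nested right $\Dup$-bubble $\rightbubmult[D]{t}$.

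The key auxiliary identity I would establish first is a ``swallowing'' relation. For $f \in \Dcat(1_n,1_n)$ and $g \in \Dcat(1_{n-2},1_{n-2})$, the coupon $f$ placed to the left of a right $\Dup$-bubble containing $g$ should equal a single right $\Dup$-bubble containing, side by side, the left $\Dup$-bubble around $f$ and the coupon $g$. I would derive this from the last relation in \cref{fire} (or \cref{spark}, depending on the orientation that appears), applied to two adjacent, oppositely oriented $\Dup$ arcs. Start from the right-hand side, in which the inner left bubble around $f$ and the outer right bubble are nested. Their neighbouring arcs are oppositely oriented cyan strands, so replacing the pair ``vertical strands'' by ``cup over cap'' via \cref{fire} reconnects the two circles into a single circle. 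After isotopy, justified by strict pivotality, that circle is a right bubble enclosing $g$, with $f$ now sitting outside it. This is exactly the manipulation used in the proof of the first relation in \cref{onion}, read in a slightly different configuration. The main obstacle is getting the orientations and region labels right, so that the correct one of \cref{fire} or \cref{spark} applies; I expect this to be routine bookkeeping.

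With this identity, the inductive step is as follows. Write $b_s$ as a right $\Dup$-bubble around $b_{s-1}$, using \cref{bubrec}. The swallowing relation with $f=b_r$ and $g=b_{s-1}$ gives
\[
    b_r\, b_s = \text{(right $\Dup$-bubble around the left $\Dup$-bubble of $b_r$, next to $b_{s-1}$)}.
\]
By \cref{apple1}, the left $\Dup$-bubble around $b_r$ equals $b_{r-1}$. The induction hypothesis then gives $b_{r-1}\, b_{s-1} = b_{\max(r,s)-1}$. Finally, \cref{bubrec} turns the right bubble around $b_{\max(r,s)-1}$ into $b_{\max(r,s)}$, which completes the induction.

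Vanishing issues cause no trouble. If a region label becomes negative, both sides vanish consistently by \cref{freeze}, so no separate case analysis on $n$ should be needed.
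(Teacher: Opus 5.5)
Your proof is correct and follows the paper's argument. It uses the same induction on $\min(r,s)$ with trivial base case, applies the induction hypothesis inside a single right cyan bubble, and finishes with \cref{bubrec}. The only difference is the local move: the paper fuses the two adjacent outer bubbles of $b_{r+1}$ and $b_{s+1}$ into one bubble around $b_r b_s$ with a single application of the last relation in \cref{fire}. You instead absorb $b_r$ into the bubble of $b_s$ via \cref{spark} and then use \cref{apple1} to rewrite the resulting left bubble, which is a slightly longer route to the same identity.
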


\begin{proof}
    The result is trivial for $r=0$ or $s=0$.  If it holds for some $r,s \in \N$,
    \[
        \rightbubmult[D]{r+1}\, \rightbubmult[D]{s+1}
        \overset{\cref{bubrec}}{=}
        \rightbubmult[D]{\rightbubmult[D]{r}}\, \rightbubmult[D]{\rightbubmult[D]{s}}
        \overset{\cref{fire}}{=}
        \begin{tikzpicture}[centerzero]
            \bubrightmult[D]{0.4,0}{s};
            \bubrightmult[D]{-0.4,0}{r};
            \draw[D,<-] (0.8,0) to[out=up,in=right] (0.5,0.4) to[out=left,in=right] (0,0.2) to[out=left,in=right] (-0.5,0.4) to[out=left,in=up] (-0.8,0) to[out=down,in=left] (-0.5,-0.4) to[out=right,in=left] (0,-0.2) to[out=right,in=left] (0.5,-0.4) to[out=right,in=down] (0.8,0);
        \end{tikzpicture}
        =
        \begin{tikzpicture}[centerzero]
            \bubrightmult[D]{0,0}{\max(r,s)};
            \draw[D,->] (1.2,0) to[out=down,in=down,looseness=0.8] (-1.2,0) to[out=up,in=up,looseness=0.8] (1.2,0);
        \end{tikzpicture}
        \overset{\cref{bubrec}}{=} \rightbubmult[D]{\max(r,s)+1}
        \, .
    \]
    Since $\max(r,s)+1 = \max(r+1,s+1)$, the result follows by induction on $\min(r,s)$.
\end{proof}

\begin{lem}
    For all 1-endomorphisms $f$ in $\Dcat$, we have
    \begin{equation} \label{smarty}
        \begin{tikzpicture}[centerzero]
            \draw[D,->] (0,-0.5) -- (0,0.5);
            \bubrightmult[D]{-0.6,0}{\freecoupon{f}};
        \end{tikzpicture}
        =
        \begin{tikzpicture}[centerzero]
            \draw[D,->] (0,-0.5) -- (0,0.5);
            \coupon{0.4,0}{f};
        \end{tikzpicture}
        \, ,\qquad
        \begin{tikzpicture}[centerzero]
            \draw[D,->] (0,-0.5) -- (0,0.5);
            \bubrightmult[S]{-0.6,0}{\freecoupon{f}};
        \end{tikzpicture}
        =
        \begin{tikzpicture}[centerzero]
            \draw[D,->] (0,-0.5) -- (0,0.5);
            \bubleftmult[S]{0.5,0}{\freecoupon{f}};
        \end{tikzpicture}
    \end{equation}
\end{lem}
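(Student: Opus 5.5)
The plan is to prove both identities by one local move on the pair of antiparallel strands formed by the right-hand arc of the bubble and the vertical $\Dup$ strand. That move is one of the relations in \cref{fire} (or its rotated form \cref{spark}). Afterwards we finish by isotopy, which is legitimate because $\Dcat$ is strict pivotal by \cref{adjunction,swishy}. Since $f$ is an arbitrary 1-endomorphism, we treat the coupon $f$ as a black box throughout and never manipulate its interior.

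For the first relation in \cref{smarty}, the right-hand side of the $\Ddown$-bubble runs antiparallel to the vertical $\Dup$ strand, and the two strands bound a thin region. Depending on the orientation convention for $\rightbubmult[D]{}$, this pair has the shape of the left-hand side of either \cref{spark} or the last relation in \cref{fire}. I would first read off the orientations and region labels to see which relation applies. We use it in the direction that replaces the two parallel $\mathrm{D}$ strands by a cap over a cup. After this replacement, the bubble and the vertical strand fuse into a single $\Dup$ strand that comes up from the bottom, detours around the coupon $f$, and continues to the top. A planar isotopy then slides the strand back across $f$, leaving $f$ floating in the region to the right of a straight $\Dup$ strand. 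That is the right-hand side.

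A quick check is that the labels match. The coupon $f$ sits in the region labelled $n$ both before and after, because the interior of a $\mathrm{D}$-bubble to the left of $\Dup 1_n$ has the same label as the region to the right of the strand. The same check would tell us which of \cref{spark} and \cref{fire} is the correct one to invoke.

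For the second relation in \cref{smarty}, the bubble is thin black, so the adjacent antiparallel pair now consists of one $\mathrm{S}$ strand and one $\mathrm{D}$ strand. Here I would use the first two relations of \cref{fire}, which express the identity on such a mixed pair as a composite of a merge followed by a split, in the reverse direction.
\begin{enumerate}
\item Apply the appropriate relation of \cref{fire} to replace the right arc of the $\mathrm{S}$-bubble together with the $\Dup$ strand by the trivalent composite. This is the same manoeuvre used in the proof of \cref{onion}.
\item The resulting diagram is a $\Dup$ strand with a short $\mathrm{S}$ segment between two trivalent vertices, together with an $\mathrm{S}$ arc that wraps around $f$ and connects those two vertices.
\item By isotopy, using the rotated trivalent vertices \cref{fishy1,fishy2}, drag this $\mathrm{S}$ loop containing $f$ across to the right of the strand.
\item Apply the mirrored relation of \cref{fire} to collapse the trivalent composite again. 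This leaves the $\Dup$ strand with an $\mathrm{S}$-bubble containing $f$ on its right; since the bubble has been reflected across the strand, its orientation reverses, giving $\leftbubmult[S]{}$.
\end{enumerate}

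The main obstacle is bookkeeping rather than conceptual difficulty. We must verify the orientations and region labels so that the correct relation of \cref{fire} applies at each step. We must also make sure the isotopy in step 3 respects the convention that the two thin strands at a trivalent vertex are oriented the same way and the thick strand the opposite way. If the direct route proves awkward, a fallback is to mimic the second computation in the proof of \cref{onion}, which performs essentially the same manipulation with a $\mathrm{D}$-bubble enclosing the configuration in place of the through-strand.
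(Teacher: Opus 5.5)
Your proposal is correct and follows the paper's proof. For the first identity, you apply the last relation in \cref{fire} and then straighten by isotopy. The adjacent pair is $\Ddown \Dup 1_n$ with the left strand pointing down, so \cref{spark} never applies, and you can drop that hedge. For the second identity, you apply the second relation in \cref{fire} in reverse, move the black loop around $f$ to the other side by isotopy, and collapse with the first relation in \cref{fire}; this is exactly what the paper does.
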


\begin{proof}
    We have
    \begin{gather*}
        \begin{tikzpicture}[centerzero]
            \draw[D,->] (0,-0.7) -- (0,0.7);
            \bubrightmult[D]{-0.6,0}{\freecoupon{f}};
        \end{tikzpicture}
        \overset{\cref{fire}}{=}
        \begin{tikzpicture}[centerzero]
            \draw[D,->] (0,-0.7) -- (0,-0.6) to[out=up,in=right] (-0.2,-0.3) to[out=left,in=right] (-0.6,-0.45) to[out=left,in=left,looseness=1.5] (-0.6,0.45) to[out=right,in=left] (-0.2,0.3) to[out=right,in=down] (0,0.6) -- (0,0.7);
            \coupon{-0.6,0}{f};
        \end{tikzpicture}
        =
        \begin{tikzpicture}[centerzero]
            \draw[D,->] (0,-0.7) -- (0,0.7);
            \coupon{0.4,0}{f};
        \end{tikzpicture}
        \ ,
        \\
        \begin{tikzpicture}[centerzero]
            \draw[D,->] (0,-0.7) -- (0,0.7);
            \bubrightmult[S]{-0.6,0}{\freecoupon{f}};
        \end{tikzpicture}
        \overset{\cref{fire}}{=}
        \begin{tikzpicture}[centerzero]
            \draw[D,->] (0,0.2) -- (0.2,0.7);
            \draw[D] (0,-0.2) -- (0.2,-0.7);
            \coupon{-0.5,0}{f};
            \draw[S,->] (-1,0) -- (-1,0.2) to[out=up,in=135] (0,0.2) -- (0,-0.2) to[out=-135,in=down] (-1,-0.2) -- (-1,0);
        \end{tikzpicture}
        =
        \begin{tikzpicture}[centerzero]
            \draw[D,->] (0,0.2) -- (-0.2,0.7);
            \draw[D] (0,-0.2) -- (-0.2,-0.7);
            \coupon{0.5,0}{f};
            \draw[S,->] (1,0) -- (1,0.2) to[out=up,in=45] (0,0.2) -- (0,-0.2) to[out=-45,in=down] (1,-0.2) -- (1,0);
        \end{tikzpicture}
        \overset{\cref{fire}}{=}
        \begin{tikzpicture}[centerzero]
            \draw[D,->] (0,-0.7) -- (0,0.7);
            \bubleftmult[S]{0.5,0}{\freecoupon{f}};
        \end{tikzpicture}
        \ .\qedhere
    \end{gather*}
\end{proof}

\begin{cor}
    The following relations hold in $\Dcat$:
    \begin{align} \label{crisp1}
        \begin{tikzpicture}[centerzero]
            \draw[D,->] (0,-0.4) -- (0,0.4);
            \bubrightmult[D]{0.45,0}{r};
        \end{tikzpicture}
        &=
        \begin{tikzpicture}[centerzero]
            \draw[D,->] (0,-0.4) -- (0,0.4);
            \bubrightmult[D]{-0.75,0}{r+1};
        \end{tikzpicture}
        \ ,& r \in \Z,
        \\ \label{crisp2}
        \begin{tikzpicture}[centerzero]
            \draw[D,->] (0,-0.5) -- (0,0.5);
            \bubright[S]{-0.4,0};
        \end{tikzpicture}
        &= \delta\
        \begin{tikzpicture}[centerzero]
            \draw[D,->] (0,-0.5) -- (0,0.5);
        \end{tikzpicture}
        \ .
    \end{align}
\end{cor}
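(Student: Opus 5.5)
Both relations follow by specializing \cref{smarty} to a particular choice of $f$ and then simplifying with the bubble relations already established.

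For \cref{crisp1}, we apply the first relation in \cref{smarty} with $f = \rightbubmult[D]{r}$, placed in the region immediately to the right of the thick strand; if that region is labelled $n$, the region to the left is labelled $n+2$. The left-hand side of \cref{smarty} is then a clockwise thick bubble, drawn to the left of the strand, that contains the coupon $\rightbubmult[D]{r}$. By \cref{bubrec}, for $r \ge 0$ this nested bubble is exactly $\rightbubmult[D]{r+1}$ in the region to the left of the strand. The right-hand side is the strand with $\rightbubmult[D]{r}$ in the region to its right. Reading the equation in the opposite direction gives \cref{crisp1} for $r \ge 0$.

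For $r < 0$, both $r$ and $r+1$ are $\le 0$, so both sides reduce to the identity by \cref{bubrec}. The only boundary case is $r = -1$: there the right-hand side has $r+1 = 0$, which is again the identity, so the relation still holds. The one point needing care is that the region labels match the convention in \cref{smarty}, namely that the coupon $f$ on the right sits in the region with label $n$ and the bubble sits in the region with label $n+2$. Since \cref{smarty} holds for every 1-endomorphism $f$, and we may take $f$ to be an endomorphism of $1_n$, there is no obstruction.

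For \cref{crisp2}, we apply the second relation in \cref{smarty} with $f = 1$. Its left-hand side is the strand with a clockwise thin bubble $\rightbub[S]$ to its left, which is the left-hand side of \cref{crisp2}. Its right-hand side is the strand with a counterclockwise thin bubble $\leftbub[S]$ to its right. By \cref{smoke}, this counterclockwise bubble equals $\delta$ times the identity, which gives the factor $\delta$.

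Neither step presents a real obstacle. The only point requiring attention is the orientation bookkeeping: checking that the bubble appearing in \cref{smarty} is the one whose value is fixed by \cref{smoke}, namely the counterclockwise one, and that \cref{bubrec} is applied with the correct orientation and region label.
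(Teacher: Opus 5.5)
Your proof is correct and matches the paper's argument: for $r \ge 0$, \cref{crisp1} comes from the first relation in \cref{smarty} with $f$ the $r$-fold nested cyan bubble, combined with \cref{bubrec}, and the case $r<0$ is trivial by \cref{bubrec}. Likewise, \cref{crisp2} comes from the second relation in \cref{smarty} with $f$ the identity, followed by \cref{smoke}.
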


\begin{proof}
    When $r<0$, \cref{crisp1} is trivial by \cref{bubrec}.  When $r \ge 0$, \cref{crisp1} follows from taking $f = \rightbubmult[D]{r}$ in the first relation in \cref{smarty}.  Relation \cref{crisp2} follows from taking $f = 1_n$ in the second relation in \cref{smarty}, then using \cref{smoke}.
\end{proof}

%---------------------
\subsection{Crossings}
%---------------------

We define the following crossings:
\begin{equation} \label{crossdefU}
    \upcross{S}{D} :=
    \begin{tikzpicture}[centerzero]
        \draw[S,->] (-0.2,-0.4) -- (-0.2,0) -- (0.2,0) -- (0.2,0.4);
        \draw[D] (0.2,-0.4) -- (0.2,0);
        \draw[D,->] (-0.2,0) -- (-0.2,0.4);
    \end{tikzpicture}
    \ ,\qquad
    \upcross{D}{S} :=
    \begin{tikzpicture}[centerzero]
        \draw[S,->] (0.2,-0.4) -- (0.2,0) -- (-0.2,0) -- (-0.2,0.4);
        \draw[D] (-0.2,-0.4) -- (-0.2,0);
        \draw[D,->] (0.2,0) -- (0.2,0.4);
    \end{tikzpicture}
    \ ,\qquad
    \upcross{D}{D} :=
    \begin{tikzpicture}[centerzero]
        \draw[D,->] (-0.2,-0.4) -- (-0.2,0.4);
        \draw[D,->] (0.2,-0.4) -- (0.2,0.4);
    \end{tikzpicture}
    \ .
\end{equation}
We rotate these crossings using caps and cups, giving:
\begin{gather} \label{crossdefL}
    \leftcross{S}{D} :=
    \begin{tikzpicture}[centerzero]
        \draw[S,<-] (-0.2,-0.4) -- (0,-0.13) -- (0,0.13) -- (0.2,0.4);
        \draw[D] (0.2,-0.4) -- (0,-0.13);
        \draw[D,->] (0,0.13) -- (-0.2,0.4);
    \end{tikzpicture}
    \ ,\qquad
    \leftcross{D}{S} :=
    \begin{tikzpicture}[centerzero]
        \draw[S,->] (0.2,-0.4) -- (0,-0.13) -- (0,0.13) -- (-0.2,0.4);
        \draw[D,<-] (-0.2,-0.4) -- (0,-0.13);
        \draw[D] (0,0.13) -- (0.2,0.4);
    \end{tikzpicture}
    \ ,\qquad
    \leftcross{D}{D} :=
    \begin{tikzpicture}[centerzero]
        \draw[D,<-] (-0.2,-0.4) -- (-0.2,-0.25) to[out=up,in=up,looseness=1.5] (0.2,-0.25) -- (0.2,-0.4);
        \draw[D,<-] (-0.2,0.4) -- (-0.2,0.25) to[out=down,in=down,looseness=1.5] (0.2,0.25) -- (0.2,0.4);
    \end{tikzpicture}
    \ ,
    \\ \label{crossdefR}
    \rightcross{S}{D} :=
    \begin{tikzpicture}[centerzero]
        \draw[S,->] (-0.2,-0.4) -- (0,-0.13) -- (0,0.13) -- (0.2,0.4);
        \draw[D,<-] (0.2,-0.4) -- (0,-0.13);
        \draw[D] (0,0.13) -- (-0.2,0.4);
    \end{tikzpicture}
    \ ,\qquad
    \rightcross{D}{S} :=
    \begin{tikzpicture}[centerzero]
        \draw[S,<-] (0.2,-0.4) -- (0,-0.13) -- (0,0.13) -- (-0.2,0.4);
        \draw[D] (-0.2,-0.4) -- (0,-0.13);
        \draw[D,->] (0,0.13) -- (0.2,0.4);
    \end{tikzpicture}
    \ ,\qquad
    \rightcross{D}{D} :=
    \begin{tikzpicture}[centerzero]
        \draw[D,->] (-0.2,-0.4) -- (-0.2,-0.25) to[out=up,in=up,looseness=1.5] (0.2,-0.25) -- (0.2,-0.4);
        \draw[D,->] (-0.2,0.4) -- (-0.2,0.25) to[out=down,in=down,looseness=1.5] (0.2,0.25) -- (0.2,0.4);
    \end{tikzpicture}
    \ ,
    \\ \label{crossdefD}
    \downcross{S}{D} :=
    \begin{tikzpicture}[centerzero]
        \draw[S,<-] (-0.2,-0.4) -- (-0.2,0) -- (0.2,0) -- (0.2,0.4);
        \draw[D,<-] (0.2,-0.4) -- (0.2,0);
        \draw[D] (-0.2,0) -- (-0.2,0.4);
    \end{tikzpicture}
    \ ,\qquad
    \downcross{D}{S} :=
    \begin{tikzpicture}[centerzero]
        \draw[S,<-] (0.2,-0.4) -- (0.2,0) -- (-0.2,0) -- (-0.2,0.4);
        \draw[D,<-] (-0.2,-0.4) -- (-0.2,0);
        \draw[D] (0.2,0) -- (0.2,0.4);
    \end{tikzpicture}
    \ ,\qquad
    \downcross{D}{D} :=
    \begin{tikzpicture}[centerzero]
        \draw[D,<-] (-0.2,-0.4) -- (-0.2,0.4);
        \draw[D,<-] (0.2,-0.4) -- (0.2,0.4);
    \end{tikzpicture}
    \ .
\end{gather}

\begin{prop}
    The following relations hold in $\Dcat$:
    \begin{gather} \label{reidUU}
        \begin{tikzpicture}[centerzero]
            \draw[D,->] (-0.2,-0.4) to[out=45,in=down] (0.15,0) to[out=up,in=-45] (-0.2,0.4);
            \draw[S,->] (0.2,-0.4) to[out=135,in=down] (-0.15,0) to[out=up,in=225] (0.2,0.4);
        \end{tikzpicture}
        =
        \begin{tikzpicture}[centerzero]
            \draw[D,->] (-0.2,-0.4) -- (-0.2,0.4);
            \draw[S,->] (0.2,-0.4) -- (0.2,0.4);
        \end{tikzpicture}
        \ ,\qquad
        \begin{tikzpicture}[centerzero]
            \draw[S,->] (-0.2,-0.4) to[out=45,in=down] (0.15,0) to[out=up,in=-45] (-0.2,0.4);
            \draw[D,->] (0.2,-0.4) to[out=135,in=down] (-0.15,0) to[out=up,in=225] (0.2,0.4);
        \end{tikzpicture}
        =
        \begin{tikzpicture}[centerzero]
            \draw[S,->] (-0.2,-0.4) -- (-0.2,0.4);
            \draw[D,->] (0.2,-0.4) -- (0.2,0.4);
        \end{tikzpicture}
        \ ,\qquad
        \begin{tikzpicture}[centerzero]
            \draw[D,->] (-0.2,-0.4) to[out=45,in=down] (0.15,0) to[out=up,in=-45] (-0.2,0.4);
            \draw[D,->] (0.2,-0.4) to[out=135,in=down] (-0.15,0) to[out=up,in=225] (0.2,0.4);
        \end{tikzpicture}
        =
        \begin{tikzpicture}[centerzero]
            \draw[D,->] (-0.2,-0.4) -- (-0.2,0.4);
            \draw[D,->] (0.2,-0.4) -- (0.2,0.4);
        \end{tikzpicture}
        \ ,
        \\ \label{reidUD}
        \begin{tikzpicture}[centerzero]
            \draw[D,->] (-0.2,-0.4) to[out=45,in=down] (0.15,0) to[out=up,in=-45] (-0.2,0.4);
            \draw[S,<-] (0.2,-0.4) to[out=135,in=down] (-0.15,0) to[out=up,in=225] (0.2,0.4);
        \end{tikzpicture}
        =
        \begin{tikzpicture}[centerzero]
            \draw[D,->] (-0.2,-0.4) -- (-0.2,0.4);
            \draw[S,<-] (0.2,-0.4) -- (0.2,0.4);
        \end{tikzpicture}
        \ ,\qquad
        \begin{tikzpicture}[centerzero]
            \draw[D,<-] (-0.2,-0.4) to[out=45,in=down] (0.15,0) to[out=up,in=-45] (-0.2,0.4);
            \draw[S,->] (0.2,-0.4) to[out=135,in=down] (-0.15,0) to[out=up,in=225] (0.2,0.4);
        \end{tikzpicture}
        =
        \begin{tikzpicture}[centerzero]
            \draw[D,<-] (-0.2,-0.4) -- (-0.2,0.4);
            \draw[S,->] (0.2,-0.4) -- (0.2,0.4);
            \bubright[D]{0.6,0};
        \end{tikzpicture}
        \ ,\qquad
        \begin{tikzpicture}[centerzero]
            \draw[D,->] (-0.2,-0.4) to[out=45,in=down] (0.15,0) to[out=up,in=-45] (-0.2,0.4);
            \draw[D,<-] (0.2,-0.4) to[out=135,in=down] (-0.15,0) to[out=up,in=225] (0.2,0.4);
        \end{tikzpicture}
        =
        \begin{tikzpicture}[centerzero]
            \draw[D,->] (-0.2,-0.4) -- (-0.2,0.4);
            \draw[D,<-] (0.2,-0.4) -- (0.2,0.4);
        \end{tikzpicture}
        \ ,\qquad
        \begin{tikzpicture}[centerzero]
            \draw[D,<-] (-0.2,-0.4) to[out=45,in=down] (0.15,0) to[out=up,in=-45] (-0.2,0.4);
            \draw[D,->] (0.2,-0.4) to[out=135,in=down] (-0.15,0) to[out=up,in=225] (0.2,0.4);
        \end{tikzpicture}
        =
        \begin{tikzpicture}[centerzero]
            \draw[D,<-] (-0.2,-0.4) -- (-0.2,0.4);
            \draw[D,->] (0.2,-0.4) -- (0.2,0.4);
            \bubright[D]{0.6,0};
        \end{tikzpicture}
        \ ,
        \\ \label{curls}
        \begin{tikzpicture}[centerzero,xscale=-1]
            \draw[D] (0,-0.4) to[out=up,in=180] (0.25,0.15) to[out=0,in=up] (0.4,0);
            \draw[D,->] (0.4,0) to[out=down,in=0] (0.25,-0.15) to[out=180,in=down] (0,0.4);
        \end{tikzpicture}
        = 
        \begin{tikzpicture}[centerzero]
            \draw[D,->] (0,-0.4) -- (0,0.4);
        \end{tikzpicture}
        \ ,\qquad
        \begin{tikzpicture}[centerzero]
            \draw[D,->] (0.4,0) to[out=down,in=0] (0.25,-0.15) to[out=180,in=down] (0,0.4);
            \draw[D] (0,-0.4) to[out=up,in=180] (0.25,0.15) to[out=0,in=up] (0.4,0);
        \end{tikzpicture}
        = 
        \begin{tikzpicture}[centerzero]
            \draw[D,->] (0,-0.4) -- (0,0.4);
            \bubright[D]{0.4,0};
        \end{tikzpicture}
        \ .
    \end{gather}
\end{prop}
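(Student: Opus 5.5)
Every crossing here is, by \cref{crossdefU,crossdefL,crossdefR,crossdefD}, a composite of the generating trivalent vertices and of cups and caps. The plan is therefore to substitute these definitions into each left-hand side, straighten with isotopy (which is available because $\Dcat$ is strict pivotal by \cref{adjunction,swishy}), and recognize one of the defining relations \cref{fire}, the relation \cref{spark}, or one of \cref{water,smarty,crisp1}.

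\textbf{Relations \cref{reidUU}.} The third relation is immediate: $\upcross{D}{D}$ is just the identity on two parallel thick strands, so its square is the identity. For the first two, compose the definitions in \cref{crossdefU}. The composite $\upcross{D}{S}\circ\upcross{S}{D}$ (or the reverse order) consists of:
\begin{itemize}
\item a thin strand passing through two trivalent vertices;
\item a thick strand passing through the same two vertices;
\item a thin rung between the two vertices.
\end{itemize}
After an isotopy, this is exactly the third relation in \cref{fire}, which gives the identity. The other composite is its rotation or reflection. It can be handled by applying $\bR$ of \cref{rotate}, or directly by the first two relations of \cref{fire} after bending strands with \cref{adjunction}.

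\textbf{Relations \cref{reidUD}.} These are handled the same way, now using the sideways crossings \cref{crossdefL,crossdefR}, which are rotations of the upward ones.
\begin{itemize}
\item \emph{First relation.} Straightening the caps and cups turns the diagram into the first relation of \cref{fire}, giving the identity.
\item \emph{Second relation.} Straightening produces the analogue in which the thin strand forms a loop through two vertices. By the first relation in \cref{water}, this becomes a strand with a right thick bubble, which is the bubble on the right-hand side. One must check carefully which of \cref{water} applies, since the second relation there produces no bubble; orientations and region labels decide this.
\item \emph{Thick--thick cases.} Here $\leftcross{D}{D}$ and $\rightcross{D}{D}$ are cup-over-cap diagrams. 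Composing two of them yields the cap-cup configuration $\cup\circ(\text{bubble})\circ\cap$ together with a closed thick bubble. In the first case, the bubble is a left bubble, which is $1$ by \cref{smoke}; then \cref{spark} gives the identity. In the second case, composing gives $\cup\circ(\text{right bubble})\circ\cap$ in the middle region. Applying the last relation of \cref{fire}, together with \cref{crisp1} to move the bubble across a thick strand, yields the identity with a right bubble on the right.
\end{itemize}

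\textbf{Relations \cref{curls}.} These come from a single thick strand with a kink, and both sides are determined by the definition of $\leftcross{D}{D}$ or $\rightcross{D}{D}$.
\begin{itemize}
\item For the first relation, the curl becomes a strand with a cup and cap attached, plus a closed left bubble. This equals the identity by \cref{adjunction} and \cref{smoke}.
\item For the second relation, the closed loop is a right bubble on the far side of the strand. It is a right bubble placed at the right of the strand, which is already the stated form; if necessary, \cref{crisp1} with $r=0$ moves it to that side.
\end{itemize}

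\textbf{Main obstacle.} The main difficulty is bookkeeping. For each case one must determine which bubble (left or right) appears and in which region it sits, since $\rightbub[D]$ is not a scalar while $\leftbub[D]=1$. I would first draw each composite explicitly and track orientations, then choose between \cref{water}, \cref{smoke} and \cref{crisp1} accordingly.
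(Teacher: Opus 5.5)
Your overall plan (substitute \cref{crossdefU,crossdefL,crossdefR} and reduce with \cref{fire,water,smoke,spark}) matches the paper's, but several reductions are wrong as stated.

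The most serious problem is the first relation in \cref{reidUU}, on $\Dup\Sup$. It does not follow from the third relation in \cref{fire} by symmetry. $\bR$ rotates by $180^\circ$, so it turns a relation on $\Sup\Dup$ into one on $\Ddown\Sdown$, not $\Dup\Sup$. Nor can you use a left--right reflection: the defining relations are not invariant under it. For instance, \cref{smoke} makes the counterclockwise thick bubble equal to $1$, whereas the clockwise one $\rightbub[D]$ is a nontrivial idempotent.

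You have to compute. By \cref{crossdefU} the composite is a square with four trivalent vertices whose left vertical edge is thin. The first relation in \cref{fire} replaces that side by a straight thick strand. This leaves a thin strand with a bigon (thin arc on the left, thick arc on the right), which is removed by the second relation in \cref{water}. That step is not a consequence of \cref{fire} alone: it uses the second relation of \cref{fire} to produce a counterclockwise thick bubble, which is then $1$ by \cref{smoke}. So your fallback via ``the first two relations of \cref{fire}'' omits exactly the ingredient that distinguishes this relation from its mirror image.

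The same bigon is overlooked in the first relation of \cref{reidUD}. Each sideways crossing has two trivalent vertices, so the composite has four and cannot be isotoped into the two-vertex first relation of \cref{fire}. The two middle vertices bound a bigon (thin arc left, thick arc right, through-strand pointing up). It must first be removed by the second relation in \cref{water}; only then does the first relation of \cref{fire} apply. Your second bullet uses the same mechanism with the through-strand pointing down. There the bigon is the $180^\circ$ rotation of the first relation in \cref{water}, which is where $\rightbub[D]$ comes from, and the remaining two vertices are then removed by \cref{fire}.

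In the fourth relation of \cref{reidUD}, the loop formed by the cup of $\leftcross{D}{D}$ and the cap of $\rightcross{D}{D}$ does not lie between the strands. It lies in the outer region between the lower cap and the upper cup, which is the same region as the right edge of the diagram. So after the last relation of \cref{fire} it is already in place, and \cref{crisp1} is not needed. Had it been between the strands, \cref{crisp1} with $r=0$ would make it disappear when moved past the upward strand, contradicting the stated answer.

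The curls are immediate once you note that the self-crossing of an upright curl is $\upcross{D}{D}$, which by \cref{crossdefU} is just two parallel strands.
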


\begin{proof}
    For the first relation in \cref{reidUU}, we compute
    \[
        \begin{tikzpicture}[centerzero]
            \draw[D,->] (-0.2,-0.4) to[out=45,in=down] (0.15,0) to[out=up,in=-45] (-0.2,0.4);
            \draw[S,->] (0.2,-0.4) to[out=135,in=down] (-0.15,0) to[out=up,in=225] (0.2,0.4);
        \end{tikzpicture}
        \overset{\cref{crossdefU}}{=}
        \begin{tikzpicture}[centerzero]
            \draw[S,->] (0.2,-0.5) -- (0.2,-0.2) -- (-0.2,-0.2) -- (-0.2,0.2) -- (0.2,0.2) -- (0.2,0.5);
            \draw[D] (-0.2,-0.5) -- (-0.2,-0.2);
            \draw[D] (0.2,-0.2) -- (0.2,0.2);
            \draw[D,->] (-0.2,0.2) -- (-0.2,0.5);
        \end{tikzpicture}
        \overset{\cref{fire}}{=}
        \begin{tikzpicture}[centerzero]
            \draw[S] (0,-0.5) -- (0,-0.2);
            \draw[S] (0,-0.2) to[out=135,in=-135,looseness=2] (0,0.2);
            \draw[D] (0,-0.2) to[out=45,in=-45,looseness=2] (0,0.2);
            \draw[S,->] (0,0.2) -- (0,0.5);
            \draw[D,->] (-0.4,-0.5) -- (-0.4,0.5);
        \end{tikzpicture}
        \overset{\cref{water}}{=}
        \begin{tikzpicture}[centerzero]
            \draw[D,->] (-0.2,-0.4) -- (-0.2,0.4);
            \draw[S,->] (0.2,-0.4) -- (0.2,0.4);
        \end{tikzpicture}
        \ .
    \]
    The second relation in \cref{reidUU} is precisely the third relation in \cref{fire}, while the third relation in \cref{reidUU} is trivial.
    
    For the first relation in \cref{reidUD}, we compute
    \[
        \begin{tikzpicture}[centerzero]
            \draw[D,->] (-0.2,-0.4) to[out=45,in=down] (0.15,0) to[out=up,in=-45] (-0.2,0.4);
            \draw[S,<-] (0.2,-0.4) to[out=135,in=down] (-0.15,0) to[out=up,in=225] (0.2,0.4);
        \end{tikzpicture}
        \overset{\cref{crossdefL}}{\underset{\cref{crossdefR}}{=}}
        \begin{tikzpicture}[centerzero]
            \draw[S] (0,-0.4) -- (0,-0.2);
            \draw[S] (0,-0.2) to[out=135,in=-135,looseness=2] (0,0.2);
            \draw[D] (0,-0.2) to[out=45,in=-45,looseness=2] (0,0.2);
            \draw[S] (0,0.2) -- (0,0.4) -- (0.2,0.6);
            \draw[D,->] (0,0.4) -- (-0.2,0.6);
            \draw[D] (-0.2,-0.6) -- (0,-0.4);
            \draw[S,<-] (0.2,-0.6) -- (0,-0.4);
        \end{tikzpicture}
        \overset{\cref{water}}{=}
        \begin{tikzpicture}[centerzero]
            \draw[D] (-0.3,-0.5) -- (0,-0.2);
            \draw[S,<-] (0.3,-0.5) -- (0,-0.2);
            \draw[S] (0,-0.2) -- (0,0.2);
            \draw[S] (0,0.2) -- (0.3,0.5);
            \draw[D,->] (0,0.2) -- (-0.3,0.5);
        \end{tikzpicture}
        \overset{\cref{fire}}{=}
        \begin{tikzpicture}[centerzero]
            \draw[D,->] (-0.2,-0.5) -- (-0.2,0.5);
            \draw[S,<-] (0.2,-0.5) -- (0.2,0.5);
        \end{tikzpicture}
        \ .
    \]
    For the second relation in \cref{reidUD}, we compute
    \[
        \begin{tikzpicture}[centerzero]
            \draw[D,<-] (-0.2,-0.4) to[out=45,in=down] (0.15,0) to[out=up,in=-45] (-0.2,0.4);
            \draw[S,->] (0.2,-0.4) to[out=135,in=down] (-0.15,0) to[out=up,in=225] (0.2,0.4);
        \end{tikzpicture}
        \overset{\cref{crossdefL}}{\underset{\cref{crossdefR}}{=}}
        \begin{tikzpicture}[centerzero]
            \draw[S] (0,-0.4) -- (0,-0.2);
            \draw[S] (0,-0.2) to[out=135,in=-135,looseness=2] (0,0.2);
            \draw[D] (0,-0.2) to[out=45,in=-45,looseness=2] (0,0.2);
            \draw[S,->] (0,0.2) -- (0,0.4) -- (0.2,0.6);
            \draw[D] (0,0.4) -- (-0.2,0.6);
            \draw[D,<-] (-0.2,-0.6) -- (0,-0.4);
            \draw[S] (0.2,-0.6) -- (0,-0.4);
        \end{tikzpicture}
        \overset{\cref{water}}{=}
        \begin{tikzpicture}[centerzero]
            \draw[D,<-] (-0.3,-0.5) -- (0,-0.2);
            \draw[S] (0.3,-0.5) -- (0,-0.2);
            \draw[S] (0,-0.2) -- (0,0.2);
            \draw[S,->] (0,0.2) -- (0.3,0.5);
            \draw[D] (0,0.2) -- (-0.3,0.5);
            \bubleft[D]{0.6,0};
        \end{tikzpicture}
        \overset{\cref{fire}}{=}
        \begin{tikzpicture}[centerzero]
            \draw[D,<-] (-0.2,-0.4) -- (-0.2,0.4);
            \draw[S,->] (0.2,-0.4) -- (0.2,0.4);
            \bubright[D]{0.6,0};
        \end{tikzpicture}
        \ .
    \]
    For the third relation in \cref{reidUD}, we compute
    \[
        \begin{tikzpicture}[centerzero]
            \draw[D,->] (-0.2,-0.4) to[out=45,in=down] (0.15,0) to[out=up,in=-45] (-0.2,0.4);
            \draw[D,<-] (0.2,-0.4) to[out=135,in=down] (-0.15,0) to[out=up,in=225] (0.2,0.4);
        \end{tikzpicture}
        \overset{\cref{crossdefL}}{\underset{\cref{crossdefR}}{=}}
        \begin{tikzpicture}[centerzero]
            \draw[D,->] (-0.2,-0.4) -- (-0.2,-0.25) to[out=up,in=up,looseness=1.5] (0.2,-0.25) -- (0.2,-0.4);
            \draw[D,<-] (-0.2,0.4) -- (-0.2,0.25) to[out=down,in=down,looseness=1.5] (0.2,0.25) -- (0.2,0.4);
            \bubleft[D]{0.6,0};
        \end{tikzpicture}
        \overset{\cref{smoke}}{\underset{\cref{spark}}{=}}
        \begin{tikzpicture}[centerzero]
            \draw[D,->] (-0.2,-0.5) -- (-0.2,0.5);
            \draw[D,<-] (0.2,-0.5) -- (0.2,0.5);
        \end{tikzpicture}
        \ .
    \]
    For the fourth relation in \cref{reidUD}, we compute
    \[
        \begin{tikzpicture}[centerzero]
            \draw[D,<-] (-0.2,-0.4) to[out=45,in=down] (0.15,0) to[out=up,in=-45] (-0.2,0.4);
            \draw[D,->] (0.2,-0.4) to[out=135,in=down] (-0.15,0) to[out=up,in=225] (0.2,0.4);
        \end{tikzpicture}
        \overset{\cref{crossdefL}}{\underset{\cref{crossdefR}}{=}}
        \begin{tikzpicture}[centerzero]
            \draw[D,<-] (-0.2,-0.4) -- (-0.2,-0.25) to[out=up,in=up,looseness=1.5] (0.2,-0.25) -- (0.2,-0.4);
            \draw[D,->] (-0.2,0.4) -- (-0.2,0.25) to[out=down,in=down,looseness=1.5] (0.2,0.25) -- (0.2,0.4);
            \bubright[D]{0.6,0};
        \end{tikzpicture}
        \overset{\cref{smoke}}{\underset{\cref{fire}}{=}}
        \begin{tikzpicture}[centerzero]
            \draw[D,<-] (-0.2,-0.5) -- (-0.2,0.5);
            \draw[D,->] (0.2,-0.5) -- (0.2,0.5);
            \bubright[D]{0.6,0};
        \end{tikzpicture}
        \ .
    \]
    The relations \cref{curls} follow easily from \cref{crossdefU,smoke}.
\end{proof}

We adopt the following convention for multiple strands:
\[
    \begin{tikzpicture}[centerzero]
        \draw[D,->] (0,-0.3) \botlabel{0} -- (0,0.3);
        \region{0.2,0}{n};
    \end{tikzpicture}
    := 1_n,\qquad
    \begin{tikzpicture}[centerzero]
        \draw[D,->] (0,-0.3) \botlabel{r} -- (0,0.3);
    \end{tikzpicture}
    :=
    \underbrace{
        \begin{tikzpicture}[centerzero]
            \draw[D,->] (-0.3,-0.3) -- (-0.3,0.3);
            \draw[D,->] (0.25,-0.3) -- (0.25,0.3);
            \node at (0,0) {$\cdots$};
        \end{tikzpicture}
    }_{r \text{ strands}},
    \qquad r \ge 1.
\]
We define crossings involving such multiple strands in the obvious way:
\[
    \begin{tikzpicture}[centerzero]
        \draw[D,->] (-0.2,-0.4) \botlabel{r} -- (0.2,0.4);
        \draw[S,->] (0.2,-0.4) -- (-0.2,0.4);
    \end{tikzpicture}
    :=
    \begin{tikzpicture}[centerzero]
        \draw[D,->] (-0.4,-0.4) -- (0,0.4);
        \draw[D,->] (0,-0.4) -- (0.4,0.4);
        \draw[S,->] (0.4,-0.4) -- (-0.4,0.4);
        \node at (-0.1,-0.2) {$\scriptstyle{\cdots}$};
    \end{tikzpicture}
    \ ,\qquad
    \begin{tikzpicture}[centerzero,xscale=-1]
        \draw[D,->] (-0.2,-0.4) \botlabel{r} -- (0.2,0.4);
        \draw[S,->] (0.2,-0.4) -- (-0.2,0.4);
    \end{tikzpicture}
    :=
    \begin{tikzpicture}[centerzero,xscale=-1]
        \draw[D,->] (-0.4,-0.4) -- (0,0.4);
        \draw[D,->] (0,-0.4) -- (0.4,0.4);
        \draw[S,->] (0.4,-0.4) -- (-0.4,0.4);
        \node at (-0.1,-0.2) {$\scriptstyle{\cdots}$};
    \end{tikzpicture}
    \ .
\]

\begin{rem}
    If $\delta = -q-q^{-1}$ for some $q \in \kk^\times$ with a chosen square root $q^{1/2}$, then one can also define black-black crossings:
    \[
        \posupcross := q^{1/2}\ 
        \begin{tikzpicture}[centerzero]
            \draw[S,->] (-0.2,-0.4) -- (-0.2,0.4);
            \draw[S,->] (0.2,-0.4) -- (0.2,0.4);
        \end{tikzpicture}
        + q^{-1/2}\
        \begin{tikzpicture}[centerzero]
            \draw[S] (-0.2,-0.4) -- (0,-0.13) -- (0.2,-0.4);
            \draw[S,<->] (-0.2,0.4) -- (0,0.13) -- (0.2,0.4);
            \draw[D] (0,-0.13) -- (0,0.13);
        \end{tikzpicture}
        \qquad \text{and} \qquad
        \negupcross := q^{-1/2}\ 
        \begin{tikzpicture}[centerzero]
            \draw[S,->] (-0.2,-0.4) -- (-0.2,0.4);
            \draw[S,->] (0.2,-0.4) -- (0.2,0.4);
        \end{tikzpicture}
        + q^{1/2}\
        \begin{tikzpicture}[centerzero]
            \draw[S] (-0.2,-0.4) -- (0,-0.13) -- (0.2,-0.4);
            \draw[S,<->] (-0.2,0.4) -- (0,0.13) -- (0.2,0.4);
            \draw[D] (0,-0.13) -- (0,0.13);
        \end{tikzpicture}
        \ .
    \]
    Then a straightforward computation shows that
    \begin{gather*}
        \begin{tikzpicture}[centerzero]
            \draw[S,->] (0.2,-0.4) to[out=135,in=down] (-0.15,0) to[out=up,in=225] (0.2,0.4);
            \draw[S,over,->] (-0.2,-0.4) to[out=45,in=down] (0.15,0) to[out=up,in=-45] (-0.2,0.4);
        \end{tikzpicture}
        \ =\
        \begin{tikzpicture}[centerzero]
            \draw[S,->] (-0.2,-0.4) -- (-0.2,0.4);
            \draw[S,->] (0.2,-0.4) -- (0.2,0.4);
        \end{tikzpicture}
        \ =\
        \begin{tikzpicture}[centerzero]
            \draw[S,->] (-0.2,-0.4) to[out=45,in=down] (0.15,0) to[out=up,in=-45] (-0.2,0.4);
            \draw[S,over,->] (0.2,-0.4) to[out=135,in=down] (-0.15,0) to[out=up,in=225] (0.2,0.4);
        \end{tikzpicture}
        \ ,\qquad
        \begin{tikzpicture}[centerzero]
            \draw[S,->] (0.4,-0.4) -- (-0.4,0.4);
            \draw[over,->] (0,-0.4) to[out=135,in=down] (-0.32,0) to[out=up,in=225] (0,0.4);
            \draw[over,->] (-0.4,-0.4) -- (0.4,0.4);
        \end{tikzpicture}
        \ =\
        \begin{tikzpicture}[centerzero]
            \draw[S,->] (0.4,-0.4) -- (-0.4,0.4);
            \draw[S,over,->] (0,-0.4) to[out=45,in=down] (0.32,0) to[out=up,in=-45] (0,0.4);
            \draw[S,over,->] (-0.4,-0.4) -- (0.4,0.4);
        \end{tikzpicture}
        \ ,
        \\
        \begin{tikzpicture}[centerzero]
            \draw[S,<-] (0.2,-0.4) to[out=135,in=down] (-0.15,0) to[out=up,in=225] (0.2,0.4);
            \draw[S,over,->] (-0.2,-0.4) to[out=45,in=down] (0.15,0) to[out=up,in=-45] (-0.2,0.4);
        \end{tikzpicture}
        \ =\
        \begin{tikzpicture}[centerzero]
            \draw[S,->] (-0.2,-0.4) -- (-0.2,0.4);
            \draw[S,<-] (0.2,-0.4) -- (0.2,0.4);
        \end{tikzpicture}
        \ ,\qquad
        \begin{tikzpicture}[centerzero]
            \draw[S,->] (0.2,-0.4) to[out=135,in=down] (-0.15,0) to[out=up,in=225] (0.2,0.4);
            \draw[S,over,<-] (-0.2,-0.4) to[out=45,in=down] (0.15,0) to[out=up,in=-45] (-0.2,0.4);
        \end{tikzpicture}
        \ =\
        \begin{tikzpicture}[centerzero]
            \draw[S,<-] (-0.25,-0.6) -- (-0.25,-0.3) to[out=up,in=up,looseness=1.5] (0.25,-0.3) -- (0.25,-0.6);
            \draw[S,->] (-0.25,0.6) -- (-0.25,0.3) to[out=down,in=down,looseness=1.5] (0.25,0.3) -- (0.25,0.6);
            \draw[S] (0.7,0) arc(0:360:0.15);
            \draw[S,->] (0.7,0) -- (0.701,-0.01);
        \end{tikzpicture}
        + q\
        \begin{tikzpicture}[centerzero]
            \draw[S,<-] (-0.27,-0.6) -- (-0.27,-0.3) to[out=up,in=up,looseness=1.5] (0.27,-0.3) -- (0.27,-0.6);
            \draw[S,->] (-0.27,0.6) -- (-0.27,0.3) to[out=down,in=down,looseness=1.5] (0.27,0.3) -- (0.27,0.6);
            \draw[D] (0.12,0.35) arc(0:360:0.15);
            \draw[D,->] (0.12,0.35) -- (0.121,0.34);
        \end{tikzpicture}
        + q^{-1}\
        \begin{tikzpicture}[centerzero]
            \draw[S,<-] (-0.27,-0.6) -- (-0.27,-0.3) to[out=up,in=up,looseness=1.5] (0.27,-0.3) -- (0.27,-0.6);
            \draw[S,->] (-0.27,0.6) -- (-0.27,0.3) to[out=down,in=down,looseness=1.5] (0.27,0.3) -- (0.27,0.6);
            \draw[D] (0.12,-0.38) arc(0:360:0.15);
            \draw[D,->] (0.12,-0.38) -- (0.121,-0.39);
        \end{tikzpicture}
        +\
        \begin{tikzpicture}[centerzero]
            \draw[S,<-] (-0.35,-0.6) -- (-0.35,0.6);
            \draw[S,->] (0.35,-0.6) -- (0.35,0.6);
            \draw[D] (0.15,0) arc(0:360:0.15);
            \draw[D,->] (0.15,0) -- (0.151,-0.01);
        \end{tikzpicture}
        \ .
    \end{gather*}
\end{rem}

%--------------------------
\subsection{More relations}
%--------------------------

We now deduce some additional relations that will be used later in the paper.

\begin{lem}
    The following relations hold in $\Dcat$:
    \begin{equation} \label{puddle}
        \begin{tikzpicture}[centerzero]
            \draw[S,->] (0,-0.4) -- (0,0.4);
            \draw[D,->] (0.4,-0.4) \botlabel{r} -- (0.4,0.4);
            \bubrightmult[D]{-0.6,0}{r};
        \end{tikzpicture}
        =
        \begin{tikzpicture}[centerzero]
            \draw[S,->] (0,-0.4) -- (0,0.4);
            \draw[D,->] (0.4,-0.4) \botlabel{r} -- (0.4,0.4);
        \end{tikzpicture}
        \ ,\qquad
        \begin{tikzpicture}[centerzero]
            \draw[S,<-] (0,-0.4) -- (0,0.4);
            \draw[D,->] (0.4,-0.4) \botlabel{r+1} -- (0.4,0.4);
            \bubrightmult[D]{-0.6,0}{r};
        \end{tikzpicture}
        =
        \begin{tikzpicture}[centerzero]
            \draw[S,<-] (0,-0.4) -- (0,0.4);
            \draw[D,->] (0.4,-0.4) \botlabel{r+1} -- (0.4,0.4);
        \end{tikzpicture}
        \ ,\qquad r \in \N.
    \end{equation}
\end{lem}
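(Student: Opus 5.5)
Both relations say that a clockwise $D$-bubble with label $r$, sitting in the region to the left of the strands, can be absorbed by the $r$ (respectively $r+1$) cyan strands on the right.  I would first move the bubble into the region directly adjacent to the cyan strands, and then absorb it into those strands, one strand at a time, by induction on $r$.

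\emph{Moving the bubble.}  For the first relation, I would pass the bubble across the upward black strand using the first relation in \cref{onion} together with the third one (equivalently, sliding the bubble through the strand using \cref{water,fire}).  Concretely, write the black strand $\Sup$ and note that
\[
\Sup \cdot \rightbubmult[D]{r} \;\text{(bubble on the right)} \quad\text{vs.}\quad \rightbubmult[D]{r} \cdot \Sup \;\text{(bubble on the left)}.
\]
Here I would rather avoid moving across black strands, since such a move changes the label (compare \cref{apple2}, where a black bubble absorbs a cyan bubble with a shift of label).  The better plan is to prove the statement by induction on $r$ directly in the given configuration.

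\emph{Base case and inductive step.}  The case $r=0$ is trivial by \cref{bubrec}.  For the inductive step, by \cref{bubrec}, the bubble with label $r+1$ is a $D$-bubble enclosing the bubble with label $r$.  I would use \cref{crisp1} in reverse on the leftmost cyan strand: a bubble with label $r+1$ to the left of a cyan strand equals a bubble with label $r$ to its right.  To do this, I need to transport the bubble across the black strand.  The key local move for that is the fourth relation of \cref{fire} (cup-cap on a pair of cyan strands), combined with \cref{reidUU,reidUD}: a cyan strand passes through an upward black strand freely by \cref{reidUU}.  So I would open the outer cyan circle of the label-$(r+1)$ bubble and merge it with the leftmost cyan strand, using the fourth relation of \cref{fire} or \cref{spark}.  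This produces a diagram in which the cyan strand makes a detour around the black strand and the inner bubble of label $r$.  Pulling that detour back through the black strand via \cref{reidUU} then leaves the label-$r$ bubble to the left of the black strand and the remaining $r$ cyan strands.  The induction hypothesis, applied with those $r$ strands, finishes the step.

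\emph{Second relation and main obstacle.}  For the downward black strand, the same maneuver works, but the crossing of cyan with the downward black strand now comes from \cref{reidUD}.  There, one of the two Reidemeister-II-type relations produces an extra clockwise $D$-bubble.  Because this extra bubble appears, one needs one more cyan strand than bubble levels, which explains the label $r+1$.  The main obstacle is keeping track, through these crossing manipulations, of which region each bubble sits in, including the region labels, so that \cref{crisp1}, \cref{headroom} and \cref{reidUD} are applied with the correct labels.  The extra bubble created by \cref{reidUD} must be shown to be absorbed using \cref{headroom} (for example $\max(r,1)=r$ when $r\ge1$) or \cref{crisp1}.  The case $r=0$ of the second relation must be handled separately: it reads \cref{reidUD} with a single cyan strand, where the bubble with label $0$ equals $1$ by \cref{bubrec}, so the claim is immediate.
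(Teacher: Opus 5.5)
\textbf{First relation.} Your plan here is essentially the paper's. Use \cref{reidUU} to let the cyan strands cross the upward black strand, so that they become adjacent to the bubble, and then lower the bubble's label with \cref{crisp1}. The paper crosses all $r$ strands at once and applies \cref{crisp1} $r$ times, rather than inducting one strand at a time. One correction to your ordering: after \cref{crisp1}, the label-$r$ bubble sits inside the bigon formed by the detoured cyan strand and the black strand. You must therefore apply the inductive hypothesis there, to the local picture of bubble, upward black strand and $r$ cyan strands, and only then undo the double crossing. The detour cannot be pulled back first.

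\textbf{Second relation.} Here there is a genuine gap. The bubble-free relation in \cref{reidUD} points the wrong way: it applies when the cyan strand starts to the left of $\Sdown$. The relation that moves an upward cyan strand from the right of a downward black strand to its left is the $180\degree$ rotation of the second relation in \cref{reidUD}. It says that the double crossing of $\Sdown\Dup$ through $\Dup\Sdown$ equals $1_{\Sdown\Dup}$ with a $\rightbub[D]$ added in the region to the left.

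For $r\ge1$ you can insert this bigon at no cost, since by \cref{headroom} the label-$r$ bubble absorbs $\rightbub[D]$. Inside the bigon, \cref{crisp1} and the inductive hypothesis then do remove the bubble. But undoing the bigon afterwards recreates $\rightbub[D]$ to the left of $\Sdown$. No cyan strand is adjacent to it there, so neither \cref{headroom} nor \cref{crisp1} can absorb it.

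So your maneuver only shows that the label-$r$ bubble can be replaced by the label-$1$ bubble. For $r=1$ it is circular, and $r=1$ is the real base case, since $r=0$ is trivial by \cref{bubrec}. This is not a bookkeeping issue about region labels. That bigon is genuinely the idempotent $\rightbub[D]\otimes 1_{\Sdown\Dup}$, not the identity: by \cref{monkey} it acts by zero on the summand $L^{n+1}_{n+1}$ of $\Sdown\Dup\otimes L^n_n$.

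\textbf{The missing ingredient} is the trivalent-vertex isomorphism $\Sdown\Dup\cong\Sup$ coming from the second relation in \cref{fire}. The argument runs as follows:
\begin{enumerate}
\item Rewrite the identity on the black strand and the leftmost cyan strand as the composite $\Sdown\Dup\to\Sup\to\Sdown\Dup$ of two vertices.
\item In the middle, an upward black strand now has exactly $r$ cyan strands to its right, so the first relation removes the label-$r$ bubble.
\item Recombine using \cref{fire}.
\end{enumerate}
This is the paper's proof, and it is where the extra $(r+1)$-st cyan strand is used.
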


\begin{proof}
    To prove the first equality in \cref{puddle}, we compute
    \[
        \begin{tikzpicture}[centerzero]
            \draw[S,->] (0,-0.4) -- (0,0.4);
            \draw[D,->] (0.4,-0.4) \botlabel{r} -- (0.4,0.4);
            \bubrightmult[D]{-0.6,0}{r};
        \end{tikzpicture}
        \overset{\cref{reidUU}}{=}
        \begin{tikzpicture}[centerzero]
            \draw[S,->] (-0.2,-0.4) to[out=45,in=down] (0.15,0) to[out=up,in=-45] (-0.2,0.4);
            \draw[D,->] (0.2,-0.4) \botlabel{r} to[out=135,in=down] (-0.15,0) to[out=up,in=225] (0.2,0.4);
            \bubrightmult[D]{-0.6,0}{r};
        \end{tikzpicture}
        \overset{\cref{crisp1}}{\underset{\cref{bubrec}}{=}}
        \begin{tikzpicture}[centerzero]
            \draw[S,->] (-0.2,-0.4) to[out=45,in=down] (0.15,0) to[out=up,in=-45] (-0.2,0.4);
            \draw[D,->] (0.2,-0.4) \botlabel{r} to[out=135,in=down] (-0.15,0) to[out=up,in=225] (0.2,0.4);
        \end{tikzpicture}
        \overset{\cref{reidUU}}{=}
        \begin{tikzpicture}[centerzero]
            \draw[S,->] (0,-0.4) -- (0,0.4);
            \draw[D,->] (0.4,-0.4) \botlabel{r} -- (0.4,0.4);
        \end{tikzpicture}
        \ .
    \]
    For the second equality in \cref{puddle}, we compute
    \[
        \begin{tikzpicture}[centerzero]
            \draw[S,<-] (0,-0.4) -- (0,0.4);
            \draw[D,->] (0.4,-0.4) \botlabel{r+1} -- (0.4,0.4);
            \bubrightmult[D]{-0.6,0}{r};
        \end{tikzpicture}
        \overset{\cref{fire}}{=}
        \begin{tikzpicture}[centerzero]
            \draw[D] (0.3,-0.5) -- (0,-0.2);
            \draw[S,<-] (-0.3,-0.5) -- (0,-0.2);
            \draw[S] (0,-0.2) -- (0,0.2);
            \draw[S] (0,0.2) -- (-0.3,0.5);
            \draw[D,->] (0,0.2) -- (0.3,0.5);
            \draw[D,->] (0.6,-0.5) \botlabel{r} -- (0.6,0.5);
            \bubrightmult[D]{-0.6,0}{r};
        \end{tikzpicture}
        =
        \begin{tikzpicture}[centerzero]
            \draw[D] (0.3,-0.5) -- (0,-0.2);
            \draw[S,<-] (-0.3,-0.5) -- (0,-0.2);
            \draw[S] (0,-0.2) -- (0,0.2);
            \draw[S] (0,0.2) -- (-0.3,0.5);
            \draw[D,->] (0,0.2) -- (0.3,0.5);
            \draw[D,->] (0.6,-0.5) \botlabel{r} -- (0.6,0.5);
        \end{tikzpicture}
        \overset{\cref{fire}}{=}
        \begin{tikzpicture}[centerzero]
            \draw[S,<-] (0,-0.4) -- (0,0.4);
            \draw[D,->] (0.4,-0.4) \botlabel{r+1} -- (0.4,0.4);
        \end{tikzpicture}
        \ ,
    \]
    where the second equality follows from the first relation in \cref{puddle}.
\end{proof}

\begin{cor}
    For all $r,s \in \Z$, the following relation holds in $\Dcat$:
    \begin{equation} \label{tulum}
        \begin{tikzpicture}[centerzero]
            \draw[->] (0,-0.5) -- (0,0.5);
            \bubrightmult[D]{-0.5,0}{r};
            \bubrightmult[D]{0.5,0}{s};
        \end{tikzpicture}
        =
        \begin{cases}
            \begin{tikzpicture}[centerzero]
                \draw[->] (0,-0.5) -- (0,0.5);
                \bubrightmult[D]{-0.5,0}{r};
            \end{tikzpicture}
            & \text{if } r > s,
            \\
            \begin{tikzpicture}[centerzero]
                \draw[->] (0,-0.5) -- (0,0.5);
                \bubrightmult[D]{0.5,0}{s};
            \end{tikzpicture}
            & \text{if } r \le s.
        \end{cases}
    \end{equation}
\end{cor}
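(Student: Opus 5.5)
The plan is to convert one of the two bubbles into parallel cyan strands running beside the black strand, so that the other bubble can be absorbed using \cref{puddle}. If $r \le 0$ or $s \le 0$ there is nothing to prove, by \cref{bubrec}. So assume $r,s \ge 1$. Recall that the region to the right of the upward black strand is labelled $n$ and the region to its left is labelled $n+1$.

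\emph{Case $r \le s$.} By \cref{bubrec}, the bubble with $s$ on the right is a family of $s$ nested clockwise cyan circles in the region labelled $n$. Choose the $r$ outermost of these circles, which exist since $s \ge r$. On the side facing the black strand, each of them contributes an upward cyan strand. Isotope the picture so that these $r$ upward cyan strands run parallel to, and immediately to the right of, the black strand, with the remaining $s-r$ circles nested inside. Locally we now have exactly the left-hand side of the first relation in \cref{puddle}: an upward black strand, $r$ upward cyan strands to its right, and the bubble with $r$ to its left. That relation removes the left bubble. Undoing the isotopy gives the black strand with only the bubble labelled $s$ on its right, as claimed.

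\emph{Case $r > s$.} Here I would reverse the roles and absorb the right bubble, labelled $s$, into the left bubble, labelled $r \ge s+1$. First expand the left bubble into $r$ nested circles, and isotope the outer $s+1$ of them so that their strands adjacent to the black strand run parallel to it. On the side facing the black strand these strands are downward cyan strands sitting to the left of an upward black strand. I would then move the black strand across them using \cref{reidUU,reidUD}. Alternatively, I would apply the rotation $\bR$ of \cref{rotate} to a mirrored form of \cref{puddle}, to obtain a version of the second relation in \cref{puddle} in which the bubble labelled $s$ on the right is absorbed by $s+1$ cyan strands on the left. The $(s+1)$-versus-$s$ asymmetry in the second relation of \cref{puddle} is consistent with the strict inequality $r > s$ in this case, which suggests this is the intended mechanism.

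The main obstacle is the second case. Passing a downward cyan strand across an upward black strand using \cref{reidUD} creates an extra clockwise cyan bubble. I expect to absorb these extra bubbles using \cref{headroom}, since $\max(r,s)=r$, and \cref{crisp1}, which moves a bubble across a cyan strand while raising its index by one. The step most likely to need care is the bookkeeping of region labels and bubble indices.
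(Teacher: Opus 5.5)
Your proposal is correct and matches the paper's proof. The cases $r\le 0$ or $s\le 0$ follow immediately from \cref{bubrec}, the case $r\le s$ is exactly the first relation in \cref{puddle} applied to the $r$ outermost circles of the right bubble, and the case $r>s$ also comes from \cref{puddle}. For that last case, your ``alternative'' is already the complete argument and needs no mirroring: applying $\bR$ from \cref{rotate} directly to the second relation in \cref{puddle}, with $r$ replaced by $s$, shows that a bubble labelled $s$ to the right of an upward black strand is absorbed by $s+1$ downward cyan strands on its left, and since $r\ge s+1$ the $s+1$ outermost circles of the left bubble supply these strands, so the \cref{reidUD} route and its bubble bookkeeping are unnecessary.
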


\begin{proof}
    If $r \le 0$ or $s \le 0$, then the result is trivial, using \cref{bubrec}.  If $r,s > 0$, then the result follows from \cref{puddle}.
\end{proof}

\begin{lem}
    The following relation holds in $\Dcat$ for all $f \in \Dcat(1_n,1_n)$, $n \in \N$:
    \begin{equation} \label{switchy}
        \rightbubmult[D]{\leftbubmult[S]{\freecoupon{f}}}
        =
        \rightbub[D]\ \leftbubmult[S]{\rightbubmult[D]{\freecoupon{f}}}\ .
    \end{equation}
\end{lem}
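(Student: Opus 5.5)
The plan is to imitate the proofs of \cref{onion} and \cref{smarty}. In those proofs a thick strand and a thin strand lying side by side are fused with the relations \cref{fire}, the resulting picture is isotoped using strict pivotality, and \cref{fire} is then applied again in a different place. Here the left-hand side of \cref{switchy} is a clockwise cyan bubble whose interior contains a thin left bubble, and the coupon $f$ sits inside that thin bubble. Near the point where the thin bubble comes closest to the surrounding cyan circle, we have a downward thin strand next to an upward cyan strand (or the rotation of this configuration). This is exactly the right-hand side of one of the first two relations in \cref{fire}. I will first use that relation backwards, replacing the two parallel strands by two trivalent vertices joined by a short thin segment. After this step the diagram is a single connected trivalent graph with one cyan edge and several thin edges, and $f$ lies in one face of it.

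Next I would isotope this graph, as in the middle steps of the proof of \cref{onion}, so that the single cyan edge runs around the other side of the thin loop. After the isotopy, $f$ should be encircled first by a cyan arc and then by a thin arc, rather than the other way round. Applying \cref{fire} once more, this time forwards at a different pair of adjacent strands, should split the graph into two components. One component should be a thin left bubble containing a clockwise cyan bubble around $f$, i.e.\ the expression $\leftbubmult[S]{\rightbubmult[D]{\freecoupon{f}}}$. The other component should be an empty clockwise cyan bubble $\rightbub[D]$ sitting outside. This is precisely the right-hand side of \cref{switchy}. The region labels should check out automatically: $f$ stays in region $n$ throughout, and the outer regions agree because the total shift is the same.

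The main obstacle is that the second application of \cref{fire} may not produce the clean split I expect. It might instead leave the cyan bubble in the wrong region, or produce a left cyan bubble, which would then have to be removed using \cref{smoke}. If that happens, my first fallback is to use the second relation in \cref{smarty} to move the thin bubble across the cyan strand. Doing so flips the thin bubble's orientation, so I would correct that flip using the third relation in \cref{onion}. I would also use the first relation in \cref{smarty} to move $f$ across the cyan strand, which wraps $f$ in a clockwise cyan bubble. Finally, I would apply \cref{headroom} or \cref{crisp1} to absorb any redundant cyan bubble into a single outside $\rightbub[D]$.
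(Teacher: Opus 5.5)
Your main line fails at the isotopy step. After you apply the first relation in \cref{fire} backwards on the left side, the diagram is a theta graph. It has two trivalent vertices joined by three edges: the cyan edge $C$ (the rest of the cyan circle), the thin edge $T$ (the rest of the thin circle), and the short thin rung $R$. The coupon $f$ lies in the face bounded by $T$ and $R$. The cyan edge $C$ only borders the unbounded face (bounded by $C$ and $R$) and the old annulus (bounded by $C$ and $T$). Planar isotopy cannot change which edges bound which face, so $f$ can never become ``encircled first by a cyan arc'', and the split you predict cannot occur. What a forward move at these two vertices actually produces is different. Taking $T$ as the rung in the second relation of \cref{fire}, you get a clockwise thin loop (from $R$) around $f$ next to an empty clockwise cyan loop (from $C$), i.e.\ $\rightbub[D]\ \rightbubmult[S]{\freecoupon{f}}$. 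This is just the proof of the second relation in \cref{smarty} run backwards. You would still need to show $\rightbubmult[S]{\freecoupon{f}} = \leftbubmult[S]{\rightbubmult[D]{\freecoupon{f}}}$.

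Your fallback supplies exactly this, and by itself it is a complete proof once the superfluous steps are dropped. On the left-hand side of \cref{switchy}, the thin bubble lies just to the right of the upward-oriented left side of the clockwise cyan circle. So the second relation in \cref{smarty}, read from right to left, slides it out as a clockwise thin bubble and leaves the cyan circle empty. Hence the left-hand side equals $\rightbub[D]\ \rightbubmult[S]{\freecoupon{f}}$. Now take the third relation in \cref{onion} with the coupon outside the cyan bubble equal to the identity. Together with $\leftbubreg[D]{m}=1_m$ from \cref{smoke}, it gives $\leftbubmult[S]{\rightbubmult[D]{\freecoupon{f}}} = \rightbubmult[S]{\freecoupon{f}}$, which finishes the proof. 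Moving $f$ across a cyan strand with the first relation in \cref{smarty} is not needed. There are also no redundant bubbles to absorb with \cref{headroom} or \cref{crisp1}.

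The paper argues differently. It pushes the cyan circle through the thin circle using the crossing relations \cref{reidUD}: first the bubble-free Reidemeister II moves on the left and right, then the moves at the top and bottom that produce a cyan bubble. Your route avoids crossings altogether and uses only relations derived directly from \cref{fire,smoke}. It also makes transparent that the extra bubble is just the emptied cyan circle, while \cref{onion} absorbs the reversal of the thin bubble's orientation.
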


\begin{proof}
    We have
    \[
        \rightbubmult[D]{\leftbubmult[S]{\freecoupon{f}}}
        \overset{\cref{reidUD}}{=}
        \begin{tikzpicture}[centerzero]
            \coupon{0,0}{f};
            \draw[S,->] (-0.6,0) arc(-180:180:0.6);
            \draw[D,->] (0,-0.8) to[out=left,in=down] (-0.6,-0.4) to[out=up,in=down] (-0.4,0) to[out=up,in=down] (-0.6,0.4) to[out=up,in=left] (0,0.8) to[out=right,in=up] (0.6,0.4) to[out=down,in=up] (0.4,0) to[out=down,in=up] (0.6,-0.4) to[out=down,in=right] (0,-0.8);
        \end{tikzpicture}
        \overset{\cref{reidUD}}{\underset{\cref{bubrec}}{=}} \rightbub[D]\ \leftbubmult[S]{\rightbubmult[D]{\freecoupon{f}}}\ .
        \qedhere
    \]
\end{proof}

\begin{prop}
    The following relations hold in $\Dcat$ for all $n \in \Z$:
    \begin{align} \label{leftbull}
        \leftbubmultreg[S]{\rightbubmult[D]{r}}{n}
        &= \frac{\Delta_{n-2r+3}}{\Delta_{n-2r+2}}\ \rightbubmultreg[D]{r}{n} + \frac{\Delta_{n-2r+1}}{\Delta_{n-2r+2}}\ \rightbubmultreg[D]{r-1}{n}
        \ ,& r \in \Z,\ 2r \le n+2,
        \\ \label{rightbull}
        \rightbubmultreg[S]{\rightbubmult[D]{r}}{n}
        &= \frac{\Delta_{n-2r-1}}{\Delta_{n-2r}}\ \rightbubmultreg[D]{r}{n} + \frac{\Delta_{n-2r+1}}{\Delta_{n-2r}}\ \rightbubmultreg[D]{r+1}{n}
        \ ,& r \in \N,\ 2r \le n.
    \end{align}
\end{prop}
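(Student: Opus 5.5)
The plan is to prove \cref{leftbull} first, by induction on $r$, and then deduce \cref{rightbull} from it using \cref{apple2}.

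\emph{Small values of $r$.} For $r \le 0$ the inner diagram is $1$ by \cref{bubrec}. Then \cref{leftbull} reads $\leftbubreg[S]{n} = \delta\, 1_n$, up to the term $\Delta_{n+1}/\Delta_{n+2}\,\rightbubmultreg[D]{-1}{n}$, which by \cref{bubrec} is also a multiple of $1_n$. So for $r \le 0$ the statement should reduce to \cref{smoke} together with the recursion \cref{Delta}, once the boundary coefficients are checked. For $r=1$, expand \cref{coal} using $\freeprojector{0} = 1 - \rightbub[D]$ from \cref{sigma0}. The left-hand side of \cref{coal} becomes $\delta\,1_n - \leftbubmultreg[S]{\rightbubmult[D]{1}}{n}$, by \cref{smoke}. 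Solving for the bubble gives
\[
    \leftbubmultreg[S]{\rightbubmult[D]{1}}{n} = \frac{\Delta_{n+1}}{\Delta_n}\rightbubmultreg[D]{1}{n} + \Big(\delta - \frac{\Delta_{n+1}}{\Delta_n}\Big) 1_n ,
\]
and $\delta\Delta_n - \Delta_{n+1} = \Delta_{n-1}$ by \cref{Delta}. This is \cref{leftbull} for $r=1$.

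\emph{Inductive step, $r \ge 2$.} Split $X := \leftbubmultreg[S]{\rightbubmult[D]{r}}{n}$ as $X = \rightbub[D]\,X + \freeprojector{0}\,X$.

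For the first summand, apply \cref{switchy} with $f = \rightbubmult[D]{r-1}$. This identifies $\rightbub[D]\,X$ with a $\rightbub[D]$ bubble enclosing $\leftbubmult[S]{\rightbubmult[D]{r-1}}$, where the inner region is now $n-2$. Apply the induction hypothesis in region $n-2$. Since $(n-2)-2(r-1) = n-2r$, the coefficients come out exactly as $\Delta_{n-2r+3}/\Delta_{n-2r+2}$ and $\Delta_{n-2r+1}/\Delta_{n-2r+2}$. Then use \cref{bubrec} to recombine the outer bubble: $\rightbub[D]$ around $\rightbubmult[D]{s}$ is $\rightbubmult[D]{s+1}$. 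This produces exactly the right-hand side of \cref{leftbull}.

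For the second summand, I need $\freeprojector{0}\,X = 0$ when $r \ge 2$. The right-hand side is consistent with this, since $\rightbub[D]\,\rightbubmult[D]{s} = \rightbubmult[D]{s}$ for $s\ge1$ by \cref{headroom}, and hence $\freeprojector{0}\,\rightbubmult[D]{s} = 0$.
\begin{itemize}
    \item First try: use \cref{puddle}. Inside the $S$-bubble, the strand together with the $r \ge 2$ nested $D$ strands lets a $\rightbubmult[D]{1}$ in the outer region be absorbed into $X$. That gives $\rightbub[D]\,X = X$, hence $\freeprojector{0}\,X = 0$.
    \item Fallback: pull a $D$-strand across the $S$-circle using \cref{onion} (third relation) and \cref{crisp1}.
\end{itemize}
This vanishing step, together with the region-label bookkeeping in \cref{switchy}, is the main obstacle.

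\emph{Deducing \cref{rightbull}.} By \cref{apple2}, the left-hand side of \cref{rightbull} equals $\leftbubmultreg[S]{\rightbubmult[D]{r+1}}{n}$ in the same outer region $n$. The hypothesis $2r \le n$ gives $2(r+1) \le n+2$, so \cref{leftbull} applies with $r+1$ in place of $r$. Its coefficients become $\Delta_{n-2r+1}/\Delta_{n-2r}$ on $\rightbubmultreg[D]{r+1}{n}$ and $\Delta_{n-2r-1}/\Delta_{n-2r}$ on $\rightbubmultreg[D]{r}{n}$, which is exactly \cref{rightbull}.
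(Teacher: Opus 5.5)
Your argument is correct and is essentially the paper's proof. It proceeds by induction on $r$, with the cases $r \le 1$ from \cref{smoke,coal,Delta} and the inductive step via \cref{switchy} plus the induction hypothesis in region $n-2$; \cref{rightbull} is then deduced from \cref{apple2}. The step you flagged as the main obstacle is exactly the paper's use of \cref{puddle}, and your first try works: the upward right side of the black circle has the $r \ge 2$ downward cyan strands of the nested clockwise bubbles on its left, so the $180^\circ$ rotation of the second relation in \cref{puddle} (equivalently \cref{tulum}) absorbs the outer $\rightbub[D]$. Your splitting $X = \rightbub[D]X + \freeprojector{0}X$ is just a rephrasing of $\rightbub[D]X = X$, so no fallback is needed.
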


\begin{proof}
    Since the statements are trivial if $n < 0$, we assume $n \ge 0$.  We prove \cref{leftbull} by induction on $r$.  For $r \le 0$, it follows from \cref{bubrec,Delta,smoke}.  For $r = 1$, we have
    \[
        \leftbubmultreg[S]{\rightbub[D]}{n}
        \overset{\cref{sigma0}}{=} \leftbubreg[S]{n} - \leftbubmultreg[S]{\freeprojector{0}}{n}
        \overset{\cref{smoke}}{\underset{\cref{coal}}{=}} \delta 1_n - \frac{\Delta_{n+1}}{\Delta_n}
        \begin{tikzpicture}[centerzero]
            \projector{0,0}{0};
            \region{0.4,0}{n};
        \end{tikzpicture}
        \overset{\cref{sigma0}}{\underset{\cref{Delta}}{=}}
        \frac{\Delta_{n+1}}{\Delta_n}\ \rightbubreg[D]{n} + \frac{\Delta_{n-1}}{\Delta_n}\ 1_n,
    \]
    verifying \cref{leftbull}.  Now suppose that \cref{leftbull} holds for some $r \in \N$.  Taking $f = \rightbubmult[D]{r}$ in \cref{switchy}, we have
    \[
        \rightbubmultreg[D]{\leftbubmult[S]{\rightbubmult[D]{r}}}{n+2}
        = \rightbub[D]\ \leftbubmultreg[S]{\rightbubmult[D]{r+1}}{n+2}
        \overset{\cref{puddle}}{=} \leftbubmultreg[S]{\rightbubmult[D]{r+1}}{n+2}.
    \]
    Thus, using the induction hypothesis, we have
    \[
        \leftbubmultreg[S]{\rightbubmult[D]{r+1}}{n+2}
        = \rightbubmultreg[D]{\leftbubmult[S]{\rightbubmult[D]{r}}}{n+2}
        \overset{\cref{bubrec}}{=} \frac{\Delta_{n-2r+3}}{\Delta_{n-2r+2}}\ \rightbubmultreg[D]{r+1}{n+2} + \frac{\Delta_{n-2r+1}}{\Delta_{n-2r+2}}\ \rightbubmultreg[D]{r}{n+2},
    \]
    completing the proof of the induction step.
    
    To prove \cref{rightbull}, we compute
    \[
        \rightbubmult[S]{\rightbubmult[D]{r}}
        \overset{\cref{apple2}}{=} \leftbubmult[S]{\rightbubmult[D]{r+1}}
        \overset{\cref{leftbull}}{=} \frac{\Delta_{n-2r-1}}{\Delta_{n-2r}}\ \rightbubmultreg[D]{r}{n} + \frac{\Delta_{n-2r+1}}{\Delta_{n-2r}}\ \rightbubmultreg[D]{r+1}{n}\ .\qedhere
    \]
\end{proof}

We extend the definition \cref{sigma0} to negative values of $r$ by setting $\freeprojectorreg{r}{n} = 0$ for $r < 0$.  It follows from \cref{sigma0,bubrec} that
\begin{equation} \label{sigma}
    \freeprojectorreg{r}{n}
    = \rightbubmultreg[D]{r}{n} - \rightbubmultreg[D]{r+1}{n},
    \qquad r,n \in \Z.
\end{equation}

\begin{prop}
    The following relations hold in $\Dcat$:
    \begin{gather} \label{abyss}
        \freeprojectorreg{r}{n}
        = 0, \qquad r,n \in \Z \text{ and } (n < 2r \text{ or } r < 0),
        \\ \label{spread}
        \sum_{r=0}^{\lfloor n/2 \rfloor} \freeprojectorreg{r}{n}
        = 1_n, \qquad n \in \N,
        \\ \label{absorb}
        \freeprojector{r}\ \freeprojector{s} = \delta_{r,s}\ \freeprojector{r}\, ,\qquad r,s \in \Z,
        \\ \label{smush}
        \begin{tikzpicture}[centerzero]
            \draw[D,->] (0,-0.4) -- (0,0.4);
            \projector{0.4,0}{r};
        \end{tikzpicture}
        =
        \begin{tikzpicture}[centerzero]
            \draw[D,->] (0,-0.4) -- (0,0.4);
            \projector{-0.6,0}{r+1};
        \end{tikzpicture}
        \ ,\qquad r \in \Z,
        \\ \label{portis}
        \begin{tikzpicture}[centerzero]
            \draw[S,->] (0,-0.4) -- (0,0.4);
            \projector{-0.4,0}{r};
        \end{tikzpicture}
        =
        \begin{tikzpicture}[centerzero]
            \draw[S,->] (0,-0.4) -- (0,0.4);
            \projector{-0.4,0}{r};
        \end{tikzpicture}
        \left( \freeprojector{r} + \freeprojector{r-1} \right),
        \quad
        \begin{tikzpicture}[centerzero]
            \draw[S,->] (0,-0.4) -- (0,0.4);
            \projector{0.4,0}{r};
        \end{tikzpicture}
        = \left( \freeprojector{r} + \freeprojector{r+1} \right)
        \begin{tikzpicture}[centerzero]
            \draw[S,->] (0,-0.4) -- (0,0.4);
            \projector{0.4,0}{r};
        \end{tikzpicture}
        \, , \qquad r \in \Z,
        \\ \label{bluewind}
        \leftbubmult[D]{\freeprojector{r}} = \freeprojector{r-1}\, ,\qquad
        \rightbubmult[D]{\freeprojector{s}} = \freeprojector{s+1}\, ,\qquad
        r \in \Z,\ s \in \N.
    \end{gather}
\end{prop}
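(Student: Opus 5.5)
The whole proposition should follow from \cref{sigma}, which writes $\freeprojector{r}$ as a difference of nested bubbles $b_r - b_{r+1}$, where $b_r$ denotes $\rightbubmult[D]{r}$. Each relation then reduces to a statement about the $b_r$ that has already been established. First I will record that \cref{headroom} extends to all $r,s \in \Z$: if, say, $r \le 0$, then $b_r = 1_n$ by \cref{bubrec}, so $b_r b_s = b_s = b_{\max(r,s)}$ when $s \ge 0$, and for $r,s \le 0$ both sides are $1_n$. I will also use that \cref{sigma} is consistent for $r<0$: there $b_r = b_{r+1} = 1_n$, so the difference is $0$, matching the convention $\freeprojector{r}=0$ for $r<0$.

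For \cref{abyss}, the case $r<0$ is the convention just noted. If $n<2r$ then also $n<2(r+1)$, so $b_r = b_{r+1} = 0$ by \cref{freeze}. For \cref{spread}, the sum telescopes to $b_0 - b_{\lfloor n/2\rfloor+1}$. Here $b_0 = 1_n$, and the last term vanishes by \cref{freeze} since $n < 2(\lfloor n/2\rfloor + 1)$. For \cref{absorb}, I will expand $(b_r - b_{r+1})(b_s - b_{s+1})$ into four terms using the extended \cref{headroom}. If $r=s$, this gives $b_r - b_{r+1} - b_{r+1} + b_{r+1} = \freeprojector{r}$. If $r<s$, it gives $b_s - b_{s+1} - b_s + b_{s+1} = 0$, and the case $r>s$ is symmetric. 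For \cref{smush}, I will subtract the instance of \cref{crisp1} for $r+1$ from the instance for $r$; both hold for all $r \in \Z$. For \cref{bluewind}, \cref{apple1} applied termwise gives $\leftbubmult[D]{\freeprojector{r}} = b_{r-1} - b_r = \freeprojector{r-1}$ for every $r \in \Z$. The second identity is immediate from \cref{bubrec}: $\rightbubmult[D]{b_s - b_{s+1}} = b_{s+1} - b_{s+2}$ for $s \in \N$.

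The only step needing a genuine computation is \cref{portis}. For the first identity, I multiply both sides on the right by $\freeprojector{t}$ and sum over $t$ using \cref{spread}, applied in the region to the right of the strand. This reduces the claim to showing that $\freeprojector{r}$ on the left of an upward $\Sup$ strand, times $\freeprojector{t}$ on the right, vanishes unless $t \in \{r, r-1\}$. Writing $L_r$ and $R_t$ for the bubbles $b_r$ placed on the left and right of the strand, \cref{tulum} says $L_a R_b = L_a$ if $a>b$ and $L_a R_b = R_b$ if $a \le b$. I then expand $(L_r - L_{r+1})(R_t - R_{t+1})$ into four terms:
\begin{itemize}
    \item if $t \le r-2$, every term reduces to an $L$, and the sum is $L_r - L_r - L_{r+1} + L_{r+1} = 0$;
    \item if $t \ge r+1$, every term reduces to an $R$, and the sum is $R_t - R_{t+1} - R_t + R_{t+1} = 0$.
\end{itemize}
The second identity in \cref{portis} will be handled by the same argument, decomposing the left region via \cref{spread} instead, or else deduced by applying the symmetry $\bR$ of \cref{rotate} together with reflection. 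The main thing to watch is that \cref{tulum}, and the conventions for $r \le 0$, cover all the boundary cases.
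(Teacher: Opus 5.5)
Your argument is correct. For \cref{abyss,spread,absorb,smush,bluewind} it is the paper's proof: everything is rewritten as $b_r-b_{r+1}$ via \cref{sigma} and then reduced to \cref{freeze,headroom,crisp1,apple1,bubrec}. Your uniform treatment of \cref{smush}, subtracting the instances of \cref{crisp1} for $r$ and $r+1$, also avoids the paper's separate discussion of $r=-1$.

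The one real difference is in \cref{portis}. Write $\sigma_r^{L}$, $\sigma_t^{R}$ for the idempotents placed to the left or right of the strand. The paper never inserts the resolution of the identity \cref{spread}. It observes that $\sigma_r+\sigma_{r-1}$ telescopes to $b_{r-1}-b_{r+1}$, so the right-hand side is $(L_r-L_{r+1})(R_{r-1}-R_{r+1})$. A single four-term application of \cref{tulum} collapses this to $L_r-L_{r+1}$.

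You instead prove the orthogonality $\sigma_r^{L}\sigma_t^{R}=0$ for $t\notin\{r-1,r\}$ and then sum over $t$. This costs an extra case split, but it isolates the clean statement that depth changes by $\pm1$ across a black strand. Read with the indices exchanged, it says $\sigma_t^{L}\sigma_r^{R}=0$ unless $t\in\{r,r+1\}$. So the second identity follows at once by decomposing the left region with \cref{spread}, and no new computation is needed.

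You should drop the fallback of deducing the second identity from the first via $\bR$ plus a reflection. $\bR$ is the identity on objects, so it preserves region labels. It turns the $\Sup$ strand into an $\Sdown$ strand, but $\sigma_r$ stays in the region labelled $n+1$. The second identity instead places $\sigma_r$ in the region labelled $n$. A reflection compatible with the labelling rule also fixes labels, so no such symmetry argument produces it. Your primary route is the right one.
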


\begin{proof}
    Relation \cref{abyss} follows from \cref{freeze,bubrec}.  For \cref{spread}, we have
    \[
        \sum_{r=0}^{\lfloor n/2 \rfloor} \freeprojectorreg{r}{n}
        \overset{\cref{sigma}}{=} \rightbubmultreg[D]{0}{n} - \rightbubmultreg[D]{\lfloor n/2 \rfloor + 1}{n}
        \overset{\cref{bubrec}}{\underset{\cref{abyss}}{=}} 1_n.
    \]
    Relation \cref{absorb} follows from \cref{headroom}.  Relation \cref{smush} is trivial when $r < -1$, by \cref{abyss}.  When $r=-1$, both sides of \cref{smush} are zero by \cref{abyss,crisp1}.  For $r>0$, \cref{smush} follows from \cref{crisp1}.  
    
    For the first equality in \cref{portis}, we compute
    \[
        \begin{tikzpicture}[centerzero]
            \draw[S,->] (0,-0.4) -- (0,0.4);
            \projector{-0.4,0}{r};
        \end{tikzpicture}
        \left( \freeprojector{r} + \freeprojector{r-1} \right)
        \overset{\cref{sigma}}{=}
        \left( \rightbubmult[D]{r} - \rightbubmult[D]{r+1} \right)
        \begin{tikzpicture}[centerzero]
            \draw[S,->] (0,-0.4) -- (0,0.4);
        \end{tikzpicture}
        \left( \rightbubmult[D]{r-1} - \rightbubmult[D]{r+1} \right)
        \overset{\cref{tulum}}{=}
        \left( \rightbubmult[D]{r} - \rightbubmult[D]{r+1} \right)
        \begin{tikzpicture}[centerzero]
            \draw[S,->] (0,-0.4) -- (0,0.4);
        \end{tikzpicture}
        =
        \begin{tikzpicture}[centerzero]
            \draw[S,->] (0,-0.4) -- (0,0.4);
            \projector{-0.4,0}{r};
        \end{tikzpicture}.
    \]
    The proof of the second equality in \cref{portis} is analogous.
    \details{
        We have
        \[
            \left( \freeprojector{r} + \freeprojector{r+1} \right)
            \begin{tikzpicture}[centerzero]
                \draw[S,->] (0,-0.4) -- (0,0.4);
                \projector{0.4,0}{r};
            \end{tikzpicture}
            \overset{\cref{sigma}}{=}
            \left( \rightbubmult[D]{r} - \rightbubmult[D]{r+2} \right)
            \begin{tikzpicture}[centerzero]
                \draw[S,->] (0,-0.4) -- (0,0.4);
            \end{tikzpicture}
            \left( \rightbubmult[D]{r-1} - \rightbubmult[D]{r+1} \right)
            \overset{\cref{tulum}}{=} \left( \rightbubmult[D]{r} - \rightbubmult[D]{r+1} \right)
            \begin{tikzpicture}[centerzero]
                \draw[S,->] (0,-0.4) -- (0,0.4);
            \end{tikzpicture}
            =
            \begin{tikzpicture}[centerzero]
                \draw[S,->] (0,-0.4) -- (0,0.4);
                \projector{-0.4,0}{r};
            \end{tikzpicture}.
        \]
    }
    
    The relations \cref{bluewind} follow easily from \cref{sigma,bubrec,apple1}.
\end{proof}

\begin{prop} \label{dune}
    The following relations hold in $\Dcat$ for all $n \in \Z$:
    \begin{align} \label{wind}
        \rightbubmultreg[S]{\freeprojector{r-1}}{n}
        &= \leftbubmultreg[S]{\freeprojector{r}}{n}
        = \frac{\Delta_{n-2r+1}}{\Delta_{n-2r}}\ \freeprojectorreg{r}{n}
        + \frac{\Delta_{n-2r+1}}{\Delta_{n-2r+2}}\ \freeprojectorreg{r-1}{n},
        &r \in \Z,\ 2r \le n+1,
        \\ \label{camel1}
        \begin{tikzpicture}[centerzero]
            \draw[S,<-] (-0.5,0.8) -- (-0.5,0.6) to[out=down,in=down,looseness=1.7] (0.5,0.6) -- (0.5,0.8);
            \projector{0,0.5}{r-1};
            \draw[S,->] (-0.5,-0.8) -- (-0.5,-0.6) to[out=up,in=up,looseness=1.7] (0.5,-0.6) -- (0.5,-0.8);
            \projector{0,-0.5}{r-1};
            \region{0.4,0}{n};
        \end{tikzpicture}
        &= \frac{\Delta_{n-2r+1}}{\Delta_{n-2r}}\
        \begin{tikzpicture}[centerzero]
            \draw[S,->] (-0.5,-0.8) -- (-0.5,0.8);
            \draw[S,<-] (0.5,-0.8) -- (0.5,0.8);
            \projector{0,0}{r-1};
            \projector{-0.85,0}{r};
            \projector{0.85,0}{r};
            \region{0.8,0.5}{n};
        \end{tikzpicture}        
        + \frac{\Delta_{n-2r+1}}{\Delta_{n-2r+2}}\
        \begin{tikzpicture}[centerzero]
            \draw[S,->] (-0.5,-0.8) -- (-0.5,0.8);
            \draw[S,<-] (0.5,-0.8) -- (0.5,0.8);
            \projector{0,0}{r-1};
            \projector{-1,0}{r-1};
            \projector{1,0}{r-1};
            \region{0.8,0.5}{n};
        \end{tikzpicture}
        \ ,& r \in \Z,\ 2r \le n+1,
        \\ \label{camel2}
        \begin{tikzpicture}[centerzero]
            \draw[S,->] (-0.4,0.8) -- (-0.4,0.6) to[out=down,in=down,looseness=2] (0.4,0.6) -- (0.4,0.8);
            \projector{0,0.5}{r};
            \draw[S,<-] (-0.4,-0.8) -- (-0.4,-0.6) to[out=up,in=up,looseness=2] (0.4,-0.6) -- (0.4,-0.8);
            \projector{0,-0.5}{r};
            \region{0.4,0}{n};
        \end{tikzpicture}
        &= \frac{\Delta_{n-2r+1}}{\Delta_{n-2r}}\
        \begin{tikzpicture}[centerzero]
            \draw[S,<-] (-0.4,-0.8) -- (-0.4,0.8);
            \draw[S,->] (0.4,-0.8) -- (0.4,0.8);
            \projector{0,0}{r};
            \projector{-0.7,0}{r};
            \projector{0.7,0}{r};
            \region{0.8,0.5}{n};
        \end{tikzpicture}
        + \frac{\Delta_{n-2r+1}}{\Delta_{n-2r+2}}\
        \begin{tikzpicture}[centerzero]
            \draw[S,<-] (-0.4,-0.8) -- (-0.4,0.8);
            \draw[S,->] (0.4,-0.8) -- (0.4,0.8);
            \projector{0,0}{r};
            \projector{-0.9,0}{r-1};
            \projector{0.9,0}{r-1};
            \region{0.8,0.5}{n};
        \end{tikzpicture}
        \ ,& r \in \Z,\ 2r \le n+1,
        \\ \label{camel3}
        \begin{tikzpicture}[centerzero]
            \draw[S,<->] (-0.4,0.8) -- (-0.4,0.6) to[out=down,in=left] (0,0.15) to[out=right,in=down] (0.4,0.6) -- (0.4,0.8);
            \projector{0,0.5}{r};
            \draw[S] (-0.4,-0.8) -- (-0.4,-0.6) to[out=up,in=left] (0,-0.15) to[out=right,in=up] (0.4,-0.6) -- (0.4,-0.8);
            \projector{0,-0.5}{r};
            \draw[D] (0,-0.15) -- (0,0.15);
            \region{0.4,0}{n};
        \end{tikzpicture}
        &= \frac{\Delta_{n-2r+1}}{\Delta_{n-2r}}\
        \begin{tikzpicture}[centerzero]
            \draw[S,->] (-0.4,-0.8) -- (-0.4,0.8);
            \draw[S,->] (0.4,-0.8) -- (0.4,0.8);
            \projector{0,0}{r};
            \projector{-0.9,0}{r+1};
            \projector{0.7,0}{r};
            \region{0.8,0.5}{n};
        \end{tikzpicture}
        + \frac{\Delta_{n-2r+1}}{\Delta_{n-2r+2}}\
        \begin{tikzpicture}[centerzero]
            \draw[S,->] (-0.4,-0.8) -- (-0.4,0.8);
            \draw[S,->] (0.4,-0.8) -- (0.4,0.8);
            \projector{0,0}{r};
            \projector{-0.7,0}{r};
            \projector{0.9,0}{r-1};
            \region{0.8,0.5}{n};
        \end{tikzpicture}
        \ ,& r \in \Z,\ 2r \le n+1,
    \end{align}
    where we interpret the first terms on the right-hand sides as zero when $2r=n+1$ (even though the coefficients have a denominator of zero).
\end{prop}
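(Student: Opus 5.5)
We will deduce \cref{wind} directly from \cref{leftbull} and \cref{rightbull}, and then obtain \cref{camel1,camel2,camel3} from the base relations \cref{coal,wood} by "translating" along $\Dup$/$\Ddown$ strands using \cref{smush}, \cref{bluewind} and \cref{portis}.

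\textbf{Step 1: relation \cref{wind}.} By \cref{sigma}, the projector $\freeprojector{r}$ equals $\rightbubmult[D]{r} - \rightbubmult[D]{r+1}$. Hence $\leftbubmult[S]{\freeprojector{r}}$ is a difference of two instances of \cref{leftbull}. Substituting, we get a combination of $\rightbubmult[D]{r-1}$, $\rightbubmult[D]{r}$ and $\rightbubmult[D]{r+1}$. We then regroup it as a combination of $\freeprojector{r} = \rightbubmult[D]{r} - \rightbubmult[D]{r+1}$ and $\freeprojector{r-1} = \rightbubmult[D]{r-1} - \rightbubmult[D]{r}$. Checking that the coefficients match reduces to the Chebyshev identity $\Delta_{m+1}\Delta_{m-1} - \Delta_m^2 = -1$, or equivalently to \cref{Delta}. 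For instance, the coefficient of $\rightbubmult[D]{r}$ must equal $\frac{\Delta_{m+1}}{\Delta_m} - \frac{\Delta_{m+1}}{\Delta_{m+2}}$ with $m = n-2r$; this follows from $\Delta_{m+3}/\Delta_{m+2} - \Delta_{m-1}/\Delta_m$ together with the recurrence. The equality $\rightbubmult[S]{\freeprojector{r-1}} = \leftbubmult[S]{\freeprojector{r}}$ follows in the same way from \cref{apple2}, or from \cref{rightbull}. The boundary case $2r = n+1$ is handled using \cref{abyss}: there $\freeprojector{r}$ vanishes, so the ill-defined coefficient multiplies zero.

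\textbf{Step 2: relation \cref{camel1}.} This is the main step. The plan is to reduce to \cref{wood} with region $n-2(r-1)$. First, use \cref{smush} repeatedly, together with the definition $\freeprojector{r-1} = \rightbubmult[D]{\freeprojector{r-2}}$ from \cref{bluewind}, to rewrite $\freeprojector{r-1}$ in the region $n$ as $r-1$ nested clockwise $\Dup$-bubbles around $\freeprojector{0}$. Next, pull the cup–cap configuration inside these bubbles. For this we use \cref{puddle}, \cref{reidUD} and the crossings, which let black strands pass through $\Dup$ strands. The outcome should be $r-1$ parallel $\Dup$ strands alongside, with the cup and cap now in a region labelled $n-2r+2$ and carrying projector $\freeprojector{0}$. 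Then apply \cref{wood} there. Its coefficients $\Delta_{m-1}/\Delta_{m-2}$ and $\Delta_{m-1}/\Delta_m$ at $m = n-2r+2$ are exactly $\Delta_{n-2r+1}/\Delta_{n-2r}$ and $\Delta_{n-2r+1}/\Delta_{n-2r+2}$. Finally, close the $\Dup$ strands back up, and use \cref{smush} and \cref{bluewind} to convert the projectors $\freeprojector{1}$ and $\freeprojector{0}$ back into $\freeprojector{r}$ and $\freeprojector{r-1}$.

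We expect the bookkeeping in this step to be the main obstacle. Specifically, we must check that threading black strands through the nested $\Dup$ circles produces no extra bubble factors, since \cref{reidUD} can introduce a $\rightbub[D]$ term. Those extra bubbles should be absorbed using \cref{headroom} and \cref{absorb}. The case $r \le 0$ is also special: there both sides vanish by \cref{abyss}, except when $r=1$, which is \cref{wood} itself.

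\textbf{Step 3: relations \cref{camel2} and \cref{camel3}.} These should follow from \cref{camel1} rather than be proved from scratch. For \cref{camel2}, rotate by $90\degree$ using pivotality, equivalently the isomorphism $\bR$ of \cref{rotate} combined with cups and caps, and relabel regions. The projectors move across strands by \cref{portis} and \cref{smush}: crossing a black strand shifts the region label by one, and the projector index on either side changes accordingly. For \cref{camel3}, compose \cref{camel2} with merge and split vertices, using \cref{fire} and \cref{water} to convert the up/down cup–cap into the $\Sup\Sup \to \Dup \to \Sup\Sup$ form. Then fix the projector labels on the two outer sides via \cref{smush}.
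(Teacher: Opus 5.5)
Your Step 1 is the paper's argument. Your Step 2 is essentially the paper's induction, unrolled. The paper wraps both sides of the established case $(r,n)$ of \cref{camel1} in one clockwise cyan circle and shows this yields the case $(r+1,n+2)$. The bubble created by \cref{reidUD} is absorbed by \cref{headroom}, and the leftover circle by \cref{puddle}. That wrapping direction is easier to make rigorous than ``unwrapping''. You never have to build enclosing circles out of the two separate bubble stacks coming from the top and bottom copies of $\sigma_{r-1}$.

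The genuine gap is your derivation of \cref{camel2}. A $90^\circ$ rotation of \cref{camel1} is not \cref{camel2} up to relabelling, because rotation interchanges the cup--cap diagram with the pair of through-strands. Take the left-hand side of \cref{camel1} at $(r,n+1)$. Rotating it gives the \emph{identity} of $\Sdown\Sup 1_n$, with $\sigma_{r-1}$ in the two outer regions and nothing in the middle. The two right-hand terms become cup--cap diagrams whose interiors carry $\sigma_r$, respectively $\sigma_{r-1}$, and whose outer region carries $\sigma_{r-1}$. That is a decomposition of an identity into two rank-one pieces. It is not the statement that a single cup--cap is a scalar multiple of an idempotent. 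Also, \cref{portis} does not relabel a projector across a black strand; it produces a sum of two depths. The functor $\bR$ you cite is the $180^\circ$ rotation and maps \cref{camel1} to itself, so it does not help.

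The missing step is to compose the rotated relation with $\sigma_r$ placed in the middle region. By \cref{absorb} exactly one cup--cap term survives. This gives the second term of \cref{camel2} (outer depth $n-2r+2$) with the correct coefficient. Treat the rotation of \cref{camel1} at $(r+1,n+1)$ the same way when $2r\le n$; this gives the first term (outer depth $n-2r$). When $2r=n+1$ that first term is zero anyway. Then \cref{portis} adds the two pieces. Done this way, $r=0$ needs only the rotated \cref{wood}. The paper instead composes \cref{camel1} above and below with the H-shaped diagram (two black strands joined by a cyan rung), and handles $r=0$ separately.

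Your route from \cref{camel2} to \cref{camel3} works once \cref{camel2} is available. There you add an upward cyan strand on the left, join it to the left black strand by vertices, and use \cref{smush}, the first relation in \cref{water}, and \cref{headroom}. The paper instead gets \cref{camel3} directly from \cref{camel1} at $(r+1,n+2)$, with an upward cyan strand on the right.
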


\begin{proof}
    Since all the equalities are trivial when $n<0$, we assume $n \in \N$.  To prove the first equality in \cref{wind}, we compute
    \[
        \rightbubmultreg[S]{\freeprojector{r-1}}{n}
        \overset{\cref{sigma}}{=} \rightbubmultreg[S]{\rightbubmult[D]{r-1}}{n} - \rightbubmultreg[S]{\rightbubmult[D]{r}}{n}
        \overset{\cref{apple2}}{=} \leftbubmultreg[S]{\rightbubmult[D]{r}}{n} - \leftbubmultreg[S]{\rightbubmult[D]{r+1}}{n}
        \overset{\cref{sigma}}{=} \leftbubmultreg[S]{\freeprojector{r}}{n}.
    \]
    For the second equality in \cref{wind}, when $2r \le n$, we compute
    \begin{align*}
        \leftbubmultreg[S]{\freeprojector{r}}{n}
        \ &\overset{\mathclap{\cref{sigma}}}{=}\ \leftbubmultreg[S]{\rightbubmult[D]{r}}{n} - \leftbubmultreg[S]{\rightbubmult[D]{r+1}}{n}
        \\
        &\overset{\mathclap{\cref{leftbull}}}{=}\ \frac{\Delta_{n-2r+3}}{\Delta_{n-2r+2}}\ \rightbubmultreg[D]{r}{n}
        + \frac{\Delta_{n-2r+1}}{\Delta_{n-2r+2}}\ \rightbubmultreg[D]{r-1}{n}
        - \frac{\Delta_{n-2r+1}}{\Delta_{n-2r}}\ \rightbubmultreg[D]{r+1}{n}
        - \frac{\Delta_{n-2r-1}}{\Delta_{n-2r}}\ \rightbubmultreg[D]{r}{n}
        \\
        &\overset{\mathclap{\cref{sigma}}}{=}\ \frac{\Delta_{n-2r+1}}{\Delta_{n-2r}}\ \freeprojectorreg{r}{n}
        + \left( \frac{\Delta_{n-2r+3}}{\Delta_{n-2r+2}} - \frac{\Delta_{n-2r-1}}{\Delta_{n-2r}} - \frac{\Delta_{n-2r+1}}{\Delta_{n-2r}}\right)\ \rightbubmultreg[D]{r}{n}
        + \frac{\Delta_{n-2r+1}}{\Delta_{n-2r+2}}\ \rightbubmultreg[D]{r-1}{n}
        \\
        &\overset{\mathclap{\cref{Delta}}}{=}\ \frac{\Delta_{n-2r+1}}{\Delta_{n-2r}}\ \freeprojectorreg{r}{n}
        + \left( \frac{\Delta_{n-2r+3}}{\Delta_{n-2r+2}} - \delta \right)\ \rightbubmultreg[D]{r}{n}
        + \frac{\Delta_{n-2r+1}}{\Delta_{n-2r+2}}\ \rightbubmultreg[D]{r-1}{n}
        \\
        &\overset{\mathclap{\cref{Delta}}}{=}\ \frac{\Delta_{n-2r+1}}{\Delta_{n-2r}}\ \freeprojectorreg{r}{n}
        - \frac{\Delta_{n-2r+1}}{\Delta_{n-2r+2}}\ \rightbubmultreg[D]{r}{n}
        + \frac{\Delta_{n-2r+1}}{\Delta_{n-2r+2}}\ \rightbubmultreg[D]{r-1}{n}
        \\
        &\overset{\mathclap{\cref{sigma}}}{=}\ \frac{\Delta_{n-2r+1}}{\Delta_{n-2r}}\ \freeprojectorreg{r}{n}
        + \frac{\Delta_{n-2r+1}}{\Delta_{n-2r+2}}\ \freeprojectorreg{r-1}{n}.
    \end{align*}
    \details{
        We need the condition $2r \le n$ above since we use \cref{leftbull} with $r$ replaced by $r+1$.
    }
    When $2r = n+1$, we have
    \[
        \leftbubmultreg[S]{\rightbubmult[D]{r+1}}{n}
        = \rightbubmultreg[D]{r+1}{n}
        = \rightbubmultreg[D]{r}{n}
        = \freeprojectorreg{r}{n}
        = 0
        \quad \text{and} \quad
        \freeprojectorreg{r-1}{n}
        = \rightbubmultreg[D]{r-1}{n} - \rightbubmultreg[D]{r}{n}
        = \rightbubmultreg[D]{r-1}{n}.
    \]
    Thus,
    \[
        \leftbubmultreg[S]{\freeprojector{r}}{n}
        \overset{\mathclap{\cref{sigma}}}{=}\ \leftbubmultreg[S]{\rightbubmult[D]{r}}{n}
        \overset{\mathclap{\cref{leftbull}}}{=}\ \frac{\Delta_{n-2r+1}}{\Delta_{n-2r+2}}\ \rightbubmultreg[D]{r-1}{n}
        \overset{\mathclap{\cref{sigma}}}{=}\ \frac{\Delta_{n-2r+1}}{\Delta_{n-2r+2}}\ \freeprojectorreg{r-1}{n}.
    \]
    
    Next, we prove \cref{camel1} by induction on $r$.  It is trivial when $r \le 0$, and the $r=1$ case is \cref{wood}.  Now assume that \cref{camel1} holds for some $r \ge 1$.  We apply the following operations to both sides of \cref{camel1}:
    \begin{itemize}
        \item compose on the left with $\upstrandreg[D]{n}$,
        \item compose on the right with $\downstrandreg[D]{n+2}$,
        \item compose on top with
            \(
                \begin{tikzpicture}[anchorbase]
                    \draw[D,->] (-0.5,0) to[out=up,in=up,looseness=1] (0.5,0);
                    \draw[S,->] (-0.2,0) -- (-0.2,0.5);
                    \draw[S,<-] (0.2,0) -- (0.2,0.5);
                    \region{0.7,0.3}{n+2};
                \end{tikzpicture}
            \),
        \item compose on bottom with
            \(
                \begin{tikzpicture}[anchorbase]
                    \draw[D,<-] (-0.5,0.5) to[out=down,in=down,looseness=1] (0.5,0.5);
                    \draw[S,->] (-0.2,0) -- (-0.2,0.5);
                    \draw[S,<-] (0.2,0) -- (0.2,0.5);
                    \region{0.7,0.2}{n+2};
                \end{tikzpicture}
            \).
    \end{itemize}
    On the left-hand side, we obtain
    \[
        \begin{tikzpicture}[centerzero]
            \draw[S,<-] (-0.5,1.1) -- (-0.5,0.6) to[out=down,in=down,looseness=1.7] (0.5,0.6) -- (0.5,1.1);
            \projector{0,0.5}{r-1};
            \draw[S,->] (-0.5,-1.1) -- (-0.5,-0.6) to[out=up,in=up,looseness=1.7] (0.5,-0.6) -- (0.5,-1.1);
            \projector{0,-0.5}{r-1};
            \region{0.4,0}{n};
            \draw[D,->] (0.7,0) -- (0.7,-0.5) to[out=down,in=right] (0,-0.9) to[out=left,in=down] (-0.7,-0.5) -- (-0.7,0.5) to[out=up,in=left] (0,0.9) to[out=right,in=up] (0.7,0.5) -- (0.7,0);
        \end{tikzpicture}
        \overset{\cref{smush}}{\underset{\cref{reidUD}}{=}}
        \begin{tikzpicture}[centerzero]
            \draw[S,<-] (-0.45,1.3) -- (-0.45,0.6) to[out=down,in=down,looseness=1.5] (0.45,0.6) -- (0.45,1.3);
            \projector{0,0.6}{r};
            \draw[S,->] (-0.45,-1.3) -- (-0.45,-0.6) to[out=up,in=up,looseness=1.5] (0.45,-0.6) -- (0.45,-1.3);
            \projector{0,-0.6}{1};
            \region{0.5,0}{n+2};
            \bubright[D]{0.8,0.5};
            \bubright[D]{0,1.1};
            \bubright[D]{0,-1.1};
        \end{tikzpicture}
        \overset{\cref{sigma}}{\underset{\cref{headroom}}{=}}
        \begin{tikzpicture}[centerzero]
            \draw[S,<-] (-0.4,1.1) -- (-0.4,0.6) to[out=down,in=down,looseness=1.5] (0.4,0.6) -- (0.4,1.1);
            \projector{0,0.6}{r};
            \draw[S,->] (-0.4,-1.1) -- (-0.4,-0.6) to[out=up,in=up,looseness=1.5] (0.4,-0.6) -- (0.4,-1.1);
            \projector{0,-0.6}{r};
            \region{0.5,0}{n+2};
            \bubright[D]{0.8,0.5};
        \end{tikzpicture}
        \overset{\cref{sigma}}{\underset{\cref{puddle}}{=}}
        \begin{tikzpicture}[centerzero]
            \draw[S,<-] (-0.4,1.1) -- (-0.4,0.6) to[out=down,in=down,looseness=1.5] (0.4,0.6) -- (0.4,1.1);
            \projector{0,0.6}{r};
            \draw[S,->] (-0.4,-1.1) -- (-0.4,-0.6) to[out=up,in=up,looseness=1.5] (0.4,-0.6) -- (0.4,-1.1);
            \projector{0,-0.6}{r};
            \region{0.5,0}{n+2};
        \end{tikzpicture}
        .
    \]
    On the right-hand side, we obtain
    \begin{multline*}
        \frac{\Delta_{n-2r+1}}{\Delta_{n-2r}}\
        \begin{tikzpicture}[centerzero]
            \draw[S,->] (-0.5,-0.8) -- (-0.5,0.8);
            \draw[S,<-] (0.5,-0.8) -- (0.5,0.8);
            \projector{0,0}{r-1};
            \projector{-0.9,0}{r};
            \projector{0.9,0}{r};
            \region{1.1,0.65}{n+2};
            \draw[D,->] (1.4,0) to[out=down,in=right,looseness=0.8] (0,-0.5) to[out=left,in=down,looseness=0.8] (-1.4,0) to[out=up,in=left,looseness=0.8] (0,0.5) to[out=right,in=up,looseness=0.8] (1.4,0);
        \end{tikzpicture}
        +
        \frac{\Delta_{n-2r+1}}{\Delta_{n-2r+2}}\
        \begin{tikzpicture}[centerzero]
            \draw[S,->] (-0.5,-0.8) -- (-0.5,0.8);
            \draw[S,<-] (0.5,-0.8) -- (0.5,0.8);
            \projector{0,0}{r-1};
            \projector{-1,0}{r-1};
            \projector{1,0}{r-1};
            \region{1.1,0.65}{n+2};
            \draw[D,->] (1.6,0) to[out=down,in=right,looseness=0.8] (0,-0.5) to[out=left,in=down,looseness=0.8] (-1.6,0) to[out=up,in=left,looseness=0.8] (0,0.5) to[out=right,in=up,looseness=0.8] (1.6,0);
        \end{tikzpicture}
        \\
        \overset{\substack{\cref{smush} \\ \cref{bluewind}}}{\underset{\cref{reidUU}}{=}}
        \frac{\Delta_{n-2r+1}}{\Delta_{n-2r}}\
        \begin{tikzpicture}[centerzero]
            \draw[S,<-] (-0.4,-0.8) -- (-0.4,0.8);
            \draw[S,->] (0.4,-0.8) -- (0.4,0.8);
            \projector{0,0}{r};
            \projector{-0.9,0}{r+1};
            \projector{0.9,0}{r+1};
            \region{0.9,0.6}{n+2};
        \end{tikzpicture}
        + \frac{\Delta_{n-2r+1}}{\Delta_{n-2r+2}}\
        \begin{tikzpicture}[centerzero]
            \draw[S,<-] (-0.4,-0.8) -- (-0.4,0.8);
            \draw[S,->] (0.4,-0.8) -- (0.4,0.8);
            \projector{0,0}{r};
            \projector{-0.8,0}{r};
            \projector{0.8,0}{r};
            \region{0.9,0.6}{n+2};
        \end{tikzpicture}
        ,
    \end{multline*}
    completing the proof of the induction step.
    
    Next, we prove \cref{camel2}.  For $r \ge 1$, we apply the following operations to both sides of \cref{camel1}:
    \begin{itemize}
        \item compose on top with 
            \(
                \begin{tikzpicture}[centerzero]
                    \draw[S] (-0.2,-0.3) -- (-0.2,0.3);
                    \draw[S,<->] (0.2,-0.3) -- (0.2,0.3);
                    \draw[D] (-0.2,0) -- (0.2,0);
                \end{tikzpicture}
            \)

        \item compose on bottom with 
            \(
                \begin{tikzpicture}[centerzero]
                    \draw[S,<->] (-0.2,-0.3) -- (-0.2,0.3);
                    \draw[S] (0.2,-0.3) -- (0.2,0.3);
                    \draw[D] (-0.2,0) -- (0.2,0);
                \end{tikzpicture}
            \)
    \end{itemize}
    On the left-hand side, we obtain
    \[
        \begin{tikzpicture}[centerzero]
            \draw[S,->] (-0.5,1) -- (-0.5,0.6) to[out=down,in=down,looseness=1.7] (0.5,0.6) -- (0.5,1);
            \projector{0,0.5}{r-1};
            \draw[S,<-] (-0.5,-1) -- (-0.5,-0.6) to[out=up,in=up,looseness=1.7] (0.5,-0.6) -- (0.5,-1);
            \projector{0,-0.5}{r-1};
            \draw[D] (-0.5,0.8) -- (0.5,0.8);
            \draw[D] (-0.5,-0.8) -- (0.5,-0.8);
            \region{0.4,0}{n};
        \end{tikzpicture}
        \overset{\cref{smush}}{\underset{\cref{water}}{=}}
        \begin{tikzpicture}[centerzero]
            \draw[S,->] (-0.4,1) -- (-0.4,0.6) to[out=down,in=down,looseness=2] (0.4,0.6) -- (0.4,1);
            \projector{0,0.6}{r};
            \draw[S,<-] (-0.4,-1) -- (-0.4,-0.6) to[out=up,in=up,looseness=2] (0.4,-0.6) -- (0.4,-1);
            \projector{0,-0.6}{r};
            \region{0.4,0}{n};
        \end{tikzpicture}
        \ .
    \]
    On the right-hand side, we obtain
    \begin{multline*}
        \frac{\Delta_{n-2r+1}}{\Delta_{n-2r}}\
        \begin{tikzpicture}[centerzero]
            \draw[S,<-] (-0.5,-0.8) -- (-0.5,0.8);
            \draw[S,->] (0.5,-0.8) -- (0.5,0.8);
            \draw[D] (-0.5,0.5) -- (0.5,0.5);
            \draw[D] (-0.5,-0.5) -- (0.5,-0.5);
            \projector{0,0}{r-1};
            \projector{-0.9,0}{r};
            \projector{0.9,0}{r};
            \region{0.8,0.5}{n};
        \end{tikzpicture}
        + \frac{\Delta_{n-2r+1}}{\Delta_{n-2r+2}}\
        \begin{tikzpicture}[centerzero]
            \draw[S,<-] (-0.5,-0.8) -- (-0.5,0.8);
            \draw[S,->] (0.5,-0.8) -- (0.5,0.8);
            \draw[D] (-0.5,0.5) -- (0.5,0.5);
            \draw[D] (-0.5,-0.5) -- (0.5,-0.5);
            \projector{0,0}{r-1};
            \projector{-1,0}{r-1};
            \projector{1,0}{r-1};
            \region{0.9,0.5}{n};
        \end{tikzpicture}
        \\
        \overset{\cref{fire}}{\underset{\cref{bluewind}}{=}}
        \frac{\Delta_{n-2r+1}}{\Delta_{n-2r}}\
        \begin{tikzpicture}[centerzero]
            \draw[S,<-] (-0.4,-0.8) -- (-0.4,0.8);
            \draw[S,->] (0.4,-0.8) -- (0.4,0.8);
            \projector{0,0}{r};
            \projector{-0.8,0}{r};
            \projector{0.8,0}{r};
            \region{0.8,0.5}{n};
        \end{tikzpicture}
        + \frac{\Delta_{n-2r+1}}{\Delta_{n-2r+2}}\
        \begin{tikzpicture}[centerzero]
            \draw[S,<-] (-0.4,-0.8) -- (-0.4,0.8);
            \draw[S,->] (0.4,-0.8) -- (0.4,0.8);
            \projector{0,0}{r};
            \projector{-1,0}{r-1};
            \projector{1,0}{r-1};
            \region{0.8,0.5}{n};
        \end{tikzpicture}
        \ ,
    \end{multline*}
    proving \cref{camel2}.  (Our assumption $r \ge 1$ was needed when we applied \cref{bluewind} above.)  This proves \cref{camel2} for $r \ge 1$.  Since it is trivial for $r < 0$ by \cref{abyss}, it remains to prove it for $r=0$.  For this, we apply the following operations to both sides of the $r=1$ case of \cref{camel2}, with $n$ replaced by $n+2$:
    \begin{itemize}
        \item compose on the left with $\downstrandreg[D]{n+2}$,
        \item compose on the right with $\upstrandreg[D]{n}$,
        \item compose on top with
            \(
                \begin{tikzpicture}[anchorbase]
                    \draw[D,<-] (-0.5,0) to[out=up,in=up,looseness=1] (0.5,0);
                    \draw[S,<-] (-0.2,0) -- (-0.2,0.5);
                    \draw[S,->] (0.2,0) -- (0.2,0.5);
                    \region{0.7,0.3}{n};
                \end{tikzpicture}
            \),
        \item compose on bottom with
            \(
                \begin{tikzpicture}[anchorbase]
                    \draw[D,->] (-0.5,0.5) to[out=down,in=down,looseness=1] (0.5,0.5);
                    \draw[S,<-] (-0.2,0) -- (-0.2,0.5);
                    \draw[S,->] (0.2,0) -- (0.2,0.5);
                    \region{0.7,0.2}{n};
                \end{tikzpicture}
            \).
    \end{itemize}
    On the left-hand side, we obtain
    \[
        \begin{tikzpicture}[centerzero]
            \draw[S,->] (-0.4,1.1) -- (-0.4,0.6) to[out=down,in=down,looseness=2] (0.4,0.6) -- (0.4,1.1);
            \projector{0,0.5}{1};
            \draw[S,<-] (-0.4,-1.1) -- (-0.4,-0.6) to[out=up,in=up,looseness=2] (0.4,-0.6) -- (0.4,-1.1);
            \projector{0,-0.5}{1};
            \region{0.2,0}{n+2};
            \draw[D,<-] (0.7,0) -- (0.7,-0.5) to[out=down,in=right] (0,-0.9) to[out=left,in=down] (-0.7,-0.5) -- (-0.7,0.5) to[out=up,in=left] (0,0.9) to[out=right,in=up] (0.7,0.5) -- (0.7,0);
        \end{tikzpicture}
        \overset{\cref{smush}}{\underset{\cref{reidUD}}{=}}
        \begin{tikzpicture}[centerzero]
            \draw[S,->] (-0.4,1.1) -- (-0.4,0.6) to[out=down,in=down,looseness=2] (0.4,0.6) -- (0.4,1.1);
            \projector{0,0.5}{0};
            \draw[S,<-] (-0.4,-1.1) -- (-0.4,-0.6) to[out=up,in=up,looseness=2] (0.4,-0.6) -- (0.4,-1.1);
            \projector{0,-0.5}{0};
            \region{0.5,0}{n};
            \bubleft[D]{0.8,0.5};
        \end{tikzpicture}
        \overset{\cref{smoke}}{=}
        \begin{tikzpicture}[centerzero]
            \draw[S,->] (-0.4,1.1) -- (-0.4,0.6) to[out=down,in=down,looseness=2] (0.4,0.6) -- (0.4,1.1);
            \projector{0,0.5}{0};
            \draw[S,<-] (-0.4,-1.1) -- (-0.4,-0.6) to[out=up,in=up,looseness=2] (0.4,-0.6) -- (0.4,-1.1);
            \projector{0,-0.5}{0};
            \region{0.5,0}{n};
        \end{tikzpicture}
        .
    \]
    On the right-hand side, we obtain
    \[
        \frac{\Delta_{n+1}}{\Delta_{n}}\
        \begin{tikzpicture}[centerzero]
            \draw[S,<-] (-0.4,-0.8) -- (-0.4,0.8);
            \draw[S,->] (0.4,-0.8) -- (0.4,0.8);
            \projector{0,0}{1};
            \projector{-0.7,0}{1};
            \projector{0.7,0}{1};
            \region{0.9,0.6}{n};
            \draw[D,<-] (1.1,0) to[out=down,in=right] (0,-0.5) to[out=left,in=down] (-1.1,0) to[out=up,in=left] (0,0.5) to[out=right,in=up] (1.1,0);
        \end{tikzpicture}
        + \frac{\Delta_{n+1}}{\Delta_{n+2}}\
        \begin{tikzpicture}[centerzero]
            \draw[S,<-] (-0.4,-0.8) -- (-0.4,0.8);
            \draw[S,->] (0.4,-0.8) -- (0.4,0.8);
            \projector{0,0}{1};
            \projector{-0.7,0}{0};
            \projector{0.7,0}{0};
            \region{0.9,0.6}{n};
            \draw[D,<-] (1.1,0) to[out=down,in=right] (0,-0.5) to[out=left,in=down] (-1.1,0) to[out=up,in=left] (0,0.5) to[out=right,in=up] (1.1,0);
        \end{tikzpicture}
        \\
        \overset{\substack{\cref{smush} \\ \cref{bluewind}}}{\underset{\substack{\cref{reidUD} \\ \cref{abyss}}}{=}} \frac{\Delta_{n+1}}{\Delta_{n}}\
        \begin{tikzpicture}[centerzero]
            \draw[S,<-] (-0.4,-0.8) -- (-0.4,0.8);
            \draw[S,->] (0.4,-0.8) -- (0.4,0.8);
            \projector{0,0}{0};
            \projector{-0.7,0}{0};
            \projector{0.7,0}{0};
            \region{0.9,0.6}{n};
        \end{tikzpicture}
        \ ,
    \]
    giving the $r=0$ case of \cref{camel2}.
    
    To prove \cref{camel3}, we apply the following operations to both sides of \cref{camel1} with $r$ and $n$ replaced by $r+1$ and $n+2$, respectively:
    \begin{itemize}
        \item compose on the right with $\upstrandreg[D]{n}$,
        \item compose on top with
            \(
                \begin{tikzpicture}[anchorbase]
                    \draw[D] (0.3,-0.3) -- (0,0);
                    \draw[S,<-] (-0.3,-0.3) -- (0,0);
                    \draw[S,->] (0,0) -- (0,0.4);
                    \draw[S,->] (-0.5,-0.3) -- (-0.5,0.4);
                    \region{0.5,0.05}{n};
                \end{tikzpicture}
            \)
        \item compose on bottom with
            \(
                \begin{tikzpicture}[anchorbase]
                    \draw[S] (0,-0.4) -- (0,0);
                    \draw[D,->] (0,0) -- (0.3,0.3);
                    \draw[S] (0,0) -- (-0.3,0.3);
                    \draw[S,->] (-0.5,-0.4) -- (-0.5,0.3);
                    \region{0.5,-0.05}{n};
                \end{tikzpicture}
            \)
    \end{itemize}
    On the left-hand side, we obtain precisely the left-hand side of \cref{camel3}.  On the right-hand side, we obtain
    \[
        \frac{\Delta_{n-2r+1}}{\Delta_{n-2r}}\
        \begin{tikzpicture}[centerzero]
            \draw[S,->] (-0.35,-0.8) -- (-0.35,0.8);
            \draw[S,->] (0.9,-0.8) -- (0.9,-0.5) to[out=left,in=down] (0.35,0) to[out=up,in=left] (0.9,0.5) -- (0.9,0.8);
            \draw[D] (0.9,-0.5) to[out=right,in=down] (1.45,0) to[out=up,in=right] (0.9,0.5);
            \projector{0,0}{r};
            \projector{-0.85,0}{r+1};
            \projector{0.9,0}{r+1};
            \region{1.5,0.5}{n};
        \end{tikzpicture}
        + \frac{\Delta_{n-2r+1}}{\Delta_{n-2r+2}}\
        \begin{tikzpicture}[centerzero]
            \draw[S,->] (-0.35,-0.8) -- (-0.35,0.8);
            \draw[S,->] (0.8,-0.8) -- (0.8,-0.4) to[out=left,in=down] (0.4,0) to[out=up,in=left] (0.8,0.4) -- (0.8,0.8);
            \draw[D] (0.8,-0.4) to[out=right,in=down] (1.2,0) to[out=up,in=right] (0.8,0.4);
            \projector{0,0}{r};
            \projector{-0.7,0}{r};
            \projector{0.8,0}{r};
            \region{1.3,0.5}{n};
        \end{tikzpicture}
        .
    \]
    By \cref{smush,water}, this is equal to the right-hand side of \cref{camel3}.
\end{proof}

\begin{rem}
    \Cref{dune} shows that one can recover the relations from \cite[\S4.1]{HS25} from the simpler relations of \cref{subsec:2cat}.  In particular, \cite[(23)]{HS25} and \cite[(24)]{HS25} correspond to \cref{camel2} and \cref{camel3}, respectively.
\end{rem}

%-----------------------------------------------------------------
\subsection{The diagrammatic monoidal category\label{subsec:Dmon}}
%-----------------------------------------------------------------

We now form a $\kk$-linear strict monoidal category $\Dmon'$ whose objects encode 1-endomorphisms of the formal sum $\bigoplus_{n \in \N} \obj{n}$.  Precisely, the objects of $\Dmon'$ are matrices
\[
    f = (f_{m,n})_{m,n \in \N},\qquad f_{m,n} \in \Dcat(\obj{n},\obj{m}),
\]
that are both row finite and column finite.  We will often use summation notation for these matrices:
\[
    f = (f_{m,n})_{m,n \in \N} = \sum_{m,n \in \N} f_{m,n}.
\]
Tensor product is given by matrix multiplication:
\[
    (f \otimes g)_{m,n} = \sum_{k \in \N} f_{m,k} g_{k,n},
\]
where the right-hand side is composition in $\Dcat$.  Morphisms in $\Dmon'$ are matrices of 2-morphisms in $\Dcat$
\[
    \alpha = (\alpha_{m,n})_{m,n \in \N} \colon f \to g,\qquad
    \alpha_{m,n} \colon f_{m,n} \Rightarrow g_{m,n}.
\]
Composition is componentwise: for $\alpha \colon f \to g$ and $\beta \colon g \to h$,
\[
    \beta\alpha \colon f \to h,\qquad
    (\beta \alpha)_{m,n} = \beta_{m,n} \alpha_{m,n},
\]
where, on the right-hand side, juxtaposition is vertical composition of 2-morphisms in $\Dcat$.  Tensor product of morphisms in $\Dmon'$ is given by matrix multiplication:
\[
    (\alpha \otimes \beta)_{m,n} = \sum_{k \in \N} \alpha_{m,k} \otimes \beta_{k,n},
\]
where, on the right-hand side, $\otimes$ denotes horizontal composition of 2-morphisms in $\Dcat$.  The unit object of $\Dmon'$ is the identity matrix
\[
    \one = (\one_{m,n})_{m,n \in \N},\qquad \one_{m,n} = \delta_{m,n} 1_n.
\]

\begin{defin} \label{Dmondef}
    Let $\Dmon$ be the full $\kk$-linear strict monoidal subcategory of $\Dmon'$ generated by the objects
    \[
        \Sup := \sum_{n=0}^\infty \Sup 1_n,\qquad
        \Sdown := \sum_{n=1}^\infty \Sdown 1_n,\qquad
        \Dup := \sum_{n=0}^\infty \Dup 1_n,\qquad
        \Ddown := \sum_{n=2}^\infty \Ddown 1_n.
    \]
\end{defin}

A string diagram in $\Dcat$ with the region labels removed defines an element of $\Dmon$ by summing over all labels of the rightmost region.  For example
\[
    \mergeup \in \Dmon(\Sup \Sup, \Dup)
    \quad \text{is defined to be} \quad
    \sum_{n \in \N} \mergeupreg{n}.
\]

The 2-functor $\bR$ from \cref{rotate} induces an isomorphism of $\kk$-linear monoidal categories
\begin{equation} \label{rotatemon}
    \bR \colon \Dmon \to \Dmon^{\rev,\op}.
\end{equation}

\begin{rem}
    The Heisenberg categories of \cite{Kho14,LS13,MS18,Bru18,BSW20-qheis} are all directly defined as monoidal categories.  In contrast, the monoidal category $\Dmon$ is defined via the 2-category $\Dcat$.  The reason for this is the dependence of the defining relations \cref{coal,wood} on $n$.  The authors of \cite{HS25} take a different approach, defining a monoidal category with extra generators taking the place of the coefficients appearing in \cref{coal,wood}.  This leads to some complications, including difficulty in determining bases for morphism spaces; see \cref{HSbasis}.
\end{rem}

%======================================
\section{Isomorphisms\label{sec:isoms}}
%======================================

In this section we deduce some isomorphisms that hold in the 2-category $\Dcat$ and the monoidal category $\Dmon$.  These will play an important role in our discussion of Grothendieck rings in \cref{sec:decat}.

For a monoidal category $\cC$, let $\Add(\cC)$ denote its additive envelope, and let $\Kar(\cC)$ denote its additive Karoubi envelope (the idempotent completion of $\Add(\cC)$).  Similarly, for a 2-category $\fC$, let $\Add(\fC)$ denote its additive envelope, and let $\Kar(\fC)$ denote the additive Karoubi envelope.  By definition, these have the same objects as $\fC$, and have morphism categories given by the additive envelopes and additive Karoubi envelopes, respectively, of the morphism categories of $\fC$.  In particular, this means that we have a zero 1-morphism between any two objects.

In $\Kar(\Dcat)$, we let $0_n$, $n \in \N$, denote the zero 1-endomorphism in $\Kar(\Dcat)$ of $\obj{n}$.  We adopt the convention that any 1-morphism expression involving $1_n$, $n<0$, is the zero 1-morphism.  We have the corresponding zero object $\zero$ in $\Kar(\Dmon)$, which can be identified with the matrix whose components are all zero $1$-morphisms in $\Dcat$.

If $f = (f_{mn})_{m,n \in \N}$ is a morphism in $\Dmon$, we will identify $f 1_n$, $n \in \N$, with the sum $\sum_{m \in \N} f_{mn}$, which is finite since $f$ is row finite.  In this way, we also view $f 1_n$ as a 1-morphism in $\Add(\Dcat)$.  To prove that two objects $f$ and $g$ in $\Dmon$ are isomorphic, it suffices to show that $f 1_n \cong g 1_n$ in $\Add(\Dcat)$ for all $n \in \N$.  This will be used repeatedly without mention below.

\begin{prop}
    The following isomorphisms hold in $\Dmon$:
    \begin{align} \label{isoms1}
        \Sup \Dup &\cong \Dup \Sup,&
        \Sdown \Ddown &\cong \Ddown \Sdown,
        \\ \label{isoms2}
        \Sdown \Dup &\cong \Sup,&
        \Ddown \Sup &\cong \Sdown.
    \end{align}
\end{prop}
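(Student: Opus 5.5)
The plan is to exhibit explicit mutually inverse 2-morphisms in $\Dcat$ for each fixed region label $n$, using only the defining relations \cref{fire}, the bigon relations \cref{water}, and the crossing relations \cref{reidUU}. This is enough, since by the remark preceding the statement it suffices to prove $f1_n \cong g1_n$ for every $n \in \N$. The two isomorphisms in the right-hand column of \cref{isoms1,isoms2} should then follow from those in the left-hand column by applying the rotation isomorphism $\bR$ of \cref{rotatemon}. Indeed, $\bR$ sends $\Sup\Dup \mapsto \Ddown\Sdown$ and $\Dup\Sup\mapsto \Sdown\Ddown$, and sends $\Sdown\Dup\mapsto \Ddown\Sup$ and $\Sup \mapsto \Sdown$. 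Since an isomorphism of monoidal categories preserves isomorphism classes of objects, it suffices to treat $\Sup\Dup\cong\Dup\Sup$ and $\Sdown\Dup\cong\Sup$.

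For $\Sup\Dup\cong\Dup\Sup$, I would take the upward crossings $\upcross{S}{D}$ and $\upcross{D}{S}$ of \cref{crossdefU}, summed over region labels. Their composites in either order are exactly the two double-crossing diagrams on the left of the first two relations in \cref{reidUU}, which equal the corresponding identities. This step is immediate.

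For $\Sdown\Dup 1_n \cong \Sup 1_n$, first note that both sides are 1-morphisms $\obj{n}\to\obj{n+1}$. I would take the two trivalent-vertex composites that appear in the second relation of \cref{fire}. The first is the map $\Sdown\Dup 1_n\to\Sup 1_n$ given by the vertex merging the downward black strand on the left with the upward cyan strand on the right into a single upward black strand. The second is the map $\Sup 1_n\to\Sdown\Dup 1_n$ given by the reverse vertex. The second relation of \cref{fire} says that the composite $\Sdown\Dup\to\Sup\to\Sdown\Dup$ is the identity. The other composite $\Sup\to\Sdown\Dup\to\Sup$ is a bigon on an upward black strand, whose internal edges are a black strand on the left and a cyan strand on the right. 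After isotoping, using pivotality, this is the middle relation of \cref{water}, which gives the identity of $\Sup$.

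The one point requiring care, which I expect to be the only real obstacle, is checking that the bigon produced lands in the middle case of \cref{water} rather than the first. The first case, with the cyan strand on the left, yields an extra factor of a clockwise cyan bubble and so is not the identity. I would verify the side of the cyan edge by tracking region labels: the region between the two internal edges should be labelled $n+2$, consistent with the cyan edge lying on the right. If instead the first case arose, the same pair of maps would exhibit $\Dup\Sdown\cong\Sup$ via the first relation of \cref{fire} rather than the desired isomorphism, so I would swap to the vertices coming from the second relation of \cref{fire}. The configuration must also remain valid when $n=0$, where no negative labels occur.
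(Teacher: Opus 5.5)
Your proposal is correct and is essentially the paper's proof. The upward crossings are mutually inverse by \cref{reidUU}, the two vertex maps between $\Sdown\Dup 1_n$ and $\Sup 1_n$ are mutually inverse by the second relation in \cref{fire} and the middle relation in \cref{water}, and the right-hand column follows by applying $\bR$.

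The obstacle you flag is not a real one, because the factor order in $\Sdown\Dup$ already forces the black edge of the bigon to lie on the left. Your fallback remark is also slightly off: the pair of maps from the first relation in \cref{fire} would only exhibit $\Dup\Sdown$ as a retract of $\Sup$, not as isomorphic to it, because of the cyan bubble in the first relation of \cref{water}.
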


\begin{proof}
    The morphisms
    \[
        \upcrossreg{S}{D}{n} \colon \Sup \Dup 1_n \to \Dup \Sup 1_n
        \qquad \text{and} \qquad
        \upcrossreg{D}{S}{n} \colon \Dup \Sup 1_n \to \Sup \Dup 1_n
    \]
    are mutually inverse by \cref{reidUU}.  The second isomorphism in \cref{isoms1} then follows from an application of the functor $\bR$ of \cref{rotate}.

    The morphisms
    \[
        \splitNWreg{n} \colon \Sdown \Dup 1_n \to \Sup 1_n
        \qquad \text{and} \qquad
        \mergeNEreg{n} \colon \Sup \Sdown 1_n \to \Dup 1_n
    \]
    are mutually inverse by the second relation in \cref{fire} and the second relation in \cref{water}.  The second isomorphism in \cref{isoms2} then follows from an application of the functor $\bR$ of \cref{rotate}.
\end{proof}

For $r,n \in \Z$ define the following 1-morphisms in $\Kar(\Dcat)$:
\[
    \Theta_r 1_n := \left( 1_n, \rightbubmultreg[D]{r}{n} \right),\qquad
    \Sigma_r 1_n := \left( 1_n, \freeprojectorreg{r}{n} \right).
\]
We then have the corresponding objects in $\Kar(\Dmon)$:
\[
    \Theta_r := \sum_{n \in \N} \Theta_r 1_n,\qquad
    \Sigma_r := \sum_{n \in \N} \Sigma_r 1_n.
\]
Note that, for $r \in \Z$, $n \in \N$,
\[
    \bR(\Theta_r) = \Theta_r,\qquad
    \bR(\Theta_r 1_n) = \Theta_r 1_n, \qquad
    \bR(\Sigma_r) = \Sigma_r,\qquad
    \bR(\Sigma_r 1_n) = \Sigma_r 1_n.
\]

\begin{lem}
    The following isomorphisms hold in $\Kar(\Dmon)$:
    \begin{align} \label{wasp1}
        \Theta_r &\cong \one,
        & r \le 0,
        \\ \label{wasp2}
        \Theta_r \Theta_s &\cong \Theta_{\max(r,s)},
        & r,s \in \Z,
        \\ \label{wasp3}
        \Sigma_r \Sigma_s &\cong \delta_{r,s} \Sigma_r
        & r,s, \in \Z,
        \\ \label{wasp4}
        \Sigma_r \Theta_s \cong \Theta_s \Sigma_r &\cong
        \begin{cases}
            \Sigma_r & \text{if } s \le r, \\
            \zero & \text{if } s > r,
        \end{cases}
        & r,s \in \Z,
        \\ \label{disect}
        \Theta_r &\cong \Sigma_r \oplus \Theta_{r+1},
        & r \in \Z.
    \end{align}
\end{lem}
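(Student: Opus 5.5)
All five isomorphisms are statements about objects of the form $(1_n,e)$ in the Karoubi envelope, where $e$ ranges over the idempotents $\rightbubmultreg[D]{r}{n}$ and $\freeprojectorreg{r}{n}$ of $\Dcat(1_n,1_n)$. The strategy is therefore to reduce everything to identities between these idempotents, working one $n$ at a time, as permitted by the remark at the start of this section.

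Idempotence of the bubbles follows from \cref{headroom} with $r=s$, and for $r\le0$ directly from \cref{bubrec}. Idempotence of the projectors is the case $r=s$ of \cref{absorb}.

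The basic observation is that the horizontal composite $(1_n,e)(1_n,e')$ of two such objects is $(1_n,ee')$, where $ee'$ is the composite in $\Dcat(1_n,1_n)$. Any two diagrams with no strands meeting the boundary commute, by isotopy; this holds since $\Dcat$ is strict pivotal and free-floating coupons can be slid past one another. With this in hand, each claim follows from an earlier relation:
\begin{itemize}
    \item \cref{wasp1}: for $r\le 0$, the bubble $\rightbubmultreg[D]{r}{n}$ equals $1_n$ by \cref{bubrec}, so $\Theta_r 1_n=(1_n,1_n)=1_n$.
    \item \cref{wasp2}: this is the product formula \cref{headroom} for $r,s\ge0$. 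If $r$ or $s$ is $\le0$, that factor is $1_n$ and the formula is immediate.
    \item \cref{wasp3}: this is \cref{absorb}, together with the fact that $(1_n,0)\cong 0_n$.
\end{itemize}

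For \cref{wasp4} I need to compute $\freeprojector{r}\,\rightbubmult[D]{s}$. By \cref{sigma} it equals $(\rightbubmult[D]{r}-\rightbubmult[D]{r+1})\rightbubmult[D]{s}$, and \cref{headroom} rewrites this as $\rightbubmult[D]{\max(r,s)}-\rightbubmult[D]{\max(r+1,s)}$. There are three cases:
\begin{itemize}
    \item If $s\le r$, the expression is $\freeprojector{r}$.
    \item If $s>r$, the two terms are both $\rightbubmult[D]{s}$, so the expression is $0$.
    \item Negative indices are handled by \cref{bubrec} and by the convention $\freeprojector{r}=0$ for $r<0$.
\end{itemize}
Commutativity gives the version with the factors in the other order.

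For \cref{disect}, \cref{headroom} shows that $\Theta_r$ equals $\rightbubmult[D]{r}=\freeprojector{r}+\rightbubmult[D]{r+1}$, and that the two summands are orthogonal idempotents:
\begin{itemize}
    \item \cref{wasp4} gives $\freeprojector{r}\rightbubmult[D]{r+1}=0$;
    \item \cref{headroom} gives $\rightbubmult[D]{r+1}\rightbubmult[D]{r}=\rightbubmult[D]{r+1}$.
\end{itemize}
The standard Karoubi-envelope fact then applies: if an idempotent $e$ is the sum of orthogonal idempotents $e_1,e_2$ with $e_ie=e_i=ee_i$, then $(X,e)\cong(X,e_1)\oplus(X,e_2)$. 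The inclusion and projection maps are given by the idempotents themselves.

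The only point needing care is the bookkeeping for negative $r$ and $s$ and for the boundary cases $n<2r$, where \cref{abyss} and \cref{freeze} make terms vanish. I expect no genuine obstacle. The one structural input to check is commutativity of the endomorphism ring $\Dcat(1_n,1_n)$ on the relevant elements, which comes from isotopy invariance.
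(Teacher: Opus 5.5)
Your proof is correct and matches the paper's: each isomorphism reduces to an identity among the idempotents in $\Dcat(1_n,1_n)$, namely \cref{bubrec} for \cref{wasp1}, \cref{headroom} for \cref{wasp2}, \cref{absorb} for \cref{wasp3}, \cref{sigma} with \cref{headroom} for \cref{wasp4}, and the orthogonal splitting coming from \cref{sigma} for \cref{disect}. One small remark: the commutativity of $\Dcat(1_n,1_n)$ that you use is just the interchange law (Eckmann--Hilton), so it holds in any 2-category and does not need the pivotal structure.
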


\begin{proof}
    The isomorphism~\cref{wasp1} holds by \cref{bubrec}.  The isomorphism~\cref{wasp2} follows from \cref{headroom}.  The isomorphism~\cref{wasp3} holds by \cref{absorb}.  The isomorphism~\cref{wasp4} holds by \cref{sigma,headroom}.  Finally, the isomorphism~\cref{disect} follows from \cref{sigma}.
\end{proof}

\begin{lem}
    The following isomorphisms holds in $\Kar(\Dcat)$ for all $n \in \N$:
    \begin{align} \label{hornet1}
        \Theta_r 1_n & \cong 0_n,
        & r \in \Z,\ 2r > n,
        \\ \label{hornet2}
        \Sigma_r 1_n &\cong 0_n,
        & r \in \Z,\ \big( r < 0 \text{ or } 2r > n \big).
    \end{align}
\end{lem}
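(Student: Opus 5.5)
Both isomorphisms reduce to the vanishing of the relevant idempotent in $\Dcat(1_n,1_n)$. In the Karoubi envelope, an object $(1_n,e)$ with $e=0$ is isomorphic to the zero object $0_n$. The mutually inverse maps are the zero morphisms $0\colon (1_n,0)\to 0_n$ and $0\colon 0_n\to(1_n,0)$. The identity of $(1_n,e)$ is $e$ itself, so when $e=0$ both composites are identities. It therefore suffices to show that $\rightbubmultreg[D]{r}{n}=0$ when $2r>n$, and that $\freeprojectorreg{r}{n}=0$ when $r<0$ or $2r>n$.

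For \cref{hornet1}, I would invoke \cref{freeze} directly: it states $\rightbubmultreg[D]{r}{n}=0$ whenever $n<2r$, which is exactly the hypothesis $2r>n$. A sanity check on the range of $r$ is that $2r>n\ge 0$ forces $r\ge 1$. So we are in the recursive case of \cref{bubrec}, and the vanishing genuinely comes from the negative-region-label convention: the innermost of the $r$ nested cyan bubbles has region label $n-2r<0$. No issue arises with the convention $\rightbubmultreg[D]{r}{n}=1_n$ for $r\le 0$.

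For \cref{hornet2}, I would invoke \cref{abyss}, which gives $\freeprojectorreg{r}{n}=0$ for $r<0$ (by the extended definition) or $n<2r$. If one prefers to unwind this, use \cref{sigma}: $\freeprojectorreg{r}{n}=\rightbubmultreg[D]{r}{n}-\rightbubmultreg[D]{r+1}{n}$. When $2r>n$, both terms vanish by \cref{freeze}, since $2(r+1)>n$ as well.

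There is no real obstacle here. The only point needing care is the formal one at the start: an idempotent equal to zero yields the zero object in the additive Karoubi envelope. This is immediate from the definition of morphisms $(1_n,e)\to(1_n,e')$ as maps $e'fe$.
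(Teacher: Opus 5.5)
Your proposal is correct and follows the paper's own proof: \cref{hornet1} comes from the negatively-labelled innermost region (that is, \cref{freeze}), and \cref{hornet2} comes directly from \cref{abyss}. Your observation that a zero idempotent gives the zero object of $\Kar(\Dcat)$ is the implicit formal step that the paper leaves unstated.
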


\begin{proof}
    Relation~\cref{hornet1} follows from the fact that $\rightbubmultreg[D]{r}{n}$ contains a negatively-labelled region when $2r>n$.  Relation~\cref{hornet2} holds by \cref{abyss}.
\end{proof}

\begin{prop}
    The following isomorphisms hold in $\Kar(\Dmon)$:
    \begin{align} \label{frisbee1}
        \Theta_{r+1} \Dup &\cong \Dup \Theta_r,&
        \Theta_r \Ddown &\cong \Ddown \Theta_{r+1},&
        r \in \Z,
        \\ \label{frisbee2}
        \Sigma_{r+1} \Dup &\cong \Dup \Sigma_r,&
        \Sigma_r \Ddown &\cong \Ddown \Sigma_{r+1},&
        r \in \Z,
        \\ \label{hacky}
        \Ddown[r] \Dup[r] &\cong \one,&
        \Dup[r] \Ddown[r] &\cong \Theta_r,&
        r \in \N.
    \end{align}
\end{prop}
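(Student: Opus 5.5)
The isomorphisms \cref{frisbee1} and \cref{frisbee2} should be equalities of idempotents, \cref{hacky} with $r=1$ should come from cup/cap factorizations, and general $r$ follows by induction.

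For \cref{frisbee1}, I would work one summand $1_n$ at a time. In $\Kar(\Dcat)$, $\Theta_{r+1}\Dup 1_n$ is the pair $(\Dup 1_n, e)$, where $e$ is $\rightbubmult[D]{r+1}$ placed to the left of the cyan strand. Likewise $\Dup\Theta_r 1_n$ is $(\Dup 1_n, e')$, where $e'$ is $\rightbubmult[D]{r}$ placed to the right of the strand. Relation \cref{crisp1} says exactly that $e=e'$. So the two objects coincide, and the identity of $\Dup 1_n$ gives the isomorphism. The second isomorphism in \cref{frisbee1} follows by applying $\bR$ from \cref{rotatemon}, since $\bR(\Theta_r)=\Theta_r$ and $\bR(\Dup)=\Ddown$. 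For \cref{frisbee2} the argument is the same, with \cref{smush} in place of \cref{crisp1}; that relation holds for all $r\in\Z$, so negative $r$ needs no extra work.

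For \cref{hacky} I would first treat $r=1$, using that a 1-morphism $X$ is isomorphic to $(Y,ab)$ in the Karoubi envelope whenever there are $a\colon X\to Y$ and $b\colon Y\to X$ with $ba=1_X$.
\begin{itemize}
\item For $\Ddown\Dup 1_n$, take $a=\leftcapreg[D]{n}$ and $b=\rightcupreg[D]{n}$. Then $ab$ is the counterclockwise cyan bubble, which equals $1_n$ by \cref{smoke}. The composite $ba$ (cup over cap) is the identity of $\Ddown\Dup 1_n$ by \cref{spark} or the last relation of \cref{fire}, whichever has the matching orientation. Hence $\Ddown\Dup\cong\one$.
\item For $\Dup\Ddown 1_n$, take $a$ the right cap and $b$ the left cup. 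Now $ba=1_{\Dup\Ddown}$ by the other of these two relations. The composite $ab$ is the clockwise bubble $\rightbubmultreg[D]{1}{n}$, so $\Dup\Ddown 1_n\cong(1_n,\rightbubmultreg[D]{1}{n})=\Theta_1 1_n$.
\end{itemize}

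For general $r$, I would induct, writing $\Dup[r]$ for $r$ parallel cyan strands. For the first isomorphism, $\Ddown[r]\Dup[r]=\Ddown[r-1](\Ddown\Dup)\Dup[r-1]\cong\Ddown[r-1]\Dup[r-1]\cong\one$. For the second, $\Dup[r]\Ddown[r]=\Dup[r-1](\Dup\Ddown)\Ddown[r-1]\cong\Dup[r-1]\Theta_1\Ddown[r-1]$. Moving $\Theta_1$ leftward past the $r-1$ up-strands with \cref{frisbee1} gives $\Theta_r\Dup[r-1]\Ddown[r-1]$. The induction hypothesis turns this into $\Theta_r\Theta_{r-1}$, which is $\cong\Theta_r$ by \cref{wasp2}. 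The case $r=0$ is trivial by \cref{wasp1}.

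The only delicate point I expect is orientation bookkeeping. For each of $\Ddown\Dup$ and $\Dup\Ddown$ one must check which of \cref{spark} and the fourth relation of \cref{fire} gives $ba=1$. One must also confirm that the bubble arising as $ab$ is the counterclockwise one (equal to $1$) for $\Ddown\Dup$ and the clockwise one ($\Theta_1$) for $\Dup\Ddown$, with the correct region label. Region labels $n<2$ cause no trouble: the relevant diagrams vanish, consistent with \cref{hornet1}.
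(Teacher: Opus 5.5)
Your proposal is correct and follows essentially the same route as the paper: \cref{frisbee1,frisbee2} are identities of idempotents coming from \cref{crisp1,smush}, the $r=1$ cases of \cref{hacky} come from the cyan cups and caps together with \cref{smoke}, \cref{spark} and the last relation of \cref{fire}, and general $r$ follows by induction using \cref{frisbee1} and \cref{wasp2}. The differences are cosmetic. You invoke the split-idempotent criterion, whereas the paper writes the cap and cup as explicit mutually inverse maps $\Dup\Ddown 1_n \leftrightarrows \Theta_1 1_n$ via \cref{smarty}. In the induction you apply the $r=1$ case to the innermost pair $\Dup\Ddown$ and the hypothesis to the outer block, while the paper does the reverse.
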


\begin{proof}
    The isomorphisms \cref{frisbee1} follow from \cref{crisp1}.  The isomorphisms \cref{frisbee2} follow from \cref{smush}.
    
    The first isomorphism in \cref{hacky} follows from the fact that the morphisms
    \[
        \leftcapreg[D]{n} \colon \Ddown \Dup 1_n \to 1_n
        \qquad \text{and} \qquad
        \rightcupreg[D]{n} \colon 1_n \to \Ddown \Dup 1_n
    \]
    are mutually inverse by the last relation in \cref{fire} and the second relation in \cref{smoke}.  
    
    We prove the second isomorphism in \cref{hacky} by induction on $r$.  The isomorphism
    \[
        \Dup[r] \Ddown[r] 1_n \cong \Theta_r 1_n
    \]
    is trivial for $n < 2r$ and for $r=0$.  Thus, we assume that $n \ge 2r$ and $r \ge 1$.  The base case $r=1$ follows from the fact that the morphisms
    \[
        \rightcapreg[D]{n}
        \overset{\cref{smarty}}{=}
        \begin{tikzpicture}[centerzero]
            \draw[D,->] (-0.25,-0.3) -- (-0.25,0) to[out=up,in=up,looseness=2] (0.25,0) -- (0.25,-0.3);
            \bubright[D]{-0.7,0};
            \region{0.6,0}{n};
        \end{tikzpicture}
        \colon \Dup \Ddown 1_n \to \Theta_1 1_n
    \]
    and
    \[
        \leftcupreg[D]{n}
        \overset{\cref{smarty}}{=}
        \begin{tikzpicture}[centerzero]
            \draw[D,<-] (-0.25,0.3) -- (-0.25,0) to[out=down,in=down,looseness=2] (0.25,0) -- (0.25,0.3);
            \bubright[D]{-0.7,0};
            \region{0.6,0}{n};
        \end{tikzpicture}
        \colon \Theta_1 1_n \to \Dup \Ddown 1_n
    \]
    are mutually inverse by \cref{bubrec,headroom}.  Then, assuming the result for some $r \ge 1$, we have
    \[
        \Dup[r+1] \Ddown[r+1] 1_n
        \cong \Dup \Theta_r \Ddown 1_n
        \overset{\cref{frisbee1}}{\cong} \Theta_{r+1} \Dup \Ddown 1_n
        \overset{\cref{hacky}}{\cong} \Theta_{r+1} \Theta_1 1_n
        \overset{\cref{wasp3}}{\cong} \Theta_{r+1} 1_n.
        \qedhere
    \]
\end{proof}

\begin{prop} \label{zebra}
    The following isomorphisms hold in $\Kar(\Dmon)$:
    \begin{align} \label{zebra1}
        \Sigma_r \Sdown \Sigma_r \Sup \Sigma_r &\cong \Sigma_r,&
        r \in \Z,
        \\ \label{zebra2}
        \Sigma_r \Sup \Sigma_{r-1} \Sdown \Sigma_r &\cong \Sigma_r,&
        r \in \Z,
        \\ \label{zebra3}
        \Sigma_{r+1} \Sup \Sigma_r \Sup \Sigma_r
        &\cong \Dup \Sigma_r,&
        r \in \Z,
        \\ \label{zebra4}
        \Sigma_r \Sdown \Sigma_r \Sdown \Sigma_{r+1} &\cong \Sigma_r \Ddown,&
        r \in \Z.
    \end{align}
\end{prop}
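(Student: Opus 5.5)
The plan is to exhibit, for each isomorphism, an explicit pair of 2-morphisms between the two idempotent-cut objects, and to check that the two composites are invertible scalar multiples of the respective identity idempotents. It suffices to work componentwise, i.e.\ to prove each statement after composing on the right with $1_n$ for every $n \in \N$. When $r<0$ or $2r>n$ (with the appropriate index shifts), both sides vanish by \cref{hornet2} together with \cref{frisbee2}, so we may assume the relevant $\Sigma$'s are nonzero. Relation \cref{zebra4} will follow from \cref{zebra3} by applying $\bR$ from \cref{rotatemon}, since $\bR(\Sigma_r)=\Sigma_r$, $\bR(\Dup)=\Ddown$ and $\bR(\Sup)=\Sdown$. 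This leaves three cases.

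For \cref{zebra1}, take the map $\Sigma_r 1_n \to \Sigma_r\Sdown\Sigma_r\Sup\Sigma_r 1_n$ to be the cup $\rightcup[S]$ decorated with $\freeprojector{r}$ in each of the three regions. Take the map in the opposite direction to be the similarly decorated cap $\leftcap[S]$, rescaled by the inverse of a scalar to be determined.
\begin{itemize}
    \item \emph{Cap after cup.} This composite is a counterclockwise black bubble containing $\freeprojector{r}$, multiplied by $\freeprojector{r}$. By the second equality in \cref{wind} and \cref{absorb}, it equals $\frac{\Delta_{n-2r+1}}{\Delta_{n-2r}}\freeprojector{r}$; the $\freeprojector{r-1}$ term is killed by \cref{absorb}. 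The scalar is invertible by \cref{generic}.
    \item \emph{Cup after cap.} This composite is exactly the left-hand side of \cref{camel2}. On the right-hand side of \cref{camel2}, the first term is the same scalar times the identity idempotent of $\Sigma_r\Sdown\Sigma_r\Sup\Sigma_r$. The second term carries outer projectors $\freeprojector{r-1}$, so it is annihilated by the outer $\freeprojector{r}$ via \cref{absorb}.
\end{itemize}

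For \cref{zebra2}, the argument is identical with orientations reversed and the middle projector equal to $\freeprojector{r-1}$. The bubble is now evaluated by the first equality in \cref{wind}, which gives the same scalar $\frac{\Delta_{n-2r+1}}{\Delta_{n-2r}}$. The other composite is evaluated by \cref{camel1}, where again only the term with outer projectors $\freeprojector{r}$ survives.

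For \cref{zebra3}, use the split $\splitup$ decorated with $\freeprojector{r}$ on the right, $\freeprojector{r}$ in the middle and $\freeprojector{r+1}$ on the left. The left label is consistent with the right one by \cref{smush}. In the opposite direction use the similarly decorated merge $\mergeup$, rescaled.
\begin{itemize}
    \item \emph{Split after merge.} This is the left-hand side of \cref{camel3}. Its first right-hand term is a scalar times the identity idempotent $\freeprojector{r+1}\,\Sup\,\freeprojector{r}\,\Sup\,\freeprojector{r}$. Its second term has left projector $\freeprojector{r}$, which is killed by $\freeprojector{r+1}$ through \cref{absorb}.
    \item \emph{Merge after split.} This composite on $\Dup\Sigma_r$ is a thin-black digon with $\freeprojector{r}$ inside. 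Using \cref{fire}, as in the proofs of \cref{water} and \cref{smarty}, rewrite it as $\Dup$ beside a black bubble enclosing $\freeprojector{r}$. Then evaluate that bubble with \cref{wind}, and use \cref{absorb} and \cref{smush} to discard the extra projector term.
\end{itemize}

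I expect the main obstacle to be this last bubble computation. The difficulty is bookkeeping rather than conceptual: one must track region labels (the bubble sits in region $n+1$ or $n$, with projector index $r$ or $r\pm1$) and check that the scalar obtained matches the one from \cref{camel3}, or at least is invertible. Any mismatch can be absorbed by choosing the rescaling of the inverse map appropriately, so a clean match is not essential. The boundary cases, where $2r=n$ or $2r=n+1$ makes some coefficient formally undefined, will be handled by the convention in \cref{dune} that those terms are zero, together with \cref{abyss}.
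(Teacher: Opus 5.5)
Your construction is the paper's: the same projector-decorated cups, caps, splits and merges, the same evaluations via \cref{wind}, \cref{camel1,camel2,camel3} and \cref{absorb}, the same digon-to-bubble computation for \cref{zebra3}, and \cref{zebra4} obtained from \cref{zebra3} by $\bR$. For \cref{zebra1}, \cref{zebra3}, \cref{zebra4}, and for \cref{zebra2} with $r \neq 0$, this gives a complete proof.

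There is one case your reduction misses, and no argument can handle it, because the statement is false there: $r=0$ in \cref{zebra2}.
\begin{itemize}
\item \emph{Why it fails.} The middle factor is $\Sigma_{-1}$, which is the zero object because $\sigma_{-1}=0$. Hence $\Sigma_0 \Sup \Sigma_{-1} \Sdown \Sigma_0 1_n \cong 0_n$ for every $n$. But $\Sigma_0 1_n \not\cong 0_n$, since $\bF$ sends $\sigma_0$ in region $n$ to right multiplication by $\JW_n \neq 0$ (\cref{fiction}). In terms of modules, $L^n_n$ restricts to $L^{n-1}_{n-1}$, which has no summand of depth $n+1$.
\item \emph{Why your reduction misses it.} You reduce by saying that when $r<0$ or $2r>n$ both sides vanish. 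That does not cover $r=0$, where only the middle projector is zero.
\item \emph{The step that breaks.} You evaluate the clockwise bubble by the first equality of \cref{wind}. At $r=0$ that bubble encloses $\sigma_{-1}=0$ and is therefore zero. The counterclockwise bubble around $\sigma_0$, however, equals $\frac{\Delta_{n+1}}{\Delta_n}\sigma_0$ by \cref{coal}. The derivation of that first equality applies \cref{apple2} with index $r-1=-1$, outside its range $r \in \N$, so the equality holds only for $r \neq 0$.
\end{itemize}
So your cap-after-cup composite at $r=0$ is $0$, not an invertible multiple of $\sigma_0$. The paper's own proof of \cref{zebra2} relies on the same equality at $r=0$. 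The statement should be restricted to $r \neq 0$, in the same spirit as the restrictions in \cref{donkey}.

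A minor aside: if the two composites were different invertible scalars, no single rescaling would give a two-sided inverse. You do not need one, since a left inverse and a right inverse already make the map an isomorphism. In any case both scalars here equal $\Delta_{n-2r+1}/\Delta_{n-2r}$.
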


\begin{proof}
    After adding $1_n$ on the right of all the expressions, the isomorphisms are trivial when $2r>n$.  Thus, we assume that $2r \le n$.  By \cref{wind,camel2,absorb}, we have
    \[
        \freeprojector{r}\ \leftbubmultreg[S]{\freeprojector{r}}{n}
        = \frac{\Delta_{n-2r+1}}{\Delta_{n-2r}}\ \freeprojectorreg{r}{n}
        \quad \text{and} \quad
        \begin{tikzpicture}[centerzero]
            \draw[S,->] (-0.4,0.8) -- (-0.4,0.6) to[out=down,in=down,looseness=2] (0.4,0.6) -- (0.4,0.8);
            \projector{0,0.5}{r};
            \draw[S,<-] (-0.4,-0.8) -- (-0.4,-0.6) to[out=up,in=up,looseness=2] (0.4,-0.6) -- (0.4,-0.8);
            \projector{0,-0.5}{r};
            \region{0.4,0}{n};
            \projector{-0.7,0}{r};
        \end{tikzpicture}
        = \frac{\Delta_{n-2r+1}}{\Delta_{n-2r}}\
        \begin{tikzpicture}[centerzero]
            \draw[S,<-] (-0.4,-0.8) -- (-0.4,0.8);
            \draw[S,->] (0.4,-0.8) -- (0.4,0.8);
            \projector{0,0}{r};
            \projector{-0.7,0}{r};
            \projector{0.7,0}{r};
            \region{0.8,0.5}{n};
        \end{tikzpicture}
        \ .
    \]
    It follows that
    \[
        \begin{tikzpicture}[anchorbase]
            \draw[S,->] (-0.4,0.8) -- (-0.4,0.6) to[out=down,in=down,looseness=2] (0.4,0.6) -- (0.4,0.8);
            \projector{0,0.5}{r};
            \region{0.6,0.5}{n};
            \projector{-0.7,0.5}{r};
        \end{tikzpicture}
        \colon \Sigma_r 1_n \to \Sigma_r \Sdown \Sigma_r \Sup \Sigma_r 1_n
    \]
    and
    \[
        \frac{\Delta_{n-2r}}{\Delta_{n-2r+1}}\
        \begin{tikzpicture}[anchorbase]
            \draw[S,<-] (-0.4,-0.8) -- (-0.4,-0.6) to[out=up,in=up,looseness=2] (0.4,-0.6) -- (0.4,-0.8);
            \projector{0,-0.5}{r};
            \region{0.6,-0.5}{n};
            \projector{-0.8,-0.5}{r};
        \end{tikzpicture}
        \colon \Sigma_r \Sdown \Sigma_r \Sup \Sigma_r 1_n \to \Sigma_r 1_n
    \]
    are mutually inverse isomorphisms.  This proves \cref{zebra1}.
    
    Now we prove \cref{zebra2}.  By \cref{wind,camel1,absorb}, we have, for $2r \le n$,
    \[
        \freeprojector{r}\ \rightbubmultreg[S]{\freeprojector{r-1}}{n}
        = \frac{\Delta_{n-2r+1}}{\Delta_{n-2r}}\ \freeprojectorreg{r}{n}
        \quad \text{and} \quad
        \begin{tikzpicture}[centerzero]
            \draw[S,<-] (-0.5,0.8) -- (-0.5,0.6) to[out=down,in=down,looseness=1.7] (0.5,0.6) -- (0.5,0.8);
            \projector{0,0.5}{r-1};
            \draw[S,->] (-0.5,-0.8) -- (-0.5,-0.6) to[out=up,in=up,looseness=2] (0.5,-0.6) -- (0.5,-0.8);
            \projector{0,-0.5}{r-1};
            \region{0.5,0}{n};
            \projector{-0.7,0}{r};
        \end{tikzpicture}
        = \frac{\Delta_{n-2r+1}}{\Delta_{n-2r}}\
        \begin{tikzpicture}[centerzero]
            \draw[S,->] (-0.5,-0.8) -- (-0.5,0.8);
            \draw[S,<-] (0.5,-0.8) -- (0.5,0.8);
            \projector{0,0}{r-1};
            \projector{-0.85,0}{r};
            \projector{0.85,0}{r};
            \region{0.8,0.5}{n};
        \end{tikzpicture}
        \ .
    \]
    Thus,
    \[
        \begin{tikzpicture}[anchorbase]
            \draw[S,<-] (-0.5,0.8) -- (-0.5,0.6) to[out=down,in=down,looseness=2] (0.5,0.6) -- (0.5,0.8);
            \projector{0,0.5}{r-1};
            \region{0.7,0.5}{n};
            \projector{-0.8,0.5}{r};
        \end{tikzpicture}
        \colon \Sigma_r 1_n \to \Sigma_r \Sup \Sigma_{r-1} \Sdown \Sigma_r 1_n
    \]
    and
    \[
        \frac{\Delta_{n-2r}}{\Delta_{n-2r+1}}\
        \begin{tikzpicture}[anchorbase]
            \draw[S,->] (-0.5,-0.8) -- (-0.5,-0.6) to[out=up,in=up,looseness=2] (0.5,-0.6) -- (0.5,-0.8);
            \projector{0,-0.5}{r-1};
            \region{0.7,-0.5}{n};
            \projector{-0.9,-0.5}{r};
        \end{tikzpicture}
        \colon \Sigma_r \Sup \Sigma_{r-1} \Sdown \Sigma_r 1_n \to \Sigma_r 1_n
    \]
    are mutually inverse isomorphisms.
        
    Next, we prove \cref{zebra3}.  By \cref{camel3,absorb},
    \[
        \begin{tikzpicture}[centerzero]
            \draw[S,<->] (-0.4,0.8) -- (-0.4,0.6) to[out=down,in=left] (0,0.15) to[out=right,in=down] (0.4,0.6) -- (0.4,0.8);
            \projector{0,0.5}{r};
            \draw[S] (-0.4,-0.8) -- (-0.4,-0.6) to[out=up,in=left] (0,-0.15) to[out=right,in=up] (0.4,-0.6) -- (0.4,-0.8);
            \projector{0,-0.5}{r};
            \draw[D] (0,-0.15) -- (0,0.15);
            \region{0.6,0.4}{n};
            \projector{0.4,0}{r};
        \end{tikzpicture}
        = \frac{\Delta_{n-2r+1}}{\Delta_{n-2r}}\
        \begin{tikzpicture}[centerzero]
            \draw[S,->] (-0.4,-0.8) -- (-0.4,0.8);
            \draw[S,->] (0.4,-0.8) -- (0.4,0.8);
            \projector{0,0}{r};
            \projector{-0.9,0}{r+1};
            \projector{0.7,0}{r};
            \region{0.8,0.5}{n};
        \end{tikzpicture}
        \ .
    \]
    We also have
    \[
        \begin{tikzpicture}[centerzero]
            \draw[D] (0,-0.7) -- (0,-0.4);
            \draw[S] (0,-0.4) to[out=right,in=down] (0.4,0) to[out=up,in=right] (0,0.4);
            \draw[S] (0,-0.4) to[out=left,in=down] (-0.4,0) to[out=up,in=left] (0,0.4);
            \draw[D,->] (0,0.4) -- (0,0.7);
            \projector{0,0}{r};
            \projector{0.7,-0.35}{r};
        \end{tikzpicture}
        \overset{\cref{fire}}{=}
        \begin{tikzpicture}[centerzero]
            \draw[D,->] (0,-0.7) -- (0,0.7);
            \bubleftmult[S]{0.6,0.2}{\freeprojector{r}};
            \projector{0.6,-0.4}{r};
        \end{tikzpicture}
        \overset{\cref{wind}}{=} \frac{\Delta_{n-2r+1}}{\Delta_{n-2r}}\
        \begin{tikzpicture}[centerzero]
            \draw[D,->] (0,-0.7) -- (0,0.7);
            \projector{0.5,0}{r};
        \end{tikzpicture}
        \ .
    \]
    Thus,
    \[
        \begin{tikzpicture}[anchorbase]
            \draw[S,<->] (-0.4,0.8) -- (-0.4,0.6) to[out=down,in=left] (0,0.15) to[out=right,in=down] (0.4,0.6) -- (0.4,0.8);
            \projector{0,0.5}{r};
            \draw[D] (0,-0.3) -- (0,0.15);
            \region{0.6,0.4}{n};
            \projector{0.4,0}{r};
        \end{tikzpicture}
        \overset{\cref{absorb}}{\underset{\cref{smush}}{=}}
        \begin{tikzpicture}[anchorbase]
            \draw[S,<->] (-0.4,0.8) -- (-0.4,0.6) to[out=down,in=left] (0,0.15) to[out=right,in=down] (0.4,0.6) -- (0.4,0.8);
            \projector{0,0.5}{r};
            \draw[D] (0,-0.3) -- (0,0.15);
            \region{0.6,0.4}{n};
            \projector{0.4,0}{r};
            \projector{-0.6,0}{r+1};
        \end{tikzpicture}
        \colon \Dup \Sigma_r 1_n \to \Sigma_{r+1} \Sup \Sigma_r \Sup \Sigma_r 1_n
    \]
    and
    \[
        \frac{\Delta_{n-2r}}{\Delta_{n-2r+1}}\ 
        \begin{tikzpicture}[centerzero]
            \draw[S] (-0.4,-0.8) -- (-0.4,-0.6) to[out=up,in=left] (0,-0.15) to[out=right,in=up] (0.4,-0.6) -- (0.4,-0.8);
            \projector{0,-0.5}{r};
            \draw[D,->] (0,-0.15) -- (0,0.4);
            \region{0.6,-0.4}{n};
            \projector{0.4,0}{r};
        \end{tikzpicture}
        \overset{\cref{absorb}}{\underset{\cref{smush}}{=}} \frac{\Delta_{n-2r}}{\Delta_{n-2r+1}}\ 
        \begin{tikzpicture}[centerzero]
            \draw[S] (-0.4,-0.8) -- (-0.4,-0.6) to[out=up,in=left] (0,-0.15) to[out=right,in=up] (0.4,-0.6) -- (0.4,-0.8);
            \projector{0,-0.5}{r};
            \draw[D,->] (0,-0.15) -- (0,0.4);
            \region{0.6,-0.4}{n};
            \projector{0.4,0}{r};
            \projector{-0.6,0}{r+1};
        \end{tikzpicture}
        \colon \Sigma_{r+1} \Sup \Sigma_r \Sup \Sigma_r 1_n \to \Dup \Sigma_r 1_n
    \]
    are mutually inverse isomorphisms.  Finally, the isomorphism \cref{zebra4} follows from \cref{zebra3} by applying $\bR$.
\end{proof}

\begin{prop} \label{donkey}
    The following isomorphisms hold in $\Kar(\Dcat)$ for all $n \in \N$:
    \begin{align} \label{donkey1}
        \Sigma_r \Sdown \Sigma_{r+1} \Sup \Sigma_r 1_n &\cong \Sigma_r 1_n,&
        r \in \Z,\ 2r \ne n,
        \\ \label{donkey2}
        \Sigma_r \Sup \Sigma_r \Sdown \Sigma_r 1_n &\cong \Sigma_r 1_n,&
        r \in \Z,\ 2r \ne n,
        \\ \label{donkey3}
        \Sigma_{r+1} \Sup \Sigma_{r+1} \Sup \Sigma_r 1_n
        &\cong \Dup \Sigma_r 1_n,&
        r \in \Z,\ 2r \ne n,
        \\ \label{donkey4}
        \Sigma_{r-1} \Sdown \Sigma_r \Sdown \Sigma_r 1_n &\cong \Sigma_r \Ddown 1_n,&
        r \in \Z,\ 2r \ne n+2.
    \end{align}
\end{prop}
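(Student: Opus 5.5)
Let me mirror the proof of \cref{zebra}: in each case I exhibit an explicit pair of maps built from a cup (resp.\ cap or trivalent vertex) decorated with projectors, and check via \cref{dune} that they compose to the identity up to an invertible scalar. The scalars will be the \emph{second} coefficients in \cref{wind,camel1,camel2,camel3}, namely $\Delta_{n-2r+1}/\Delta_{n-2r+2}$ (suitably shifted). These are invertible by \cref{generic}, except in the boundary cases where the relevant $\Delta$ has index $-1$. That is exactly where the hypotheses $2r\ne n$ (resp.\ $2r \ne n+2$) enter.

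For \cref{donkey1}, I use
\[
\freeprojector{r}\ \leftbubmultreg[S]{\freeprojector{r+1}}{n}
\]
with \cref{wind} (with $r$ replaced by $r+1$) and \cref{absorb}. This gives $\frac{\Delta_{n-2r-1}}{\Delta_{n-2r}}\freeprojectorreg{r}{n}$, which is valid when $2r+2\le n+1$. If $2r>n$, both sides vanish by \cref{hornet2}. So the real cases are $2r\le n-1$, where $\Delta_{n-2r-1}$ is invertible; the excluded case $2r=n$ gives coefficient $\Delta_{-1}=0$.

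Next I compose the other way, using \cref{camel2} with $r$ replaced by $r+1$, sandwiched by $\Sigma_r$. The term with outer projectors $r+1$ is killed by \cref{absorb}, and the term with outer projectors $r$ survives with the same coefficient. So the cup $\Sigma_r1_n\to \Sigma_r\Sdown\Sigma_{r+1}\Sup\Sigma_r1_n$ and the rescaled cap are mutually inverse, exactly as in \cref{zebra1}.

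\Cref{donkey2} is handled the same way. I use \cref{wind} in the form $\rightbubmultreg[S]{\freeprojector{r}}{n}=\leftbubmultreg[S]{\freeprojector{r+1}}{n}$ together with \cref{camel1} at $r+1$, keeping the second term. \Cref{donkey3} follows the \cref{zebra3} argument, with \cref{camel3} replaced by its analogue. To get that analogue, I apply to \cref{camel2} (at $r+1$, shifted $n$) the same attach-a-trivalent-vertex operations used in deriving \cref{camel3} from \cref{camel1}. I then project onto the second summand via \cref{absorb}, and use \cref{water,wind} for the other composite $\Dup\Sigma_r\to\Dup\Sigma_r$. Finally, \cref{donkey4} follows from \cref{donkey3} by applying $\bR$ and reindexing $r\mapsto r-1$, with the regions relabelled. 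The condition becomes $2r\ne n+2$ because the rightmost region of $\Sigma_r\Ddown1_n$ sits at $n$ while $\Sigma_{r-1}$ is evaluated at $n-2$.

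The main obstacle is the bookkeeping of region labels and of which $\Delta$-index appears. I must confirm that the surviving coefficient in each case is $\Delta_{n-2r-1}/\Delta_{n-2r}$, or an equivalent ratio that is invertible precisely off the excluded value. I also need to check that the degenerate first-term convention of \cref{dune} (at $2r=n+1$) never interferes. For \cref{donkey3}, deriving the needed \cref{camel3}-type identity for the other summand is the step most likely to require genuine work.
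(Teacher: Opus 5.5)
You treat \cref{donkey1,donkey2} exactly as the paper does. You use \cref{wind} together with \cref{camel2} (resp.\ \cref{camel1}) at $r+1$, sandwiched by $\sigma_r$. Only the second summand survives, with coefficient $\Delta_{n-2r-1}/\Delta_{n-2r}$, which is invertible exactly when $2r\le n-1$.

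For \cref{donkey3} you do not need a new identity. \cref{camel3} with $r$ replaced by $r+1$ already has the required term as its second summand. Putting $\sigma_r$ in the rightmost region kills the first summand, whose right region carries $\sigma_{r+1}$. What remains is $\frac{\Delta_{n-2r-1}}{\Delta_{n-2r}}$ times the parallel strands decorated by $\sigma_{r+1},\sigma_{r+1},\sigma_r$, and this is what the paper uses. Your construction from \cref{camel2} (attaching the $\Dup$ on the left of $\Sdown\Sup$) just reproduces \cref{camel3}, so it is superfluous rather than wrong. The other composite is \cref{fire} followed by \cref{wind} at $r+1$.

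The real problem is \cref{donkey4}. Your justification of the condition $2r\ne n+2$ is not an actual computation, and the bookkeeping it covers is wrong. Applying $\bR$ to \cref{donkey3} reverses words, swaps $\pm$, and fixes the $\Sigma$'s. It gives $\Sigma_r\Sdown\Sigma_{r+1}\Sdown\Sigma_{r+1}1_{n+2}\cong\Sigma_r\Ddown 1_{n+2}$ for $2r\ne n$. Substituting $r\mapsto r-1$ and $n\mapsto n-2$ yields
\[
    \Sigma_{r-1}\Sdown\Sigma_r\Sdown\Sigma_r 1_n \cong \Sigma_{r-1}\Ddown 1_n \cong \Ddown\Sigma_r 1_n,
    \qquad 2r\ne n,
\]
where the last isomorphism is \cref{frisbee2}.

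The version you assert, with right-hand side $\Sigma_r\Ddown 1_n$ and condition $2r\ne n+2$, does not follow, and it is false as printed. For $r=0$, $n=2$, the left side contains $\Sigma_{-1}1_0\cong 0_0$ by \cref{hornet2}. But $\Sigma_0\Ddown 1_2\cong\Ddown 1_2\neq 0$, since its image under $\bF$ is $e_1\TLalg_2$. In general, the left side has depth $n-2r$ in its outer regions, whereas $\Sigma_r\Ddown 1_n$ has depth $n-2r-2$.

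So the displayed \cref{donkey4} has an index typo. The paper's one-line proof (apply $\bR$ to \cref{donkey3}) establishes the corrected statement above. Your write-up should target that statement rather than rationalize the printed indices.
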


\begin{proof}
    Since the isomorphisms are trivial when $2r>n$, we assume that $2r \le n$.  First, we prove \cref{donkey1}.  By \cref{wind,camel2,absorb}, we have, for $2r < n$,
    \[
        \freeprojector{r}\ \leftbubmultreg[S]{\freeprojector{r+1}}{n}
        = \frac{\Delta_{n-2r-1}}{\Delta_{n-2r}}\ \freeprojectorreg{r}{n}
        \quad \text{and} \quad
        \begin{tikzpicture}[centerzero]
            \draw[S,->] (-0.5,0.8) -- (-0.5,0.6) to[out=down,in=down,looseness=1.7] (0.5,0.6) -- (0.5,0.8);
            \projector{0,0.5}{r+1};
            \draw[S,<-] (-0.5,-0.8) -- (-0.5,-0.6) to[out=up,in=up,looseness=2] (0.5,-0.6) -- (0.5,-0.8);
            \projector{0,-0.5}{r+1};
            \region{0.5,0}{n};
            \projector{-0.7,0}{r};
        \end{tikzpicture}
        = \frac{\Delta_{n-2r-1}}{\Delta_{n-2r}}\
        \begin{tikzpicture}[centerzero]
            \draw[S,<-] (-0.5,-0.8) -- (-0.5,0.8);
            \draw[S,->] (0.5,-0.8) -- (0.5,0.8);
            \projector{0,0}{r+1};
            \projector{-0.85,0}{r};
            \projector{0.85,0}{r};
            \region{0.8,0.5}{n};
        \end{tikzpicture}
        \ .
    \]
    (Note that the condition $2r<n$ arises from the fact that we use \cref{wind,camel2} with $r$ replaced by $r+1$.  This is why we have the condition $2r \ne n$ in \cref{donkey1}.)  It follows that
    \[
        \begin{tikzpicture}[anchorbase]
            \draw[S,->] (-0.5,0.8) -- (-0.5,0.6) to[out=down,in=down,looseness=2] (0.5,0.6) -- (0.5,0.8);
            \projector{0,0.5}{r+1};
            \region{0.7,0.5}{n};
            \projector{-0.8,0.5}{r};
        \end{tikzpicture}
        \colon \Sigma_r 1_n \to \Sigma_r \Sdown \Sigma_{r+1} \Sup \Sigma_r 1_n
    \]
    and
    \[
        \frac{\Delta_{n-2r}}{\Delta_{n-2r-1}}\
        \begin{tikzpicture}[anchorbase]
            \draw[S,<-] (-0.5,-0.8) -- (-0.5,-0.6) to[out=up,in=up,looseness=2] (0.5,-0.6) -- (0.5,-0.8);
            \projector{0,-0.5}{r+1};
            \region{0.7,-0.5}{n};
            \projector{-0.9,-0.5}{r};
        \end{tikzpicture}
        \colon \Sigma_r \Sdown \Sigma_{r+1} \Sup \Sigma_r 1_n \to \Sigma_r 1_n
    \]
    are mutually inverse isomorphisms.  The proof of \cref{donkey2} is analogous, using \cref{wind,camel1,absorb}.
    \details{
        We prove \cref{donkey2}.  By \cref{wind,camel1,absorb}, we have, for $2r<n$,
        \[
            \freeprojector{r}\ \rightbubmultreg[S]{\freeprojector{r}}{n}
            = \frac{\Delta_{n-2r-1}}{\Delta_{n-2r}}\ \freeprojectorreg{r}{n}
            \quad \text{and} \quad
            \begin{tikzpicture}[centerzero]
                \draw[S,<-] (-0.4,0.8) -- (-0.4,0.6) to[out=down,in=down,looseness=2] (0.4,0.6) -- (0.4,0.8);
                \projector{0,0.5}{r};
                \draw[S,->] (-0.4,-0.8) -- (-0.4,-0.6) to[out=up,in=up,looseness=2] (0.4,-0.6) -- (0.4,-0.8);
                \projector{0,-0.5}{r};
                \region{0.4,0}{n};
                \projector{-0.7,0}{r};
            \end{tikzpicture}
            = \frac{\Delta_{n-2r+1}}{\Delta_{n-2r}}\
            \begin{tikzpicture}[centerzero]
                \draw[S,->] (-0.4,-0.8) -- (-0.4,0.8);
                \draw[S,<-] (0.4,-0.8) -- (0.4,0.8);
                \projector{0,0}{r};
                \projector{-0.7,0}{r};
                \projector{0.7,0}{r};
                \region{0.8,0.5}{n};
            \end{tikzpicture}
            \ .
        \]
        Thus
        \[
            \begin{tikzpicture}[anchorbase]
                \draw[S,<-] (-0.4,0.8) -- (-0.4,0.6) to[out=down,in=down,looseness=2] (0.4,0.6) -- (0.4,0.8);
                \projector{0,0.5}{r};
                \region{0.6,0.5}{n};
                \projector{-0.7,0.5}{r};
            \end{tikzpicture}
            \colon \Sigma_r 1_n \to \Sigma_r \Sup \Sigma_r \Sdown \Sigma_r 1_n
        \]
        and
        \[
            \frac{\Delta_{n-2r}}{\Delta_{n-2r+1}}\
            \begin{tikzpicture}[anchorbase]
                \draw[S,->] (-0.4,-0.8) -- (-0.4,-0.6) to[out=up,in=up,looseness=2] (0.4,-0.6) -- (0.4,-0.8);
                \projector{0,-0.5}{r};
                \region{0.6,-0.5}{n};
                \projector{-0.8,-0.5}{r};
            \end{tikzpicture}
            \colon \Sigma_r \Sup \Sigma_r \Sdown \Sigma_r 1_n \to \Sigma_r 1_n
        \]
        are mutually inverse isomorphisms.
    }
    
    The proof of \cref{donkey3} is similar to the proof of \cref{zebra3}.
    \details{
        By \cref{camel3,absorb}, we have, for $2r < n$,
        \[
            \begin{tikzpicture}[centerzero]
                \draw[S,<->] (-0.6,0.8) -- (-0.6,0.6) to[out=down,in=left] (0,0.15) to[out=right,in=down] (0.6,0.6) -- (0.6,0.8);
                \projector{0,0.5}{r+1};
                \draw[S] (-0.6,-0.8) -- (-0.6,-0.6) to[out=up,in=left] (0,-0.15) to[out=right,in=up] (0.6,-0.6) -- (0.6,-0.8);
                \projector{0,-0.5}{r+1};
                \draw[D] (0,-0.15) -- (0,0.15);
                \region{0.8,0.4}{n};
                \projector{0.6,0}{r};
            \end{tikzpicture}
            = \frac{\Delta_{n-2r-1}}{\Delta_{n-2r}}\
            \begin{tikzpicture}[centerzero]
                \draw[S,->] (-0.5,-0.8) -- (-0.5,0.8);
                \draw[S,->] (0.55,-0.8) -- (0.55,0.8);
                \projector{0,0}{r+1};
                \projector{-1,0}{r+1};
                \projector{0.9,0}{r};
                \region{0.8,0.5}{n};
            \end{tikzpicture}
            \ .
        \]
        We also have
        \[
            \begin{tikzpicture}[centerzero]
                \draw[D] (0,-0.7) -- (0,-0.4);
                \draw[S] (0,-0.4) to[out=right,in=down] (0.5,0) to[out=up,in=right] (0,0.4);
                \draw[S] (0,-0.4) to[out=left,in=down] (-0.5,0) to[out=up,in=left] (0,0.4);
                \draw[D,->] (0,0.4) -- (0,0.7);
                \projector{0,0}{r+1};
                \projector{0.7,-0.35}{r};
            \end{tikzpicture}
            \overset{\cref{fire}}{=}
            \begin{tikzpicture}[centerzero]
                \draw[D,->] (0,-0.7) -- (0,0.7);
                \bubleftmult[S]{0.8,0.2}{\freeprojector{r+1}};
                \projector{0.6,-0.4}{r};
            \end{tikzpicture}
            \overset{\cref{wind}}{=} \frac{\Delta_{n-2r-1}}{\Delta_{n-2r}}\
            \begin{tikzpicture}[centerzero]
                \draw[D,->] (0,-0.7) -- (0,0.7);
                \projector{0.5,0}{r};
            \end{tikzpicture}
            \ .
        \]
        Thus
        \[
            \begin{tikzpicture}[anchorbase]
                \draw[S,<->] (-0.5,0.8) -- (-0.5,0.6) to[out=down,in=left] (0,0.15) to[out=right,in=down] (0.5,0.6) -- (0.5,0.8);
                \projector{0,0.5}{r+1};
                \draw[D] (0,-0.3) -- (0,0.15);
                \region{0.7,0.4}{n};
                \projector{0.5,0}{r};
            \end{tikzpicture}
            \overset{\cref{absorb}}{\underset{\cref{smush}}{=}}
            \begin{tikzpicture}[anchorbase]
                \draw[S,<->] (-0.5,0.8) -- (-0.5,0.6) to[out=down,in=left] (0,0.15) to[out=right,in=down] (0.5,0.6) -- (0.5,0.8);
                \projector{0,0.5}{r+1};
                \draw[D] (0,-0.3) -- (0,0.15);
                \region{0.7,0.4}{n};
                \projector{0.5,0}{r};
                \projector{-0.7,0}{r+1};
            \end{tikzpicture}
            \colon \Dup \Sigma_r 1_n \to \Sigma_{r+1} \Sup \Sigma_{r+1} \Sup \Sigma_r 1_n
        \]
        and
        \[
            \frac{\Delta_{n-2r}}{\Delta_{n-2r-1}}\ 
            \begin{tikzpicture}[centerzero]
                \draw[S] (-0.5,-0.8) -- (-0.5,-0.6) to[out=up,in=left] (0,-0.15) to[out=right,in=up] (0.5,-0.6) -- (0.5,-0.8);
                \projector{0,-0.5}{r+1};
                \draw[D,->] (0,-0.15) -- (0,0.4);
                \region{0.7,-0.4}{n};
                \projector{0.5,0}{r};
            \end{tikzpicture}
            \overset{\cref{absorb}}{\underset{\cref{smush}}{=}} \frac{\Delta_{n-2r}}{\Delta_{n-2r-1}}\ 
            \begin{tikzpicture}[centerzero]
                \draw[S] (-0.5,-0.8) -- (-0.5,-0.6) to[out=up,in=left] (0,-0.15) to[out=right,in=up] (0.5,-0.6) -- (0.5,-0.8);
                \projector{0,-0.5}{r+1};
                \draw[D,->] (0,-0.15) -- (0,0.4);
                \region{0.7,-0.4}{n};
                \projector{0.5,0}{r};
                \projector{-0.7,0}{r+1};
            \end{tikzpicture}
            \colon \Sigma_{r+1} \Sup \Sigma_{r+1} \Sup \Sigma_r 1_n \to \Dup \Sigma_r 1_n
        \]
        are mutually inverse isomorphisms.
    }
    Finally, \cref{donkey4} follows from applying the functor $\bR$ to \cref{donkey3}.
\end{proof}

\begin{cor} \label{kona}
    The following isomorphism holds in $\Kar(\Dmon)$:
    \begin{gather} \label{ccr}
        \Sdown \Sup \cong \Sup \Sdown \oplus \Sigma_0.
    \end{gather}
    Furthermore, the following isomorphism holds in $\Add(\Dmon)$:
    \begin{gather} \label{ccrmon}
        \Sdown \Sup \oplus \Dup \Ddown \cong \Sup \Sdown \oplus \one.
    \end{gather}
\end{cor}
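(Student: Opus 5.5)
The plan is to work one object at a time. It suffices to prove $\Sdown\Sup 1_n \cong \Sup\Sdown 1_n \oplus \Sigma_0 1_n$ in $\Kar(\Dcat)$ for each $n \in \N$. I will decompose both sides using the complete family of orthogonal idempotents $\freeprojector{r}$ from \cref{spread,absorb}, inserted into each of the three regions.

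\emph{Step 1 (decomposition).} Thus
\[
  \Sdown\Sup 1_n \cong \bigoplus_{a,b,c} \Sigma_a \Sdown \Sigma_b \Sup \Sigma_c 1_n,
  \qquad
  \Sup\Sdown 1_n \cong \bigoplus_{a,b,c} \Sigma_a \Sup \Sigma_b \Sdown \Sigma_c 1_n .
\]
By \cref{portis}, together with its images under $\bR$ for downward strands, only triples with adjacent labels differing by $0$ or $\pm 1$ survive.
\begin{itemize}
  \item For $\Sdown\Sup$ this means $b \in \{c,c+1\}$ and $a \in \{b-1,b\}$.
  \item For $\Sup\Sdown$ this means $b \in \{c-1,c\}$ and $a \in \{b,b+1\}$.
\end{itemize}
\cref{hornet2} kills every term in which some label is out of range for its region.

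\emph{Step 2 (diagonal summands).} I first treat the summands with $a=c$.
\begin{itemize}
  \item In $\Sdown\Sup 1_n$: by \cref{zebra1}, $\Sigma_r\Sdown\Sigma_r\Sup\Sigma_r 1_n \cong \Sigma_r 1_n$ for every $r$. By \cref{donkey1}, $\Sigma_r\Sdown\Sigma_{r+1}\Sup\Sigma_r 1_n \cong \Sigma_r 1_n$ for $2r \neq n$; when $2r=n$ the middle label is out of range at $n+1$, so that term is $0$.
  \item In $\Sup\Sdown 1_n$: by \cref{zebra2}, $\Sigma_r\Sup\Sigma_{r-1}\Sdown\Sigma_r 1_n \cong \Sigma_r 1_n$; this vanishes for $r=0$ by \cref{abyss}. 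By \cref{donkey2}, $\Sigma_r\Sup\Sigma_r\Sdown\Sigma_r 1_n \cong \Sigma_r 1_n$ for $2r \neq n$.
\end{itemize}
Carefully counting which $r$ occur at each $n$ (with $0 \le 2r \le n$), the diagonal part of $\Sdown\Sup 1_n$ should equal that of $\Sup\Sdown 1_n$ plus exactly one extra copy of $\Sigma_0 1_n$. As a sanity check, for $n=0$ we have $\Sup\Sdown 1_0 = 0$ and $\Sdown\Sup 1_0 \cong \Sigma_0 1_0$.

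\emph{Step 3 (off-diagonal summands).} The remaining summands have $a \ne c$, namely $(a,b,c)=(c-1,c,c)$ and $(c,c+1,c+1)$ types. I expect these to be the main obstacle. I would try to show that each one occurring in $\Sdown\Sup 1_n$ is isomorphic to the corresponding one in $\Sup\Sdown 1_n$ with the same outer labels; for example, match $\Sigma_{c-1}\Sdown\Sigma_c\Sup\Sigma_c 1_n$ with $\Sigma_{c-1}\Sup\Sigma_{c-1}\Sdown\Sigma_c 1_n$. The first attempt will use \cref{zebra3,zebra4,donkey3,donkey4} to rewrite both as a composite involving $\Dup$ or $\Ddown$ and a single $\Sigma$. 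The desired isomorphism should then come from \cref{isoms1,isoms2} and \cref{frisbee2}.

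If that fails, the fallback is to show directly that these idempotents vanish. This would rewrite $\Sigma_{c-1}(\,\cdot\,)\Sigma_c$ on opposite sides of $\Sdown\Sup$ by sliding the $\Dup$-bubbles through the strands, using \cref{tulum,switchy,leftbull}, and then invoking the orthogonality in \cref{absorb}. Either outcome gives \cref{ccr}.

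\emph{Step 4 (the statement in $\Add(\Dmon)$).} By \cref{wasp1,disect}, $\one \cong \Theta_0 \cong \Sigma_0 \oplus \Theta_1$, and by \cref{hacky}, $\Theta_1 \cong \Dup\Ddown$. Combining these with \cref{ccr} gives, in $\Kar(\Dmon)$,
\[
  \Sup\Sdown \oplus \one \cong \Sup\Sdown \oplus \Sigma_0 \oplus \Dup\Ddown \cong \Sdown\Sup \oplus \Dup\Ddown .
\]
Both outer objects lie in $\Add(\Dmon)$, and the embedding $\Add(\Dmon) \to \Kar(\Dmon)$ is fully faithful. Hence this isomorphism already holds in $\Add(\Dmon)$, which is \cref{ccrmon}.
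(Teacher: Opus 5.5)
Your Steps 1, 2 and 4 are sound. The diagonal count is right, and Step 4 is exactly how the paper passes from \cref{ccr} to \cref{ccrmon}. Your choice to set the $r=0$ term of \cref{zebra2} to zero is also correct: the middle factor there is $\Sigma_{-1}\cong\zero$, so that isomorphism should only be invoked for $r\ge1$.

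The gap is Step 3, which you leave open, and neither of your plans closes it. The off-diagonal summands of $\Sdown\Sup 1_n$ are $\Sigma_{c-1}\Sdown\Sigma_c\Sup\Sigma_c 1_n$ and its $\bR$-image $\Sigma_c\Sdown\Sigma_c\Sup\Sigma_{c-1}1_n$, with $c\ge1$ and $2c\le n$ (so your second type should read $(c+1,c+1,c)$). They carry the monotone depth paths $k\to k+1\to k+2$ and $k+2\to k+1\to k$, where $k=n-2c$.

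Plan A cannot produce the form you want. The isomorphisms \cref{zebra3,zebra4,donkey3,donkey4} only identify summands of $\Sup\Sup$ or $\Sdown\Sdown$ with \emph{returning} paths $k\to k\pm1\to k$. More decisively, any composite of $\Dupdown$'s with a single $\Sigma$ preserves depth by \cref{monkey}, whereas these summands shift depth by $2$.

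Plan B is impossible. By \cref{silent,sunny}, $\bF$ sends the idempotent cutting out the first summand to the projection onto $V_{(k,k+1,k+2)}\cong L^n_{k+2}\otimes_\kk(L^n_k)^\vee\neq0$, so that idempotent does not vanish. Hence \cref{ccr} is not established as written.

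The missing idea is how to move $\Sup$ past $\Ddown$ on a summand with $c\ge1$. By \cref{isoms2,hacky,wasp4},
\[
    \Sup\Ddown\Sigma_c \cong \Sdown\Dup\Ddown\Sigma_c \cong \Sdown\Theta_1\Sigma_c \cong \Sdown\Sigma_c .
\]
Together with \cref{isoms2,frisbee2}, this gives
\[
    \Sigma_{c-1}\Sup\Sigma_{c-1}\Sdown\Sigma_c
    \cong \Sigma_{c-1}\Sup\Ddown\Sigma_c\Sup\Sigma_c
    \cong \Sigma_{c-1}\Sdown\Sigma_c\Sup\Sigma_c .
\]
Applying $\bR$ handles the other type, and this would complete your matching.

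However, the same mechanism ($\Sdown\Dup\cong\Sup$ and $\Dup\Ddown\cong\Theta_1$) proves \cref{ccr} in one stroke, which is what the paper does. Insert $\one\cong\Theta_0\cong\Sigma_0\oplus\Dup\Ddown$ (\cref{wasp1,disect,hacky}) between $\Sdown$ and $\Sup$. Then $\Sdown\Dup\Ddown\Sup\cong\Sup\Sdown$ by \cref{isoms2}, and $\Sdown\Sigma_0\Sup\cong\Sigma_0\Sdown\Sigma_0\Sup\Sigma_0\cong\Sigma_0$ by \cref{portis,abyss} and \cref{zebra1} at $r=0$. That route needs no summand-by-summand bookkeeping and none of \cref{zebra2,donkey1,donkey2}.
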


\begin{proof}
    We have
    \begin{equation} \label{kona1}
        \Sdown \Sigma_0 \Sup
        \overset{\cref{portis}}{\underset{\cref{abyss}}{\cong}} \Sigma_0 \Sdown \Sigma_0 \Sup \Sigma_0
        \overset{\cref{zebra1}}{\cong} \Sigma_0.
    \end{equation}
    Thus,
    \[
        \Sdown \Sup
        \overset{\cref{wasp1}}{\cong} \Sdown \Theta_0 \Sup
        \overset{\cref{disect}}{\underset{\cref{hacky}}{\cong}} \Sdown \Dup \Ddown \Sup \oplus \Sdown \Sigma_0 \Sup
        \overset{\cref{isoms2}}{\underset{\cref{kona1}}{\cong}} \Sup \Sdown \oplus \Sigma_0,
    \]
    proving \cref{ccr}.
    
    In $\Kar(\Dcat)$, we then have
    \[
        \Sdown \Sup \oplus \Dup \Ddown
        \overset{\cref{ccr}}{\cong} \Sup \Sdown \oplus \Sigma_0 \oplus \Dup \Ddown
        \overset{\cref{disect}}{\underset{\cref{hacky}}{\cong}} \Sup \Sdown \oplus \Theta_0
        \overset{\cref{wasp1}}{\cong} \Sup \Sdown \oplus \one,
    \]
    implying \cref{ccrmon}.
\end{proof}

\begin{rem} \label{thai}
    \begin{enumerate}[wide]
        \item \label{thai1} The isomorphism \cref{zebra1} is fundamental for us, since it is used in the proof of \cref{kona}, which is one of the key isomorphisms used in our analysis of the Grothendieck ring of $\Dmon$; see the proof of \cref{Grothsurj}.  The other isomorphisms in \cref{zebra,donkey} are not used elsewhere in the current paper.  We include them to show that all of the isomorphisms of \cite[Table~1]{HS25} can be proved directly in the setting of the current paper.  
        \item \label{thai2} The conditions on $r$ in \cref{donkey1,donkey2,donkey3,donkey4} are necessary.  For the excluded values of $r$, it follows from \cref{abyss} that the left side of the equation is zero.  However, it will follow from our basis theorem (\cref{basisthm}) that the right sides are all nonzero for these values of $r$.
    \end{enumerate}
\end{rem}

%=================================
\section{Temperley--Lieb algebras}
%=================================

In this section we collect the crucial facts about Temperley--Lieb algebras that we will need.  Many of the basic facts (properties of Jones--Wenzl projectors, classification of modules, etc.) are well known.  For a thorough discussion of the results on induction, restriction, and matrix units, we refer the reader to \cite{Qui17} and \cite[Appendix]{HS25}.  We make the same assumptions on the ground field as in \cref{subsec:2cat}.

%----------------------------------------------------------------------------------------
\subsection{The Temperley--Lieb category and Jones--Wenzl projectors\label{subsec:TLcat}}
%----------------------------------------------------------------------------------------

Define $\TLcat(\delta)$ to be the $\kk$-linear strict monoidal category generated by the object $\Tobj$ and morphisms
\[
    \Tcap \colon \Tobj \otimes \Tobj \to \one,\qquad
    \Tcup \colon \one \to \Tobj \otimes \Tobj,
\]
subject to the relations
\begin{equation} \label{TLrels}
    \begin{tikzpicture}[centerzero,TL]
        \draw (-0.3,-0.4) -- (-0.3,0) arc(180:0:0.15) arc(180:360:0.15) -- (0.3,0.4);
    \end{tikzpicture}
    =
    \begin{tikzpicture}[centerzero,TL]
        \draw (0,-0.4) -- (0,0.4);
    \end{tikzpicture}
    =
    \begin{tikzpicture}[centerzero,TL]
        \draw (-0.3,0.4) -- (-0.3,0) arc(180:360:0.15) arc(180:0:0.15) -- (0.3,-0.4);
    \end{tikzpicture}
    \ ,\qquad
    \unbub[TL] = \delta 1_\one.
\end{equation}
For $n \in \N$,
\[
    \TLalg_n = \TLalg_n(\delta) := \TLcat(\Tobj^{\otimes n}, \Tobj^{\otimes n})
\]
is the Temperley--Lieb algebra on $n$ strands.  In Temperley--Lieb diagrams, we adopt the convention that
\begin{center}
    strings are numbered $1,2,\dotsc$ from \emph{right to left}.
\end{center}

We denote the unit element of $\TLalg_n$ by $1_n$ and the Jones--Wenzl projector by $\JW_n$.  We will sometimes denote multiple strands by a thick strand labelled by the multiplicity.  For example,
\[
    \begin{tikzpicture}[centerzero,TL]
        \draw[multi] (0,-0.2) \botlabel{n} -- (0,0.2);
    \end{tikzpicture}
    \, = 1_n.
\]
We will sometimes omit the multiplicity label when it can be deduced from the rest of the diagram.  We have the following recursive formula for the Jones--Wenzl projectors:
\begin{equation} \label{JWrecursion}
    \JW_0 = 1_\one,\qquad
    \JW_1 = 1_1,\qquad
    \begin{tikzpicture}[centerzero,TL]
        \draw[multi] (0,-1) -- (0,1);
        \coupon{0,0}{\JW_{n+1}};
    \end{tikzpicture}
    =
    \begin{tikzpicture}[centerzero,TL]
        \draw[multi] (0,-1) -- (0,1);
        \draw (-0.4,-1) -- (-0.4,1);
        \coupon{0,0}{\JW_n};
    \end{tikzpicture}
    - \frac{\Delta_{n-1}}{\Delta_n}
    \begin{tikzpicture}[centerzero,TL]
        \draw[multi] (0.15,-0.5) -- (0.15,0.5);
        \draw[multi] (0,0.5) -- (0,1);
        \draw[multi] (0,-0.5) -- (0,-1);
        \draw (-0.15,0.5) -- (-0.15,0.25) to[out=down,in=down,looseness=1.5] (-0.5,0.25) -- (-0.5,1);
        \draw (-0.15,-0.5) -- (-0.15,-0.25) to[out=up,in=up,looseness=1.5] (-0.5,-0.25) -- (-0.5,-1);
        \genbox{-0.3,-0.3}{0.3,-0.7}{\JW_n};
        \genbox{-0.3,0.3}{0.3,0.7}{\JW_n}; 
    \end{tikzpicture}
    \ ,\qquad n \ge 1.
\end{equation}
Using \cref{Delta,TLrels,JWrecursion}, we see that
\begin{equation} \label{JWtrace}
    \begin{tikzpicture}[centerzero,TL]
        \draw[multi] (0,-0.6) -- (0,0.6) \toplabel{n};
        \draw (-0.2,0) -- (-0.2,0.25) to[out=up,in=up,looseness=1.5] (-0.6,0.25) -- (-0.6,-0.25) to[out=down,in=down,looseness=1.5] (-0.2,-0.25) -- (-0.2,0);
        \genbox{-0.4,-0.2}{0.4,0.2}{\JW_{n+1}};
    \end{tikzpicture}
    =
    \frac{\Delta_{n+1}}{\Delta_n}\,
    \begin{tikzpicture}[centerzero,TL]
        \draw[multi] (0,-0.6) -- (0,0.6);
        \coupon{0,0}{\JW_n};
    \end{tikzpicture}
    \qquad \text{and} \qquad
    \begin{tikzpicture}[centerzero,TL]
        \draw[multi] (0,0) -- (0,0.2) to[out=up,in=up,looseness=1.5] (-0.5,0.2) -- (-0.5,-0.2) to[out=down,in=down,looseness=1.5] (0,-0.2) -- (0,0);
        \coupon{0,0}{\JW_n};
    \end{tikzpicture}
    = \Delta_n 1_\one,
\end{equation}
where the second equality follows by induction.  We also have
\begin{equation} \label{JWkill}
    \begin{tikzpicture}[centerzero,TL]
        \draw (-0.15,0.2) -- (-0.15,0.3) to[out=up,in=up,looseness=1.5] (0.15,0.3) -- (0.15,0.2);
        \draw[multi] (-0.3,0.2) -- (-0.3,0.5) \toplabel{r};
        \draw[multi] (0.3,0.2) -- (0.3,0.5) \toplabel{s};
        \draw[multi] (0,-0.2) -- (0,-0.5);
        \genbox{-0.5,-0.2}{0.5,0.2}{\JW_n};
    \end{tikzpicture}
    = 0 =
    \begin{tikzpicture}[centerzero,TL]
        \draw (-0.15,-0.2) -- (-0.15,-0.3) to[out=down,in=down,looseness=1.5] (0.15,-0.3) -- (0.15,-0.2);
        \draw[multi] (-0.3,-0.2) -- (-0.3,-0.5) \botlabel{r};
        \draw[multi] (0.3,-0.2) -- (0.3,-0.5) \botlabel{s};
        \draw[multi] (0,0.2) -- (0,0.5);
        \genbox{-0.5,-0.2}{0.5,0.2}{\JW_n};
    \end{tikzpicture}
    \qquad \text{for all } r,s \in \N,\ n=r+s+2,
\end{equation}
and
\begin{equation} \label{JWabsorb}
    \begin{tikzpicture}[centerzero,TL]
        \draw[multi] (0,-0.15) -- (0,1);
        \draw (-0.4,-0.15) -- (-0.4,1);
        \draw (0.4,-0.15) -- (0.4,1);
        \draw[multi] (0,-0.55) -- (0,-1);
        \coupon{0,0.35}{\JW_m};
        \genbox{-0.6,-0.15}{0.6,-0.55}{\JW_n};
    \end{tikzpicture}
    =
    \begin{tikzpicture}[centerzero,TL]
        \draw[multi] (0,-1) -- (0,1);
        \coupon{0,0}{\JW_n};
    \end{tikzpicture}
    =
    \begin{tikzpicture}[centerzero,TL,yscale=-1]
        \draw[multi] (0,-0.15) -- (0,1);
        \draw (-0.4,-0.15) -- (-0.4,1);
        \draw (0.4,-0.15) -- (0.4,1);
        \draw[multi] (0,-0.55) -- (0,-1);
        \coupon{0,0.35}{\JW_m};
        \genbox{-0.6,-0.15}{0.6,-0.55}{\JW_n};
    \end{tikzpicture}
    \qquad \text{for all } 0 \le m \le n.
\end{equation}

For $0 \le k < n$, we define
\begin{equation} \label{ecupcap}
    e_k =
    \begin{tikzpicture}[centerzero,TL]
        \draw[multi] (-0.7,-0.4) \botlabel{k-1} -- (-0.7,0.4);
        \draw[multi] (0.7,-0.4) \botlabel{n-k-1} -- (0.7,0.4);
        \draw (-0.15,-0.4) -- (-0.15,-0.25) to[out=up,in=up,looseness=1.5] (0.15,-0.25) -- (0.15,-0.4);
        \draw (-0.15,0.4) -- (-0.15,0.25) to[out=down,in=down,looseness=1.5] (0.15,0.25) -- (0.15,0.4);
    \end{tikzpicture}
    .
\end{equation}

%------------------------------------------------------
\subsection{Standard modules\label{subsec:TLstandards}}
%------------------------------------------------------

Our assumption \cref{generic} implies that the Temperley--Lieb algebras $\TLalg_n$ are split semisimple.  However, since we do not assume that $\kk$ is a field, one needs to take care with the notion of a \emph{simple} module.  In this subsection, we recall the classification of the standard modules (also called cell modules).  These are the absolutely irreducible $\kk$-forms of the generic simple modules, and coincide with the simple modules when $\kk$ is a field.

The isomorphism classes of standard $\TLalg_n$-modules are in bijection with elements of the set
\begin{equation} \label{Lambda}
    \Lambda_n := \{ k \in \N : k \le n,\ k \equiv n \mod 2\}.
\end{equation}
The standard module corresponding to $k \in \Lambda_n$ is
\[
    L^n_k := \TLcat(\Tobj^{\otimes k},\Tobj^{\otimes n}) \JW_k.
\]
We adopt the convention that
\[
    L^n_k = 0 \quad \text{if } k<0 \text{ or } k>n.
\]
We depict elements of $L^n_k$ by string diagrams with $\JW_k$ at the bottom and $n$ strings at the top:
\[
    \begin{tikzpicture}[centerzero,TL]
        \draw[multi] (0,-0.7) -- (0,0.7) \toplabel{n};
        \coupon{0,0.3}{f};
        \coupon{0,-0.3}{\JW_k};
    \end{tikzpicture}
    \ ,\qquad f \in \TLcat(\Tobj^{\otimes k}, \Tobj^{\otimes n}).
\]
The action of $\TLalg_n$ is by composition on the top of such diagrams.

%----------------------------------------------------------
\subsection{Induction and restriction\label{subsec:indres}}
%----------------------------------------------------------

For $n,m \in \N$, we have a natural inclusion of algebras
\[
    \TLalg_m \hookrightarrow \TLalg_{m+n},\qquad
    \begin{tikzpicture}[centerzero,TL]
        \draw[multi] (0,-0.5) -- (0,0.5);
        \coupon{0,0}{f};
    \end{tikzpicture}
    \mapsto
    \begin{tikzpicture}[centerzero,TL]
        \draw[multi] (0,-0.5) -- (0,0.5);
        \draw[multi] (-0.3,-0.5) \botlabel{n} -- (-0.3,0.5);
        \coupon{0,0}{f};
    \end{tikzpicture}
    \ .
\]
For $0 \le m,l \le n$, let $\TLbim{m}{\TLalg_n}{l}$ be $\TLalg_n$, viewed as a $(\TLalg_m,\TLalg_l)$-bimodule.  For $n \in \N$, we denote the tensor product over $\TLalg_n$ by $\otimes_n$.  When working with (bi)modules, an unadorned tensor product denotes a tensor product over $\kk$.  We have the induction functor
\[
    \Ind_n^{n+1} \colon \TLalg_n\md \to \TLalg_{n+1}\md,\qquad
    M \mapsto \TLbim{n+1}{\TLalg_{n+1}}{n} \otimes_n M
\]
and the restriction functor
\[
    \Res_n^{n+1} \colon \TLalg_{n+1}\md \to \TLalg_n\md,\qquad
    M \mapsto \TLbim{n}{\TLalg_{n+1}}{n+1} \otimes_{n+1} M.
\]
As noted in \cite[\S A.2]{HS25} (see also \cite[\S 3.3, \S 3.4]{Qui17}), we have an isomorphism
\begin{equation} \label{resformula}
    \Res_n^{n+1}(L^{n+1}_k) \xrightarrow{\cong} L^n_{k+1} \oplus L^n_{k-1},\qquad
    \begin{tikzpicture}[anchorbase,TL]
        \draw[multi] (0,-0.8) -- (0,0.8) \toplabel{n+1};
        \coupon{0,0.3}{f};
        \coupon{0,-0.3}{\JW_k};
    \end{tikzpicture}
    \mapsto
    \left(
        \begin{tikzpicture}[anchorbase,TL]
            \draw[multi] (0,-0.3) -- (0,0.2);
            \draw[multi] (0.1,0.4) -- (0.1,0.8) \toplabel{n};
            \draw[multi] (-0.15,-0.9) -- (-0.15,-0.5);
            \draw (-0.05,0.4) -- (-0.05,0.6) to[out=up,in=up,looseness=1.5] (-0.35,0.6) -- (-0.35,-0.3);
            \genbox{-0.55,-0.6}{0.25,-0.2}{\JW_{k+1}};
            \genbox{-0.2,0.1}{0.2,0.5}{f};
        \end{tikzpicture}
        \, , \frac{\Delta_{k-1}}{\Delta_k}\,
        \begin{tikzpicture}[anchorbase,TL]
            \draw (-0.15,0) -- (-0.15,0.25) to[out=up,in=up,looseness=1.5] (-0.5,0.25) -- (-0.5,-0.95) to[out=down,in=down,looseness=1.5] (-0.15,-0.95) -- (-0.15,-0.7);
            \draw[multi] (0.15,0) -- (0.15,0.6) \toplabel{n};
            \draw[multi] (0.15,-0.7) -- (0.15,-1.2);
            \draw[multi] (0,-0.7) -- (0,0);
            \genbox{-0.3,-0.2}{0.3,0.2}{f};
            \genbox{-0.3,-0.5}{0.3,-0.9}{\JW_k};
        \end{tikzpicture}
    \right),
\end{equation}
with inverse
\begin{equation} \label{resinv}
    L^n_{k+1} \oplus L^n_{k-1} \xrightarrow{\cong} \Res_n^{n+1}(L^{n+1}_k),\qquad
    \left(
        \begin{tikzpicture}[anchorbase,TL]
            \draw[multi] (0,-0.8) -- (0,0.8) \toplabel{n};
            \coupon{0,0.3}{f};
            \coupon{0,-0.3}{\JW_{k+1}};
        \end{tikzpicture}
        \ ,\
        \begin{tikzpicture}[anchorbase,TL]
            \draw[multi] (0,-0.8) -- (0,0.8) \toplabel{n};
            \coupon{0,0.3}{g};
            \coupon{0,-0.3}{\JW_{k-1}};
        \end{tikzpicture}
    \right)
    \mapsto
    \begin{tikzpicture}[anchorbase,TL]
        \draw[multi] (0,-0.3) -- (0,0.8) \toplabel{n};
        \draw (-0.2,-0.5) to[out=down,in=down,looseness=2] (-0.6,-0.5) -- (-0.6,0.8);
        \draw[multi] (0.2,-0.5) -- (0.2,-0.9);
        \coupon{0,0.3}{f};
        \genbox{-0.4,-0.5}{0.4,-0.1}{\JW_{k+1}};
    \end{tikzpicture}
    +
    \begin{tikzpicture}[anchorbase,TL]
        \draw[multi] (0,-0.3) -- (0,0.2);
        \draw[multi] (0,0.4) -- (0,0.8) \toplabel{n};
        \draw[multi] (-0.15,-0.9) -- (-0.15,-0.5);
        \draw (-0.3,-0.3) -- (-0.3,0.8);
        \genbox{-0.55,-0.6}{0.25,-0.2}{\JW_k};
        \coupon{0,0.3}{g};
    \end{tikzpicture}
    \ ,
\end{equation}
and an isomorphism
\begin{equation} \label{indformula}
    \Ind_n^{n+1}(L^n_k) \xrightarrow{\cong} L^{n+1}_{k+1} \oplus L^{n+1}_{k-1},\qquad
    \begin{tikzpicture}[centerzero,TL]
        \draw[multi] (0,-0.8) -- (0,0.3);
        \draw[multi] (-0.2,0.3) -- (-0.2,0.8) \toplabel{n+1};
        \draw (-0.4,-0.8) -- (-0.4,0.3);
        \genbox{-0.6,0.1}{0.2,0.5}{f};
        \coupon{0,-0.3}{\JW_k};
    \end{tikzpicture}
    \mapsto
    \left(
        \begin{tikzpicture}[centerzero,TL]
            \draw[multi] (0,-0.8) -- (0,0.8) \toplabel{n+1};
            \coupon{0,0.3}{f};
            \coupon{0,-0.3}{\JW_{k+1}};
        \end{tikzpicture}
        \, , \frac{\Delta_{k-1}}{\Delta_k}\
        \begin{tikzpicture}[centerzero,TL]
            \draw[multi] (0.1,-0.9) -- (0.1,-0.3);
            \draw[multi] (0,-0.3) -- (0,0.3);
            \draw[multi] (-0.25,0.3) -- (-0.25,0.8) \toplabel{n+1};
            \draw (-0.1,-0.3) -- (-0.1,-0.5) to[out=down,in=down,looseness=2] (-0.5,-0.5) -- (-0.5,0.3);
            \genbox{-0.7,0.1}{0.2,0.5}{f};
            \genbox{-0.3,-0.5}{0.3,-0.1}{\JW_k};
        \end{tikzpicture}
    \right),
\end{equation}
with inverse
\begin{equation} \label{indinv}
    L^{n+1}_{k+1} \oplus L^{n+1}_{k-1} \xrightarrow{\cong} \Ind_n^{n+1}(L^n_k),\qquad
    \left(
        \begin{tikzpicture}[anchorbase,TL]
            \draw[multi] (0,-0.8) -- (0,0.8) \toplabel{n+1};
            \coupon{0,0.3}{f};
            \coupon{0,-0.3}{\JW_{k+1}};
        \end{tikzpicture}
        \ ,\
        \begin{tikzpicture}[anchorbase,TL]
            \draw[multi] (0,-0.8) -- (0,0.8) \toplabel{n+1};
            \coupon{0,0.3}{g};
            \coupon{0,-0.3}{\JW_{k-1}};
        \end{tikzpicture}
    \right)
    \mapsto
    \begin{tikzpicture}[anchorbase,TL]
        \draw[multi] (0,-0.8) -- (0,0.8) \toplabel{n+1};
        \coupon{0,0.3}{f};
        \coupon{0,-0.3}{\JW_{k+1}};
    \end{tikzpicture}
    + 
    \begin{tikzpicture}[anchorbase,TL]
        \draw[multi] (0,-0.3) -- (0,0.8) \toplabel{n+1};
        \draw[multi] (-0.1,-0.8) -- (-0.1,-0.3);
        \draw (-0.2,-0.1) to[out=up,in=up,looseness=2] (-0.6,-0.1) -- (-0.6,-0.8);
        \coupon{0,0.3}{g};
        \genbox{-0.4,-0.5}{0.2,-0.1}{\JW_k};
    \end{tikzpicture}
    \ .
\end{equation}
Note that we have rescaled the maps (A.7) and (A.8) of \cite{HS25} to avoid the square root appearing there.

%--------------------------------------
\subsection{Bases for standard modules}
%--------------------------------------

Since the direct sum decompositions appearing in \cref{resformula,indformula} are multiplicity free, we have a basis for $L^n_k$ corresponding to paths in the branching graph.  For $j,k \in \Z$, let
\begin{gather*}
    P^n := \{ p = (p_0,p_1,\dotsc,p_n) \in \N^{n+1} : p_0 = 0,\ p_{i+1} = p_i \pm 1 \text{ for all } 0 \le i < n \}, \\
    P^n_k := \{ p \in P^n : p_n = k \},\qquad
    P^n_{j,k} := \{ p \in P^n_k : p_{n-1} = j \},\qquad
    P := \bigcup_{n=0}^\infty P^n.
\end{gather*}
We define $P^n_k = \varnothing$ for $n<0$.  By definition, $P^n_k = \varnothing$ if $k<0$ or $k>n$.  For $p \in P^n_k$, $n>0$, define
\begin{equation} \label{pcut}
    \bar{p} = (p_0,\dotsc,p_{n-1}) \in P^{n-1}_{k-1} \cup P^{n-1}_{k+1}.
\end{equation}
We then define $v_p \in L^n_k$, for $p \in P^n_k$, by induction on $n$ via
\begin{equation} \label{v-initial}
    v_{(0)} = 1 \in \kk = L^0_0,
\end{equation}
and then, for $p \in P^n_k$,
\begin{equation} \label{v-recursion}
    v_p =
    \begin{tikzpicture}[anchorbase,TL]
        \draw[multi] (0,-0.3) -- (0,0.8);
        \draw (-0.2,-0.5) to[out=down,in=down,looseness=2] (-0.6,-0.5) -- (-0.6,0.8);
        \draw[multi] (0.2,-0.5) -- (0.2,-0.9);
        \coupon{0,0.3}{v_{\bar{p}}};
        \genbox{-0.4,-0.5}{0.4,-0.1}{\JW_{k+1}};
    \end{tikzpicture}
    \text{ if } p_{n-1} = k+1,
    \qquad \text{and} \qquad
    v_p =
    \begin{tikzpicture}[anchorbase,TL]
        \draw[multi] (0,-0.3) -- (0,0.2);
        \draw[multi] (0,0.4) -- (0,0.8);
        \draw[multi] (-0.15,-0.9) -- (-0.15,-0.5);
        \draw (-0.3,-0.3) -- (-0.3,0.8);
        \genbox{-0.55,-0.6}{0.25,-0.2}{\JW_k};
        \genbox{-0.2,0.1}{0.2,0.5}{v_{\bar{p}}};
    \end{tikzpicture}
    \text{ if } p_{n-1} = k-1.
\end{equation}
(Compare to \cref{resinv}.)  Then $v_p$, $p \in P^n_k$, is a basis for $L^n_k$ for all $k,n \in \Z$.

For $0 \le k \le n$ and $p \in P^n$, define
\begin{equation}
    p[k] := (p_0,p_1,\dotsc,p_k) \in P^k.
\end{equation}
In particular $p[n-1] = \bar{p}$.

\begin{lem} \label{cloud}
    For $p \in P^{n+2}$, we have
    \[
        \begin{tikzpicture}[TL,centerzero]
            \draw[multi] (0,-0.5) -- (0,0);
            \draw[multi] (0.3,0) -- (0.3,0.5) \toplabel{n};
            \draw (-0.3,0.1) -- (-0.3,0.2) to[out=up,in=up,looseness=2] (0,0.2) -- (0,0.1);
            \genbox{-0.5,-0.3}{0.5,0.1}{v_p};
        \end{tikzpicture}
        = 0 \quad \text{if} \quad p_{n+2} \ne p_n.
    \]
\end{lem}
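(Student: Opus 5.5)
We prove the lemma by unfolding the recursion \cref{v-recursion} twice. Set $k = p_{n+2}$. Since $p_{i+1} = p_i \pm 1$, the condition $p_{n+2} \ne p_n$ means that the last two steps of $p$ go in the same direction. So there are exactly two cases: $(p_n,p_{n+1},p_{n+2}) = (k-2,k-1,k)$ or $(k+2,k+1,k)$. In each case we express $v_p$ in terms of $v_{p[n]}$ together with the two new left strands. We then slide the cap on the two leftmost top strands until it meets a Jones--Wenzl projector, and conclude with \cref{JWkill}.

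\emph{Case of two up-steps.} Apply the second formula in \cref{v-recursion} twice. This gives $v_p$ as the diagram with $\JW_k$ at the bottom, then $\JW_{k-1}$ on the right $k-1$ strands, then $v_{p[n]}$ on the right, with two vertical strands running up the left side. Here $v_{p[n]}$ already carries $\JW_{k-2}$ at its bottom.

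\begin{itemize}
\item By \cref{JWabsorb}, the inner projector $\JW_{k-1}$ is absorbed into $\JW_k$.
\item The two leftmost top strands are then just the two free left strands, and they attach directly to the top of $\JW_k$. They do not pass through $v_{p[n]}$.
\item Capping them and sliding the cap down by isotopy produces a cap on two adjacent top strands of $\JW_k$. This is zero by \cref{JWkill}.
\end{itemize}

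\emph{Case of two down-steps.} Apply the first formula in \cref{v-recursion} twice. At the bottom sits $\JW_k$. Its leftmost output strand is joined by a cup to a new strand on the far left (the outer cup). The remaining $k$ output strands feed $\JW_{k+1}$, the bottom of $v_{p[n+1]}$. Inside $v_{p[n+1]}$, the leftmost output of $\JW_{k+1}$ is joined by a second cup (the inner cup) to a free strand. The other outputs feed $\JW_{k+2}$, the bottom of $v_{p[n]}$. The two leftmost top strands of $v_p$ are the left legs of the outer and inner cups.

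\begin{itemize}
\item Capping these two legs and straightening the resulting zigzag, using \cref{TLrels}, produces a single cup.
\item One end of this cup goes into the leftmost bottom strand of $\JW_{k+2}$.
\item The other end goes through $\JW_{k+1}$ and into the adjacent bottom strand of $\JW_{k+2}$.
\end{itemize}

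It remains to remove the intervening projector $\JW_{k+1}$, which is where the bookkeeping needs care. First, use \cref{JWabsorb} to absorb $\JW_{k+1}$ into $\JW_{k+2}$, since $\JW_{k+1}$ sits directly beneath $k+1$ consecutive bottom strands of $\JW_{k+2}$. The same absorption is available for the bottom $\JW_k$ if needed. After this, we have $\JW_{k+2}$ with a cup joining two adjacent bottom strands, which vanishes by the second equality in \cref{JWkill}.

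\emph{Main obstacle.} The main difficulty is keeping track of which strands connect to which in the second case. We must check, in the numbering convention where strings are counted from the right, that after straightening the two ends of the cup really land on adjacent bottom strands of $\JW_{k+2}$. We must also check that $\JW_{k+1}$ sits on consecutive strands, so that \cref{JWabsorb} applies. If this direct picture turns out to be awkward, a fallback is to use the isomorphism \cref{resformula}. Capping the two left strands of an element of $L^{n+2}_k$ corresponds to a component of a map into $L^n_{k}$. By the multiplicity-free branching, the vector $v_p$ lies in the summand of $\Res^{n+2}_n L^{n+2}_k$ isomorphic to $L^n_{p_n}$, so its image in $L^n_k$ is zero when $p_n \ne k$.
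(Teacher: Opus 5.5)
Your argument is correct and is essentially the paper's proof. You split into the up--up and down--down cases, unfold \cref{v-recursion} twice, absorb the intermediate projector with \cref{JWabsorb}, and conclude with the cap (respectively cup) case of \cref{JWkill}. One slip in your description of the down--down case: the outer and inner cups feed the leftmost \emph{bottom} inputs of $\JW_{k+1}$ and $\JW_{k+2}$, not outputs of $\JW_k$ and $\JW_{k+1}$, and all $k+1$ outputs of $\JW_{k+1}$ go into $\JW_{k+2}$; this is exactly the configuration your absorption step correctly uses.
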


\begin{proof}
    If $(p_n,p_{n+1},p_{n+2}) = (k-2,k-1,k)$ for some $k \ge 2$, then
    \[
        \begin{tikzpicture}[TL,centerzero]
            \draw[multi] (0,-0.5) -- (0,0);
            \draw[multi] (0.3,0) -- (0.3,0.5) \toplabel{n};
            \draw (-0.3,0.1) -- (-0.3,0.2) to[out=up,in=up,looseness=2] (0,0.2) -- (0,0.1);
            \genbox{-0.5,-0.3}{0.5,0.1}{v_p};
        \end{tikzpicture}
        \overset{\cref{v-recursion}}{=}
        \begin{tikzpicture}[anchorbase,TL]
            \draw[multi] (0,-0.3) -- (0,0.8);
            \draw[multi] (-0.15,-0.9) -- (-0.15,-0.5);
            \draw (-0.5,-0.2) -- (-0.5,-0.1) to[out=up,in=up,looseness=2] (-0.8,-0.1) -- (-0.8,-0.2);
            \genbox{-1,-0.6}{0.25,-0.2}{\JW_k};
            \coupon{0,0.3}{v_{p[n]}};
        \end{tikzpicture}
        \overset{\cref{JWkill}}{=} 0.
    \]
    If $(p_n,p_{n+1},p_{n+2}) = (k+2,k+1,k)$ for some $k \in \N$, then
    \[
        \begin{tikzpicture}[TL,centerzero]
            \draw[multi] (0,-0.5) -- (0,0);
            \draw[multi] (0.3,0) -- (0.3,0.5) \toplabel{n};
            \draw (-0.3,0.1) -- (-0.3,0.2) to[out=up,in=up,looseness=2] (0,0.2) -- (0,0.1);
            \genbox{-0.5,-0.3}{0.5,0.1}{v_p};
        \end{tikzpicture}
        \overset{\cref{v-recursion}}{=}
        \begin{tikzpicture}[anchorbase,TL]
            \draw[multi] (0,-0.3) -- (0,0.8);
            \draw (-0.3,-0.5) to[out=down,in=down,looseness=2] (-0.8,-0.5) -- (-0.8,0) to[out=up,in=up,looseness=2] (-1.1,0) -- (-1.1,-0.5) to[out=down,in=down,looseness=2] (0,-0.5);
            \draw[multi] (0.3,-0.5) -- (0.3,-1.2);
            \coupon{0,0.3}{v_{p[n]}};
            \genbox{-0.6,-0.5}{0.6,-0.1}{\JW_k};
        \end{tikzpicture}
        \overset{\cref{JWkill}}{=} 0.
    \]
    Since the only other possibilities are that $(p_n,p_{n+1},p_{n+2}) = (k, k \pm 1, k)$ for some $k \in \N$, the result follows.
\end{proof}

We have an isomorphism of monoidal categories
\[
    \Omega_\updownarrow \colon \TLcat \to \TLcat^\op,\qquad
    \Tcap \mapsto \Tcup\, ,\quad \Tcup \mapsto \Tcap\, .
\]
Intuitively, $\Omega_\updownarrow$ flips diagram across a horizontal axis.  We will sometimes denote $\Omega_\updownarrow(f)$ by $f^\updownarrow$.  It follows from \cref{JWrecursion} and induction on $n$ that
\begin{equation}
    \JW_n^\updownarrow = \JW_n \qquad \text{for all} \quad n \in \N.
\end{equation}

\begin{lem}
    For all $0 \le k \le n$, and $p,r \in P^n_k$,
    \begin{equation} \label{crab}
        v_p^\updownarrow v_{r}
        = \delta_{p,r} \Delta_p\
        \begin{tikzpicture}[centerzero,TL]
            \draw[multi] (0,-0.5) -- (0,0.5);
            \coupon{0,0}{\JW_k};
        \end{tikzpicture}
        \ ,\qquad \text{where} \quad
        \Delta_p := \prod_{i : p_{i-1} > p_i} \frac{\Delta_{p_{i-1}}}{\Delta_{p_i}}.
    \end{equation}
\end{lem}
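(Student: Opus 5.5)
We argue by induction on $n$; the case $n=0$ is immediate, since $v_{(0)} = 1$ and the empty product gives $\Delta_{(0)} = 1$. For $n \ge 1$ and $p,r \in P^n_k$, write $j = p_{n-1}$ and $j' = r_{n-1}$, each lying in $\{k-1,k+1\}$. Using \cref{v-recursion}, we express $v_p^\updownarrow v_r$ as a diagram in which $v_{\bar p}^\updownarrow$ sits above $v_{\bar r}$, with a Jones--Wenzl projector attached on each side to the extra strand. The vertically flipped recursion is obtained from \cref{v-recursion} by applying $\Omega_\updownarrow$ and using $\JW_m^\updownarrow = \JW_m$. The key idea is to first compose the middle portion $v_{\bar p}^\updownarrow v_{\bar r}$, which is an element of $\TLcat(\Tobj^{\otimes j'}, \Tobj^{\otimes j})$. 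We cannot apply the inductive hypothesis to it directly when $j \ne j'$, so we treat the cases separately.

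\emph{Case $j = j'$.} The inductive hypothesis gives $v_{\bar p}^\updownarrow v_{\bar r} = \delta_{\bar p,\bar r}\Delta_{\bar p}\JW_j$. Since $\bar p=\bar r$ if and only if $p = r$ in this case, we substitute and reduce to a single diagram involving only Jones--Wenzl projectors.

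If $j = k-1$, the diagram is $\JW_k(\JW_{k-1}\otimes 1)\JW_k$, which equals $\JW_k$ by \cref{JWabsorb}. There is no new factor, consistent with $p_{n-1}<p_n$.

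If $j = k+1$, the diagram is $\JW_{k+1}$ with its leftmost strand closed off by a cup at the bottom and a cap at the top, sandwiched between copies of $\JW_k$ on the remaining strands. This is a partial trace of $\JW_{k+1}$, which by \cref{JWtrace} equals $\frac{\Delta_{k+1}}{\Delta_k}\JW_k$; the outer copies of $\JW_k$ are then absorbed using idempotence (\cref{JWabsorb} with $m=n$). This supplies exactly the factor $\Delta_{p_{n-1}}/\Delta_{p_n}$ needed to pass from $\Delta_{\bar p}$ to $\Delta_p$.

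\emph{Case $j \ne j'$.} Here we must show the product vanishes, say for $j=k+1$ and $j'=k-1$ (the other case follows by applying $\Omega_\updownarrow$). Rather than using induction, we exploit the structure of the diagram: $v_p^\updownarrow v_r$ factors through a morphism $\Tobj^{\otimes k-1}\otimes\Tobj \to \Tobj^{\otimes k+1}\otimes$ (appropriate strands) in which $\JW_{k+1}$ sits on top. The element $v_{\bar r}$ has the form $f\,\JW_{k-1}$, and $v_r$ is $(1\otimes v_{\bar r})\JW_k$ where the extra strand passes straight through. Hence $v_p^\updownarrow v_r = \JW_k \circ g \circ \JW_k$, where $g$ contains the composite $\JW_{k+1}$ (from $v_p^\updownarrow$) $\circ\,(1 \otimes v_{\bar p}^\updownarrow v_{\bar r}) \circ \ldots$.

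The cleaner approach is to note that $v_{\bar p}^\updownarrow v_{\bar r}$ is a morphism from $k-1$ to $k+1$ strands that is preceded by $\JW_{k-1}$ and followed by $\JW_{k+1}$. Any Temperley--Lieb diagram from $k-1$ to $k+1$ strands must contain a cap at the top, and such a cap is killed by $\JW_{k+1}$ by \cref{JWkill}. This argument works uniformly and in fact also handles the case $j=j'$ with $\bar p\ne\bar r$, although that case is already covered by induction.

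The main obstacle I expect is the bookkeeping in the case $j=k+1$: we must check that the flipped recursion produces precisely the partial-trace configuration required for \cref{JWtrace}, with the cup/cap on the correct (leftmost) strand, rather than some twisted variant. The remaining steps are formal applications of \cref{JWabsorb}, \cref{JWkill}, and induction.
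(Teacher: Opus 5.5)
Your proof is correct and is essentially the paper's: induction on $n$, with the case $p_{n-1}=r_{n-1}=k-1$ handled by \cref{JWabsorb}, the case $p_{n-1}=r_{n-1}=k+1$ by \cref{JWabsorb} together with the partial-trace formula \cref{JWtrace}, and the mixed case by a strand count that forces a cup under $\JW_{k+1}$, which \cref{JWkill} kills; the other mixed case then follows via $\Omega_\updownarrow$. The only slip is your aside that the strand-count argument also covers $j=j'$ with $\bar p\neq\bar r$: a diagram from $j$ to $j$ strands need not contain a cup, so that case does need the induction, which you already use there.
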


\begin{proof}
    We prove the result by induction on $n$.  For $n=0$, \cref{crab} follows immediately from \cref{v-initial,JWrecursion}.  Now suppose that $p,r \in P^n_k$ for some $n > 0$, and that the result holds for $n-1$.  If $p_{n-1} = r_{n-1} = k+1$, then
    \[
        v_p^\updownarrow v_r
        \overset{\cref{v-recursion}}{=}
        \begin{tikzpicture}[anchorbase,TL]
            \draw[multi] (0,-1.3) -- (0,1.4);
            \draw (-0.25,1.1) to[out=up,in=up,looseness=1.5] (-0.7,1.1) -- (-0.7,-1) to[out=down,in=down,looseness=1.5] (-0.25,-1);
            \coupon{0,0.3}{v_{\bar{p}}^\updownarrow};
            \coupon{0,-0.3}{v_{\bar{r}}};
            \genbox{-0.5,0.7}{0.2,1.1}{\JW_{k+1}};
            \genbox{-0.5,-0.6}{0.2,-1}{\JW_{k+1}};
        \end{tikzpicture}
        = \delta_{\bar{p}, \bar{r}} \Delta_{\bar{p}}\
        \begin{tikzpicture}[anchorbase,TL]
            \draw[multi] (0,-1.3) -- (0,1.4);
            \draw (-0.25,1.1) to[out=up,in=up,looseness=1.5] (-0.7,1.1) -- (-0.7,-1) to[out=down,in=down,looseness=1.5] (-0.25,-1);
            \coupon{0,0}{\JW_k};
            \genbox{-0.5,0.7}{0.2,1.1}{\JW_{k+1}};
            \genbox{-0.5,-0.6}{0.2,-1}{\JW_{k+1}};
        \end{tikzpicture}
        \overset{\cref{JWabsorb}}{\underset{\cref{JWtrace}}{=}} \delta_{\bar{p},\bar{r}} \Delta_{\bar{p}} \frac{\Delta_{k+1}}{\Delta_k}\ 
        \begin{tikzpicture}[centerzero,TL]
            \draw[multi] (0,-0.5) -- (0,0.5);
            \coupon{0,0}{\JW_k};
        \end{tikzpicture}
        = \delta_{p,r} \Delta_p\
        \begin{tikzpicture}[centerzero,TL]
            \draw[multi] (0,-0.5) -- (0,0.5);
            \coupon{0,0}{\JW_k};
        \end{tikzpicture}
        \ .
    \]
    If $p_{n-1} = r_{n-1} = k-1$, then
    \[
        v_p^\updownarrow v_r
        \overset{\cref{v-recursion}}{=}
        \begin{tikzpicture}[anchorbase,TL]
            \draw[multi] (0,-1.3) -- (0,1.4);
            \draw (-0.35,0.7) -- (-0.35,-0.6);
            \coupon{0,0.3}{v_{\bar{p}}^\updownarrow};
            \coupon{0,-0.3}{v_{\bar{r}}};
            \genbox{-0.5,0.7}{0.2,1.1}{\JW_k};
            \genbox{-0.5,-0.6}{0.2,-1}{\JW_k};
        \end{tikzpicture}
        = \delta_{\bar{p}, \bar{r}} \Delta_{\bar{p}}\
        \begin{tikzpicture}[anchorbase,TL]
            \draw[multi] (0,-1.3) -- (0,1.4);
            \draw (-0.45,0.7) -- (-0.45,-0.6);
            \genbox{-0.35,-0.2}{0.35,0.2}{\JW_{k-1}};
            \genbox{-0.6,0.7}{0.3,1.1}{\JW_k};
            \genbox{-0.6,-0.6}{0.3,-1}{\JW_k};
        \end{tikzpicture}
        \overset{\cref{JWabsorb}}{=} \delta_{\bar{p},\bar{r}} \Delta_{\bar{p}} \ 
        \begin{tikzpicture}[centerzero,TL]
            \draw[multi] (0,-0.5) -- (0,0.5);
            \coupon{0,0}{\JW_k};
        \end{tikzpicture}
        = \delta_{p,r} \Delta_p\
        \begin{tikzpicture}[centerzero,TL]
            \draw[multi] (0,-0.5) -- (0,0.5);
            \coupon{0,0}{\JW_k};
        \end{tikzpicture}
        \ .
    \]
    If $p_{n-1} = k+1$ and $r_{n-1} = k-1$, then
    \[
        v_p^\updownarrow v_{r}
        \overset{\cref{v-recursion}}{=}
        \begin{tikzpicture}[anchorbase,TL]
            \draw[multi] (0,-1.3) -- (0,1.4);
            \draw (-0.25,1.1) to[out=up,in=up,looseness=1.5] (-0.6,1.1) -- (-0.6,-0.6);
            \coupon{0,0.3}{v_{\bar{p}}^\updownarrow};
            \coupon{0,-0.3}{v_{\bar{r}}};
            \genbox{-0.5,0.7}{0.2,1.1}{\JW_{k+1}};
            \genbox{-0.8,-0.6}{0.2,-1}{\JW_k};
        \end{tikzpicture}
        \overset{\cref{JWkill}}{=} 0,
    \]
    where the last equality follows from the fact that, since only $k-1$ strands from the $\JW_k$ coupon are connected to the bottom of the $\JW_{k+1}$ coupon, there must be a cup attached to the bottom of the $\JW_{k+1}$ coupon.  The remaining case of $p_{n-1} = k-1$ and $r_{n-1} = k+1$ follows from the last case above after applying $\Omega_\updownarrow$.
\end{proof}

%----------------------
\subsection{Trace maps}
%----------------------

For $0 \le k \le n$, define the \emph{partial trace map}
\[
    \tr^n_{n-k} \colon \TLalg_n \to \TLalg_{n-k},\quad
    \begin{tikzpicture}[centerzero,TL]
        \draw[multi] (0,-0.5) -- (0,0.5);
        \coupon{0,0}{f};
    \end{tikzpicture}
    \mapsto
    \begin{tikzpicture}[centerzero,TL]
        \draw[multi] (0,0.2) to[out=up,in=up,looseness=1.5] (-0.4,0.2) -- (-0.4,0) node[anchor=east] {\strandlabel{k}} -- (-0.4,-0.2) to[out=down,in=down,looseness=1.5] (0,-0.2);
        \draw[multi] (0.3,-0.5) -- (0.3,0.5);
        \genbox{-0.2,-0.2}{0.5,0.2}{f};
    \end{tikzpicture}
    \ .
\]
Note that
\begin{equation} \label{telescope}
    \tr^{n-k}_{n-l} \tr^n_{n-k} = \tr^n_{n-l}\
    \qquad \text{for all} \quad 0 \le k \le l \le n,
\end{equation}
and
\begin{equation} \label{pingpong}
    \tr^n_{n-k} \big( (x \otimes 1_{n-k}) y) = \tr^n_{n-k} \big( y (x \otimes 1_{n-k}) \big)
    \qquad \text{for all } x \in \TLalg_k,\ y \in \TLalg_n.
\end{equation}

\begin{lem}
    For all $p,r \in P^n_k$, $n > 0$, $0 \le k \le n$,
    \begin{equation} \label{spiral}
        \tr^n_{n-1} \big( v_p v_r^\updownarrow \big)
        = \delta_{p_{n-1},r_{n-1}} \left( \frac{\Delta_k}{\Delta_{k-1}} \right)^{\delta_{p_{n-1},k-1}} v_{\bar{p}} v_{\bar{r}}^\updownarrow.
    \end{equation}
\end{lem}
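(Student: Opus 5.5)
We argue directly from the recursion \cref{v-recursion}, splitting into four cases according to the values of $p_{n-1}$ and $r_{n-1}$, each of which lies in $\{k+1,k-1\}$. Throughout, $v_p v_r^\updownarrow \in \TLalg_n$: the diagram $v_r^\updownarrow$ sits below $v_p$, and they meet in the middle through $\JW_k$ (absorbed into both, since $v_p = v_p\JW_k$). The partial trace $\tr^n_{n-1}$ closes the leftmost strand around the right side of... more precisely, around the left, joining the top-left endpoint to the bottom-left endpoint. In every case, the leftmost top strand of $v_p$ is either the new strand added in \cref{v-recursion} (case $p_{n-1}=k-1$) or the leg of a cup attached to $\JW_{k+1}$ (case $p_{n-1}=k+1$). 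The same holds for $v_r^\updownarrow$ at the bottom.

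\emph{Case $p_{n-1}=r_{n-1}=k-1$.} Closing the leftmost strand produces a loop passing through the two $\JW_k$ boxes, which sit on top of $v_{\bar p}$ and below $v_{\bar r}^\updownarrow$. By \cref{JWabsorb} the two boxes merge into a single $\JW_k$ (the middle $\JW_k$ absorbs into them). The resulting partial trace of $\JW_k$ over one strand is $\frac{\Delta_k}{\Delta_{k-1}}\JW_{k-1}$ by \cref{JWtrace}. Finally $\JW_{k-1}$ is absorbed into $v_{\bar p}$ and $v_{\bar r}^\updownarrow$ via \cref{JWabsorb}, since $v_{\bar p}=v_{\bar p}\JW_{k-1}$. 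This gives the coefficient $\Delta_k/\Delta_{k-1}$.

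\emph{Case $p_{n-1}=r_{n-1}=k+1$.} Here $v_p$ is $v_{\bar p}$, with $\bar p\in P^{n-1}_{k+1}$, composed on the bottom with $\JW_{k+1}$ whose leftmost bottom leg is capped by a cup to a new leftmost strand. The trace joins this new strand at the top to the corresponding new strand at the bottom. Isotoping with \cref{TLrels}, the cup and cap straighten, so the closed strand becomes a straight strand connecting the leftmost bottom leg of the upper $\JW_{k+1}$ to the leftmost top leg of the lower $\JW_{k+1}$. The middle is then $\JW_{k+1}(1\otimes \JW_k)\JW_{k+1}=\JW_{k+1}$ by \cref{JWabsorb} and idempotence. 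The result is $v_{\bar p}\JW_{k+1}v_{\bar r}^\updownarrow = v_{\bar p} v_{\bar r}^\updownarrow$, with coefficient $1$, as claimed.

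\emph{Mixed cases.} Suppose $p_{n-1}=k+1$ and $r_{n-1}=k-1$. After the trace and straightening, the upper $\JW_{k+1}$ has only $k$ of its bottom legs attached to the $\JW_k$ and the lower $\JW_k$, plus one leg going down the left. I expect this case to be the main obstacle. The planned argument is a strand count. The lower factor is $v_{\bar r}^\updownarrow$ composed with a $\JW_k$ whose leftmost top strand is a through strand. Tracing the closed strand, the $k+1$ bottom legs of the upper $\JW_{k+1}$ connect downward into the bottom of $\JW_k$ and the top of $v_{\bar r}^\updownarrow$ (which has $k-1$ top strands). Since $k+1>k-1$ and only $k-1$ strands can pass through the middle, some pair of legs of the upper $\JW_{k+1}$ must be joined by a cup. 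Arguing as in the proof of \cref{crab}, that cup kills the diagram by \cref{JWkill} (possibly after first using \cref{JWabsorb} to absorb $\JW_k$). The opposite mixed case follows by applying $\Omega_\updownarrow$, using that $\tr$ commutes with $\Omega_\updownarrow$. Hence both mixed cases give $0$, matching $\delta_{p_{n-1},r_{n-1}}$.
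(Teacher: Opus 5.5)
Your proposal is correct and is essentially the paper's proof: the same four-way case split on $(p_{n-1},r_{n-1})$, using \cref{JWtrace} when both equal $k-1$, zigzag straightening plus \cref{JWabsorb} when both equal $k+1$, and, in the mixed case, absorbing $\JW_k$ into $\JW_{k+1}$ so that the wrapped-around strand becomes a cup on the two leftmost legs of $\JW_{k+1}$, which is killed by \cref{JWkill}; the other mixed case follows by reflection. The paper phrases this as an induction on $n$ with base case $n=1$ but never uses the induction hypothesis, so your direct argument is the same proof. One small slip: in your first case the $\JW_k$ boxes sit below $v_{\bar p}$ and above $v_{\bar r}^\updownarrow$, not the other way around.
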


\begin{proof}
    We prove the result by induction on $n$.  The case $n=1$ follows from \cref{Delta,TLrels}.  Now suppose that $p,r \in P^n_k$ for some $n>1$, and that the result holds for $n-1$.  If $p_{n-1} = r_{n-1}  = k-1$, then
    \[
        \tr^n_{n-1} \big( v_p v_r^\updownarrow \big)
        \overset{\cref{v-recursion}}{=}
        \begin{tikzpicture}[centerzero,TL]
            \draw[multi] (0,-1) -- (0,1);
            \draw (-0.3,0.2) to[out=up,in=up,looseness=1.5] (-0.75,0.2) -- (-0.75,-0.2) to[out=down,in=down,looseness=1.5] (-0.3,-0.2);
            \coupon{0,0.65}{v_{\bar{p}}};
            \coupon{0,-0.65}{v_{\bar{r}}};
            \genbox{-0.5,-0.2}{0.3,0.2}{\JW_k};
        \end{tikzpicture}
        \overset{\cref{JWtrace}}{=} \frac{\Delta_k}{\Delta_{k-1}}\
        \begin{tikzpicture}[centerzero,TL]
            \draw[multi] (0,-1) -- (0,1);
            \coupon{0,0.65}{v_{\bar{p}}};
            \coupon{0,-0.65}{v_{\bar{r}}};
            \coupon{0,0}{\JW_{k-1}};
        \end{tikzpicture}
        = \frac{\Delta_k}{\Delta_{k-1}} v_{\bar{p}} v_{\bar{r}}^\updownarrow.
    \]
    If $p_{n-1} = r_{n-1} = k+1$, then
    \[
        \tr^n_{n-1} \big( v_p v_r^\updownarrow \big)
        \overset{\cref{v-recursion}}{=}
        \begin{tikzpicture}[centerzero,TL]
            \draw[multi] (0,-1.35) -- (0,1.35);
            \coupon{0,1}{v_{\bar{p}}};
            \coupon{0,-1}{v_{\bar{r}}};
            \draw (-0.3,0.2) to[out=down,in=down,looseness=1.5] (-0.6,0.2) -- (-0.6,0.7) to[out=up,in=up,looseness=1.5] (-0.85,0.7) -- (-0.85,-0.7) to[out=down,in=down,looseness=1.5] (-0.6,-0.7) -- (-0.6,-0.2) to[out=up,in=up,looseness=1.5] (-0.3,-0.2);
            \genbox{-0.5,0.2}{0.3,0.6}{\JW_{k+1}};
            \genbox{-0.5,-0.2}{0.3,-0.6}{\JW_{k+1}};
        \end{tikzpicture}
        \overset{\cref{JWabsorb}}{=}
        \begin{tikzpicture}[centerzero,TL]
            \draw[multi] (0,-1.35) -- (0,1.35);
            \coupon{0,0.7}{v_{\bar{p}}};
            \coupon{0,-0.7}{v_{\bar{r}}};
            \coupon{0,0}{\JW_{k+1}};
        \end{tikzpicture}
        = v_{\bar{p}} v_{\bar{r}}.
    \]
    If $p_{n-1}=k-1$ and $r_{n-1} = k+1$, then
    \[
        \tr^n_{n-1} \big( v_p v_r^\updownarrow \big)
        \overset{\cref{v-recursion}}{=}
        \begin{tikzpicture}[centerzero,TL]
            \draw[multi] (0,-1.35) -- (0,1.35);
            \coupon{0,1}{v_{\bar{p}}};
            \coupon{0,-1}{v_{\bar{r}}};
            \draw (-0.3,0.6) -- (-0.3,0.8) to[out=up,in=up,looseness=1.5] (-0.85,0.8) -- (-0.85,-0.7) to[out=down,in=down,looseness=1.5] (-0.6,-0.7) -- (-0.6,-0.2) to[out=up,in=up,looseness=1.5] (-0.3,-0.2);
            \genbox{-0.5,0.2}{0.3,0.6}{\JW_k};
            \genbox{-0.5,-0.2}{0.3,-0.6}{\JW_{k+1}};
        \end{tikzpicture}
        \overset{\cref{JWabsorb}}{\underset{\cref{JWkill}}{=}} 0.
    \]
    The remaining case of $p_{n-1} = k+1$ and $r_{n-1} = k-1$ is analogous; we simply reflect all diagrams in a horizontal axis.
\end{proof}

We have a symmetric bilinear form determined by
\begin{equation} \label{formdef}
    \langle -,- \rangle \colon L^n_k \otimes L^n_k \to \kk,\qquad
    \langle v,w \rangle 1_\one
    = \tr^n_0 \big( v w^\updownarrow \big).
\end{equation}

\begin{lem}
    For all $p,r \in P^n_k$, $0 \le k \le n$
    \begin{equation}
        \langle v_p, v_r \rangle = \Delta_k \Delta_p \delta_{p,r},
    \end{equation}
    where $\Delta_p$ is defined as in \cref{crab}.
\end{lem}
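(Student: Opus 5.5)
Two routes are available: use the full trace and \cref{crab}, or iterate the partial trace \cref{spiral}. Iterating \cref{spiral} is the natural choice, since the form is defined through $\tr^n_0$ and \cref{telescope} lets us peel off one strand at a time. By \cref{telescope},
\[
    \tr^n_0 = \tr^1_0 \circ \tr^2_1 \circ \dotsb \circ \tr^n_{n-1}.
\]
We apply \cref{spiral} repeatedly to $v_p v_r^\updownarrow$. At each step the paths are truncated by one step, and we pick up the factor $\delta_{p_{i-1},r_{i-1}}$ together with $\Delta_{p_i}/\Delta_{p_i-1}$ whenever $p_{i-1} = p_i - 1$. After $n$ steps only the empty-path case remains: $v_{(0)} v_{(0)}^\updownarrow = 1_\one$.

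Collecting the factors gives
\[
    \tr^n_0(v_p v_r^\updownarrow) = \delta_{p,r} \prod_{i : p_{i-1} < p_i} \frac{\Delta_{p_i}}{\Delta_{p_{i-1}}}\, 1_\one,
\]
where the Kronecker deltas combine to $\delta_{p,r}$ because $p_n = r_n = k$ and every earlier entry is forced to agree. One point needs checking: after the first application, \cref{spiral} is applied to $v_{\bar p} v_{\bar r}^\updownarrow$ with $\bar p, \bar r \in P^{n-1}_{p_{n-1}}$. The hypotheses of \cref{spiral} are still satisfied because $\bar p$ and $\bar r$ end at the same value once we are in the nonzero case.

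It then remains to identify the product with $\Delta_k \Delta_p$. Write $U = \{i : p_{i-1} < p_i\}$ and $D = \{i : p_{i-1} > p_i\}$. Telescoping over all steps gives
\[
    \prod_{i=1}^n \frac{\Delta_{p_i}}{\Delta_{p_{i-1}}} = \frac{\Delta_{p_n}}{\Delta_{p_0}} = \Delta_k .
\]
Splitting this product into the $U$ and $D$ parts,
\[
    \prod_{i\in U}\frac{\Delta_{p_i}}{\Delta_{p_{i-1}}} = \Delta_k \prod_{i\in D}\frac{\Delta_{p_{i-1}}}{\Delta_{p_i}} = \Delta_k \Delta_p.
\]
All denominators are invertible by \cref{generic}, since $p_i \ge 0$ throughout.

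The main obstacle is bookkeeping rather than anything substantive. The indexing conventions must be tracked carefully: \cref{spiral} is stated for $n > 0$ and $0 \le k \le n$, and we must confirm that the intermediate truncated paths $p[i]$ remain valid elements of $P^i_{p_i}$, which they do by definition of $P$.

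An alternative route uses \cref{crab}. Apply \cref{pingpong} to move $v_r^\updownarrow$ around, giving $\tr^n_0(v_p v_r^\updownarrow) = \tr^k_0(v_r^\updownarrow v_p)$ in the appropriate sense. Then $v_r^\updownarrow v_p = \delta_{p,r}\Delta_p \JW_k$ by \cref{crab}, and \cref{JWtrace} gives $\tr(\JW_k) = \Delta_k$. However, $v_p$ is not an element of $\TLalg_n$, so making this cyclicity step rigorous needs extra care. I would therefore present the \cref{spiral} argument.
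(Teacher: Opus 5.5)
Your argument is correct. Iterating \cref{spiral} along $\tr^n_0=\tr^1_0\tr^2_1\dotsm\tr^n_{n-1}$ gives $\delta_{p,r}\prod_{i:p_{i-1}<p_i}\Delta_{p_i}/\Delta_{p_{i-1}}$. Your telescoping of $\prod_{i=1}^n\Delta_{p_i}/\Delta_{p_{i-1}}=\Delta_k$ then turns this into $\Delta_k\Delta_p$.

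This is exactly the alternative the paper mentions in parentheses. The paper's main proof is the route you set aside. There, cyclicity of the trace replaces the closure of $v_pv_r^\updownarrow$ by the closure of $v_r^\updownarrow v_p\in\TLalg_k$. Then \cref{crab} gives $\delta_{p,r}\Delta_p\JW_k$, and \cref{JWtrace} evaluates the closed $\JW_k$ to $\Delta_k$. So $\Delta_k\Delta_p$ appears at once, with no step-by-step accumulation or telescoping.

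Your hesitation about that step is fair only as a matter of citation. \cref{pingpong} is stated for endomorphisms, and the paper applies it loosely; the right-hand trace there should be $\tr^k_0$. But cyclicity of the full closure for $f\colon\Tobj^{\otimes k}\to\Tobj^{\otimes n}$ and $g\colon\Tobj^{\otimes n}\to\Tobj^{\otimes k}$ does hold in $\TLcat$, by planar isotopy: slide $g$ around the closing strands. This works because every morphism there is a linear combination of planar diagrams.

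So the paper's route is shorter and more conceptual. Yours uses \cref{spiral,telescope} exactly as stated, and it shows that the norm is built up one step at a time, with only up-steps contributing.
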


\begin{proof}
    We have
    \[
        \langle v_p, v_r \rangle 1_\one
        \overset{\cref{formdef}}{=} \tr^n_0 \left( v w^\updownarrow \right) 1_\one
        \overset{\cref{pingpong}}{=} \tr^n_0 \left( w^\updownarrow v \right) 1_\one
        \overset{\cref{crab}}{=} \delta_{p,r} \Delta_p\
        \begin{tikzpicture}[centerzero,TL]
            \draw[multi] (0,0) -- (0,0.2) to[out=up,in=up,looseness=1.5] (-0.5,0.2) -- (-0.5,-0.2) to[out=down,in=down,looseness=1.5] (0,-0.2) -- (0,0);
            \coupon{0,0}{\JW_k};
        \end{tikzpicture}
        \overset{\cref{JWtrace}}{=} \delta_{p,r} \Delta_k \Delta_p 1_\one.
    \]
    (One can also give a proof using \cref{spiral} and the fact that $\tr^n_0 = \tr^1_0 \tr^2_1 \dotsm \tr^n_{n-1}$, by \cref{telescope}.)
\end{proof}

%------------------------
\subsection{Matrix units}
%------------------------

Define the \emph{matrix units}
\begin{equation} \label{matrixunits}
    e_{p,r} = \Delta_p^{-1} v_p v_r^\updownarrow \in \TLalg_n,\qquad
    p,r \in P^n_k,\quad 0 \le k \le n,\ n \in \N.
\end{equation}
Then $e_{p,r}$, $p,r \in P^n_k$, $0 \le k \le n$, form a basis for $\TLalg_n$, and it follows from \cref{crab} that
\begin{equation} \label{mumult}
    e_{p,r} e_{s,t} = \delta_{r,s} e_{p,t},\qquad
    \text{for all} \quad
    p,r,s,t \in P^n, \text{ such that } p_n = r_n,\ s_n=t_n.
\end{equation}
\details{
    Recall that, under our assumption \cref{generic}, $\TLalg_n$ is split semisimple.  By~\cref{mumult}, for each $k$, the elements $\{e_{p,r}\}_{p,r\in P_k^n}$ satisfy the matrix–unit relations and hence span a subalgebra isomorphic to $M_{|P_k^n|}(\Bbbk) \cong \End_\kk(L^n_k)$.  Taking the union over all admissible $k$ yields a basis of $\TLalg_n$.
}
It follows from \cref{v-recursion,crab} that
\begin{equation} \label{funspiel}
    e_{(0,1,2,\dotsc,n),(0,1,2,\dotsc,n)} = J_n = v_{(0,1,2,\dotsc,n)}.
\end{equation}

Define
\[
    p^+(i_1,\dotsc,i_m) := (p_0,p_1,\dotsc,p_n,i_1,\dotsc,i_m) \in P^{n+m}
\]
for $p \in P^n$, $m \in \N$, and $i_1,\dotsc,i_m$ such that $(p_0,p_1,\dotsc,p_n,i_1,\dotsc,i_m) \in P^{n+m}$.  Then we have the following generalization of \cref{mumult}.

\begin{prop}[General matrix unit multiplication]
    For all $p,r \in P^n$, $s,t \in P^m$ such that $p_n=r_n$, $s_m=t_m$, $n \ge m$,
    \begin{equation} \label{mumultgen}
        e_{p,r} e_{s,t} = \delta_{r[m],s} e_{p,t^+(r_{m+1},\dotsc,r_n)}
        \qquad \text{and} \qquad
        e_{t,s} e_{r,p} = \delta_{r[m],s} e_{t^+(r_{m+1},\dotsc,r_n),p}.
    \end{equation}
\end{prop}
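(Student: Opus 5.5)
The plan is to reduce \cref{mumultgen} to \cref{mumult} by expressing the embedded element $e_{s,t} \in \TLalg_m \subseteq \TLalg_n$ (strands added on the left) as a sum of matrix units of $\TLalg_n$. The key intermediate claim is
\[
    e_{s,t} = \sum_{u} e_{s^+(u), t^+(u)} \quad \text{in } \TLalg_n,
\]
where $u = (u_{m+1},\dotsc,u_n)$ runs over all continuations for which $s^+(u) \in P^n$. Since $s_m = t_m$, such a $u$ also continues $t$. With this identity in hand, \cref{mumult} gives
\[
    e_{p,r} e_{s,t} = \sum_u \delta_{r, s^+(u)}\, e_{p,t^+(u)}.
\]
The term is nonzero exactly when $r[m] = s$ and $u = (r_{m+1},\dotsc,r_n)$, which yields the first formula. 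The second formula follows either by the same computation multiplying on the other side, or by applying $\Omega_\updownarrow$. For the latter, note that $e_{p,r}^\updownarrow = e_{r,p}$, because $(v_p v_r^\updownarrow)^\updownarrow = v_r v_p^\updownarrow$, and that $\Omega_\updownarrow$ commutes with the inclusion $\TLalg_m \hookrightarrow \TLalg_n$.

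To prove the claim, I will induct on $n - m$, so it suffices to treat $n = m+1$. Here I show that in $\TLalg_{m+1}$,
\[
    1_1 \otimes v_s v_t^\updownarrow = \sum_{\epsilon = \pm 1} c_\epsilon\, v_{s^+(k+\epsilon)} v_{t^+(k+\epsilon)}^\updownarrow,
\]
where $k = s_m$ and $c_\epsilon$ is chosen to match the normalization $\Delta_{s^+(k+\epsilon)}/\Delta_s$. The natural route is to insert the decomposition of $1_1 \otimes \JW_k$ from \cref{JWrecursion}:
\[
    1_1 \otimes \JW_k = \JW_{k+1} + \frac{\Delta_{k-1}}{\Delta_k}\,(\text{cup-cap term through } \JW_{k-1}),
\]
rearranged as the identity minus the correction. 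The $\JW_{k+1}$ piece sandwiched between $v_s$ and $v_t^\updownarrow$ is exactly $v_{s^+(k+1)} v_{t^+(k+1)}^\updownarrow$ by \cref{v-recursion} (the case $p_{n-1} = k-1$). The cup-cap piece is $v_{s^+(k-1)} v_{t^+(k-1)}^\updownarrow$ up to the factor $\Delta_{k-1}/\Delta_k$; this uses \cref{v-recursion} in the case $p_{n-1} = k+1$, where the current path has endpoint $k-1$ and its predecessor has endpoint $k$. Writing $\JW_k = \JW_k \JW_k$ via \cref{JWabsorb} places one projector on each side.

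Dividing by $\Delta_s$ then requires checking the coefficients. From \cref{crab}, $\Delta_{s^+(k+1)} = \Delta_s$ and $\Delta_{s^+(k-1)} = \Delta_s \Delta_k/\Delta_{k-1}$. So the coefficient $\Delta_{k-1}/\Delta_k$ on the cup-cap term is exactly what converts it into $e_{s^+(k-1),t^+(k-1)}$, and all coefficients come out to $1$. When $k = 0$ the second term is absent, consistent with \cref{JWrecursion} and with $P^{m+1}_{-1} = \varnothing$.

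For the induction step, apply the $n = m+1$ case to each summand, using the fact that the inclusion $\TLalg_{m} \hookrightarrow \TLalg_{m+1} \hookrightarrow \TLalg_n$ is the composite of one-strand inclusions.

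The main obstacle is the bookkeeping in the base step. I need to verify precisely that \cref{JWrecursion} (whose cup and cap sit on the leftmost strands) matches the shape of the two cases of \cref{v-recursion} for extending a path by one step. I also need to track the sign of the correction term: the $-\Delta_{k-1}/\Delta_k$ in the recursion becomes $+$ once $1_1 \otimes \JW_k$ is isolated on one side.
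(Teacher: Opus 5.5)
Your proposal is correct, but it takes a different route from the paper.

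\textbf{The paper's route.} The paper never computes the image of $e_{s,t}$ under $\TLalg_m \hookrightarrow \TLalg_n$.
\begin{enumerate}
\item For $m=n-1$, it expands $v_r^\updownarrow$ by \cref{v-recursion}, according to whether $r_{n-1}=k\pm 1$.
\item It applies the orthogonality \cref{crab} to the middle product, $v_{\bar r}^\updownarrow v_s=\delta_{\bar r,s}\Delta_s J_{k\pm1}$, so the factor $\Delta_s^{-1}$ cancels immediately.
\item It then reassembles the result as $v_p v_{t^+(k)}^\updownarrow$ using \cref{v-recursion} again.
\item For $n-m\ge 2$, it writes $e_{p,r}e_{s,t}=e_{p,r}e_{r[n-1],r[n-1]}e_{s,t}$ and combines the one-step case with the induction hypothesis.
\end{enumerate}

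\textbf{Your route.} You first prove the branching identity $1_{n-m}\otimes e_{s,t}=\sum_u e_{s^+(u),t^+(u)}$ from \cref{JWrecursion}, and then read off both formulas from \cref{mumult}. I checked the one-step case:
\begin{itemize}
\item $(1_1\otimes v_s)J_{k+1}(1_1\otimes v_t^\updownarrow)=v_{s^+(k+1)}v_{t^+(k+1)}^\updownarrow$.
\item The cup--cap term equals $v_{s^+(k-1)}v_{t^+(k-1)}^\updownarrow$, using $v_s=v_sJ_k$.
\item $\Delta_{s^+(k-1)}=\Delta_s\Delta_k/\Delta_{k-1}$ absorbs the coefficient $\Delta_{k-1}/\Delta_k$, so both summands appear with coefficient $1$.
\end{itemize}

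\textbf{Comparison.} The paper's argument is more economical: it needs only \cref{v-recursion} and \cref{crab}, and no coefficient matching. Yours costs that bookkeeping, but it proves a stronger and reusable fact: the path matrix units are compatible with the tower. Both formulas then follow symmetrically in one line.

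\textbf{One small correction.} You wrote $e_{p,r}^\updownarrow=e_{r,p}$, but this is false in general. The normalizations in \cref{matrixunits} differ, and in fact $e_{p,r}^\updownarrow=(\Delta_r/\Delta_p)\,e_{r,p}$. If you derive the second formula via $\Omega_\updownarrow$, you pick up the factor $\Delta_s\Delta_{t^+(u)}/(\Delta_t\Delta_r)$. This equals $1$ when $r[m]=s$, because $r$ and $t^+(u)$ take the same steps after position $m$, starting from the same height $s_m=t_m$. So your conclusion survives. Alternatively, use your direct computation with the branching identity applied to $e_{t,s}$, which needs no rescaling at all.
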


\begin{proof}
    We prove the first equality in \cref{mumultgen} by induction on $n-m$.  When $n=m$, the result is \cref{mumult}.  Now suppose $m=n-1$ and $p_n=r_n=k$.  If $r_{n-1} = k+1$, then
    \[
        e_{p,r} e_{s,t}
        \overset{\cref{matrixunits}}{=} \Delta_p^{-1} \Delta_s^{-1}\
        \begin{tikzpicture}[anchorbase,TL]
            \draw[multi] (-0.15,0.3) -- (-0.15,1.4);
            \draw[multi] (0,-1.4) -- (0,0.3);
            \draw (-0.35,-1.4) -- (-0.35,0.3);
            \genbox{-0.5,0}{0.2,0.6}{v_r^\updownarrow};
            \coupon{0,-0.35}{v_{s}};
            \genbox{-0.5,0.8}{0.2,1.2}{v_p};
            \coupon{0,-1}{v_t^\updownarrow};
        \end{tikzpicture}
        \overset{\cref{v-recursion}}{\underset{\cref{crab}}{=}} \delta_{\bar{r},s} \Delta_p^{-1}\
        \begin{tikzpicture}[anchorbase,TL]
            \draw[multi] (0,-1.4) -- (0,1.4);
            \draw (-0.2,0.2) to[out=up,in=up,looseness=2] (-0.6,0.2) -- (-0.6,-1.4);
            \genbox{-0.5,-0.2}{0.2,0.2}{\JW_{k+1}};
            \coupon{0,1}{v_p};
            \coupon{0,-0.9}{v_t^\updownarrow};
        \end{tikzpicture}
        \overset{\cref{v-recursion}}{=} \delta_{\bar{r},s} \Delta_p^{-1}\
        \begin{tikzpicture}[anchorbase,TL]
            \draw[multi] (0,-1.4) -- (0,1.4);
            \coupon{0,0.5}{v_p};
            \coupon{0,-0.5}{v_{t^+(k)}^\updownarrow};
        \end{tikzpicture}
        \overset{\cref{matrixunits}}{=} \delta_{\bar{r},s} e_{p,t^+(k)},
    \]
    as desired.  On the other hand, if $r_{n-1}=k-1$, then
    \[
        e_{p,r} e_{s,t}
        \overset{\cref{matrixunits}}{=} \Delta_p^{-1} \Delta_s^{-1}\
        \begin{tikzpicture}[anchorbase,TL]
            \draw[multi] (-0.15,0.3) -- (-0.15,1.4);
            \draw[multi] (0,-1.4) -- (0,0.3);
            \draw (-0.35,-1.4) -- (-0.35,0.3);
            \genbox{-0.5,0}{0.2,0.6}{v_r^\updownarrow};
            \coupon{0,-0.35}{v_{s}};
            \genbox{-0.5,0.8}{0.2,1.2}{v_p};
            \coupon{0,-1}{v_t^\updownarrow};
        \end{tikzpicture}
        \overset{\cref{v-recursion}}{\underset{\cref{crab}}{=}} \delta_{\bar{r},s} \Delta_p^{-1}\
        \begin{tikzpicture}[anchorbase,TL]
            \draw[multi] (-0.15,0.3) -- (-0.15,1.4);
            \draw[multi] (0,-1.4) -- (0,0.3);
            \draw (-0.35,-1.4) -- (-0.35,0.3);
            \genbox{-0.5,0}{0.2,0.4}{\JW_k};
            \genbox{-0.5,0.8}{0.2,1.2}{v_p};
            \coupon{0,-0.8}{v_t^\updownarrow};
        \end{tikzpicture}
        \overset{\cref{v-recursion}}{=} \delta_{\bar{r},s} \Delta_p^{-1}\
        \begin{tikzpicture}[anchorbase,TL]
            \draw[multi] (0,-1.4) -- (0,1.4);
            \coupon{0,0.5}{v_p};
            \coupon{0,-0.5}{v_{t^+(k)}^\updownarrow};
        \end{tikzpicture}
        \overset{\cref{matrixunits}}{=} \delta_{\bar{r},s} e_{p,t^+(k)},
    \]
    as desired.
    
    For $n-m \ge 2$, we have, using the induction hypothesis,
    \[
        e_{p,r} e_{s,t}
        = e_{p,r} e_{r[n-1],r[n-1]} e_{s,t}
        = \delta_{r[m],s} e_{p,r} e_{r[n-1],t^+(r_{m+1},\dotsc,r_{n-1})}
        = \delta_{r[m],s} e_{p,t^+(r_{m+1},\dotsc,r_n)},
    \]
    completing the proof of the induction step.  The proof of the second equality in \cref{mumultgen} is analogous.
\end{proof}

\begin{lem}
    For all $0 \le k < n$, and $p,r \in P^n$ such that $p_n=r_n$,
    \begin{equation} \label{mutrace}
        \tr^n_k(e_{p,r})
        = \frac{\Delta_{p_n}}{\Delta_{p_k}} \delta_{p_{n-1},r_{n-1}} \dotsm \delta_{p_k,r_k} e_{p[k],r[k]}.
    \end{equation}
\end{lem}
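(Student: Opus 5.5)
We argue by downward induction on $k$, equivalently by induction on $n-k$, using \cref{telescope} to peel off one strand at a time and \cref{spiral} as the one-step computation.

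For the base case $k = n-1$, we unpack \cref{matrixunits}:
\[
    \tr^n_{n-1}(e_{p,r}) = \Delta_p^{-1}\,\tr^n_{n-1}\big(v_p v_r^\updownarrow\big).
\]
By \cref{spiral} this equals
\[
    \Delta_p^{-1}\, \delta_{p_{n-1},r_{n-1}} \left(\frac{\Delta_{p_n}}{\Delta_{p_n-1}}\right)^{\delta_{p_{n-1},p_n-1}} v_{\bar p} v_{\bar r}^\updownarrow .
\]
Writing $v_{\bar p} v_{\bar r}^\updownarrow = \Delta_{\bar p}\, e_{\bar p,\bar r}$, it remains to check a scalar identity:
\[
    \frac{\Delta_{\bar p}}{\Delta_p}\left(\frac{\Delta_{p_n}}{\Delta_{p_n-1}}\right)^{\delta_{p_{n-1},p_n-1}} = \frac{\Delta_{p_n}}{\Delta_{p_{n-1}}}.
\]
By the definition of $\Delta_p$ in \cref{crab}, $\Delta_p/\Delta_{\bar p}$ equals $\Delta_{p_{n-1}}/\Delta_{p_n}$ if $p_{n-1}>p_n$, and equals $1$ otherwise. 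We check the two cases separately.
\begin{itemize}
    \item If $p_{n-1} = p_n+1$, the left side is $\Delta_{p_n}/\Delta_{p_{n-1}}$, as required.
    \item If $p_{n-1} = p_n-1$, the left side is $\Delta_{p_n}/\Delta_{p_n-1} = \Delta_{p_n}/\Delta_{p_{n-1}}$, again as required.
\end{itemize}
This proves the case $k=n-1$, and the case $n=1$, $k=0$ is included here.

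For the induction step with $k<n-1$, we use \cref{telescope} to write $\tr^n_k = \tr^{n-1}_k\,\tr^n_{n-1}$. By the base case,
\[
    \tr^n_{n-1}(e_{p,r}) = \frac{\Delta_{p_n}}{\Delta_{p_{n-1}}}\,\delta_{p_{n-1},r_{n-1}}\, e_{\bar p,\bar r}.
\]
When $\delta_{p_{n-1},r_{n-1}}=1$, we have $\bar p_{n-1}=\bar r_{n-1}$, so the induction hypothesis applies to $e_{\bar p,\bar r}\in \TLalg_{n-1}$. It gives
\[
    \tr^{n-1}_k(e_{\bar p,\bar r}) = \frac{\Delta_{p_{n-1}}}{\Delta_{p_k}}\,\delta_{p_{n-2},r_{n-2}}\dotsm\delta_{p_k,r_k}\, e_{p[k],r[k]}.
\]
Multiplying the two formulas, the factor $\Delta_{p_{n-1}}$ cancels (it is invertible by \cref{generic}), and we obtain exactly \cref{mutrace}.

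The main obstacle is purely bookkeeping. One must match the exponent convention in \cref{spiral}, where the extra factor appears when $p_{n-1}=k-1$, against the product defining $\Delta_p$, which only picks up factors at down-steps. We also need the harmless observation that when $p_{n-1}\neq r_{n-1}$ both sides vanish. This is consistent because $e_{\bar p,\bar r}$ is only defined when the endpoints agree, and the Kronecker delta kills that term.
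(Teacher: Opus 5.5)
Your proof is correct and is essentially the paper's argument: reverse induction, with the base case $k=n-1$ from \cref{spiral} plus the scalar comparison against $\Delta_p$ (which the paper leaves implicit and you check explicitly), and the inductive step via \cref{telescope}. The only difference is cosmetic: you factor $\tr^n_k=\tr^{n-1}_k\tr^n_{n-1}$ and apply the induction hypothesis in $\TLalg_{n-1}$, whereas the paper factors $\tr^n_k=\tr^{k+1}_k\tr^n_{k+1}$ and applies the base case at level $k+1$ last.
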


\begin{proof}
    We prove the result by reverse induction on $k$.  Suppose $p,r \in P^n$ with $p_n=r_n$. The case $k=n-1$ follows from \cref{spiral}.
    \details{
        We have
        \[
            \tr^n_{n-1} \left( e_{p,r} \right)
             \overset{\cref{matrixunits}}{\underset{\cref{spiral}}{=}} \delta_{p_{n-1}, r_{n-1}} \frac{\Delta_{\bar{p}}}{\Delta_p} \left( \frac{\Delta_k}{\Delta_{k-1}} \right)^{\delta_{p_{n-1},k-1}} e_{\bar{p},\bar{r}}
            = \delta_{p_{n-1}, r_{n-1}} \frac{\Delta_{p_n}}{\Delta_{p_{n-1}}} e_{\bar{p},\bar{r}}.
        \]
    }
    Now suppose that $k \le n-2$ and that the result holds for $k+1$.  Then we have
    \begin{multline*}
        \tr^n_k(e_{p,r})
        = \tr^{k+1}_k \tr^n_{k+1}(e_{p,r})
        = \frac{\Delta_{p_n}}{\Delta_{p_{k+1}}} \delta_{p_{n-1},r_{n-1}} \dotsm \delta_{p_{k+1},r_{k+1}} \tr^{k+1}_k \left( e_{p[k+1],r[k+1]} \right) \\
        = \frac{\Delta_{p_n}}{\Delta_{p_k}} \delta_{p_{n-1},r_{n-1}} \dotsm \delta_{p_k,r_k} e_{p[k],r[k]}.
        \qedhere
    \end{multline*}
\end{proof}

%-------------------------------------------------------
\subsection{Dual bases for the Temperley--Lieb algebras}
%-------------------------------------------------------

For $n>0$, define
\begin{equation} \label{salt}
    \bB_n = \left\{ e_{p,r} : p,r \in P^n,\ p_n = r_n \right\} \subseteq \TLalg_n.
\end{equation}
For $p,r \in P^n$ satisfying $p_n=r_n$, and $1 \le k \le n$, define
\begin{equation} \label{mudual}
    e_{p,r}^{\vee,k}
    := \frac{\Delta_{r_{n-k}}}{\Delta_{r_n} |P^{n-k}_{r_{n-k}}|} e_{r,p}
    \in \TLalg_n,
    \qquad e_{p,r}^\vee := e_{p,r}^{\vee,1}.
\end{equation}

\begin{lem} \label{Froblem}
    We have
    \begin{equation} \label{Frobfall}
        \sum_{b \in \bB_n} \tr^n_{n-k}(xb) b^{\vee,k}
        = x
        = \sum_{b \in \bB_n} b \tr^n_{n-k}(b^{\vee,k} x)
        \qquad \text{for all } x \in \TLalg_n,\ 1 \le k \le n.
    \end{equation}    
\end{lem}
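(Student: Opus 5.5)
By linearity in $x$, and since $\bB_n$ is a basis of $\TLalg_n$, it suffices to verify both equalities in \cref{Frobfall} for $x = e_{s,t}$ with $s,t \in P^n$ and $s_n = t_n$. The whole computation uses three earlier results: the multiplication rule \cref{mumult}, the trace formula \cref{mutrace} (with $n-k$ in place of $k$, which is allowed since $0 \le n-k < n$), and the general multiplication rule \cref{mumultgen} (with $m = n-k \le n$).

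\emph{First equality.} Take $b = e_{p,r}$. By \cref{mumult}, $e_{s,t}e_{p,r} = \delta_{t,p} e_{s,r}$, so only $p=t$ contributes. Then \cref{mutrace} gives
\[
    \tr^n_{n-k}(e_{s,r}) = \frac{\Delta_{s_n}}{\Delta_{s_{n-k}}}\, \delta_{s_{n-1},r_{n-1}} \dotsm \delta_{s_{n-k},r_{n-k}}\, e_{s[n-k],r[n-k]} .
\]
Hence the only surviving $r$ are those with $r_i = s_i$ for $n-k \le i \le n$. Next, multiply by $b^{\vee,k} = \frac{\Delta_{r_{n-k}}}{\Delta_{r_n}|P^{n-k}_{r_{n-k}}|} e_{r,t}$, using the second identity of \cref{mumultgen}. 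Since $r[n-k]$ matches the right index, this gives
\[
    e_{s[n-k],r[n-k]}\, e_{r,t} = e_{s[n-k]^+(r_{n-k+1},\dotsc,r_n),\,t} = e_{s,t},
\]
where the last step uses that the tail of $r$ agrees with that of $s$.

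For each surviving $r$, the scalar is
\[
    \frac{\Delta_{s_n}}{\Delta_{s_{n-k}}} \cdot \frac{\Delta_{r_{n-k}}}{\Delta_{r_n}|P^{n-k}_{r_{n-k}}|} = \frac{1}{|P^{n-k}_{s_{n-k}}|} ,
\]
because $r_{n-k} = s_{n-k}$ and $r_n = s_n$. The surviving $r$ are exactly an arbitrary initial segment $r[n-k] \in P^{n-k}_{s_{n-k}}$ followed by the fixed tail of $s$. There are therefore $|P^{n-k}_{s_{n-k}}|$ of them, and the sum equals $e_{s,t}$.

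\emph{Second equality.} The argument is symmetric. For $b = e_{p,r}$, \cref{mumult} gives $b^{\vee,k} e_{s,t} \propto \delta_{p,s} e_{r,t}$, so only $p = s$ contributes. By \cref{mutrace}, $\tr^n_{n-k}(e_{r,t})$ forces $r_i = t_i$ for $i \ge n-k$ and produces $e_{r[n-k],t[n-k]}$. The first identity of \cref{mumultgen} then gives
\[
    e_{s,r}\, e_{r[n-k],t[n-k]} = e_{s,\,t[n-k]^+(r_{n-k+1},\dotsc,r_n)} = e_{s,t}.
\]
The scalars again multiply to $1/|P^{n-k}_{t_{n-k}}|$. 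Summing over the $|P^{n-k}_{t_{n-k}}|$ admissible choices of $r$ gives $e_{s,t}$.

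There is no real obstacle here. The main care needed is bookkeeping: choosing the correct side-version of \cref{mumultgen}, viewing elements of $\TLalg_{n-k}$ inside $\TLalg_n$ via the inclusion, and checking that the normalization in \cref{mudual} cancels the factor $\Delta_{p_n}/\Delta_{p_{n-k}}$ from \cref{mutrace} exactly, leaving only the counting factor that the sum over initial path segments absorbs.
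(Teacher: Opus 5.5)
Your proposal is correct and is essentially the paper's proof. You reduce to $x$ a matrix unit, use \cref{mumult} to collapse the sum, and apply \cref{mutrace} and \cref{mumultgen}; the normalization in \cref{mudual} then cancels the trace factor, so the sum over the $|P^{n-k}_{s_{n-k}}|$ initial path segments returns $x$. The paper writes out only the first equality and calls the second analogous, and the second is exactly the symmetric computation you give.
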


\begin{proof}
    It suffices to prove the result for $x = e_{p,r}$, $p,r \in P^n_l$, $0 \le l \le n$.  We have
    \begin{multline*}
        \sum_{b \in \bB_n} \tr^n_{n-k}(xb) b^{\vee,k}
        \overset{\cref{salt}}{\underset{\cref{mudual}}{=}} \sum_{\substack{s,t \in P^n \\ s_n = t_n}} \frac{\Delta_{t_{n-k}}}{\Delta_{t_n} |P^{n-k}_{t_{n-k}}|} \tr^n_{n-k} \left( e_{p,r} e_{s,t} \right) e_{t,s}
        \\
        \overset{\cref{mumult}}{=} \sum_{t \in P^n_l} \frac{\Delta_{t_{n-k}}}{\Delta_l |P^{n-k}_{t_{n-k}}|} \tr^n_{n-k} \left( e_{p,t} \right) e_{t,r}
        \overset{\cref{mutrace}}{=} \sum_{t \in P^n_{p_{n-k},\dotsc,p_{n-1},l}} \frac{1}{|P^{n-k}_{p_{n-k}}|} e_{p[n-k],t[n-k]} e_{t,r}
        \\
        \overset{\cref{mumultgen}}{=} \sum_{t \in P^n_{p_{n-k},\dotsc,p_{n-1},l}} \frac{1}{|P^{n-k}_{p_{n-k}}|} e_{p,r}
        = e_{p,r}.
    \end{multline*}
    The proof of the second equality in \cref{Frobfall} is analogous.
    \details{
        We have
        \begin{multline*}
            \sum_{b \in \bB_n} b \tr^n_{n-k}(b^{\vee,k} x)
            = \sum_{\substack{s,t \in P^n \\ s_n = t_n}} \frac{\Delta_{t_{n-k}}}{\Delta_{t_n} |P^{n-k}_{t_{n-k}}|} e_{s,t} \tr^n_{n-k} \left( e_{t,s} e_{p,r} \right)
            \overset{\cref{mumult}}{=} \sum_{t \in P^n_l} \frac{\Delta_{t_{n-k}}}{\Delta_l |P^{n-k}_{t_{n-k}}|} e_{p,t} \tr^n_{n-k} \left( e_{t,r} \right)
            \\
            \overset{\cref{mutrace}}{=} \sum_{t \in P^n_{r_{n-k},\dotsc,r_{n-1},l}} \frac{1}{|P^{n-k}_{r_{n-k}}|} e_{p,t} e_{t[n-k],r[n-k]}
            \overset{\cref{mumultgen}}{=} \sum_{t \in P^n_{r_{n-k},\dotsc,r_{n-1},l}} \frac{1}{|P^{n-k}_{r_{n-k}}|} e_{p,r}
            = e_{p,r}.
        \end{multline*}
    }
\end{proof}

\Cref{Froblem} implies that $\TLalg_n$ is a Frobenius extension of $\TLalg_{n-k}$.  It follows that
\begin{equation} \label{teleport}
    \sum_{b \in \bB_n} xb \otimes_{n-k} b^{\vee,k} = \sum_{b \in \bB_n} b \otimes_{n-k} b^{\vee,k} x
    \qquad \text{for all } x \in \TLalg_n,\ 1 \le k \le n.
\end{equation}
\details{
    We have
    \begin{multline*}
        \sum_{b \in \bB_n} xb \otimes_{n-k} b^{\vee,k}
        \overset{\cref{Frobfall}}{=} \sum_{b,c \in \bB_n} c \tr^n_{n-k}(c^{\vee,k} xb) \otimes_{n-k} b^{\vee,k}
        \\
        = \sum_{b,c \in \bB_n} c \otimes_{n-k} \tr^n_{n-k}(c^{\vee,k} xb) b^{\vee,k}
        \overset{\cref{Frobfall}}{=} \sum_{c \in \bB_n} c \otimes_{n-k} c^{\vee,k} x.
    \end{multline*}
}

We conclude this section with some results that will be needed later in proofs.

\begin{lem}
    For all $p,r \in P^n$, $s,t \in P^{n-k}$ such that $p_n=r_n$, $s_{n-k}=t_{n-k}$, $k \ge 1$,
    \begin{equation} \label{dualmult}
        e_{s,t}^\vee e_{p,r}^{\vee,k}
        = \frac{\delta_{r[n-k],s}}{|P^{n-k}_{r_{n-k}}|} e_{p,t^+(r_{n-k+1},\dotsc,r_n)}^{\vee,k+1}.
    \end{equation}
\end{lem}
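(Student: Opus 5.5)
The identity follows by unwinding the definitions \cref{mudual} of both dual elements and multiplying the resulting matrix units with the general multiplication rule \cref{mumultgen}. Throughout, $e_{s,t}^\vee \in \TLalg_{n-k}$ is viewed in $\TLalg_n$ via the inclusion $\TLalg_{n-k} \hookrightarrow \TLalg_n$. This is the same convention under which \cref{mumultgen} is stated. Implicitly $n-k \ge 1$, so that $e_{s,t}^\vee = e_{s,t}^{\vee,1}$ is defined in $\TLalg_{n-k}$ and $e^{\vee,k+1}$ is defined in $\TLalg_n$.

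\textbf{Step 1 (expand the definitions).} By \cref{mudual}, applied in $\TLalg_{n-k}$ with $k$ replaced by $1$ and in $\TLalg_n$ with $k$ as given, the left-hand side of \cref{dualmult} equals
\[
    \frac{\Delta_{t_{n-k-1}}}{\Delta_{t_{n-k}} |P^{n-k-1}_{t_{n-k-1}}|}
    \cdot
    \frac{\Delta_{r_{n-k}}}{\Delta_{r_n} |P^{n-k}_{r_{n-k}}|}\
    e_{t,s}\, e_{r,p}.
\]

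\textbf{Step 2 (multiply the matrix units).} Apply the second equality of \cref{mumultgen}, with $m = n-k$ and the roles of its $(t,s,r,p)$ played by our $(t,s,r,p)$. This gives
\[
    e_{t,s}\, e_{r,p} = \delta_{r[n-k],s}\, e_{q,p},
    \qquad
    q := t^+(r_{n-k+1},\dotsc,r_n).
\]
The hypotheses of \cref{mumultgen} are satisfied, since $t_{n-k} = s_{n-k}$ and $p_n = r_n$.

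\textbf{Step 3 (match the coefficients).} On the support of the Kronecker delta we have $s = r[n-k]$, so $t_{n-k} = s_{n-k} = r_{n-k}$. Hence the factors $\Delta_{t_{n-k}}$ and $\Delta_{r_{n-k}}$ cancel, and the coefficient becomes
\[
    \frac{\Delta_{t_{n-k-1}}}{\Delta_{r_n}\, |P^{n-k-1}_{t_{n-k-1}}|\, |P^{n-k}_{r_{n-k}}|}.
\]
On the other hand, \cref{mudual} with $k+1$ gives
\[
    e^{\vee,k+1}_{p,q} = \frac{\Delta_{q_{n-k-1}}}{\Delta_{q_n}\, |P^{n-k-1}_{q_{n-k-1}}|}\, e_{q,p}.
\]
Since $q_{n-k-1} = t_{n-k-1}$ and $q_n = r_n$, this coefficient agrees with the one above after extracting the factor $1/|P^{n-k}_{r_{n-k}}|$. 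This yields exactly the right-hand side of \cref{dualmult}.

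There is no real obstacle in this argument. The only point needing care is that \cref{mumultgen} is applied to $e_{t,s}$ regarded as an element of $\TLalg_n$ via the inclusion, which is the convention already used in that proposition. One should also confirm that $q$ lies in $P^n$ with $q_n = p_n$, so that $e^{\vee,k+1}_{p,q}$ is defined. This holds because $t_{n-k} = r_{n-k}$, so appending the tail $(r_{n-k+1},\dotsc,r_n)$ of $r$ to $t$ produces a valid path ending at $r_n = p_n$.
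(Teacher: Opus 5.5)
Your proposal is correct and is essentially the paper's proof: expand both dual elements via \cref{mudual}, multiply $e_{t,s}e_{r,p}$ using the second identity of \cref{mumultgen} with $m=n-k$, and then cancel $\Delta_{t_{n-k}}$ against $\Delta_{r_{n-k}}$ on the support of the Kronecker delta to recognize $e^{\vee,k+1}_{p,t^+(r_{n-k+1},\dotsc,r_n)}$. You also make explicit two points the paper leaves tacit: that $n-k\ge 1$ is needed, and that the concatenated path is valid on the support of the delta.
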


\begin{proof}
    We compute
    \begin{multline*}
        e_{s,t}^\vee e_{p,r}^{\vee,k}
        \overset{\cref{mudual}}{=} \frac{\Delta_{t_{n-k-1}} \Delta_{r_{n-k}}}{\Delta_{t_{n-k}} \Delta_{r_n} |P^{n-k-1}_{t_{n-k-1}}| |P^{n-k}_{r_{n-k}}|} e_{t,s} e_{r,p}
        \\
        \overset{\cref{mumultgen}}{=} \delta_{r[n-k],s} \frac{\Delta_{t_{n-k-1}}}{\Delta_{r_n} |P^{n-k-1}_{t_{n-k-1}}| |P^{n-k}_{r_{n-k}}|} e_{t^+(r_{n-k+1},\dotsc,r_n),p}
        \overset{\cref{mudual}}{=} \frac{\delta_{r[n-k],s}}{|P^{n-k}_{r_{n-k}}|} e_{p,t^+(r_{n-k+1},\dotsc,r_n)}^{\vee,k+1}.
        \qedhere
    \end{multline*}
\end{proof}

\begin{cor}
    For $0 \le k < n$, we have
    \begin{equation} \label{rose}
        \sum_{(b_{(n)},\dotsc,b_{(n-k)}) \in B_n \times \dotsb \times B_{n-k}} b_{(n)} b_{(n-1)} \dotsm b_{(n-k)} \otimes b_{(n-k)}^\vee \dotsm b_{(n-1)}^\vee b_n^\vee
        = \sum_{b \in B_n} b \otimes b^{\vee,k+1}.
    \end{equation}
\end{cor}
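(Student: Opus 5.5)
Here $B_m$ is read as $\bB_m$, and $b^\vee=b^{\vee,1}$ for $b\in\bB_m$ (a dual relative to $\TLalg_{m-1}$). We prove \cref{rose} by induction on $k$, iterating \cref{dualmult} in the second tensor factor and \cref{mumultgen} in the first. The case $k=0$ is trivial: both sides equal $\sum_{b \in \bB_n} b \otimes b^{\vee,1}$, since $b^\vee = b^{\vee,1}$ by \cref{mudual}. For the inductive step, assume
\[
\sum b_{(n)} \dotsm b_{(n-k+1)} \otimes b_{(n-k+1)}^\vee \dotsm b_{(n)}^\vee = \sum_{b \in \bB_n} b \otimes b^{\vee,k}.
\]
Multiplying the first factor on the right by $b_{(n-k)}$ and the second factor on the left by $b_{(n-k)}^\vee$, and summing over $b_{(n-k)} \in \bB_{n-k}$, reduces the claim to
\[
\sum_{b \in \bB_n} \sum_{c \in \bB_{n-k}} bc \otimes c^\vee b^{\vee,k} = \sum_{b \in \bB_n} b \otimes b^{\vee,k+1}.
\]

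To check this, write $b = e_{p,r}$ with $p,r \in P^n$, $p_n=r_n$, and $c = e_{s,t}$ with $s,t \in P^{n-k}$, $s_{n-k}=t_{n-k}$. By \cref{mumultgen}, $bc = \delta_{r[n-k],s}\, e_{p,t^+(r_{n-k+1},\dotsc,r_n)}$. By \cref{dualmult}, $c^\vee b^{\vee,k} = \frac{\delta_{r[n-k],s}}{|P^{n-k}_{r_{n-k}}|}\, e^{\vee,k+1}_{p,t^+(r_{n-k+1},\dotsc,r_n)}$. The delta forces $s=r[n-k]$, so the left side becomes
\[
\sum_{p,r,t} \frac{1}{|P^{n-k}_{r_{n-k}}|}\, e_{p,u} \otimes e^{\vee,k+1}_{p,u}, \qquad u := t^+(r_{n-k+1},\dotsc,r_n),
\]
summed over $p,r \in P^n$ with $p_n=r_n$ and $t \in P^{n-k}_{r_{n-k}}$.

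Now reindex by $(p,u)$. Here $u$ runs over $P^n$ with $u_n=p_n$, and the map $(r,t) \mapsto u$ is surjective onto this set. The fibre over a given $u$ consists of the $r$ with $(r_{n-k},\dotsc,r_n) = (u_{n-k},\dotsc,u_n)$, with $t = u[n-k]$ then determined. Such $r$ are obtained by choosing $r[n-k] \in P^{n-k}_{u_{n-k}}$ freely, so the fibre has exactly $|P^{n-k}_{u_{n-k}}|$ elements. This cancels the prefactor and gives $\sum_{p,u} e_{p,u} \otimes e_{p,u}^{\vee,k+1} = \sum_{b \in \bB_n} b \otimes b^{\vee,k+1}$, as required.

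The only step needing care is this fibre count together with the index bookkeeping in the left-to-right ordering of the product. One also has to check that applying \cref{dualmult} with $k$ requires $k \ge 1$, which holds here since the case $k=0$ was handled separately.
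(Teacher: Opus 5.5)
Your argument is correct and is essentially the paper's proof: induction on $k$, with \cref{mumultgen} applied in the first tensor factor and \cref{dualmult} in the second, followed by reindexing the sum by $(p,u)$ so that the factor $1/|P^{n-k}_{u_{n-k}}|$ cancels against the size of the fibre. Your explicit fibre count spells out the final reindexing step, which the paper does in one line, and your indexing with $s,t \in P^{n-k}$ is the consistent one (the paper's display writes $n-k+1$ at these places).
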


\begin{proof}
    We prove the result by induction on $k$.  The case $k=0$ is a tautology.  For $k \ge 1$, using the induction hypothesis, we have
    \begin{multline*}
        \sum_{(b_{(n)},\dotsc,b_{(n-k)}) \in B_n \times \dotsb \times B_{n-k}} b_{(n)} b_{(n-1)} \dotsm b_{(n-k)} \otimes b_{(n-k)}^\vee \dotsm b_{(n-1)}^\vee b_n^\vee
        \\
        = \sum_{\substack{p,r \in P^n,\ s,t \in P^{n-k+1} \\ p_n=r_n,\ s_{n-k+1}=t_{n-k+1}}} e_{p,r} e_{s,t} \otimes e_{s,t}^\vee e_{p,r}^{\vee,k}
        \\
        \overset{\cref{mumultgen}}{\underset{\cref{dualmult}}{=}} \sum_{\substack{p,r \in P^n,\ s,t \in P^{n-k+1} \\ p_n=r_n,\ s_{n-k+1}=t_{n-k+1}}} \frac{\delta_{r[n-k+1],s}}{|P^{n-k+1}_{r_{n-k+1}}|} e_{p,t^+(r_{n-k+2},\dotsc,r_n)} \otimes e_{p,t^+(r_{n-k+2},\dotsc,r_n)}^{\vee,k+1}
        \\
        = \sum_{\substack{p,r \in P^n,\ t \in P^{n-k+1}_{r_{n-k+1}} \\ p_n=r_n}} \frac{1}{|P^{n-k+1}_{r_{n-k+1}}|} e_{p,t^+(r_{n-k+2},\dotsc,r_n)} \otimes e_{p,t^+(r_{n-k+2},\dotsc,r_n)}^{\vee,k+1}
        \\
        = \sum_{\substack{p,r \in P^n \\ p_n=r_n}} e_{p,r} \otimes e_{p,r}^{\vee,k+1}
        = \sum_{b \in B_n} b \otimes b^{\vee,k+1},
    \end{multline*}
    completing the proof of the induction step.
\end{proof}

%===================================================
\section{The incarnation functor\label{sec:functor}}
%===================================================

In this section, we relate the diagrammatic 2-category $\Dcat$ to the representation theory of Temperley--Lieb algebras.  The algebras $\TLalg_n$, $n \in \N$, bimodules between them, and bimodule homomorphisms form a bicategory under relative tensor product.  We will suppress associativity and unit constraints, using the standard coherence theorem for bicategories, and hence view this as a 2-category.  (Equivalently, one may replace this bicategory by a biequivalent strict 2-category.)  We will use the term \emph{2-functor} in what follows to mean \emph{pseudofunctor}.

%--------------------------------
\subsection{Functor to bimodules}
%--------------------------------

We let $\Bcat$ be the $\kk$-linear $2$-category of bimodules over the Temperley--Lieb algebras.  Thus, the set of objects of $\Bcat$ is $\{\obj{n} : n \in \N\}$, and the $1$-morphisms categories are
\[
    \Bcat(\obj{m},\obj{n}) = (\TLalg_n, \TLalg_m)\bimod.
\]
Horizontal composition is given by tensor product of bimodules.  Recall, from \cref{subsec:indres}, that, for $0 \le m,l \le n$, $\TLbim{m}{\TLalg_n}{l}$ is $\TLalg_n$, viewed as a $(\TLalg_m,\TLalg_l)$-bimodule.  Recall also the definition of $e_k$ from \cref{ecupcap}.

We define a $\kk$-linear $2$-functor
\[
    \bF \colon \Dcat \to \Bcat
\]
as follows.  On objects,
\[
    \bF(\obj{n}) = \obj{n},\qquad n \in \N.
\]
On 1-morphisms,
\begin{align*}
    \bF \left( \Sup 1_n \right) &= \TLbim{n+1}{\TLalg_{n+1}}{n},&
    \bF \left( \Sdown 1_{n+1} \right) &= \TLbim{n}{\TLalg_{n+1}}{n+1},
    \\
    \bF \left( \Dup 1_n \right) &= \TLbim{n+2}{\TLalg_{n+2} e_{n+1}}{n},&
    \bF \left( \Ddown 1_{n+2} \right) &= \TLbim{n}{e_{n+1} \TLalg_{n+2}}{n+2}.
\end{align*}
On 2-morphisms
\begin{align*}
    \bF \left( \rightcapreg[S]{n} \right) &\colon \TLbim{n}{\TLalg_n}{n-1} \TLbimr{\TLalg_n}{n} \to \TLbim{n}{\TLalg_n}{n},
    & x \otimes_{n-1} y &\mapsto xy,
    \\
    \bF \left( \rightcupreg[S]{n} \right) &\colon \TLbim{n}{\TLalg_n}{n} \to \TLbim{n}{\TLalg_{n+1}}{n},
    & x &\mapsto x,
    \\
    \bF \left( \leftcapreg[S]{n} \right) &\colon \TLbim{n}{\TLalg_{n+1}}{n} \to \TLbim{n}{\TLalg_n}{n},
    & x &\mapsto \tr^{n+1}_n(x),
    \\
    \bF \left( \leftcupreg[S]{n} \right) &\colon \TLbim{n}{\TLalg_n}{n} \to \TLbim{n}{\TLalg_n}{n-1} \TLbimr{\TLalg_n}{n},
    & x &\mapsto x \sum_{b \in \bB_n} b \otimes_{n-1} b^\vee,
    \\
    \bF \left( \rightcapreg[D]{n} \right) &\colon \TLbim{n}{\TLalg_n e_{n-1}}{n-2} \TLbimr{e_{n-1}\TLalg_n}{n} \to \TLbim{n}{\TLalg_n}{n},
    & x \otimes_{n-2} y &\mapsto xy,
    \\
    \bF \left( \rightcupreg[D]{n} \right) &\colon \TLbim{n}{\TLalg_n}{n} \to \TLbim{n}{e_{n+1}\TLalg_{n+2}e_{n+1}}{n},
    & x &\mapsto \delta^{-1} e_{n+1} x = \delta^{-1} x e_{n+1},
    \\
    \bF \left( \leftcapreg[D]{n} \right) &\colon \TLbim{n}{e_{n+1}\TLalg_{n+2}e_{n+1}}{n} \to \TLbim{n}{\TLalg_n}{n},
    & x &\mapsto \tr^{n+2}_n(x),
    \\
    \bF \left( \leftcupreg[D]{n} \right) &\colon \TLbim{n}{\TLalg_n}{n} \to \TLbim{n}{\TLalg_n e_{n-1}}{n-2} \TLbimr{e_{n-1}\TLalg_n}{n},
    & x &\mapsto \delta^{-1} x \sum_{b \in \bB_n} b e_{n-1} \otimes_{n-2} b^{\vee,2},
    \\
    \bF \left( \mergeupreg{n} \right) &\colon \TLbim{n+2}{\TLalg_{n+2}}{n} \to \TLbim{n+2}{\TLalg_{n+2} e_{n+1}}{n},
    & x &\mapsto x e_{n+1},
    \\
    \bF \left( \splitupreg{n} \right) &\colon \TLbim{n+2}{\TLalg_{n+2} e_{n+1}}{n} \to \TLbim{n+2}{\TLalg_{n+2}}{n},
    & x &\mapsto x,
\end{align*}

We now verify that $\bF$ respects the defining relations of $\Dcat$.  For $x \in \TLalg_{n+1}$, we have
\[
    \bF
    \left(
        \begin{tikzpicture}[centerzero,S]
            \draw[->] (-0.3,-0.4) -- (-0.3,0) arc(180:0:0.15) arc(180:360:0.15) -- (0.3,0.4);
            \region{-0.1,-0.2}{n};
        \end{tikzpicture}
    \right)
    \colon x \mapsto x \otimes_n 1 \mapsto x
    =
    \bF
    \left(
        \begin{tikzpicture}[centerzero,S]
            \draw[->] (0,-0.4) -- (0.,0.4);
            \region{0.2,0}{n};
        \end{tikzpicture}
    \right)
    (x)
\]
and
\[
    \bF
    \left(
        \begin{tikzpicture}[centerzero,S]
            \draw[<-] (-0.3,0.4) -- (-0.3,0) arc(180:360:0.15) arc(180:0:0.15) -- (0.3,-0.4);
            \region{-0.1,0.2}{n};
        \end{tikzpicture}
    \right)
    \colon x \mapsto \sum_{b \in \bB_{n+1}} b \otimes_n b^\vee x
    \\
    \mapsto \sum_{b \in \bB_{n+1}} b \tr^{n+1}_n (b^\vee x)
    \overset{\cref{Frobfall}}{=} x
    =
    \bF
    \left(
        \begin{tikzpicture}[centerzero,S]
            \draw[->] (0,-0.4) -- (0.,0.4);
            \region{0.2,0}{n};
        \end{tikzpicture}
    \right)
    (x).
\]
For $x \in \TLalg_{n+2}$, we have
\[
    \bF
    \left(
        \begin{tikzpicture}[centerzero,D]
            \draw[->] (-0.3,-0.4) -- (-0.3,0) arc(180:0:0.15) arc(180:360:0.15) -- (0.3,0.4);
            \region{-0.1,-0.2}{n};
        \end{tikzpicture}
    \right)
    \colon x e_{n+1} \mapsto \delta^{-1} x e_{n+1} \otimes_n e_{n+1} \mapsto x e_{n+1}
    =
    \bF
    \left(
        \begin{tikzpicture}[centerzero,D]
            \draw[->] (0,-0.4) -- (0.,0.4);
            \region{0.2,0}{n};
        \end{tikzpicture}
    \right)
    (x e_{n+1}),
\]
and
\begin{multline*}
    \bF
    \left(
        \begin{tikzpicture}[centerzero,D]
            \draw[<-] (-0.3,0.4) -- (-0.3,0) arc(180:360:0.15) arc(180:0:0.15) -- (0.3,-0.4);
            \region{-0.1,0.2}{n};
        \end{tikzpicture}
    \right)
    \colon x e_{n+1} \mapsto \sum_{b \in \bB_{n+2}} b e_{n+1} \otimes_n e_{n+1} b^{\vee,2} x e_{n+1}
    \\
    \mapsto \delta^{-2} \sum_{b \in \bB_{n+2}} b e_{n+1} \tr^{n+2}_n (e_{n+1} b^{\vee,2} x e_{n+1})
    \overset{\cref{pingpong}}{=} \delta^{-1} \sum_{b \in \bB_{n+2}} b e_{n+1} \tr^{n+2}_n (b^{\vee,2} x e_{n+1})
    \\
    = \delta^{-1} \sum_{b \in \bB_{n+2,2}} b \tr^{n+2}_n (b^\vee x e_{n+1}) e_{n+1}
    \overset{\cref{Frobfall}}{=} \delta^{-1} x e_{n+1}^2
    = x e_{n+1}
    =
    \bF
    \left(
        \begin{tikzpicture}[centerzero,D]
            \draw[->] (0,-0.4) -- (0.,0.4);
            \region{0.2,0}{n};
        \end{tikzpicture}
    \right)
    (x e_{n+1}),
\end{multline*}
where the second equality uses the fact that $e_{n+1}$ commutes with elements of $\TLalg_n$.  Thus, we have verified the relations \cref{adjunction}.

Next, for $x \in \TLbim{n}{e_{n+1} \TLalg_{n+2}}{n+2}$,
\begin{multline*}
    \bF
    \left(
        \begin{tikzpicture}[anchorbase]
            \draw[D,->] (0,0) -- (0,0.1) arc (0:180:0.2) -- (-0.4,-0.6);
            \draw[S] (0,0) to[out=-45, in=left] (0.2,-0.2) arc(-90:0:0.15) -- (0.35,0.4);
            \draw[S] (0,0) to[out=-135,in=up] (-0.2,-0.3) to[out=down,in=down] (0.6,-0.3) -- (0.6,0.4);
            \region{0.15,0.2}{n};
        \end{tikzpicture}
    \right)
    \colon x
    \mapsto \sum_{b \in \bB_{n+2},\ c \in \bB_{n+1}} x bc \otimes_n c^\vee b^\vee
    \overset{\cref{rose}}{=} \sum_{b \in \bB_{n+2}} xb \otimes_n b^{\vee,2}
    \mapsto \sum_{b \in \bB_{n+2}} x b e_{n+1} \otimes_n b^{\vee,2}
    \\
    \mapsto \sum_{b \in \bB_{n+2}} \tr^{n+2}_n (x b e_{n+1}) b^{\vee,2}
    \overset{\cref{pingpong}}{=} \delta \sum_{b \in \bB_{n+2}} \tr^{n+2}_n (e_{n+1} x b) b^{\vee,2}
    \overset{\cref{Frobfall}}{=} e_{n+1} x
    = \delta x
\end{multline*}
and
\[
    \bF
    \left(
        \begin{tikzpicture}[anchorbase,xscale=-1]
            \draw[D,->] (0,0) -- (0,0.1) arc (0:180:0.2) -- (-0.4,-0.6);
            \draw[S] (0,0) to[out=-45, in=left] (0.2,-0.2) arc(-90:0:0.15) -- (0.35,0.4);
            \draw[S] (0,0) to[out=-135,in=up] (-0.2,-0.3) to[out=down,in=down] (0.6,-0.3) -- (0.6,0.4);
            \region{-0.2,0.1}{n};
        \end{tikzpicture}
    \right)
    \colon x
    \mapsto 1 \otimes_n x
    \mapsto e_{n+1} \otimes_n x
    \mapsto e_{n+1} x
    = \delta x,
\]
verifying the first relation in \cref{swishy}.  Similarly, for $x \in \TLbim{n}{\TLalg_{n+2}}{n+2}$,
\[
    \bF
    \left(
        \begin{tikzpicture}[anchorbase,yscale=-1]
            \draw[D] (0,0) -- (0,0.1) arc (0:180:0.2) -- (-0.4,-0.6);
            \draw[S,->] (0,0) to[out=-45, in=left] (0.2,-0.2) arc(-90:0:0.15) -- (0.35,0.4);
            \draw[S,->] (0,0) to[out=-135,in=up] (-0.2,-0.3) to[out=down,in=down] (0.6,-0.3) -- (0.6,0.4);
            \region{0.18,0.1}{n};
        \end{tikzpicture}
    \right)
    \colon x \mapsto \delta^{-1} e_{n+1} \otimes_n x
    \mapsto \delta^{-1} e_{n+1} x
\]
and
\begin{multline*}
    \bF
    \left(
        \begin{tikzpicture}[anchorbase,xscale=-1,yscale=-1]
            \draw[D] (0,0) -- (0,0.1) arc (0:180:0.2) -- (-0.4,-0.6);
            \draw[S,->] (0,0) to[out=-45, in=left] (0.2,-0.2) arc(-90:0:0.15) -- (0.35,0.4);
            \draw[S,->] (0,0) to[out=-135,in=up] (-0.2,-0.3) to[out=down,in=down] (0.6,-0.3) -- (0.6,0.4);
            \region{-0.2,0.05}{n};
        \end{tikzpicture}
    \right)
    \colon x \mapsto \delta^{-2} \sum_{b \in \bB_{n+2}} x b e_{n+1} \otimes_n e_{n+1} b^{\vee,2}
    \mapsto \delta^{-2} \sum_{b \in \bB_{n+2}} \tr^{n+2}_n(x b e_{n+1}) e_{n+1} b^{\vee,2}
    \\
    \overset{\cref{pingpong}}{=} \delta^{-2} \sum_{b \in \bB_{n+2}} e_{n+1} \tr^{n+2}_n(e_{n+1} x b) b^{\vee,2}
    \overset{\cref{Frobfall}}{=} \delta^{-2} e_{n+1} e_{n+1} x
    = \delta^{-1} e_{n+1} x,
\end{multline*}
where the first equality uses the fact that $e_{n+1}$ commutes with elements of $\TLalg_n$.  This verifies the second relation in \cref{swishy}.

Using \cref{fishy1,fishy2}, we now compute
\begin{gather*}
    \bF \left( \mergeSWreg{n} \right)
    =
    \bF
    \left(
        \begin{tikzpicture}[anchorbase]
            \draw[D,->] (0,0) -- (0,0.1) arc (0:180:0.2) -- (-0.4,-0.4);
            \draw[S] (0,0) to[out=-45, in=left] (0.2,-0.2) arc(-90:0:0.15) -- (0.35,0.4);
            \draw[S] (0,0) to[out=-135,in=up] (-0.2,-0.4);
            \region{0,-0.25}{n};
        \end{tikzpicture}
    \right)
    \colon \TLbim{n-1}{e_n \TLalg_{n+1}}{n} \to \TLbim{n-1}{\TLalg_n}{n},\quad
    x \mapsto \sum_{b \in \bB_n} \tr^{n+1}_{n-1}(xb) b^\vee,
    \\
    \bF \left( \mergeSEreg{n} \right)
    =
    \bF
    \left(
        \begin{tikzpicture}[anchorbase,xscale=-1]
            \draw[D,->] (0,0) -- (0,0.1) arc (0:180:0.2) -- (-0.4,-0.4);
            \draw[S] (0,0) to[out=-45, in=left] (0.2,-0.2) arc(-90:0:0.15) -- (0.35,0.4);
            \draw[S] (0,0) to[out=-135,in=up] (-0.2,-0.4);
            \region{0.2,0.1}{n};
        \end{tikzpicture}
    \right)
    \colon \TLbim{n-1}{\TLalg_{n-1}}{n-2} \TLbimr{e_{n-1} \TLalg_n}{n} \to \TLbim{n-1}{\TLalg_n}{n},\quad
    x \otimes_{n-2} y \mapsto x e_{n-1} y
    \\
    \bF \left( \splitNEreg{n} \right)
    =
    \bF
    \left(
        \begin{tikzpicture}[anchorbase]
            \draw[D] (0,-0.4) -- (0,0);
            \draw[S,->] (0,0) -- (-0.3,0.3);
            \draw[S,->] (0,0) to[out=45,in=left] (0.2,0.2) to[out=right,in=up] (0.4,-0.4);
            \region{0.6,0}{n};
        \end{tikzpicture}
    \right)
    \colon \TLbim{n+1}{\TLalg_{n+1} e_n}{n-1} \TLbimr{\TLalg_n}{n} \to \TLbim{n+1}{\TLalg_{n+1}}{n},\quad
    x \otimes_{n-1} y \mapsto xy,
    \\
    \bF \left( \splitNWreg{n} \right)
    =
    \bF
    \left(
        \begin{tikzpicture}[anchorbase,xscale=-1]
            \draw[D] (0,-0.4) -- (0,0);
            \draw[S,->] (0,0) -- (-0.3,0.3);
            \draw[S,->] (0,0) to[out=45,in=left] (0.2,0.2) to[out=right,in=up] (0.4,-0.4);
            \region{-0.2,-0.1}{n};
        \end{tikzpicture}
    \right)
    \colon \TLbim{n+1}{\TLalg_{n+2} e_{n+1}}{n} \to \TLbim{n+1}{\TLalg_{n+1}}{n},\quad
    x \mapsto \tr^{n+2}_{n+1}(x),
    \\
    \bF \left( \splitSEreg{n} \right)
    =
    \bF
    \left(
        \begin{tikzpicture}[anchorbase,yscale=-1]
            \draw[D] (0,0) -- (0,0.1) arc (0:180:0.2) -- (-0.4,-0.4);
            \draw[S,->] (0,0) to[out=-45, in=left] (0.2,-0.2) arc(-90:0:0.15) -- (0.35,0.4);
            \draw[S,->] (0,0) to[out=-135,in=up] (-0.2,-0.4);
            \region{0.55,0}{n};
        \end{tikzpicture}
    \right)
    \colon \TLbim{n-1}{\TLalg_n}{n} \to \TLbim{n-1}{e_n \TLalg_{n+1}}{n},\quad
    x \mapsto \delta^{-1} e_n x, 
    \\
    \begin{multlined}
        \bF \left( \splitSWreg{n} \right)
        =
        \bF
        \left(
            \begin{tikzpicture}[anchorbase,yscale=-1,xscale=-1]
                \draw[D] (0,0) -- (0,0.1) arc (0:180:0.2) -- (-0.4,-0.4);
                \draw[S,->] (0,0) to[out=-45, in=left] (0.2,-0.2) arc(-90:0:0.15) -- (0.35,0.4);
                \draw[S,->] (0,0) to[out=-135,in=up] (-0.2,-0.4);
                \region{0.2,0.1}{n};
            \end{tikzpicture}
        \right)
        \colon \TLbim{n-1}{\TLalg_n}{n} \to \TLbim{n-1}{\TLalg_n}{n-2} \TLbimr{e_{n-1} \TLalg_n}{n},
        \\
        x \mapsto \delta^{-2} \sum_{b \in \bB_n} \tr^n_{n-1}(x b e_{n-1}) e_{n-1} b^{\vee,2},
    \end{multlined}
    \\
    \bF \left( \mergeNWreg{n} \right)
    =
    \bF
    \left(
        \begin{tikzpicture}[anchorbase]
            \draw[D,->] (0,0) -- (0,0.4);
            \draw[S] (0,0) -- (-0.3,-0.3);
            \draw[S] (0,0) to[out=-45,in=left] (0.2,-0.2) to[out=right,in=down] (0.4,0.4);
            \region{0,-0.25}{n};
        \end{tikzpicture}
    \right)
    \colon \TLbim{n+1}{\TLalg_{n+1}}{n} \to \TLbim{n+1}{\TLalg_{n+1} e_n}{n-1} \TLbimr{\TLalg_n}{n},\quad
    x \mapsto x \sum_{b \in \bB_n}  b e_n \otimes_{n-1} b^\vee,
    \\
    \bF \left( \mergeNEreg{n} \right)
    =
    \bF
    \left(
        \begin{tikzpicture}[anchorbase,xscale=-1]
            \draw[D,->] (0,0) -- (0,0.4);
            \draw[S] (0,0) -- (-0.3,-0.3);
            \draw[S] (0,0) to[out=-45,in=left] (0.2,-0.2) to[out=right,in=down] (0.4,0.4);
            \region{-0.25,0.1}{n};
        \end{tikzpicture}
    \right)
    \colon \TLbim{n+1}{\TLalg_{n+1}}{n} \to \TLbim{n+1}{\TLalg_{n+2} e_{n+1}}{n},\quad
    x \mapsto x e_{n+1}.
\end{gather*}

We have
\begin{gather*}
    \bF
    \left(
        \begin{tikzpicture}[centerzero]
            \draw[D] (-0.3,-0.5) -- (0,-0.2);
            \draw[S,<-] (0.3,-0.5) -- (0,-0.2);
            \draw[S] (0,-0.2) -- (0,0.2);
            \draw[S] (0,0.2) -- (0.3,0.5);
            \draw[D,->] (0,0.2) -- (-0.3,0.5);
            \region{0.2,0}{n};
        \end{tikzpicture}
    \right)
    \colon \TLbim{n+1}{\TLalg_{n+1} e_n}{n-1} \TLbimr{\TLalg_n}{n} \to \TLbim{n+1}{\TLalg_{n+1}}{n-1} \TLbimr{\TLalg_n}{n},
    \\
    x e_n \otimes_{n-1} y \mapsto x e_n y \mapsto \sum_{b \in \bB_n} x e_n y b e_n \otimes_{n-1} b^\vee.
\end{gather*}
Noting that
\[
    x e_n y b e_n
    =
    \begin{tikzpicture}[anchorbase,TL]
        \draw[multi] (-0.05,1.8) -- (-0.05,2.3);
        \draw[multi] (0.3,0.7) -- (0.3,1.8);
        \draw[multi] (0.15,0) -- (0.15,0.7);
        \draw[multi] (0.3,-0.8) -- (0.3,-0.2);
        \draw (0,0.9) to[out=up,in=up,looseness=2] (-0.4,0.9) -- (-0.4,-0.2) to[out=down,in=down,looseness=2] (0,-0.2);
        \draw (-0.4,1.6) to[out=down,in=down,looseness=2] (0,1.6);
        \draw (-0.4,-0.8) to[out=up,in=up,looseness=2] (0,-0.8);
        \genbox{-0.2,-0.2}{0.5,0.2}{b};
        \genbox{-0.2,0.5}{0.5,0.9}{y};
        \genbox{-0.6,1.6}{0.5,2}{x};
    \end{tikzpicture}
    = x e_n \tr^n_{n-1}(yb),
\]
we see that
\[
    \bF
    \left(
        \begin{tikzpicture}[centerzero]
            \draw[D] (-0.3,-0.5) -- (0,-0.2);
            \draw[S,<-] (0.3,-0.5) -- (0,-0.2);
            \draw[S] (0,-0.2) -- (0,0.2);
            \draw[S] (0,0.2) -- (0.3,0.5);
            \draw[D,->] (0,0.2) -- (-0.3,0.5);
            \region{0.2,0}{n};
        \end{tikzpicture}
    \right)
    \colon x e_n \otimes_{n-1} y \mapsto \sum_{b \in \bB_n}  x e_n \tr^n_{n-1}(yb) \otimes_{n-1} b^\vee
    \overset{\cref{Frobfall}}{=} x e_n \otimes_{n-1} y.
\]
Thus, $\bF$ respects the first relation in \cref{fire}.
For $x \in \TLalg_{n+2}$, we have
\begin{gather*}
    \bF
    \left(
        \begin{tikzpicture}[centerzero]
            \draw[D] (0.3,-0.5) -- (0,-0.2);
            \draw[S,<-] (-0.3,-0.5) -- (0,-0.2);
            \draw[S] (0,-0.2) -- (0,0.2);
            \draw[S] (0,0.2) -- (-0.3,0.5);
            \draw[D,->] (0,0.2) -- (0.3,0.5);
            \region{0.2,0}{n};
        \end{tikzpicture}
    \right)
    \colon \TLbim{n+1}{\TLalg_{n+2} e_{n+1}}{n} \to \TLbim{n+1}{\TLalg_{n+2} e_{n+1}}{n},
    \\
    x e_{n+1} =
    \begin{tikzpicture}[centerzero,TL]
        \draw (-0.3,-0.2) to[out=down,in=down,looseness=2] (0,-0.2);
        \draw (-0.3,-0.7) to[out=up,in=up,looseness=2] (0,-0.7);
        \draw[multi] (0.3,-0.7) -- (0.3,0);
        \draw[multi] (0,0) -- (0,0.7);
        \genbox{-0.5,-0.2}{0.5,0.2}{x};
    \end{tikzpicture}
    \mapsto
    \begin{tikzpicture}[centerzero,TL]
        \draw (-0.3,-0.2) to[out=down,in=down,looseness=2] (0,-0.2);
        \draw (-0.3,0.2) to[out=up,in=up,looseness=2] (-0.7,0.2) -- (-0.7,-0.7) to[out=down,in=down,looseness=2] (-0.3,-0.7) to[out=up,in=up,looseness=2] (0,-0.7) -- (0,-1);
        \draw[multi] (0.3,-1) -- (0.3,0);
        \draw[multi] (0.1,0) -- (0.1,0.7);
        \genbox{-0.5,-0.2}{0.5,0.2}{x};
    \end{tikzpicture}
    \mapsto
    \begin{tikzpicture}[centerzero,TL]
        \draw (-0.3,-0.2) to[out=down,in=down,looseness=2] (0,-0.2);
        \draw (-0.3,0.2) to[out=up,in=up,looseness=2] (-0.7,0.2) -- (-0.7,-0.7) to[out=down,in=down,looseness=2] (-0.3,-0.7) to[out=up,in=up,looseness=2] (0,-0.7) to[out=down,in=down,looseness=2] (-1,-0.7) -- (-1,0.7);
        \draw (0,-1.5) to[out=up,in=up,looseness=2] (-0.3,-1.5);
        \draw[multi] (0.3,-1.5) -- (0.3,0);
        \draw[multi] (0.1,0) -- (0.1,0.7);
        \genbox{-0.5,-0.2}{0.5,0.2}{x};
    \end{tikzpicture}
    =
    \begin{tikzpicture}[centerzero,TL]
        \draw (-0.3,-0.2) to[out=down,in=down,looseness=2] (0,-0.2);
        \draw (-0.3,-0.7) to[out=up,in=up,looseness=2] (0,-0.7);
        \draw[multi] (0.3,-0.7) -- (0.3,0);
        \draw[multi] (0,0) -- (0,0.7);
        \genbox{-0.5,-0.2}{0.5,0.2}{x};
    \end{tikzpicture}
    \ .
\end{gather*}
Thus, $\bF$ respects the second relation in \cref{fire}.

Next, for $x \in \TLalg_{n+2}$, we compute
\begin{gather*}
    \bF
    \left(
        \begin{tikzpicture}[centerzero]
            \draw[D,<-] (-0.2,-0.5) -- (-0.2,-0.3) arc(180:0:0.2) -- (0.2,-0.5);
            \draw[D,->] (-0.2,0.5) -- (-0.2,0.3) arc(180:360:0.2) -- (0.2,0.5);
            \region{0.3,0}{n};
        \end{tikzpicture}
    \right)
    \colon \TLbim{n}{e_{n+1} \TLalg_{n+2} e_{n+1}}{n} \to \TLbim{n}{e_{n+1} \TLalg_{n+2} e_{n+1}}{n},
    \\
    e_{n+1} x e_{n+1}
    =
    \begin{tikzpicture}[centerzero,TL]
        \draw (-0.3,0.2) to[out=up,in=up,looseness=2] (0,0.2);
        \draw (-0.3,-0.2) to[out=down,in=down,looseness=2] (0,-0.2);
        \draw (-0.3,0.7) to[out=down,in=down,looseness=2] (0,0.7);
        \draw (-0.3,-0.7) to[out=up,in=up,looseness=2] (0,-0.7);
        \draw[multi] (0.3,-0.7) -- (0.3,0.7);
        \genbox{-0.5,-0.2}{0.5,0.2}{x};
    \end{tikzpicture}
    \mapsto
    \begin{tikzpicture}[centerzero,TL]
        \draw (-0.3,0.2) to[out=up,in=up,looseness=2] (0,0.2);
        \draw (-0.3,-0.2) to[out=down,in=down,looseness=2] (0,-0.2);
        \draw (-0.3,0.7) to[out=down,in=down,looseness=2] (0,0.7) to[out=up,in=up] (-1,0.7) -- (-1,-0.7) to[out=down,in=down] (0,-0.7) to[out=up,in=up,looseness=2] (-0.3,-0.7) to[out=down,in=down] (-0.7,-0.7) -- (-0.7,0.7) to[out=up,in=up] (-0.3,0.7);
        \draw[multi] (0.3,-1) -- (0.3,1);
        \genbox{-0.5,-0.2}{0.5,0.2}{x};
    \end{tikzpicture}
    = \delta\
    \begin{tikzpicture}[centerzero,TL]
        \draw (-0.3,0.2) to[out=up,in=up,looseness=2] (0,0.2);
        \draw (-0.3,-0.2) to[out=down,in=down,looseness=2] (0,-0.2);
        \draw[multi] (0.3,-0.7) -- (0.3,0.7);
        \genbox{-0.5,-0.2}{0.5,0.2}{x};
    \end{tikzpicture}
    \mapsto
    \begin{tikzpicture}[centerzero,TL]
        \draw (-0.3,0.2) to[out=up,in=up,looseness=2] (0,0.2);
        \draw (-0.3,-0.2) to[out=down,in=down,looseness=2] (0,-0.2);
        \draw (-0.3,0.7) to[out=down,in=down,looseness=2] (0,0.7);
        \draw (-0.3,-0.7) to[out=up,in=up,looseness=2] (0,-0.7);
        \draw[multi] (0.3,-0.7) -- (0.3,0.7);
        \genbox{-0.5,-0.2}{0.5,0.2}{x};
    \end{tikzpicture}
    \ .
\end{gather*}
Thus, $\bF$ respects the last relation in \cref{fire}.

For $x \in \TLbim{n}{\TLalg_n}{n}$, we have
\[
    \bF \left( \leftbubreg[S]{n} \right)
    \colon
    \begin{tikzpicture}[centerzero,TL]
        \draw[multi] (0,-0.4) -- (0,0.4);
        \coupon{0,0}{x};
    \end{tikzpicture}
    \mapsto
    \begin{tikzpicture}[centerzero,TL]
        \draw[multi] (0,-0.4) -- (0,0.4);
        \draw (-0.3,-0.4) -- (-0.3,0.4);
        \coupon{0,0}{x};
    \end{tikzpicture}
    \mapsto
    \begin{tikzpicture}[centerzero,TL]
        \draw[multi] (0,-0.4) -- (0,0.4);
        \coupon{0,0}{x};
        \draw (-0.3,0) arc(0:360:0.2);
    \end{tikzpicture}
    = \delta\
    \begin{tikzpicture}[centerzero,TL]
        \draw[multi] (0,-0.4) -- (0,0.4);
        \coupon{0,0}{x};
    \end{tikzpicture}
    \ .
\]
Thus, $\bF$ satisfies the first relation in \cref{smoke}.

For $x \in \TLbim{n}{\TLalg_n}{n}$, we have
\[
    \bF \left( \leftbubreg[D]{n} \right)
    \colon
    \begin{tikzpicture}[centerzero,TL]
        \draw[multi] (0,-0.4) -- (0,0.4);
        \coupon{0,0}{x};
    \end{tikzpicture}
    \mapsto \delta^{-1}\
    \begin{tikzpicture}[centerzero,TL]
        \draw[multi] (0,-0.4) -- (0,0.4);
        \coupon{0,0}{x};
        \draw (-0.3,0.4) -- (-0.3,0.3) to[out=down,in=down,looseness=2] (-0.6,0.3) -- (-0.6,0.4);
        \draw (-0.3,-0.4) -- (-0.3,-0.3) to[out=up,in=up,looseness=2] (-0.6,-0.3) -- (-0.6,-0.4);
    \end{tikzpicture}
    \mapsto \delta^{-1}\
    \begin{tikzpicture}[centerzero,TL]
        \draw[multi] (0,-0.7) -- (0,0.7);
        \coupon{0,0}{x};
        \draw (-0.3,0.4) -- (-0.3,0.3) to[out=down,in=down,looseness=2] (-0.6,0.3) -- (-0.6,0.4) to[out=up,in=up] (-0.9,0.4) -- (-0.9,-0.4) to[out=down,in=down] (-0.6,-0.4) -- (-0.6,-0.3) to[out=up,in=up,looseness=2] (-0.3,-0.3) -- (-0.3,-0.4) to[out=down,in=down] (-1.2,-0.4) -- (-1.2,0.4) to[out=up,in=up] (-0.3,0.4);
    \end{tikzpicture}
    =
    \begin{tikzpicture}[centerzero,TL]
        \draw[multi] (0,-0.4) -- (0,0.4);
        \coupon{0,0}{x};
    \end{tikzpicture}
    \ .
\]
Thus, $\bF$ respects the second relation in \cref{smoke}.

Before verifying the remaining relations, we prove two results.

\begin{prop} \label{ducky}
    We have
    \begin{equation} \label{duck}
        \bF( \rightbubmultreg[D]{r}{n} ) = \sum_{\substack{p \in P^n \\ p_n \le n-2r}} e_{p,p},\qquad r,n \in \N,\ n \ge 2r.
    \end{equation}
\end{prop}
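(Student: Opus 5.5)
The plan is to induct on $r$, using the recursive definition \cref{bubrec} of the multiplicity-$r$ bubble together with an explicit formula for the image of a clockwise cyan bubble with an arbitrary coupon inside. For $r=0$ both sides of \cref{duck} equal $1_n=\sum_{p\in P^n} e_{p,p}$, since the matrix units $e_{p,p}$ are orthogonal idempotents summing to $1_n$ by \cref{mumult}. The first step is to compute $\bF$ of a clockwise cyan bubble with outer region $n$ and a coupon $f\in\TLalg_{n-2}$ inside. The bubble is the composite of $\bF(\leftcupreg[D]{n})$ and $\bF(\rightcapreg[D]{n})$, so the definitions of $\bF$ give
\[
    f \mapsto \delta^{-1}\sum_{b\in\bB_n} b\, e_{n-1} f\, b^{\vee,2}.
\]
Here $e_{n-1}$ caps the two leftmost strands and therefore commutes with $f$. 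By \cref{teleport} with $k=2$, this element is central in $\TLalg_n$. Hence it is a linear combination of the central idempotents $\sum_{p\in P^n_k}e_{p,p}$, and it suffices to compute its coefficient $c_k$ on each block.

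Next, we expand $b=e_{p,t}$ and $b^{\vee,2}$ via \cref{mudual}. By the induction hypothesis, $f=\bF(\rightbubmultreg[D]{r-1}{n-2})=\sum_{s\in P^{n-2},\ s_{n-2}\le n-2r} e_{s,s}$ (for $r=1$, $f=1_{n-2}$). To compute $c_k$, we multiply by $e_{q,q}$ for a fixed $q\in P^n_k$, and we do not need all matrix entries of $e_{n-1}$. Using \cref{mumult,mumultgen}, the coefficient of $e_{q,q}$ reduces to
\[
    \sum_{t} \frac{\Delta_{t_{n-2}}}{\Delta_k\,|P^{n-2}_{t_{n-2}}|}\,\big(\text{coefficient of } e_{t,t} \text{ in } e_{n-1}e_{t[n-2],t[n-2]}\big)\,\delta_{t_{n-2}\le n-2r},
\]
where $t$ runs over $P^n_k$. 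The required coefficients of $e_{n-1}$ come from the diagonal case of the pairing \cref{crab}: $v_t^\updownarrow e_{n-1} v_t$. By \cref{cloud}, the cap on the two leftmost strands kills $v_t$ unless $t_n=t_{n-2}$. In that case we use \cref{v-recursion} twice and \cref{JWtrace} to evaluate the capped diagram. This gives a value of the form $\delta\cdot\Delta_{t_{n-1}}/\Delta_k$ times explicit factors of $\Delta_t$ (when $t_{n-1}=k+1$), or a similar expression with $k-1$.

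The main obstacle is this last computation. We must obtain the exact scalar for $v_t^\updownarrow e_{n-1}v_t$ in both cases $t_{n-1}=k\pm1$, and then check that summing over $t_{n-1}\in\{k-1,k+1\}$ and over the $|P^{n-2}_{k}|$ prefixes cancels all the normalizations. The expected outcome is
\[
    c_k=\delta^{-1}\cdot\frac{\Delta_{k+1}+\Delta_{k-1}}{\Delta_k}\cdot\delta_{k\le n-2r}=\delta_{k\le n-2r},
\]
using \cref{Delta} in the form $\Delta_{k+1}+\Delta_{k-1}=\delta\Delta_k$. Note that only paths with $t_n=t_{n-2}=k$ contribute, so the constraint $t_{n-2}\le n-2r$ becomes $k\le n-2r$; when $k=0$ only the branch $t_{n-1}=1$ occurs, consistent with $\Delta_{-1}=0$. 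If the direct evaluation of $v_t^\updownarrow e_{n-1}v_t$ becomes messy, the fallback is to compute $c_k$ via traces instead. Centrality gives $c_k\,\tr^n_0(e_{q,q})=\tr^n_0(z\,e_{q,q})$, and we would simplify the right-hand side with \cref{pingpong,mutrace,Frobfall}, reducing it to $\tr^{n}_{n-2}$ of $e_{n-1}$-conjugates, which \cref{mutrace} evaluates.
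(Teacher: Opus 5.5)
Your proposal is correct and is essentially the paper's proof. Both arguments induct via \cref{bubrec}, write the bubble's image as $\delta^{-1}\sum_b b\,e_{n-1}f\,b^{\vee,2}$, expand in matrix units with \cref{mudual,mumultgen}, discard the terms with $t_n\neq t_{n-2}$ by \cref{cloud}, and finish with $\Delta_{k+1}+\Delta_{k-1}=\delta\Delta_k$. Your use of centrality to extract one scalar per block is only a cosmetic streamlining of the paper's direct summation.

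The step you left open comes out uniformly as $v_t^\updownarrow e_{n-1}v_t=\frac{\Delta_{t_{n-1}}}{\Delta_k}\Delta_t J_k$, using \cref{JWabsorb} when $t_{n-1}=k-1$ and \cref{JWtrace} when $t_{n-1}=k+1$. There is no factor of $\delta$ in this value. The $\delta$ in your sketched form is a slip: with it you would get $c_k=\delta$ rather than your correct $c_k=\delta_{k\le n-2r}$.
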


\begin{proof}
    We prove the result by induction on $r$.  The case $r=0$ is trivial.  Now suppose that \cref{duck} holds for some $r \in \N$.  Then, for $x \in \TLalg_{n+2}$, we have, using \cref{salt,mudual},
    \begin{multline*}
        \bF \left( \rightbubmultreg[D]{r+1}{n+2} \right)
        \overset{\cref{bubrec}}{=} \bF \left( \rightbubmult[D]{\rightbubmultreg[D]{r}{n}} \right)
        \colon x \mapsto \delta^{-1} x \sum_{\substack{s,t \in P^{n+2} \\ s_{n+2} = t_{n+2}}} \sum_{\substack{p \in P^n \\ p_n \le n-2r}} \frac{\Delta_{t_n}}{\Delta_{t_{n+2}} |P^n_{t_{n}}|} e_{s,t} e_{n+1} e_{p,p} e_{t,s}
        \\
        \overset{\cref{mumultgen}}{=} \delta^{-1} x \sum_{\substack{s,t \in P^{n+2} \\ s_{n+2} = t_{n+2} \\ t_n \le n-2r}} \frac{\Delta_{t_n}}{\Delta_{t_{n+2}} |P^n_{t_{n}}|} e_{s,t} e_{n+1} e_{t,s}
        \overset{\cref{mumultgen}}{=} \delta^{-1} x \sum_{\substack{s,t \in P^{n+2} \\ s_{n+2} = t_{n+2} \\ t_n \le n-2r}} \frac{\Delta_{t_n}}{\Delta_s \Delta_t \Delta_{t_{n+2}} |P^n_{t_{n}}|} v_s v_t^\updownarrow e_{n+1} v_t v_s^\updownarrow.
    \end{multline*}
    
    Let $k=s_{n+2}=t_{n+2}$ and consider the four possibilities for the pair $(t_{n+1},t_n)$ in the final sum above.  By \cref{cloud}, $e_{n+1} v_t=0$ unless $t_{n+2} = t_n$.  If $(t_n,t_{n+1}) = (k,k-1)$, then
    \[
        e_{n+1} v_t
        \overset{\cref{v-recursion}}{=} 
        \begin{tikzpicture}[anchorbase,TL]
            \draw[multi] (0,-0.3) -- (0,0.8);
            \draw[multi] (0.2,-1) -- (0.2,-0.5);
            \draw[multi] (-0.4,-1.4) -- (-0.4,-1.7);
            \draw (-0.5,0.8) -- (-0.5,0.6) to[out=down,in=down,looseness=2] (-0.8,0.6) -- (-0.8,0.8);
            \draw (-0.2,-0.6) to[out=down,in=down,looseness=2] (-0.6,-0.6) -- (-0.6,-0.2) to[out=up,in=up,looseness=2] (-0.9,-0.2) -- (-0.9,-1);
            \genbox{-0.4,-0.6}{0.4,-0.2}{\JW_k};
            \genbox{-1.2,-1}{0.4,-1.4}{\JW_k};
            \coupon{0,0.3}{v_{t[n]}};
        \end{tikzpicture}
        \overset{\cref{JWabsorb}}{=}
        \begin{tikzpicture}[anchorbase,TL]
            \draw[multi] (0,-0.9) -- (0,0.8);
            \draw (-0.5,0.8) -- (-0.5,0.6) to[out=down,in=down,looseness=2] (-0.8,0.6) -- (-0.8,0.8);
            \coupon{0,-0.4}{\JW_k};
            \coupon{0,0.3}{v_{t[n]}};
        \end{tikzpicture}
        \ ,
    \]
    and so
    \[
        v_t^\updownarrow e_{n+1} v_t = v_{t[n]}^\updownarrow v_{t[n]}
        \overset{\cref{crab}}{=} \Delta_{t[n]} J_k
        \overset{\cref{crab}}{=} \frac{\Delta_{k-1}}{\Delta_k} \Delta_t J_k
        = \frac{\Delta_{t_{n+1}}}{\Delta_{t_n}} \Delta_t J_k.
    \]
    If $(t_n,t_{n+1}) = (k,k+1)$, then
    \[
        e_{n+1} v_t
        \overset{\cref{v-recursion}}{=}
        \begin{tikzpicture}[anchorbase,TL]
            \draw[multi] (0,-0.3) -- (0,0.8);
            \draw[multi] (-0.15,-0.9) -- (-0.15,-0.5);
            \draw (-0.5,0.8) -- (-0.5,0.6) to[out=down,in=down,looseness=2] (-0.8,0.6) -- (-0.8,0.8);
            \draw (-0.5,-0.2) -- (-0.5,-0.1) to[out=up,in=up,looseness=2] (-0.8,-0.1) -- (-0.8,-0.7) to[out=down,in=down,looseness=2] (-0.5,-0.7) -- (-0.5,-0.6);
            \genbox{-0.65,-0.6}{0.25,-0.2}{\JW_{k+1}};
            \coupon{0,0.3}{v_{t[n]}};
        \end{tikzpicture}
        \overset{\cref{JWtrace}}{=} \frac{\Delta_{k+1}}{\Delta_k}\
        \begin{tikzpicture}[anchorbase,TL]
            \draw[multi] (0,-0.9) -- (0,0.8);
            \draw (-0.5,0.8) -- (-0.5,0.6) to[out=down,in=down,looseness=2] (-0.8,0.6) -- (-0.8,0.8);
            \coupon{0,-0.4}{\JW_k};
            \coupon{0,0.3}{v_{t[n]}};
        \end{tikzpicture}
        \ ,
    \]
    and so
    \[
        v_t^\updownarrow e_{n+1} v_t = \frac{\Delta_{k+1}^2}{\Delta_k^2} v_{t[n]}^\updownarrow v_{t[n]}
        \overset{\cref{crab}}{=} \frac{\Delta_{k+1}^2}{\Delta_k^2} \Delta_{t[n]} J_k
        \overset{\cref{crab}}{=} \frac{\Delta_{k+1}}{\Delta_k} \Delta_t J_k
        = \frac{\Delta_{t_{n+1}}}{\Delta_{t_n}} \Delta_t J_k.
    \]
    Thus,
    \begin{multline*}
        \sum_{\substack{s,t \in P^{n+2} \\ s_{n+2} = t_{n+2} \\ t_n \le n-2r}} \frac{\Delta_{t_n}}{\Delta_s \Delta_t \Delta_{t_{n+2}} |P^n_{t_{n}}|} v_s v_t^\updownarrow e_{n+1} v_t v_s^\updownarrow
        = \sum_{k=0}^{n-2r} \sum_{\substack{s,t \in P^{n+2}_k \\ t_n=k}} \frac{\Delta_{t_{n+1}}}{\Delta_s \Delta_k |P^n_k|} v_s v_s^\updownarrow
        \\
        \overset{\cref{crab}}{=} \sum_{k=0}^{n-2r} \sum_{\substack{s,t \in P^{n+2}_k \\ t_n=k}} \frac{\Delta_{t_{n+1}}}{\Delta_s \Delta_k |P^n_k|} v_s v_s^\updownarrow
        = \sum_{k=0}^{n-2r} \sum_{s \in P^{n+2}_k} \frac{\Delta_{k-1} + \Delta_{k+1}}{\Delta_s \Delta_k} v_s v_s^\updownarrow
        \overset{\cref{Delta}}{\underset{\cref{matrixunits}}{=}} \delta \sum_{k=0}^{n-2r} \sum_{\substack{s \in P^{n+2} \\ s_{n+2} \le n-2r}} e_{s,s},
    \end{multline*}
    completing the proof of the induction step.
\end{proof}

\begin{cor} \label{goosey}
    We have
    \begin{equation} \label{goose}
        \bF( \freeprojectorreg{r}{n} ) = \sum_{p \in P^n_{n-2r}} e_{p,p},\qquad r,n \in \N,\ n \ge 2r.
    \end{equation}
    In particular,
    \begin{equation} \label{fiction}
        \bF
        \left(
            \begin{tikzpicture}[centerzero]
                \projector{0,0}{0};
                \region{0.4,0}{n};
            \end{tikzpicture}
        \right)
        \colon x \mapsto x \JW_n.
    \end{equation}
\end{cor}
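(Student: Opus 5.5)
The plan is to derive \cref{goose} directly from \cref{ducky} and \cref{sigma}, and then to identify the $r=0$ case with right multiplication by $\JW_n$ via \cref{funspiel}.

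\textbf{Step 1: the formula \cref{goose}.} By \cref{sigma} we have $\freeprojectorreg{r}{n} = \rightbubmultreg[D]{r}{n} - \rightbubmultreg[D]{r+1}{n}$. Since $\bF$ is $\kk$-linear, I apply \cref{duck} to each term. For $n \ge 2r+2$, both terms are given by \cref{duck}. Their difference is the sum of $e_{p,p}$ over $p \in P^n$ with $n-2r-2 < p_n \le n-2r$. Since $p_n \equiv n \pmod 2$, this condition is exactly $p_n = n-2r$. For $n \in \{2r, 2r+1\}$, the second diagram is zero by \cref{freeze}, so the image is $\sum_{p_n \le n-2r} e_{p,p}$. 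Parity again forces $p_n = n-2r$ here: when $n=2r$ we get $p_n=0$, and when $n=2r+1$ we get $p_n=1$. In every case we obtain \cref{goose}.

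One caveat concerns how $\bF$ of a closed diagram acts. It is an endomorphism of the bimodule $\TLbim{n}{\TLalg_n}{n}$, so it is right multiplication by some central element. Statement \cref{duck} should be read as saying this element is $\sum e_{p,p}$, consistent with how it was used in the proof of \cref{ducky}. So \cref{goose} means that $\bF(\freeprojectorreg{r}{n})$ is $x \mapsto x \sum_{p \in P^n_{n-2r}} e_{p,p}$.

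\textbf{Step 2: the formula \cref{fiction}.} I set $r=0$. Then $P^n_n$ consists of the single path $(0,1,\dotsc,n)$. By \cref{funspiel}, $e_{(0,\dotsc,n),(0,\dotsc,n)} = \JW_n$, so $\bF$ of the $r=0$ projector is $x \mapsto x\JW_n$.

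\textbf{Expected obstacle.} The only delicate point is the boundary range $n<2r+2$, where \cref{duck} does not apply to the second term. I handle it with \cref{freeze} and the parity argument above, so no real obstacle remains.
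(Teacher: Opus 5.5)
Your proposal is correct and follows the paper's proof: \cref{goose} comes from \cref{duck} and \cref{sigma} by linearity and parity, and the $r=0$ case gives \cref{fiction} via \cref{funspiel}. Your explicit treatment of the boundary cases $n \in \{2r, 2r+1\}$, using \cref{freeze}, fills in a detail the paper leaves implicit.
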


\begin{proof}
    Equation \cref{goose} follows from \cref{duck,sigma}.  For \cref{fiction}, we use \cref{duck} to compute
    \[
        \bF
        \left(
            \begin{tikzpicture}[centerzero]
                \projector{0,0}{0};
                \region{0.4,0}{n};
            \end{tikzpicture}
        \right)
        \colon x \mapsto x e_{(0,1,\dotsc,n),(0,1,\dotsc,n)}
        \overset{\cref{funspiel}}{=} x \JW_n.
        \qedhere
    \]
\end{proof}

Next we verify that $\bF$ respects the third relation in \cref{fire}.  The left-hand side of this relation is
\[
    \begin{tikzpicture}[centerzero]
        \draw[S,->] (-0.2,-0.5) -- (-0.2,-0.2) -- (0.2,-0.2) -- (0.2,0.2) -- (-0.2,0.2) -- (-0.2,0.5);
        \draw[D] (0.2,-0.5) -- (0.2,-0.2);
        \draw[D] (-0.2,-0.2) -- (-0.2,0.2);
        \draw[D,->] (0.2,0.2) -- (0.2,0.5);
    \end{tikzpicture}
    \overset{\cref{fire}}{=}
    \begin{tikzpicture}[centerzero]
        \draw[D,->] (0.5,-0.5) -- (0.5,0.5);
        \draw[S] (0,-0.5) -- (0,-0.2);
        \draw[D] (0,-0.2) to[out=135,in=-135,looseness=2] (0,0.2);
        \draw[S] (0,-0.2) to[out=45,in=-45,looseness=2] (0,0.2);
        \draw[S,->] (0,0.2) -- (0,0.5);
    \end{tikzpicture}
    \overset{\cref{water}}{=}
    \begin{tikzpicture}[centerzero]
        \draw[S,->] (0,-0.5) -- (0,0.5);
        \draw[D,->] (0.3,-0.5) -- (0.3,0.5);
        \bubright[D]{-0.5,0};
    \end{tikzpicture}
    \ ,
\]
where the first equality above uses the second relation in \cref{fire}.  Since we have already verified that $\bF$ respects the first, second, and fourth relations in \cref{fire}, as well as all the relations used in the proof of \cref{water}, we see that
\[
    \bF
    \left(
        \begin{tikzpicture}[centerzero]
            \draw[S,->] (-0.2,-0.5) -- (-0.2,-0.2) -- (0.2,-0.2) -- (0.2,0.2) -- (-0.2,0.2) -- (-0.2,0.5);
            \draw[D] (0.2,-0.5) -- (0.2,-0.2);
            \draw[D] (-0.2,-0.2) -- (-0.2,0.2);
            \draw[D,->] (0.2,0.2) -- (0.2,0.5);
            \region{0.4,0}{n};
        \end{tikzpicture}
    \right)
    =
    \bF
    \left(
        \begin{tikzpicture}[centerzero]
            \draw[S,->] (0,-0.5) -- (0,0.5);
            \draw[D,->] (0.3,-0.5) -- (0.3,0.5);
            \bubright[D]{-0.5,0};
            \region{0.5,0}{n};
        \end{tikzpicture}
    \right)
    \colon \TLbim{n+3}{\TLalg_{n+3}e_{n+1}}{n} \to \TLbim{n+3}{\TLalg_{n+3}e_{n+1}}{n}
\]
is given, for $x \in \TLalg_{n+3}$, by
\[
    x e_{n+1} \xmapsto{\cref{duck}} x \sum_{\substack{p \in P^{n+3} \\ p_{n+3} \ne n+3}} e_{p,p} e_{n+1}
    \overset{\cref{funspiel}}{=} x (1-J_{n+3}) e_{n+1}
    \overset{\cref{JWkill}}{=} x e_{n+1}
    = \bF
    \left(
        \begin{tikzpicture}[centerzero]
            \draw[S,->] (-0.2,-0.4) -- (-0.2,0.4);
            \draw[D,->] (0.2,-0.4) -- (0.2,0.4);
        \end{tikzpicture}
    \right)
    (x e_{n+1}).
\]
Thus, $\bF$ respects the third relation in \cref{fire}, as desired.

Now we consider \cref{coal}.  For $x \in \TLalg_n$, we have
\[
    \bF
    \left(
        \leftbubmultreg[S]{\freeprojector{0}}{n}
    \right)
    \colon
    \begin{tikzpicture}[centerzero,TL]
        \draw[multi] (0,-0.4) -- (0,0.4);
        \coupon{0,0}{x};
    \end{tikzpicture}
    \mapsto
    \begin{tikzpicture}[anchorbase,TL]
        \draw[multi] (0,-1) -- (0,0.4);
        \draw (-0.3,-0.4) to[out=up,in=up,looseness=1.5] (-0.7,-0.4) -- (-0.7,-0.8) to[out=down,in=down,looseness=1.5] (-0.3,-0.8);
        \coupon{0,0}{x};
        \genbox{-0.55,-0.4}{0.25,-0.8}{\JW_{n+1}};
    \end{tikzpicture}
    \overset{\cref{JWtrace}}{=}
    \frac{\Delta_{n+1}}{\Delta_n}
    \begin{tikzpicture}[anchorbase,TL]
        \draw[multi] (0,-1) -- (0,0.4);
        \coupon{0,0}{x};
        \coupon{0,-0.6}{\JW_n};
    \end{tikzpicture}
    = 
    \bF
    \left(
        \begin{tikzpicture}[centerzero]
            \projector{0,0}{0};
            \region{0.4,0}{n};
        \end{tikzpicture}
    \right)
    \left(
        \begin{tikzpicture}[centerzero,TL]
            \draw[multi] (0,-0.4) -- (0,0.4);
            \coupon{0,0}{x};
        \end{tikzpicture}
    \right).
\]
Hence, $\bF$ respects \cref{coal}.

It remains to verify that $\bF$ respects \cref{wood}.  Suppose $p,p',r,r' \in P^n$ satisfy $p_n = p'_n$ and $r_n = r'_n$.  If $n \ge 2$, let $t = (0,1,\dotsc,n-2,n-1,n-2)$.  Then, using \cref{fiction},
\[
    \bF
    \left(
        \begin{tikzpicture}[centerzero]
            \draw[S,<-] (-0.4,0.8) -- (-0.4,0.6) to[out=down,in=down,looseness=2] (0.4,0.6) -- (0.4,0.8);
            \projector{0,0.5}{0};
            \draw[S,->] (-0.4,-0.8) -- (-0.4,-0.6) to[out=up,in=up,looseness=2] (0.4,-0.6) -- (0.4,-0.8);
            \projector{0,-0.5}{0};
            \region{0.4,0}{n};
        \end{tikzpicture}
    \right)
    \colon
    e_{p,p'} \otimes_{n-1} e_{r',r}
    \mapsto e_{p,p'} J_{n-1} e_{r',r} \sum_{b \in \bB_n} b J_{n-1} \otimes_{n-1} b^\vee.
\]
Using \cref{funspiel,mumultgen}, we have
\[
    e_{p,p'} J_{n-1} e_{r',r}
    =
    \begin{cases}
        J_n & \text{if } p=p'=r'=r = (0,1,\dotsc,n), \\
        e_{t,t} & \text{if } n \ge 2 \text{ and } p=p'=r'=r = t, \\
        0 & \text{otherwise}.
    \end{cases}
\]
Similarly,
\[
    \sum_{b \in \bB_n} b J_{n-1} \otimes_{n-1} b^\vee
    \overset{\cref{mudual}}{=} \frac{\Delta_{n-1}}{\Delta_n} J_n \otimes_{n-1} J_n + \sum_{s \in P^n_{n-2}} \frac{\Delta_{n-1}}{\Delta_{n-2}} e_{s,t} \otimes_{n-1} e_{t,s},
\]
where we interpret the last term as zero when $n=1$.  Thus,
\[
    \bF
    \left(
        \begin{tikzpicture}[centerzero]
            \draw[S,<-] (-0.4,0.8) -- (-0.4,0.6) to[out=down,in=down,looseness=2] (0.4,0.6) -- (0.4,0.8);
            \projector{0,0.5}{0};
            \draw[S,->] (-0.4,-0.8) -- (-0.4,-0.6) to[out=up,in=up,looseness=2] (0.4,-0.6) -- (0.4,-0.8);
            \projector{0,-0.5}{0};
            \region{0.4,0}{n};
        \end{tikzpicture}
    \right)
    \colon
    e_{p,p'} \otimes_{n-1} e_{r',r}
    \mapsto
    \begin{cases}
        \frac{\Delta_{n-1}}{\Delta_n} J_n \otimes_{n-1} J_n & \text{if } p=p'=r'=r = (0,1,\dotsc,n), \\
        \frac{\Delta_{n-1}}{\Delta_{n-2}} e_{t,t} \otimes_{n-1} e_{t,t} & \text{if } n \ge 2 \text{ and } p=p'=r'=r = t, \\
        0 & \text{otherwise}.
    \end{cases}
\]
On the other hand,
\begin{multline*}
    \bF
    \left(
        \begin{tikzpicture}[centerzero]
            \draw[S,->] (-0.4,-0.5) -- (-0.4,0.5);
            \draw[S,<-] (0.4,-0.5) -- (0.4,0.5);
            \projector{0,0}{0};
            \projector{-0.7,0}{0};
            \projector{0.7,0}{0};
            \region{0.8,0.5}{n};
        \end{tikzpicture}
    \right)
    e_{p,p'} \otimes_{n-1} e_{r',r} \mapsto e_{p,p'} J_n \otimes_{n-1} e_{r',r} J_n
    \\
    =
    \begin{cases}
        J_n \otimes_{n-1} J_n & \text{if } p=p'=r'=r = (0,1,\dotsc,n), \\
        0 & \text{otherwise},
    \end{cases}
\end{multline*}
and, if $n \ge 2$,
\begin{multline*}
    \bF
    \left(
        \begin{tikzpicture}[centerzero]
            \draw[S,->] (-0.4,-0.5) -- (-0.4,0.5);
            \draw[S,<-] (0.4,-0.5) -- (0.4,0.5);
            \projector{0,0}{0};
            \projector{-0.7,0}{1};
            \projector{0.7,0}{1};
            \region{0.8,0.5}{n};
        \end{tikzpicture}
    \right)
    \colon e_{p,p'} \otimes_{n-1} e_{r',r}
    \mapsto e_{p,p'} \left( \sum_{s \in P^n_{n-2}} e_{s,s} \right) J_{n-1} \otimes_{n-1} \left( \sum_{s \in P^n_{n-2}} e_{s,s} \right) e_{r',r}
    \\
    =
    \begin{cases}
        e_{t,t} \otimes_{n-1} e_{t,t} & \text{if } p=p'=r'=r = (0,1,\dotsc,n), \\
        0 & \text{otherwise}.
    \end{cases}
\end{multline*}
Thus, $\bF$ respects \cref{wood}.  This completes the proof that $\bF$ is well defined.

%-----------------------------
\subsection{Action on modules}
%-----------------------------

Consider the category
\begin{equation} \label{Mdef}
    \Mcat := \bigoplus_{n \in \N} \TLalg_n\md.
\end{equation}
Then we let $\tEnd(\Mcat)$ be the $\kk$-linear 2-category where
\begin{itemize}
    \item the set of objects is $\{ \obj{n} : n \in \N \}$, and
    \item for $m,n \in \N$, $\Mcat(\obj{m},\obj{n})$ is the category of $\kk$-linear functors $\TLalg_m\md \to \TLalg_n\md$.
\end{itemize}
We then have a natural $2$-functor
\[
    \bA \colon \Bcat \to \tEnd(\Mcat)
\]
sending $\obj{n} \mapsto \obj{n}$, sending the bimodule $M \in (\TLalg_m,\TLalg_n)\bimod$ to the functor
\[
    M \otimes_n - \colon \TLalg_n\md \to \TLalg_m\md,
\]
and sending a bimodule homomorphism to the corresponding natural transformation of functors.  The functor $\bA$ is full and faithful by the Eilenberg--Watts Theorem.  It induces an action of the monoidal category $\Dmon$ on $\Mcat$, which we denote by $\otimes$, given on $M \in \TLalg_n\md$ by
\begin{align*}
    \Sup \otimes M &= \bA \bF(\Sup 1_n) (M) = \TLbim{n+1}{\TLalg_{n+1}}{n} \otimes_n M, & n \in \N,
    \\
    \Sdown \otimes M &= \bA \bF(\Sdown 1_n) (M) = \TLbim{n-1}{\TLalg_n}{n} \otimes_n M, & n \ge 1,
    \\
    \Dup \otimes M &= \bA \bF(\Dup 1_n) (M) = \TLbim{n+2}{\TLalg_{n+2} e_{n+1}}{n} \otimes_n M, & n \in \N,
    \\
    \Ddown \otimes M &= \bA \bF(\Ddown 1_n) (M) = \TLbim{n-2}{e_{n-1} \TLalg_n}{n} \otimes_n M. & n \ge 2,
\end{align*}
and similarly for morphisms.

\begin{prop} \label{monkey}
    For $n,k \in \N$, $n-k \in 2\N$, we have
    \begin{align} \label{monkey+}
        \Sup \otimes L^n_k &\cong L^{n+1}_{k+1} \oplus L^{n+1}_{k-1},&
        \Dup \otimes L^n_k &\cong L^{n+2}_k,
        \\ \label{monkey-}
        \Sdown \otimes L^n_k &\cong L^{n-1}_{k+1} \oplus L^{n-1}_{k-1},\quad n \ge 1,&
        \Ddown \otimes L^n_k &\cong L^{n-2}_k,\quad n \ge 2.
    \end{align}
    where we interpret $L^m_l$ as the zero module if $l<0$ or $l>m$.  Furthermore, $\bA \bF (\rightbubmultreg[D]{r}{n})$ and $\bA \bF (\freeprojectorreg{r}{n})$ are the functors with components
    \begin{equation} \label{monkeybub}
        \bA \bF \left( \rightbubmultreg[D]{r}{n} \right)_{L^n_k} = \delta_{k \le n-2r} 1_{L^n_k}
        \qquad \text{and} \qquad
        \bA \bF \left( \freeprojectorreg{r}{n} \right)_{L^n_k} = \delta_{k,n-2r} 1_{L^n_k}.
    \end{equation}
\end{prop}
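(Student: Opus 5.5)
The plan is to deduce \cref{monkey+} and \cref{monkey-} from the branching isomorphisms of \cref{subsec:indres}, and \cref{monkeybub} from \cref{ducky} and \cref{goosey}.

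The $\Sup$ statement is literally \cref{indformula}, since $\Sup \otimes M = \Ind_n^{n+1} M$. Likewise the $\Sdown$ statement is \cref{resformula} with $n$ replaced by $n-1$, since $\Sdown \otimes M = \Res^n_{n-1} M$.

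For $\Dup$, I will use that $\TLalg_{n+2} e_{n+1}$ is a direct summand of $\TLalg_{n+2} \cong \TLalg_{n+2}\otimes_n$-bimodule $\Ind$-twice. Concretely, $\TLalg_{n+2} e_{n+1} \otimes_n L^n_k$ is the image of the idempotent $\delta^{-1}(\text{right mult. by } e_{n+1})$ acting on $\Ind_n^{n+2} L^n_k \cong L^{n+2}_{k+2}\oplus L^{n+2}_k \oplus L^{n+2}_k \oplus L^{n+2}_{k-2}$. Rather than tracking idempotents, it is cleaner to compute directly. Write an element as $x e_{n+1}\otimes_n v$, where $v = fJ_k$. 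Since $e_{n+1}$ commutes with $\TLalg_n$, this equals $x\otimes_n$ of the diagram obtained by capping the top two new strands. The map
\[
x e_{n+1}\otimes_n fJ_k \mapsto x\,(\text{cup on strands } n{+}1,n{+}2 \text{ placed beside } f)J_k
\]
lands in $L^{n+2}_k = \TLcat(\Tobj^{\otimes k},\Tobj^{\otimes(n+2)})J_k$. Its inverse sends $gJ_k$, with $g$ of shape $(k\to n+2)$, to $\delta^{-1}\, g'e_{n+1}\otimes_n$ (cap of the top strands of $g$). Here $g'$ is $g$ with a cup/cap pair inserted, using the zigzag relation \cref{TLrels}. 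Checking that both composites are identities uses only \cref{TLrels} and $e_{n+1}^2=\delta e_{n+1}$. Well-definedness over $\otimes_n$ is clear because everything is equivariant for the left $\TLalg_n$-action.

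The $\Ddown$ case follows the same pattern. We have $e_{n-1}\TLalg_n \otimes_n L^n_k \cong e_{n-1}L^n_k$. Capping the leftmost two strands identifies $e_{n-1}L^n_k$ with $\TLcat(\Tobj^{\otimes k},\Tobj^{\otimes(n-2)})J_k = L^{n-2}_k$, and the inverse is given by attaching a cup and dividing by $\delta$. Alternatively, one can avoid the computation via the isomorphisms \cref{hacky} and \cref{isoms2}: since $\Ddown\Dup\cong\one$, the functor $\bA\bF(\Ddown)$ is left inverse to $\bA\bF(\Dup)$ on objects. Combined with the $\Dup$ result (and $\Dup\otimes L^{n-2}_k\cong L^n_k$), this gives $\Ddown\otimes L^n_k \cong \Ddown\otimes\Dup\otimes L^{n-2}_k\cong L^{n-2}_k$ for $k\le n-2$. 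For $k=n$, we get $e_{n-1}J_n=0$ by \cref{JWkill}, hence zero. I expect this bookkeeping of edge cases ($k=n$, $k-1<0$) to be the main place needing care.

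Finally, for \cref{monkeybub}: by \cref{duck}, $\bA\bF$ of the $r$-fold bubble is the natural transformation given by right multiplication by the central-type idempotent $\sum_{p_n\le n-2r} e_{p,p}$ on $\TLalg_n\otimes_n M\cong M$, i.e. left action on $M$. On $L^n_k$, the idempotent $\sum_{p\in P^n_j}e_{p,p}$ is the central idempotent of the block $j$. Indeed $e_{p,p}v_q = \delta_{p,q}v_q$ by \cref{crab} and \cref{matrixunits}, so it acts on $L^n_k$ by $\delta_{j,k}$. Summing over $j\le n-2r$ gives the first formula, and \cref{goose} gives the second. For $n<2r$ both sides vanish, by \cref{freeze} and \cref{abyss} together with $k\ge 0 > n-2r$.
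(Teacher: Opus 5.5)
Your route is the paper's: $\Sup$ and $\Sdown$ come from \cref{indformula,resformula}, $\Dup$ and $\Ddown$ from an explicit cup/cap identification, and \cref{monkeybub} from \cref{ducky,goosey}. Your forward map $\phi(xe_{n+1}\otimes_n hJ_k)=x(\Tcup\otimes h)J_k$ is the paper's map. It equals $\delta^{-1}y(\Tcup\otimes hJ_k)$ for $y=xe_{n+1}$, so it is well defined and balanced. Your direct $\Ddown$ argument via $e_{n-1}\TLalg_n\otimes_n L^n_k\cong e_{n-1}L^n_k$ is also fine.

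The gap is the inverse of $\phi$, which is the only nontrivial point of the $\Dup$ case. You take the second tensor factor of $\psi(gJ_k)$ to be $g$ with two of its top strands capped. No map of that shape can invert $\phi$ once $k\ge 2$. Choose $g$ in which the two capped top points are through-strands; for $n=k=2$, capping the leftmost pair, take $g=1_2\otimes\Tcup$. Capping them leaves a cap on top of $J_k$, so that factor is $0$ by \cref{JWkill}, whereas $gJ_k\neq 0$. So your assertion that both composites are identities fails for the recipe as stated.

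The repair is to use a \emph{fixed} vector of $L^n_k$ as the second factor. Choose $u\in\TLcat(\Tobj^{\otimes k},\Tobj^{\otimes n})$ with $u^\updownarrow u=\delta^{(n-k)/2}1_k$ (e.g.\ $1_k$ with $(n-k)/2$ cups adjoined). Set $c=\Tcup\otimes u$ and $m=(n-k)/2+1$, so $c^\updownarrow c=\delta^m 1_k$. Define
\[
    \psi(v)=\delta^{-m}\,v\,c^\updownarrow e_{n+1}\otimes_n uJ_k,\qquad v\in L^{n+2}_k .
\]
Using $e_{n+1}c=\delta c$, one gets $\phi\psi(v)=\delta^{-m}vc^\updownarrow cJ_k=v$. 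Conversely, $c^\updownarrow e_{n+1}=\delta u^\updownarrow(\Tcap\otimes 1_n)$ gives $x(\Tcup\otimes hJ_k)c^\updownarrow e_{n+1}=\delta\,xe_{n+1}(1_2\otimes hJ_ku^\updownarrow)$. Moving $hJ_ku^\updownarrow\in\TLalg_n$ across $\otimes_n$ yields $\psi\phi(xe_{n+1}\otimes_n hJ_k)=\delta^{1-m}xe_{n+1}\otimes_n hJ_ku^\updownarrow uJ_k=xe_{n+1}\otimes_n hJ_k$.

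Your alternative $\Ddown$ argument through $\Ddown\Dup\cong\one$ relies on the $\Dup$ case, so it needs this repair; the direct one does not. One smaller point in \cref{monkeybub}: \cref{crab} only covers $p_n=q_n$. To get $e_{p,p}v_q=\delta_{p,q}v_q$ when $p_n\neq q_n$, you also need $v_p^\updownarrow v_q=0$, which follows from \cref{JWkill}.
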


\begin{proof}
    The first isomorphisms in \cref{monkey+,monkey-} follow from \cref{indformula,resformula}.  For the second isomorphism in \cref{monkey+}, we have an isomorphism
    \[
        \Dup \otimes L^n_k
        = \TLalg_{n+2} e_{n+1} \otimes_n L^n_k
        =
        \left\{
            \begin{tikzpicture}[centerzero,TL]
                \draw[multi] (0,-0.7) -- (0,0.7) \toplabel{n+2};
                \coupon{0,0.3}{f};
                \coupon{0,-0.3}{\JW_k};
                \draw (-0.4,-0.7) -- (-0.4,-0.5) to[out=up,in=up,looseness=2] (-0.7,-0.5) -- (-0.7,-0.7);
            \end{tikzpicture}
            : f \in \TLcat(\Tobj^{\otimes k}, \Tobj^{\otimes n})
        \right\}
        \xrightarrow{\cong}
        L^{n+2}_k
    \]
    given by composing on the bottom with
    \(
        \delta^{-1}\
        \begin{tikzpicture}[centerzero,TL]
            \draw (0,0.1) -- (0,0) to[out=down,in=down,looseness=2] (0.3,0) -- (0.3,0.1);
            \draw[multi] (0.6,0.1) \toplabel{n} -- (0.6,-0.2);
        \end{tikzpicture}
    \).
    Its inverse is given by composing on the bottom with
    \(
        \begin{tikzpicture}[centerzero,TL]
            \draw (0,-0.1) -- (0,0) to[out=up,in=up,looseness=2] (0.3,0) -- (0.3,-0.1);
            \draw[multi] (0.6,-0.1) \botlabel{n} -- (0.6,0.2);
        \end{tikzpicture}
    \).
    The proof of the second isomorphism in \cref{monkey-} is similar.  The equations \cref{monkeybub} follow from \cref{ducky,goosey}.
\end{proof}

%======================
\section{Basis theorem}
%======================

In this section, we describe bases for the 2-morphism spaces in $\Dcat$.  For this purpose, using the pivotal structure, we will not distinguish between a string diagram and any other diagram obtained from it by planar isotopy.  The arguments in \cref{subsec:closed,subsec:reduced,subsec:scaffold} are independent of regional labels in string diagrams.  Thus, we will omit such labels in those subsections.

%------------------------------------------------
\subsection{Closed diagrams\label{subsec:closed}}
%------------------------------------------------

We begin by studying closed diagrams, i.e., diagrams with no endpoints at the top or bottom of the diagram.

\begin{lem} \label{cyanfloat}
    Every string diagram in $\Dcat(1_n,1_n)$ consisting of only cyan strands is a linear combination of $\freeprojectorreg{r}{n}$, $0 \le r \le \lfloor n/2 \rfloor$.
\end{lem}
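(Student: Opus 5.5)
We argue by induction on the number of cyan circles in the diagram, and show the stronger claim that every such diagram is a linear combination of the elements $\rightbubmultreg[D]{r}{n}$, $r \in \N$. This suffices: by \cref{sigma} and \cref{spread}, each $\rightbubmultreg[D]{r}{n}$ equals $\sum_{s \ge r} \freeprojectorreg{s}{n}$, and by \cref{abyss} the terms with $2s > n$ vanish.

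A closed diagram built only from cyan strands has no trivalent vertices, since every vertex is incident to black strands. It is therefore a disjoint union of embedded oriented cyan circles in the plane, with arbitrary nesting. Write the circles as a planar forest. The outermost circles sit side by side in the region labelled $n$, and each is either clockwise or counterclockwise. A counterclockwise circle $\leftbubmult[D]{f}$ containing a diagram $f$ is handled with the first relation in \cref{onion}, used backwards or forwards to pull out right-bubble content. More directly, by the inductive hypothesis its interior $f$ is a combination of $\rightbubmult[D]{r}$'s in the appropriate region, and \cref{apple1} turns $\leftbubmult[D]{\rightbubmult[D]{r}}$ into $\rightbubmult[D]{r-1}$. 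A clockwise circle containing $\rightbubmult[D]{r}$ becomes $\rightbubmult[D]{r+1}$ by the definition \cref{bubrec}. Several circles juxtaposed in the same region multiply via \cref{headroom}, which gives $\rightbubmult[D]{\max(r,s)}$. Innermost empty circles are already covered: a counterclockwise empty circle is $1$ by \cref{smoke}, and a clockwise one is $\rightbubmult[D]{1}$.

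The induction therefore runs as follows. Take an outermost circle. Its interior has fewer circles, so by induction it is a combination of $\rightbubmult[D]{r}$ in the inner region. Applying \cref{bubrec} or \cref{apple1}, the whole nested bubble becomes a combination of $\rightbubmult[D]{s}$ in the outer region. Finally, the product of outermost pieces is combined using \cref{headroom}.

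The main point needing care is the bookkeeping of region labels. The inner region of a clockwise cyan bubble in region $n$ is labelled $n-2$, whereas that of a counterclockwise one is labelled $n+2$. Because of this, the inductive statement must be formulated for all $n$ simultaneously, and \cref{bubrec} and \cref{apple1} must be applied with those shifted labels. \cref{freeze} and the negative-label convention make sure that terms out of range vanish. No real obstacle is expected beyond this.
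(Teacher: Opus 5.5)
Your proof is correct and follows the paper's argument. The paper likewise notes that the diagram is a union of nested cyan circles, collapses it to a single $\rightbubmultreg[D]{r}{n}$ by repeated use of \cref{bubrec,apple1,headroom,smoke}, and then expands this as $\freeprojectorreg{r}{n} + \dotsb + \freeprojectorreg{\lfloor n/2 \rfloor}{n}$ via \cref{sigma,freeze}; your induction on the number of circles, with the explicit region-label bookkeeping, is simply a more detailed write-up of that ``repeated use'' step.
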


\begin{proof}
    Let $D$ be a string diagram in $\Dcat(1_n,1_n)$ consisting of only cyan strands.  Since all vertices involve black strings, $D$ is a closed cyan diagram with no vertices.  Thus, $D$ is a union of cyan circles, some of which may be nested.  Repeated use of \cref{apple1,headroom,smoke} then shows that $D$ is, in fact, equal to $\rightbubmultreg[D]{r}{n}$ for some $r$.  Since
    \[
        \rightbubmultreg[D]{r}{n}
        \overset{\cref{sigma}}{\underset{\cref{freeze}}{=}}
        \freeprojectorreg{r}{n} + \freeprojectorreg{r+1}{n} + \dotsb + \freeprojectorreg{\lfloor n/2 \rfloor}{n},
    \]
    the result follows.
\end{proof}

Since all trivalent vertices have exactly two incident black edges, the black strings in any string diagram in $\Dcat$ are either cycles (i.e., closed curves) or they have both endpoints at the top or bottom of the diagram.

\begin{prop} \label{noloops}
    Every 2-morphism space in $\Dcat$ is spanned by string diagrams without black cycles.
\end{prop}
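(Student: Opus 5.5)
We argue by induction on the number of trivalent vertices lying on black cycles, and, within that, on the number of black cycles. Let $D$ be a string diagram containing a black cycle $C$. Since every trivalent vertex has exactly two black edges, $C$ is a closed black curve, and the cyan edges incident to it are attached at its trivalent vertices. Using planar isotopy (the category is strict pivotal), we may regard $C$ as a circle with some number $m$ of trivalent vertices on it. Orientation considerations force the black arcs between consecutive vertices to alternate in a controlled way. At each vertex, the two black edges are both oriented into or both out of the vertex, so the black orientation reverses at every vertex. Hence $m$ is even.

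\emph{Case $m=0$.} Then $C$ is a black bubble, possibly with a subdiagram $f$ in the region it encloses. By the rotation $\bR$ and isotopy, we may take it counterclockwise or clockwise. We would like to remove it using \cref{smoke}, \cref{leftbull}, \cref{rightbull}, \cref{wind}. First, apply the induction (on diagrams with fewer cycles, or a nested argument on innermost cycles) to the contents $f$: we choose $C$ innermost, so that the inside contains no black cycles. If the inside is a closed diagram, it consists of cyan circles only, and by \cref{cyanfloat} it is a combination of $\freeprojector{r}$. Then \cref{wind} (or \cref{smoke} and \cref{leftbull,rightbull}) rewrites the black bubble around $\freeprojector{r}$ as a combination of $\freeprojector{r'}$, which eliminates $C$.

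The subtlety is that an innermost black cycle may enclose cyan strands that cross $C$'s interior only via vertices on $C$. When $m=0$, nothing connects inside to outside, so the inside is closed. This is the case just treated.

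\emph{Case $m\ge 2$.} We use the relations \cref{fire}, \cref{water}, together with the derived crossings \cref{reidUU,reidUD}, to reduce $m$. Two adjacent vertices on $C$ are joined by a black arc and carry cyan legs. When those legs lie on the same side of $C$, the local picture is exactly the left side of one of the relations in \cref{fire}, or a rotation of it via \cref{swishy,fishy1,fishy2}. These relations replace the pair by two parallel strands (a cyan strand and a black strand), removing two vertices from $C$. Doing so may split $C$ or open it into a strand reaching the boundary, and either way the quantity being inducted on strictly decreases. When the two legs lie on opposite sides of $C$, we instead slide one cyan leg across using the definitions \cref{crossdefU,crossdefL,crossdefR,crossdefD} and \cref{reidUU,reidUD}. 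Those definitions express cyan--black crossings as pairs of trivalent vertices, so the move produces a configuration where \cref{fire} or \cref{water} applies. The cost is possibly introducing a cyan bubble $\rightbub[D]$, which is harmless.

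The main obstacle is this last step: showing that some pair of adjacent vertices on $C$ can always be put, after isotopy, into one of the forms covered by \cref{fire,water}, and that the rewriting strictly decreases a well-chosen complexity. That complexity should be the total number of vertices on black cycles. I would first try choosing two consecutive vertices on $C$ whose cyan legs are both directed toward the same face, and check all orientation cases against the four relations of \cref{fire} and the three of \cref{water}.
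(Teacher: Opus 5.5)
\textbf{There is a genuine gap in the case $m\ge 2$.}

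Your framework matches the paper's: you cancel pairs of adjacent vertices on a black cycle with the first two relations of \cref{fire}, and you remove a vertex-free innermost cycle using \cref{cyanfloat} and \cref{wind}. Your case $m=0$ and your same-side case are fine. What is missing is a proof that a pair of adjacent vertices with legs on the same side exists, and your fallback for opposite-side pairs is not a valid step.

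Two adjacent vertices of $C$ whose cyan legs lie on opposite sides \emph{are}, up to isotopy, one of the crossings in \cref{crossdefU,crossdefL,crossdefR,crossdefD}. So rewriting them through those definitions does nothing. The only relations that remove crossings, \cref{reidUU,reidUD}, need a second crossing of the \emph{same} cyan strand with $C$ bounding an otherwise empty bigon. That is a global condition you have not established. Applying those relations from right to left instead adds vertices to $C$ and defeats your complexity. You list this yourself as the main obstacle, so the argument is incomplete exactly at its key step.

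\textbf{The missing idea is a planarity/parity argument showing that the opposite-side situation never has to be handled.}

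Take $C$ innermost in every case, not only for $m=0$. If $C$ encloses other black cycles, interior cyan legs may run to them and what follows breaks down. For innermost $C$, the interior contains only cyan strands: closed components and pairwise noncrossing arcs joining two vertices of $C$.

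Every vertex is either of merge type (black in, black in, cyan out) or of split type (black out, black out, cyan in). Each black edge of $C$ runs from a split vertex to a merge vertex, so the types alternate around $C$. Each cyan arc joins a merge vertex to a split vertex. Hence each of the two arcs of $C$ between the endpoints of an interior cyan strand carries an \emph{even} number of vertices.

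Now the case analysis:
\begin{itemize}
\item If there are interior legs, choose an interior cyan arc $S$ with no other interior arc on one side. The arc of $C$ on that side carries an even number of vertices, all with exterior legs. If that number is positive, two of them are adjacent with legs on the same (outer) side. If it is zero, the endpoints of $S$ are adjacent with legs on the same (inner) side.
\item If there are no interior legs, any two adjacent vertices work.
\end{itemize}
In every case the first two relations of \cref{fire} apply locally and lower $m$ by $2$. This is the paper's argument; neither \cref{water} nor the crossings are needed.

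A smaller correction: those relations never split $C$ or open it up. $C$ remains a cycle with $m-2$ vertices, and the two cyan legs are joined, possibly into a closed cyan loop.
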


\begin{proof}
    It suffices to show that every string diagram can be written as a linear combination of diagrams without black cycles.  We do this by induction on the number of black cycles.  Suppose $D$ is a string diagram in $\Dcat$ with $b > 0$ black cycles.  Since black strings cannot cross, we may choose a black cycle $C$ not containing any black strings in its interior.  
    
    Our first step is to remove all the vertices from $C$.  For orientation reasons, $C$ contains an even number of vertices, each with an incident cyan string on either the inside or the outside of $C$.  Suppose first that at least one of the incident cyan strings is on the inside of $C$.  Each such cyan string must be attached to $C$ at both ends and thus partitions the interior of $C$ into two regions: its right and left sides, when heading in the direction of orientation of the cyan string.  Since the number of cyan strings in the interior of $C$ is finite, we may choose one cyan string, which we denote by $S$, whose right region does not contain any other cyan strings with endpoints on $C$.  Thus, if it is not empty, the right region contains a closed diagram consisting of only cyan strands.  By \cref{cyanfloat,smush}, these can be moved across $S$ to the left region.  Therefore, we may assume the right region of $S$ is empty.  The arc of $C$ bordering the right region may contain vertices, whose cyan strands must be on the outside of $C$.  For orientation reasons, there must be an even number of such vertices, which can all be removed by repeated application of the first two relations in \cref{fire}.  Then we may remove the two vertices where $S$ is attached to $C$ by again using the first two relations in \cref{fire}.  Continuing in this manner, we may remove from $C$ all vertices whose incident cyan strand is in the interior of $C$.
    
    By the above, we may now assume that all the vertices on $C$ have their incident cyan strand in the exterior of $C$.  For orientation reasons, there must be an even number of such vertices.   Then we can use the first two relations in \cref{fire} to remove all the vertices on $C$.
    
    We are now reduced to the situation where $C$ has no vertices.  Its interior may contain a closed cyan diagram which, by \cref{cyanfloat} is a linear combination of the 2-morphisms $\freeprojectorreg{r}{n}$, $0 \le r \le \lfloor n/2 \rfloor$, where $n$ is the label of the interior region immediately adjacent to $C$.  Then we can remove the cycle $C$ using \cref{wind}, which allows us to write the diagram $D$ as a linear combination of diagrams with fewer black cycles.  This completes the induction step.
\end{proof}

\begin{cor} \label{galaxy}
    For all $n \in \N$, the 2-endomorphism space $\Dcat(1_n,1_n)$ is spanned by $\freeprojectorreg{r}{n}$, $0 \le r \le \lfloor n/2 \rfloor$.
\end{cor}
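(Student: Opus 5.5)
The plan is to combine \cref{noloops} with \cref{cyanfloat}. Any element of $\Dcat(1_n,1_n)$ is a linear combination of closed string diagrams in the region labelled $n$. By \cref{noloops}, each such diagram can be rewritten as a linear combination of diagrams without black cycles. A closed diagram has no endpoints on the top or bottom boundary. As observed before \cref{noloops}, black strings are either cycles or run between boundary endpoints, so a closed diagram with no black cycles has no black strings at all.

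With no black strings, the diagram also has no trivalent vertices, since every vertex has two incident black edges. Thus it consists only of cyan strands, namely a collection of possibly nested cyan circles. By \cref{cyanfloat}, such a diagram is a linear combination of $\freeprojectorreg{r}{n}$ with $0 \le r \le \lfloor n/2 \rfloor$. Combining these steps gives the spanning statement.

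The only point that needs care is that the rewriting in \cref{noloops} stays inside $\Dcat(1_n,1_n)$ and produces closed diagrams with the same outer region label $n$. This holds because all relations used there are local and preserve the boundary and the outer region. I do not expect a real obstacle here: the substantive work is in \cref{noloops}, and this corollary is essentially immediate from it.
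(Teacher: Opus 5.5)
Your proposal is correct and follows essentially the paper's own proof. A closed diagram has only black cycles as black strings, \cref{noloops} reduces everything to purely cyan diagrams, and \cref{cyanfloat} gives the spanning set. Your extra remarks (no trivalent vertices without black strings, the rewriting staying inside $\Dcat(1_n,1_n)$) are accurate details that the paper leaves implicit.
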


\begin{proof}
    Any string diagram in $\Dcat(1_n,1_n)$ is a closed diagram.  Thus, all black strings must be cycles and so, by \cref{noloops}, $\Dcat(1_n,1_n)$ is spanned by diagrams with only cyan strings.  Then the result follows from \cref{cyanfloat}.
\end{proof}

%--------------------------------------------------
\subsection{Reduced diagrams\label{subsec:reduced}}
%--------------------------------------------------

We say that a string diagram in $\Dcat$ is \emph{reduced} if it satisfies the following conditions:
\begin{description}
    \item[\namedlabel{R1}{R1}] No black string has two adjacent trivalent vertices with the cyan string emanating from the same side.  In other words, the subdiagram
        \[
            \begin{tikzpicture}[centerzero,rotate=-90]
                \draw[D] (-0.3,-0.5) -- (0,-0.2);
                \draw[S] (0.3,-0.5) -- (0,-0.2);
                \draw[S] (0,-0.2) -- (0,0.2);
                \draw[S] (0,0.2) -- (0.3,0.5);
                \draw[D] (0,0.2) -- (-0.3,0.5);
            \end{tikzpicture}
        \]
        does not appear with any orientation of the strands.  (Because of our convention of ignoring planar isotopy, this condition also applies to any rotation of the above subdiagram.)
    
    \item[\namedlabel{R2}{R2}] It contains no closed subdiagrams.
\end{description}

\begin{lem} \label{raspberry}
    Reduced diagrams have no black cycles.
\end{lem}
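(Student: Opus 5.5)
We argue by contradiction: suppose a reduced diagram $D$ contains a black cycle $C$ and show that $D$ then violates \ref{R1} or \ref{R2}. Black strings never cross one another, and every trivalent vertex has exactly two incident black edges. So $C$ is a simple closed curve, and the only vertices on it are trivalent vertices, each with one cyan string leaving either into the interior or the exterior of $C$.

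\emph{No vertices on $C$.} Then $C$ itself is a closed subdiagram, contradicting \ref{R2}. (If one reads ``closed subdiagram'' as a connected component with no boundary endpoints, this is immediate, since $C$ is disconnected from everything else.)

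\emph{Vertices only on one side.} Suppose $C$ has vertices, but all of their cyan strings lie on the exterior of $C$, or all lie on the interior. Consider two consecutive vertices along $C$. Both have their cyan string on the same side, which is exactly the forbidden configuration of \ref{R1}. Note that one vertex suffices: with a single vertex, it is adjacent to itself along $C$. This degenerate case needs a remark, but parity also rules it out, since orientation forces the number of vertices on $C$ to be even, as noted in the proof of \cref{noloops}. So there are at least two vertices, and \ref{R1} is violated.

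\emph{Vertices on both sides.} We show this case forces \ref{R1} or \ref{R2} as well. The natural approach is to pick an innermost black cycle, one with no black string inside it; black strings cannot cross, and any black string inside $C$ that is not a cycle would have to reach the boundary, so such a $C$ exists. The interior of $C$ then contains only cyan strings, and these have no vertices other than those on $C$. Each interior cyan string therefore either is a closed cyan loop, which violates \ref{R2}, or is an arc with both endpoints on $C$. These arcs are pairwise non-crossing, so we may choose one, $S$, whose right region contains no other arc. The two endpoints of $S$ split $C$, and the boundary arc of $C$ bordering that right region carries only exterior vertices. If that boundary arc carries no vertex, then the two endpoints of $S$ are consecutive vertices of $C$ whose cyan strands both point inward, violating \ref{R1}. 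If it carries at least one vertex, then an endpoint of $S$ is adjacent to an exterior vertex, which is not itself a violation. However, the orientation argument from the proof of \cref{noloops} shows that an even number of vertices lie on that arc. Hence that arc contains at least two consecutive exterior vertices, violating \ref{R1}.

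\emph{Main obstacle.} The delicate point is this parity or orientation count. One must check that, on the black arc bordering the right region of $S$, the even count is forced. The trivalent vertex conventions flip the black orientation relative to the cyan strand, so tracking orientations along the arc should give the parity. It is also cleaner to avoid the innermost-cycle choice: a reduced diagram has no closed subdiagrams, so by \ref{R2} any black cycle $C$, together with everything inside it, must connect to the boundary through exterior cyan strands. We expect most of the work to be making this orientation bookkeeping precise.
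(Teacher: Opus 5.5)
Your argument is correct and follows the paper's route. The paper's proof just points back to the argument in the proof of \cref{noloops}: take a black cycle $C$ with no black strings inside it, pick an extremal interior cyan arc $S$, and use orientations to find two adjacent vertices of $C$ whose cyan strands lie on the same side. In a reduced diagram that configuration contradicts \ref{R1}, while closed cyan pieces and vertex-free cycles are excluded by \ref{R2}.

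One step needs a small repair; the paper's wording has the same slip.

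\emph{The choice of $S$.} An interior arc whose \emph{right} region contains no other arc need not exist. Take four vertices on $C$ carrying two parallel, oppositely oriented interior arcs. Each arc can have the other on its right, and this is compatible with every orientation and region-label constraint. Instead, choose $S$ so that \emph{one} of its two sides contains no other arc; such an $S$ exists by finiteness. Everything afterwards is symmetric in the side, so the rest of your argument goes through unchanged.

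\emph{The parity step.} The count you flag as delicate is short. The black orientation reverses at every vertex, so vertices along $C$ alternate between ``cyan out'' and ``cyan in''. Since $S$ joins one vertex of each kind, each arc of $C$ between its endpoints carries an even number of vertices.

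\emph{The innermost choice.} Keep the innermost choice of $C$, despite the suggestion in your last paragraph. Without it, interior cyan strands can end on black cycles nested inside $C$, and the matching argument breaks. \ref{R2} alone does not repair this.
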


\begin{proof}
    This follows from the argument used in the proof of \cref{noloops}.
\end{proof}

We say that two reduced string diagrams $f$ and $g$ are \emph{equivalent}, and write $f \sim g$, if $f$ can be obtained from $g$ by repeated use of the last relation in \cref{fire} (equivalently, the relation \cref{spark}).

\begin{lem} \label{sky}
    If $f$ and $g$ are compositions of $\Dupdown$, then any two reduced string diagrams in $\Dcat(f,g)$ are equivalent.
\end{lem}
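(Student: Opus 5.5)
The plan is to reduce everything to a purely combinatorial statement about oriented planar matchings, and then to run an induction that uses the last relation in \cref{fire} and its rotated form \cref{spark} to move every diagram to a canonical one.

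\emph{Step 1: the diagrams carry no black strings.} Let $D \in \Dcat(f,g)$ be reduced, where $f$ and $g$ are words in $\Dup$ and $\Ddown$. A black string either is a cycle or has both endpoints on the top or bottom boundary. Since $f$ and $g$ contain no black letters, every black string of $D$ would be a cycle, and \cref{raspberry} rules out black cycles. Hence $D$ has no black strings at all. Every trivalent vertex has two black edges, so $D$ also has no vertices. Condition \ref{R2} then says $D$ has no closed cyan components. So, up to planar isotopy, $D$ is a crossingless planar matching of the boundary points of $f$ and $g$. Each arc is a through-strand, a cup or a cap, and its ends are compatible with the orientations.

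\emph{Step 2: induction on the length of $f$ plus the length of $g$.} Call a word \emph{cancelled} if it has no adjacent pair of opposite letters. The cancelled words are exactly $\Dup^a$ and $\Ddown^a$. Suppose first that both $f$ and $g$ are cancelled. All their boundary points then have the same orientation, so no cup or cap can occur. Every strand is therefore a through-strand, and by planarity $D$ is the identity diagram, which is unique.

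Otherwise, say $f$ has an adjacent opposite pair at positions $i,i+1$; the case of $g$ is symmetric, or follows by applying $\bR$ from \cref{rotate}. There are two cases.
\begin{itemize}
    \item If these two bottom points are joined by a cup in $D$, then $D = D' \circ (\text{cup on positions } i,i+1)$ for a reduced $D'$ with domain the shortened word $f'$.
    \item If they are not joined, apply the last relation in \cref{fire} (or \cref{spark}, depending on the order of the orientations) to the two parallel strand segments just above the bottom boundary. This replaces them by a cap over a cup. The result is a diagram $D' \circ (\text{cup})$, where $D'$ has domain $f'$, and $D \sim D' \circ (\text{cup})$.
\end{itemize}
In the second case one checks that $D'$ is still reduced. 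It has no vertices, and it has no closed component, because a closed component would have to use the new cap, and that happens only when the two original strands were already joined, which is excluded. The new middle region carries the label of the region between the two strands in $D$, so no negative labels appear and the relations are legitimately applied.

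\emph{Step 3: conclusion.} Given two reduced diagrams $D_1, D_2 \in \Dcat(f,g)$, apply the same cancellation step at the same positions to both. This gives $D_j \sim D_j' \circ c$ for the same cup $c$. By induction $D_1' \sim D_2'$, since they are reduced diagrams between the same shorter words. Precomposing with $c$ preserves equivalence, because the relations are local. Hence $D_1 \sim D_2$.

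The step I expect to need the most care is the second case of Step 2. There one must verify that the modified diagram remains reduced and that the region labelling stays admissible, in particular that the cup--cap relation applies with the correct orientation variant, \cref{fire} versus \cref{spark}. This amounts to a local check on the orientations of the pair.
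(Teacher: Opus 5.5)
Your proposal is correct and follows essentially the paper's argument: once \cref{raspberry} and \ref{R2} reduce everything to oriented noncrossing cyan matchings, you induct by using the flip relation to force an adjacent, oppositely oriented pair of endpoints to be joined, and then delete that arc. The only difference is bookkeeping: the paper places all endpoints on a circle and inducts on the number of incoming endpoints, so the chosen pair may straddle top and bottom and no extra base case is needed, whereas you use pairs inside $f$ or inside $g$ only and therefore need the separate base case $f=g=\Dup[a]$ or $f=g=\Ddown[a]$ (parallelism rules out the mixed case), where the diagram is forced to be the identity.
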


\begin{proof}
    By the assumption on $f$ and $g$, all black strings in diagrams in $\Dcat(f,g)$ must be cycles.  Thus, by \cref{raspberry}, any reduced string diagram in $\Dcat(f,g)$ contains only cyan strings.  Thus, it can be viewed as an oriented Temperley--Lieb diagram with fixed orientations at the endpoints, given by $f$ and $g$, and arcs given by the cyan strings. The endpoints can be arranged as vertices on a circle, with
    \begin{itemize}
        \item each occurrence of $\Dup$ in $f$, and $\Ddown$ in $g$, corresponding to a vertex oriented into the interior of the circle, and
        \item each occurrence of $\Ddown$ in $f$, and $\Dup$ in $g$, corresponding to a vertex oriented toward the exterior of the circle.
    \end{itemize}
    Then the lemma is equivalent to the statement that the graph of such oriented Temperley--Lieb diagrams, with adjacency given by a local flip (the last relation in \cref{fire}), is connected.  In the unoriented case, this is the standard flip graph of noncrossing perfect matchings; see \cite[Th.~2.2]{HHN02}.  The oriented version considered in the current paper can be viewed as the corresponding bichromatic subgraph.  Closely related results for bichromatic matching graphs can be found in \cite{ABLS15,ABHPV18}.   For the present local-flip graph with fixed orientations at the endpoints, the result can be proved by the following elementary induction.
    
    There are no oriented Temperley--Lieb diagrams unless the number of incoming vertices is equal to the number of outgoing vertices.  Thus, we assume that there are $k$ incoming vertices, $k$ outgoing vertices, and we induct on $k$.  In the base cases $k \in \{0,1\}$, there is only one oriented Temperley--Lieb diagram, and so the result is trivial.  Now suppose that $k \ge 2$ and that $D_1$ and $D_2$ are oriented Temperley--Lieb diagrams.  Choose adjacent vertices $v_1$ and $v_2$ with opposite orientations (which can always be done).  Suppose that, for $i \in \{1,2\}$, the vertices $v_1$ and $v_2$ in $D_i$ are not joined by an arc.  Then $v_1$ is joined to some $v_3$, and $v_2$ is joined to some $v_4$.  The distinct vertices $v_1,v_2,v_3,v_4$ bound the region incident to the boundary interval between $v_1$ and $v_2$.  Thus, we can use the last relation in \cref{fire} to replace the arcs $v_1 \leftrightarrow v_3$ and $v_2 \leftrightarrow v_4$ with the arcs $v_1 \leftrightarrow v_2$ and $v_3 \leftrightarrow v_4$.  So, we may assume that $D_1$ and $D_2$ both contain an arc joining $v_1$ and $v_2$.  Removing this arc from $D_1$ and $D_2$, the induction hypothesis implies that a series of flips converts $D_1$ into $D_2$.
\end{proof}

%--------------------------------------------
\subsection{Scaffolds\label{subsec:scaffold}}
%--------------------------------------------

For a $1$-morphism $f$ in $\Dcat$, let $\ell_\Sobj(f)$ be the number of factors of $\Sobj_\pm$ in $f$, and let $\ell(f)$ be the total length of $f$ (that is, the number of factors of $\Sobj_\pm$ and $\Dupdown$).

Suppose $f$ and $g$ are 1-morphisms in $\Dcat$ with $\ell_\Sobj(f)=r$ and $\ell_\Sobj(g)=s$.  We define an \emph{$(f,g)$-scaffold} to be a Temperley--Lieb diagram with $r$ nodes at the top and $s$ nodes at the bottom.  For example, if
\[
    f = \Dup \Sdown \Sdown \Ddown \Sup \Sdown \Dup
    \quad \text{and} \quad
    g = \Sup \Dup \Ddown \Sup,
\]
then $r=4$, $s=2$, and
\[
    \begin{tikzpicture}[centerzero]
        \draw (-0.6,0.5) to[out=down,in=down,looseness=1.5] (0.6,0.5);
        \draw (-0.2,0.5) to[out=down,in=down,looseness=1.5] (0.2,0.5);
        \draw (-0.2,-0.5) to[out=up,in=up,looseness=2] (0.2,-0.5);
    \end{tikzpicture}
    \ ,\quad
    \begin{tikzpicture}[centerzero]
        \draw (-0.6,0.5) to[out=down,in=up] (-0.2,-0.5);
        \draw (-0.2,0.5) to[out=down,in=down,looseness=1.5] (0.2,0.5);
        \draw (0.6,0.5) to[out=down,in=up] (0.2,-0.5);
    \end{tikzpicture}
    \ ,\qquad
    \begin{tikzpicture}[centerzero]
        \draw (-0.6,0.5) to[out=down,in=down,looseness=2] (-0.2,0.5);
        \draw (0.2,0.5) to[out=down,in=down,looseness=2] (0.6,0.5);
        \draw (-0.2,-0.5) to[out=up,in=up,looseness=2] (0.2,-0.5);
    \end{tikzpicture}
    \ ,\qquad
    \begin{tikzpicture}[centerzero]
        \draw (-0.6,0.5) to[out=down,in=down,looseness=2] (-0.2,0.5);
        \draw (0.2,0.5) to[out=down,in=up] (-0.2,-0.5);
        \draw (0.6,0.5) to[out=down,in=up] (0.2,-0.5);
    \end{tikzpicture}
    \ ,\qquad
    \begin{tikzpicture}[centerzero]
        \draw (0.6,0.5) to[out=down,in=down,looseness=2] (0.2,0.5);
        \draw (-0.2,0.5) to[out=down,in=up] (0.2,-0.5);
        \draw (-0.6,0.5) to[out=down,in=up] (-0.2,-0.5);
    \end{tikzpicture}
\]
is a complete list of $(f,g)$-scaffolds.  We define the \emph{shadow}, $\sh(D)$, of a reduced string diagram $D \in \Dcat(f,g)$ to be the $(f,g)$-scaffold obtained from it by deleting all cyan strands and forgetting the orientation of all black strands.  Clearly, two equivalent reduced diagrams have the same shadow.

We define a grading on the 1-morphisms in $\Dcat$ by declaring
\[
    \deg \Sobj_\pm = \pm 1,\qquad
    \deg \Dupdown = \pm 2.
\]
Since the generating 2-morphisms of $\Dcat$ are all homogeneous, we have
\[
    \deg f \ne \deg g \implies \Dcat(f,g) = 0,
\]
for any 1-morphisms $f$ and $g$.  We say that 1-morphisms $f$ and $g$ are \emph{parallel} if they have the same domain and codomain.

\begin{prop} \label{noire}
    If $f$ and $g$ are parallel 1-morphisms in $\Dcat$, then taking shadows induces a bijection
    \[
        \tsh \colon \{ \text{reduced diagrams from $f$ to $g$} \}/{\sim} \xrightarrow{\cong} \{ \text{$(f,g)$-scaffolds} \}.
    \]
\end{prop}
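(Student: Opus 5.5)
We must show that $\tsh$ is well defined, surjective and injective. Well-definedness was already noted: the flip relation \cref{spark} involves only cyan strands, so equivalent reduced diagrams have the same shadow.

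For surjectivity, fix an $(f,g)$-scaffold $T$ and build a reduced diagram with shadow $T$. Each arc of $T$ joins two $\Sobj_\pm$ endpoints, and its orientation is forced by those endpoints. Since $f$ and $g$ are parallel, the region labels, and hence the degree, are consistent. The $\Dupdown$ endpoints of $f$ and $g$ lie in the complementary regions of $T$ (together with the boundary). In each such region, the cyan endpoints on the boundary must be matched by cyan arcs, possibly passing through black arcs via trivalent vertices.

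The local model is the crossing \cref{crossdefU}–\cref{crossdefD}: a cyan strand crosses a black strand through two trivalent vertices on opposite sides. This is allowed by \ref{R1}, which forbids only two adjacent vertices with the cyan strand on the same side. Consider the disc whose boundary circle carries the cyan endpoints. The black arcs are chords dividing it into regions. The parity and degree constraints should show that incoming and outgoing cyan endpoints balance globally, so they can be paired by a noncrossing cyan matching drawn in the disc. Each time a cyan arc meets a black chord we insert a crossing. After isotopy, the resulting diagram has no closed components and no forbidden adjacent vertices, so it is reduced. I would justify the global balance by the degree count $\deg f=\deg g$ together with the observation that each black arc changes the degree by $0$ overall.

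For injectivity, take two reduced diagrams $D_1,D_2$ with the same shadow. Cut along the black arcs using the first two relations in \cref{fire} in reverse, and use the crossing Reidemeister relations \cref{reidUU}, \cref{reidUD} to slide cyan strands. The plan is to isotope so that every black strand is a vertical or cap/cup arc, with the cyan arcs meeting it only in crossing pairs. By \ref{R1}, the vertices along a black arc alternate sides. This pairs consecutive vertices into "crossings", except possibly at the ends of the arc. An endpoint vertex would leave a cyan strand ending on the black arc from one side; I expect an orientation and parity argument to rule this out, or else to show it is forced identically in $D_1$ and $D_2$. After this, replacing each crossing by a cyan strand passing straight through gives, in both diagrams, a purely cyan oriented matching on the same circle of endpoints, where the circle is the disc boundary with the black arcs contracted. \Cref{sky} (connectivity of the flip graph) then shows the two matchings are related by flips. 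Each flip lifts to a use of the last relation in \cref{fire}, performed away from the black arcs after sliding the cyan strands. Hence $D_1\sim D_2$.

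The main obstacle is the injectivity step: showing that a reduced diagram's cyan data is determined, up to flips, by an honest cyan matching. This requires controlling the vertices at the ends of black arcs and checking that the flips produced by \cref{sky} can be realized without creating forbidden configurations. I would first try to establish a normal form in which every vertex occurs in a crossing pair, with the pair treated as a transparent crossing via \cref{reidUU,reidUD}, and then apply \cref{sky} verbatim.
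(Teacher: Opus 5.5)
Your argument has genuine gaps in both directions. They share one source: you assume that cyan strands in a reduced diagram join cyan boundary points and meet black strings only in crossing pairs.

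\textbf{Surjectivity.} The assumption above is false. A black string may carry an odd number of trivalent vertices, so a cyan strand can end on a black string. Take $f=\Sup\Sup 1_n$, $g=\Dup 1_n$ (a single merge vertex), or $f=\Sdown\Dup 1_n$, $g=\Sup 1_n$. In each case there is exactly one cyan boundary point, so the noncrossing matching your construction needs does not exist. Your justification also fails: the degrees of a black arc's endpoints need not cancel, since in the first example both endpoints are $\Sup$.

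Even when the cyan points do balance, a matching with crossings inserted is generally not reduced. Black orientation reverses at every vertex, so vertices along a black string alternate between cyan-in and cyan-out. Combined with \ref{R1}, this forces all cyan-in vertices onto one side of the string and all cyan-out vertices onto the other. Hence all cyan flow across a given black string goes in one direction, and the number of attachments on each side is fixed by degrees. Two arcs crossing a chord in opposite directions, or one arc crossing it twice, produce exactly the configuration \ref{R1} forbids. You give no rule for choosing a matching that avoids this.

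\textbf{Injectivity.} The relation $\sim$ is generated only by flips between reduced diagrams. Inserting vertex pairs with the first two relations of \cref{fire}, and sliding cyan strands with \cref{reidUU,reidUD}, are equalities in $\Dcat$ that pass through non-reduced diagrams. (The second and fourth relations of \cref{reidUD} even create a cyan bubble.) So at best they show your $D_1=D_2$ as morphisms, not $D_1\sim D_2$. The point you flag as the main obstacle, realizing the flips from \cref{sky} inside the regions cut out by black strings, is exactly what needs proof.

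\textbf{The missing idea} is the observation above: along each black string, the number, side, orientation and order of the attached cyan strands are determined by $f$, $g$ and the scaffold. The paper uses this to cut along one black string. The parts to its left and right are then reduced diagrams between fixed parallel 1-morphisms with fewer black strings, and the proof inducts on the number of black strings. \cref{sky} is needed only in the base case with no black strings, and flips in the two parts are flips in the whole diagram. Surjectivity uses the same induction: glue preimages of the two sub-scaffolds to a black string carrying the degree-determined numbers of alternating cyan strands on each side, which satisfies \ref{R1} by construction.
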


\begin{proof}
    We first show that $\tsh$ is injective.  Suppose $f,g$ are parallel 1-morphisms in $\Dcat$.  We prove that, for reduced diagrams $D$ and $D'$ from $f$ to $g$,
    \[
        \sh(D) = \sh(D') \implies D \sim D',
    \]
    by induction on the number of strings in $T := \sh(D) = \sh(D')$ (equivalently, the number of black strings in $D$ and $D'$).  If $T$ has zero strings, then $D$ and $D'$ consist of only cyan strings, and the result follows from \cref{sky}.
    
    Now suppose that $T$ has $b > 0$ strings, and that the result holds for fewer than $b$ strings.  Choose one string $S$ in $T$, which corresponds to a black string $S_D$ in $D$.  Since black strings never terminate at trivalent vertices, there are three possibilities:
    \begin{itemize}
        \item One endpoint of $S$ is at the top of $T$ and the other endpoint is at the bottom of $T$.
        \item Both endpoints of $S$ are at the bottom of $T$.
        \item Both endpoints of $S$ are at the top of $T$.
    \end{itemize}
    
    Consider the case where one endpoint of $S$ is at the top and one is at the bottom.  Then we have
    \begin{equation} \label{mouse}
        f = f_1 f_3 f_2
        \qquad \text{and} \qquad
        g = g_1 g_3 g_2,
        \qquad f_3,g_3 \in \{\Sobj_\pm\},
    \end{equation}
    where $f_3,g_3$ are the endpoints of $S$.  By \ref{R1}, the cyan strings incident to $S_D$ must alternate sides of $S$.  Thus, if $\deg f_1 \le \deg g_1$ (which implies that $\deg f_2 \ge \deg g_2$), then $D$ and $D'$ look like
    \begin{equation} \label{spikey}
        \begin{tikzpicture}[centerzero]
            \filldraw[black!10!white] (0.4,-1.7) -- (0.4,1.7) -- (3,1.7) node[midway,anchor=south,black] {$g_2$} -- (3,-1.7) -- cycle node[midway,anchor=north,black] {$f_2$};
            \filldraw[black!10!white] (-0.4,-1.7) -- (-0.4,1.7) -- (-3,1.7) node[midway,anchor=south,black] {$g_1$} -- (-3,-1.7) -- cycle node[midway,anchor=north,black] {$f_1$};
            \draw[S,dotted] (0,-1.7) node[anchor=north] {$f_3$} -- (0,-1.4);
            \draw[S,->-=0.55] (0,-1.4) -- (0,-1.1);
            \draw[S,-<-=0.45] (0,-1.1) -- (0,-0.7);
            \draw[S,->-=0.75] (0,-0.7) -- (0,-0.3);
            \draw[S,dotted] (0,-0.3) -- (0,0.3);
            \draw[S,->-=0.45] (0,0.3) -- (0,0.7);
            \draw[S,-<-=0.45] (0,0.7) -- (0,1.1);
            \draw[S,->] (0,1.1) -- (0,1.4);
            \draw[S,dotted] (0,1.4) -- (0,1.7) node[anchor=south] {$g_3$};
            \draw[D,->] (0,-1.1) -- (-0.4,-1.1);
            \draw[D,<-] (0,-0.7) -- (0.4,-0.7);
            \draw[D,->] (0,0.7) -- (-0.4,0.7);
            \draw[D,<-] (0,1.1) -- (0.4,1.1);
            \node at (-1.7,0) {$D_1$};
            \node at (1.7,0) {$D_2$};
        \end{tikzpicture}
        \qquad \text{and} \qquad
        \begin{tikzpicture}[centerzero]
            \filldraw[black!10!white] (0.4,-1.7) -- (0.4,1.7) -- (3,1.7) node[midway,anchor=south,black] {$g_2$} -- (3,-1.7) -- cycle node[midway,anchor=north,black] {$f_2$};
            \filldraw[black!10!white] (-0.4,-1.7) -- (-0.4,1.7) -- (-3,1.7) node[midway,anchor=south,black] {$g_1$} -- (-3,-1.7) -- cycle node[midway,anchor=north,black] {$f_1$};
            \draw[S,dotted] (0,-1.7) node[anchor=north] {$f_3$} -- (0,-1.4);
            \draw[S,->-=0.55] (0,-1.4) -- (0,-1.1);
            \draw[S,-<-=0.45] (0,-1.1) -- (0,-0.7);
            \draw[S,->-=0.75] (0,-0.7) -- (0,-0.3);
            \draw[S,dotted] (0,-0.3) -- (0,0.3);
            \draw[S,->-=0.45] (0,0.3) -- (0,0.7);
            \draw[S,-<-=0.45] (0,0.7) -- (0,1.1);
            \draw[S,->] (0,1.1) -- (0,1.4);
            \draw[S,dotted] (0,1.4) -- (0,1.7) node[anchor=south] {$g_3$};
            \draw[D,->] (0,-1.1) -- (-0.4,-1.1);
            \draw[D,<-] (0,-0.7) -- (0.4,-0.7);
            \draw[D,->] (0,0.7) -- (-0.4,0.7);
            \draw[D,<-] (0,1.1) -- (0.4,1.1);
            \node at (-1.7,0) {$D_1'$};
            \node at (1.7,0) {$D_2'$};
        \end{tikzpicture}
        \ ,
    \end{equation}
    respectively, for some subdiagrams $D_1$ and $D_2$ of $D$, and subdiagrams $D_1'$ and $D_2'$ of $D'$.  For both $D$ and $D'$, the number of cyan strands incident to $S$ on the left side is $(\deg g_1 - \deg f_1)/2$, and the number of cyan strands incident to $S$ on the right side is $(\deg f_2 - \deg g_2)/2$.  We view $D_1$ and $D_1'$ as string diagrams in $\Dcat$ whose domains are the unions of their bottom and right edges, and whose codomains are their top edges.  Similarly, we view $D_2$ and $D_2'$ as string diagrams in $\Dcat$ whose domains are given by their bottom edges, and whose codomains are given by the unions of their left and top edges.  Since $\sh(D_1) = \sh(D_1')$ and $\sh(D_2)= \sh(D_2')$, and these all contain fewer than $b$ black strings, our induction hypothesis implies that $D_1 \sim D_1'$ and $D_2 \sim D_2'$.  This implies that $D \sim D'$, completing the proof of the induction step.  The case $\deg f_1 > \deg g_1$, where $D$ and $D'$ look like the reflections of \cref{spikey} in a vertical line, is analogous.  The arguments in the cases where the endpoints of $S$ are both at the bottom of $T$ or both at the top of $T$ are also similar.  This shows that $\tsh$ is injective.
    \details{
        When both endpoints are at the bottom of $T$, we have something like:
        \[
            \begin{tikzpicture}[centerzero]
                \filldraw[black!10!white] (-2.4,-1.5) -- (-2.4,-0.5) to[out=up,in=left] (-1.5,0.4) -- (1.5,0.4) to[out=right,in=up] (2.4,-0.5) -- (2.4,-1.5) -- (3.5,-1.5) -- (3.5,1.5) -- (-3.5,1.5) -- (-3.5,-1.5) -- cycle;
                \filldraw[black!10!white] (-1.6,-1.5) -- (-1.6,-0.5) to[out=up,in=left] (-1.5,-0.4) -- (1.5,-0.4) to[out=left,in=up] (1.6,-0.5) -- (1.6,-1.5) -- cycle;
                \draw[S,->] (-2,-1.5) -- (-2,-0.5);
                \draw[S] (-2,-0.5) to[out=up,in=left] (-1.5,0);
                \draw[S,-<-=0.45] (-1.5,0) -- (-1,0);
                \draw[S,->-=0.55] (-1,0) -- (-0.5,0);
                \draw[S,dotted] (-0.5,0) -- (0.5,0);
                \draw[S,-<-=-0.45] (0.5,0) -- (1,0);
                \draw[S,->-=0.55] (1,0) -- (1.5,0);
                \draw[S] (1.5,0) to[out=right,in=up] (2,-0.5);
                \draw[S,<-] (2,-0.5) -- (2,-1.5);
                \draw[D,->] (-1.5,0) -- (-1.5,0.4);
                \draw[D,<-] (-1,0) -- (-1,-0.4);
                \draw[D,<-] (1,0) -- (1,-0.4);
                \draw[D,->] (1.5,0) -- (1.5,0.4);
                \node at (0,0.95) {$D_1$};
                \node at (0,-0.95) {$D_2$};
            \end{tikzpicture}
        \]
        The case where both endpoints of $S$ are at the top of $T$ is analogous.
    }
    
    The proof that $\tsh$ is surjective proceeds by a similar induction on the number of strings $b$ in an $(f,g)$-scaffold $T$.  If $b=0$, then any matching of the $\Dupdown$ by oriented cyan strings is a preimage of $T$.  Now suppose $b > 0$, and fix one string $S$ in $T$.  Suppose that one endpoint of $S$ is at the top of $T$ and the other endpoint is at the bottom of $T$.  (As for injectivity above, the other cases are analogous.)  Then $f$ and $g$ can be written as in \cref{mouse}.  Let
    \[
        d_1 = \deg g_1 - \deg f_1
        \qquad \text{and} \qquad
        d_2 = \deg f_2 - \deg g_2.
    \]
    For $i \in \{1,2\}$, the sum of the number of $\Sobj_\pm$ appearing in $f_i$ and $g_i$ must be even (since arcs in $T$ cannot cross $S$).  Thus, $d_1, d_2 \in 2\Z$.  Furthermore,
    \[
        d_2 - d_1
        = (\deg f_1 + \deg f_2) - (\deg g_1 + \deg g_2)
        =
        \begin{cases}
            0 & \text{if } f_3=g_3, \\
            2 & \text{if } f_3 = \Sdown,\ g_3 = \Sup, \\
            -2 & \text{if } f_3 = \Sup,\ g_3 = \Sdown.
        \end{cases}
    \]
    By the induction hypothesis, there exist reduced diagrams $D_1$ and $D_2$ as in \cref{spikey}, with $d_1/2$ cyan strands entering the right side of $D_1$ and $d_2/2$ cyan strings exiting the left side of $D_2$, in the preimage of the corresponding subdiagrams of $T$ (on either side of the string $S$), which then yield a reduced diagram $D$ in the preimage of $T$.
\end{proof}

%-------------------
\subsection{Bridges}
%-------------------

Recall the definition of $\Lambda_n$, $n \in \N$, in \cref{Lambda}.

\begin{defin}[$\Lambda_f$]
    Suppose
    \[
        f = f_a \dotsm f_2 f_1 1_n \colon \obj{n} \to \obj{m},\qquad
        a \in \N,\ f_1,f_2,\dotsc,f_a \in \{\Sobj_\pm, \Dupdown\}.
    \]
    For $0 \le i \le a$, let
    \[
        n_i = n + \sum_{k=1}^i \deg f_k.
    \]
    (In particular, $n_a = m$.)  For $k,l \in \N$, define $\Lambda_f^{k,l}$ to be the set of all
    \[
        b=(b_0,b_1,\dotsc,b_a) \in \N^{a+1}
    \]
    such that
    \[
        b_0 = l,\quad
        b_a = k,\quad
        b_i \le n_i, \qquad 0 \le i \le a,
    \]
    and
    \[
        b_{i+1}
        =
        \begin{cases}
            b_i \pm 1 & \text{if } f_{i+1} = \Sobj_\pm,\\
            b_i & \text{if } f_{i+1} = \Dupdown,
        \end{cases}
    \]
    for all $0 \le i \le a-1$.  (Note that this implies that $b_i \in \Lambda_{n_i}$ for $0 \le i \le a$.)  We also define
    \[
        \Lambda_f = \bigsqcup_{k,l} \Lambda_f^{k,l}.
    \]
\end{defin}

\begin{defin}[$\Lambda_{f,g}$]
    For parallel 1-morphisms $f$ and $g$ in $\Dcat$,we define
    \begin{gather*}
        \Lambda_{f,g}^{k,l} = \{ b=(b_0,b_1,\dotsc,b_{\ell(f)+\ell(g)}) : (b_0,\dotsc,b_{\ell(f)}) \in \Lambda_f^{k,l},\ (b_{\ell(f)+\ell(g)},\dotsc,b_{\ell(f)}) \in \Lambda_g^{k,l} \},\ k,l \in \N,
        \\
        \Lambda_{f,g} := \bigsqcup_{k,l} \Lambda_{f,g}^{k,l}.
    \end{gather*}
    We call elements of $\Lambda_{f,g}$ \emph{bridges}, since they are closely related to \emph{nonnegative bridges} and \emph{Dyck paths}.  For a bridge $b \in \Lambda_{f,g}$, we let
    \[
        \bar{b} = \left( \bar{b}_0, \bar{b}_1, \dotsc, \bar{b}_{\ell_\Sobj(f) + \ell_\Sobj(g)} \right)
    \]
    denote the sequence obtained from $\bar{b}$ by deleting multiple consecutive occurrences of the same entry.  For example,
    \[
        \text{if } b = (3,4,4,3,2,2,3,4,5,5,5,4,3,3)
        \text{ then } \bar{b} = (3,4,3,2,3,4,5,4,3).
    \]
\end{defin}

The following is a slight modification of the usual method of associating Temperley--Lieb diagrams to Dyck paths.

\begin{defin}[Scaffold associated to a bridge] \label{pillar}
    Suppose $f$ and $g$ are parallel 1-morphisms in $\Dcat$ such that $l := \ell_\Sobj(f) + \ell_\Sobj(g)$ is even. (If $l$ is odd, then $\Dcat(f,g) = 0$.)  For $b \in \Lambda_{f,g}$, we define $b(f,g)$ to be the $(f,g)$-scaffold defined as follows:
    \begin{enumerate}
        \item Number the occurrences of $\Sobj_\pm$ in $f$ and $g$ from $1$ to $l$, starting from the right side of $f$ and moving left along $f$, then starting with the left side of $g$ and moving right along $g$.
            
        \item Choose $m$ such that $0 \le m \le l$ and $\bar{b}_m$ is a minimal entry of $\bar{b}$.
        
        \item Define an order $\prec$ on the set $\{ i : 0 \le i \le l\}$ by
            \[
                m \prec m+1 \prec \dotsb \prec l \prec 0 \prec 1 \dotsb \prec m-1.
            \]
            (If $m=0$, then $\prec$ is the order $<$.)
            
        \item Draw an arc connecting vertices $i \prec j$ whenever $\bar{b}_{i-1} = \bar{b}_j$ and
            \begin{equation} \label{pillarjoin}
                \bar{b}_k > \bar{b}_{i-1} \text{ for all } i \preceq k \prec j.
            \end{equation}
    \end{enumerate}
    Note that the choice of $m$ above is not necessarily unique, but that $b(f,g)$ is independent of this choice.  See \cref{sandbox} below.
\end{defin}

%-----------------
\subsection{Depth}
%-----------------

Suppose $f$ and $g$ are parallel 1-morphisms in $\Dcat$ and that $D \in \Dcat(f,g)$.  We define the \emph{depth} of a region of $D$ labelled $n$ and containing $\freeprojector{r}$ to be $n-2r$.  By \cref{absorb,abyss}, in a nonzero diagram, the depth of every region is a unique nonnegative integer.  We indicate depth by the notation
\begin{equation} \label{depthor}
    \freedepthorreg{n-2r}{n} := \freeprojectorreg{r}{n},
    \qquad r,n \in \Z.
\end{equation}
It follows from \cref{depthor,abyss} that
\begin{equation} \label{abyss2}
    \freedepthorreg{k}{n} = 0 \quad \text{if} \quad k<0 \text{ or } k > n.
\end{equation}
By \cref{absorb,smush,portis}
\begin{gather} \label{smush2}
    \begin{tikzpicture}[centerzero]
        \draw[D,->] (0,-0.4) -- (0,0.4);
        \depthor{0.4,0}{k};
    \end{tikzpicture}
    =
    \begin{tikzpicture}[centerzero]
        \draw[D,->] (0,-0.4) -- (0,0.4);
        \depthor{-0.4,0}{k};
    \end{tikzpicture}
    =
    \begin{tikzpicture}[centerzero]
        \draw[D,->] (0,-0.4) -- (0,0.4);
        \depthor{-0.4,0}{k};
        \depthor{0.4,0}{k};
    \end{tikzpicture}
    \ ,\qquad r \in \Z,
    \\ \label{portis2}
    \begin{tikzpicture}[centerzero]
        \draw[S,->] (0,-0.4) -- (0,0.4);
        \depthor{-0.4,0}{k};
    \end{tikzpicture}
    =
    \begin{tikzpicture}[centerzero]
        \draw[S,->] (0,-0.4) -- (0,0.4);
        \depthor{-0.4,0}{k};
    \end{tikzpicture}
    \left( \freedepthor{k-1} + \freedepthor{k+1} \right),
    \qquad
    \begin{tikzpicture}[centerzero]
        \draw[S,->] (0,-0.4) -- (0,0.4);
        \depthor{0.4,0}{k};
    \end{tikzpicture}
    = \left( \freedepthor{k-1} + \freedepthor{k+1} \right)
    \begin{tikzpicture}[centerzero]
        \draw[S,->] (0,-0.4) -- (0,0.4);
        \depthor{0.4,0}{k};
    \end{tikzpicture}
        \, , \qquad k \in \Z,
\end{gather}
In other words, depth remains unchanged across cyan strings and either increases or decreases by one across black strings.

%------------------------
\subsection{Spanning set}
%------------------------

Suppose $D$ is a string diagram in $\Dcat(f,g)$ obtained from a reduced string diagram by adding one $\freedepthor{k}$, $k \in \N$, in each region such that the depths of regions separated by a cyan string are equal and the depths of regions separated by a black string differ by one.  Then, since $D$ has no black cycles by \cref{raspberry}, the depths of the regions are uniquely determined by the regions along the top and bottom of the diagram.  Define $\beta(D) \in \Lambda_{f,g}$ to be the bridge given by recording the depths of the regions at the top and bottom of a diagram, starting from the rightmost bottom region, moving left across the bottom to the leftmost bottom region (which is the same as the leftmost top region), then moving right across the top. 

\begin{defin} \label{Bdef}
    For each pair of parallel 1-morphisms $f,g \colon \obj{n} \to \obj{m}$ in $\Dcat$, and each $b \in \Lambda_{f,g}$, fix a reduced diagram $D'_b(f,g)$ such that $\sh(D'(f,g)) = b(f,g)$.  (By \cref{noire}, this choice exists and is unique up to equivalence.)  Then let $D_b(f,g)$ be the string diagram obtained from $D'_b(f,g)$ by adding one $\freedepthor{k}$, $k \in \N$, in each region, such that
    \begin{itemize}
        \item regions separated by a cyan string contain $\freedepthor{k}$ for the same $k$, and
        \item $\beta(D_b(f,g)) = b$.
    \end{itemize}
    Let
    \[
        \bB(f,g) := \{ D_b(f,g) : b \in \Lambda_{f,g} \}.
    \]
\end{defin}

Our goal is to show that $\bB(f,g)$ is a $\kk$-basis for $\Dcat(f,g)$.

\begin{eg} \label{sandbox}
    Suppose
    \[
        f = \Sup \Sup \Sdown \Sdown \Sup \Dup \Sup \Sup \Ddown 1_3
        \quad \text{and} \quad
        g = \Dup \Sup \Ddown \Sup \Sup \Sup \Sup \Ddown \Dup \Ddown 1_3,
    \]
    Then $\ell_\Sobj(f) = 7$, $\ell(f) = 9$, $\ell_\Sobj(g) = 5$, and $\ell(g) = 10$.  Let
    \[
        b = (1,1,1,1,0,1,0,1,1,2,2,3,2,1,2,1,1,2,1,1) \in \Lambda_{f,g},
    \]
    so that
    \[
        \bar{b} = (1,0,1,0,1,2,3,2,1,2,1,2,1).
    \]
    Then we can choose $m=1$ or $m=3$ in \cref{pillar}, and either choice gives
    \[
        b(f,g)
        =
        \begin{tikzpicture}[centerzero,scale=0.8]
            \draw[S] (0,0.8) node[anchor=south] {\strandlabel{6}} to[out=down,in=down,looseness=2] (0.6,0.8) node[anchor=south] {\strandlabel{7}};
            \draw[S] (1.2,0.8) node[anchor=south] {\strandlabel{8}} -- (0.6,-0.8) node[anchor=north] {\strandlabel{5}};
            \draw[S] (1.8,-0.8) node[anchor=north] {\strandlabel{3}} to[out=up,in=up,looseness=2] (2.4,-0.8) node[anchor=north] {\strandlabel{2}};
            \draw[S] (1.2,-0.8) node[anchor=north] {\strandlabel{4}} to[out=up,in=up,looseness=1.5] (3,-0.8) node[anchor=north] {\strandlabel{1}};
            \draw[S] (1.8,0.8) node[anchor=south] {\strandlabel{9}} to[out=down,in=down,looseness=2] (2.4,0.8) node[anchor=south] {\strandlabel{10}};
            \draw[S] (3,0.8) node[anchor=south] {\strandlabel{11}} to[out=down,in=down,looseness=2] (3.6,0.8) node[anchor=south] {\strandlabel{12}};
        \end{tikzpicture}
        \ ,
    \]
    where we have indicated the labels of the vertices.  One possible choice for $D_b'(f,g)$ is
    \[
        D_b'(f,g) =
        \begin{tikzpicture}[centerzero]
            \draw[S,<->] (0,1.2) to[out=down,in=left] (0.3,0.7) to[out=right,in=down] (0.6,1.2);
            \draw[D,->-=0.55] (0,-1.2) to[out=up,in=down] (0.3,0.7);
            \draw[S,->-=0.55] (1.2,1.2) -- (0.9,0);
            \draw[S,->-=0.55] (0.6,-1.2) -- (0.9,0);
            \draw[D,->] (0.9,0) to[out=-45,in=up] (1.2,-1.2);
            \draw[S,->-=0.55] (2.4,-1.2) to[out=up,in=left] (2.7,-0.7);
            \draw[S,->-=0.55] (3,-1.2) to[out=up,in=right] (2.7,-0.7);
            \draw[S,->-=0.55] (1.8,-1.2) to[out=up,in=left] (2.2,0);
            \draw[S,->-=0.55] (2.7,0) -- (2.2,0);
            \draw[S,->-=0.55] (2.7,0) -- (3.2,0);
            \draw[S,->-=0.55] (3.6,-1.2) to[out=up,in=right] (3.2,0);
            \draw[D,->-=0.55] (2.7,-0.7) -- (2.7,0);
            \draw[S,->] (1.8,1.2) to[out=down,in=left] (2.1,0.7) to[out=right,in=down] (2.4,1.2);
            \draw[S,<->] (3.6,1.2) to[out=down,in=left] (3.9,0.7) to[out=right,in=down] (4.2,1.2);
            \draw[D,->] (2.2,0) to[out=up,in=down] (3,1.2);
            \draw[D,->-=0.55] (3.2,0) to[out=60,in=down] (3.9,0.7);
            \draw[D,<-] (4.2,-1.2) to[out=up,in=down] (4.8,1.2);
            \draw[D,->] (4.8,-1.2) to[out=up,in=left] (5.1,-0.7) to[out=right,in=up] (5.4,-1.2);
            \region{5.2,0}{3};
        \end{tikzpicture}
        \ ,
    \]
    where we have only labelled the rightmost region, since that uniquely determines the labels of the other regions.  With this choice, we have
    \begin{equation} \label{bday}
        D_b(f,g) =
        \begin{tikzpicture}[centerzero]
            \draw[S,<->] (0,1.2) to[out=down,in=left] (0.3,0.7) to[out=right,in=down] (0.6,1.2);
            \draw[D,->-=0.55] (0,-1.2) to[out=up,in=down] (0.3,0.7);
            \draw[S,->-=0.55] (1.2,1.2) -- (0.9,0);
            \draw[S,->-=0.55] (0.6,-1.2) -- (0.9,0);
            \draw[D,->] (0.9,0) to[out=-45,in=up] (1.2,-1.2);
            \draw[S,->-=0.55] (2.4,-1.2) to[out=up,in=left] (2.7,-0.7);
            \draw[S,->-=0.55] (3,-1.2) to[out=up,in=right] (2.7,-0.7);
            \draw[S,->-=0.55] (1.8,-1.2) to[out=up,in=left] (2.2,0);
            \draw[S,->-=0.55] (2.7,0) -- (2.2,0);
            \draw[S,->-=0.55] (2.7,0) -- (3.2,0);
            \draw[S,->-=0.55] (3.6,-1.2) to[out=up,in=right] (3.2,0);
            \draw[D,->-=0.55] (2.7,-0.7) -- (2.7,0);
            \draw[S,->] (1.8,1.2) to[out=down,in=left] (2.1,0.7) to[out=right,in=down] (2.4,1.2);
            \draw[S,<->] (3.6,1.2) to[out=down,in=left] (3.9,0.7) to[out=right,in=down] (4.2,1.2);
            \draw[D,->] (2.2,0) to[out=up,in=down] (3,1.2);
            \draw[D,->-=0.55] (3.2,0) to[out=60,in=down] (3.9,0.7);
            \draw[D,<-] (4.2,-1.2) to[out=up,in=down] (4.8,1.2);
            \draw[D,->] (4.8,-1.2) to[out=up,in=left] (5.1,-0.7) to[out=right,in=up] (5.4,-1.2);
            \region{5.1,0.3}{3};
            \depthor{5.1,-0.3}{1};
            \depthor{2.7,-1}{1};
            \depthor{3.2,-0.4}{0};
            \depthor{-0.1,0.2}{2};
            \depthor{0.3,1}{3};
            \depthor{2.1,1}{2};
            \depthor{3.9,1}{2};
        \end{tikzpicture}
        \ ,
    \end{equation}
    where we have only added $\freedepthor{k}$ to enough regions so that \cref{smush2} determines the ones in the remaining regions.
\end{eg}

\begin{lem} \label{tire}
    In $\Dcat$,
    \[
        \begin{tikzpicture}[centerzero]
            \draw[D] (-0.2,-0.4) to[out=45,in=down] (0.15,0) to[out=up,in=-45] (-0.2,0.4);
            \draw[S] (0.2,-0.4) to[out=135,in=down] (-0.15,0) to[out=up,in=225] (0.2,0.4);
            \region{0.5,0.2}{n};
            \depthor{0.5,-0.2}{k};
            \depthor{-0.7,-0.2}{k+1};
        \end{tikzpicture}
        =
        \begin{tikzpicture}[centerzero]
            \draw[D] (-0.2,-0.4) -- (-0.2,0.4);
            \draw[S] (0.2,-0.4) -- (0.2,0.4);
            \region{0.5,0.2}{n};
            \depthor{0.5,-0.2}{k};
            \depthor{-0.7,-0.2}{k+1};
        \end{tikzpicture}
        \ ,\qquad n \in \Z,\ k \in n - 2\Z,
    \]
    for all orientations of the strands.
\end{lem}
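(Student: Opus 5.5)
The plan is to reduce the lemma to the Reidemeister-type relations \cref{reidUU,reidUD} already established, and then to show that the only correction term appearing there, a clockwise cyan bubble, is absorbed by the depth projector $\freedepthor{k}$. There are four orientation patterns for the pair (cyan, black). When both strands point up, \cref{reidUU} gives the identity directly, with no hypothesis on depths. When the cyan strand points up and the black strand points down, the first relation in \cref{reidUD} likewise gives the identity. In these cases the depth coupons just ride along.

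The substantive case is when the cyan strand points down and the black strand points up. The second relation in \cref{reidUD} then gives the identity diagram times $\rightbub[D]$ placed in the rightmost region, which is labelled $n$. Here I will use the region labels. The region between the strands has label $n+1$, so the leftmost region has label $n-1$. Since it carries $\freedepthor{k+1}$, \cref{abyss2} lets us assume $k+1 \le n-1$, that is, $k \le n-2$. Writing $\freedepthor{k} = \freeprojectorreg{r}{n}$ with $r = (n-k)/2 \ge 1$, we have $\rightbubreg[D]{n} = \rightbubmultreg[D]{1}{n}$ by \cref{bubrec}. Then \cref{sigma,headroom} (equivalently \cref{wasp4}) give
\[
    \rightbubmultreg[D]{1}{n}\ \freeprojectorreg{r}{n} = \freeprojectorreg{r}{n}
    \qquad \text{for } r \ge 1 .
\]
So the bubble disappears and we obtain the identity. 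This is exactly where the hypothesis that the left depth is $k+1$ (rather than $k-1$) is used.

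For the remaining orientation, with both strands pointing down, I would first try applying the rotation $\bR$ of \cref{rotate} to the cases already proved. Rotation by $180\degree$ turns a (cyan, black) double crossing into a (black, cyan) one, so it pairs orientation classes but swaps left and right. Alternatively, I would derive the down--down relation directly from the crossing definitions \cref{crossdefD,crossdefL,crossdefR} and \cref{fire}, by analogy with the computations proving \cref{reidUD}. I expect the outcome to be either the identity or the identity with a clockwise cyan bubble in a region where the depth constraint again forces absorption.

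The main obstacle is this bookkeeping: determining, for the down--down case (and for the rotated versions, if $\bR$ is used), which region receives the bubble and what its label is. The argument must then check that the depth hypothesis always yields $r \ge 1$ for the projector in that region, so that \cref{headroom,sigma} apply. If the bubble lands in the left region rather than the right, I would use \cref{crisp1} to slide it across the cyan strand, or the first relation of \cref{smarty} together with \cref{smush2}. This moves it into the region where the depth computation above applies.
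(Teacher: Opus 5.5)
Your treatment of the three cases you carry out matches the paper's proof. The up--up and (cyan up, black down) cases are read off from \cref{reidUU,reidUD}. In the (cyan down, black up) case you correctly apply \cref{abyss2} to the left region, which has label $n-1$, to force $k \le n-2$. Hence $r=(n-k)/2 \ge 1$, and the clockwise cyan bubble produced by \cref{reidUD} is absorbed via \cref{sigma,headroom}. This is exactly the paper's argument.

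The gap is the down--down case, which you leave open. Your first route, applying $\bR$ to cases of the lemma already proved, cannot work. As you observe yourself, a $180^\circ$ rotation exchanges which strand sits at the left endpoints. So rotating any double crossing with cyan on the left endpoints only produces double crossings with black on the left endpoints, never the down--down case of the lemma. What must be rotated is the one relation with black on the left that you never invoke: the second relation in \cref{reidUU}, which is the third relation in \cref{fire}. Its $180^\circ$ rotation is precisely the down--down case of the lemma. Equivalently, composing the two mixed downward crossings defined in \cref{crossdefD} gives literally the rotated left-hand side of the third relation in \cref{fire}. Either way the result is the identity, with no bubble and no depth hypothesis. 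Your contingency plans for a bubble in the left region (sliding it with \cref{crisp1}, or using \cref{smarty,smush2}) are therefore unnecessary. With this observation added, your proof is complete and coincides with the paper's, which treats every orientation except (cyan down, black up) as following immediately from \cref{reidUU,reidUD}.
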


\begin{proof}
    This follows immediately from \cref{reidUU,reidUD} for all orientations of the strings except a downward blue string and an upward black string.  In that case, by \cref{abyss2}, the equation holds trivially unless $0 \le k < n$.  For $0 \le k < n$, setting $r=(n-k)/2  > 0$, we have
    \[
        \begin{tikzpicture}[centerzero]
            \draw[D,<-] (-0.2,-0.4) to[out=45,in=down] (0.15,0) to[out=up,in=-45] (-0.2,0.4);
            \draw[S,->] (0.2,-0.4) to[out=135,in=down] (-0.15,0) to[out=up,in=225] (0.2,0.4);
            \region{0.5,0.2}{n};
            \depthor{0.5,-0.2}{k};
            \depthor{-0.7,-0.2}{k + 1};
        \end{tikzpicture}
        \overset{\cref{reidUD}}{\underset{\cref{depthor}}{=}}
        \begin{tikzpicture}[centerzero]
            \draw[D,<-] (-0.2,-0.4) -- (-0.2,0.4);
            \draw[S,->] (0.2,-0.4) -- (0.2,0.4);
            \region{0.5,0.2}{n};
            \projector{0.6,-0.2}{r};
            \depthor{-0.8,-0.2}{k + 1};
            \bubright[D]{1.1,0};
        \end{tikzpicture}
        \overset{\cref{sigma}}{\underset{\cref{headroom}}{=}} 
        \begin{tikzpicture}[centerzero]
            \draw[D,<-] (-0.2,-0.4) -- (-0.2,0.4);
            \draw[S,->] (0.2,-0.4) -- (0.2,0.4);
            \region{0.5,0.2}{n};
            \projector{0.6,-0.2}{r};
            \depthor{-0.8,-0.2}{k + 1};
        \end{tikzpicture}
        \overset{\cref{depthor}}{=}
        \begin{tikzpicture}[centerzero]
            \draw[D,<-] (-0.2,-0.4) -- (-0.2,0.4);
            \draw[S,->] (0.2,-0.4) -- (0.2,0.4);
            \region{0.5,0.2}{n};
            \depthor{0.5,-0.2}{k};
            \depthor{-0.8,-0.2}{k + 1};
        \end{tikzpicture}
        \ . \qedhere
    \]
    \details{
        When $k=n$,
        \[
            \begin{tikzpicture}[centerzero]
                \draw[D,<-] (-0.2,-0.4) -- (-0.2,0.4);
                \draw[S,->] (0.2,-0.4) -- (0.2,0.4);
                \region{0.5,0.2}{n};
                \depthor{0.5,-0.2}{n};
                \depthor{-0.7,-0.2}{n+1};
            \end{tikzpicture}
            =
            \begin{tikzpicture}[centerzero]
                \draw[D,<-] (-0.4,-0.4) -- (-0.4,0.4);
                \draw[S,->] (0.4,-0.4) -- (0.4,0.4);
                \region{0.7,0.2}{n};
                \region{0,0.2}{n+1};
                \region{-0.9,0.2}{n-1};
                \depthor{0.7,-0.2}{n};
                \depthor{-0.9,-0.2}{n+1};
            \end{tikzpicture}
            \overset{\cref{abyss2}}{=} 0
            \overset{\cref{abyss2}}{=}
            \begin{tikzpicture}[centerzero]
                \draw[D,<-] (-0.4,-0.4) -- (-0.4,0.4);
                \draw[S,->] (0.4,-0.4) -- (0.4,0.4);
                \region{0.7,0.2}{n};
                \region{0,0.2}{n+1};
                \region{-0.9,0.2}{n-1};
                \depthor{0.7,-0.2}{n};
                \depthor{-0.9,-0.2}{n+1};
                \bubright[D]{1.1,0};
            \end{tikzpicture}
            \overset{\cref{reidUD}}{=}
            \begin{tikzpicture}[centerzero]
                \draw[D,<-] (-0.2,-0.4) to[out=45,in=down] (0.15,0) to[out=up,in=-45] (-0.2,0.4);
                \draw[S,->] (0.2,-0.4) to[out=135,in=down] (-0.15,0) to[out=up,in=225] (0.2,0.4);
                \region{0.5,0.2}{n};
                \depthor{0.5,-0.2}{n};
                \depthor{-0.7,-0.2}{n+1};
            \end{tikzpicture}
            \ .
        \]
    }
\end{proof}

\begin{lem}
    The following relations hold in $\Dcat$ for all $n \in \Z$, $k \in \N$:
    \begin{gather} \label{sushi1}
        \begin{tikzpicture}[centerzero]
            \draw[S,<-] (-0.25,-0.7) -- (-0.25,0.7);
            \draw[S,->] (0.25,-0.7) -- (0.25,0.7);
            \depthor{0,0}{k};
            \depthor{-0.7,0}{k \pm 1};
            \depthor{0.7,0}{k \pm 1};
            \region{0.8,0.5}{n};
        \end{tikzpicture}
        = \frac{\Delta_{k \pm 1}}{\Delta_k}\
        \begin{tikzpicture}[centerzero]
            \draw[S,->] (-0.3,0.7) -- (-0.3,0.5) to[out=down,in=down,looseness=2] (0.3,0.5) -- (0.3,0.7);
            \depthor{0,0.45}{k};
            \draw[S,<-] (-0.3,-0.7) -- (-0.3,-0.5) to[out=up,in=up,looseness=2] (0.3,-0.5) -- (0.3,-0.7);
            \depthor{0,-0.45}{k};
            \depthor{-0.6,0}{k \pm 1};
            \region{0.4,0}{n};
        \end{tikzpicture}
        \ ,\qquad
        \begin{tikzpicture}[centerzero]
            \draw[S,->] (-0.25,-0.7) -- (-0.25,0.7);
            \draw[S,<-] (0.25,-0.7) -- (0.25,0.7);
            \depthor{0,0}{k};
            \depthor{-0.7,0}{k \pm 1};
            \depthor{0.7,0}{k \pm 1};
            \region{0.8,0.5}{n};
        \end{tikzpicture}
        = \frac{\Delta_{k \pm 1}}{\Delta_k}\
        \begin{tikzpicture}[centerzero]
            \draw[S,<-] (-0.3,0.7) -- (-0.3,0.5) to[out=down,in=down,looseness=2] (0.3,0.5) -- (0.3,0.7);
            \depthor{0,0.45}{k};
            \draw[S,->] (-0.3,-0.7) -- (-0.3,-0.5) to[out=up,in=up,looseness=2] (0.3,-0.5) -- (0.3,-0.7);
            \depthor{0,-0.45}{k};
            \depthor{-0.6,0}{k \pm 1};
            \region{0.4,0}{n};
        \end{tikzpicture}
        \ ,
        \\ \label{sushi2}
        \begin{tikzpicture}[centerzero]
            \draw[S,->] (-0.25,-0.7) -- (-0.25,0.7);
            \draw[S,->] (0.25,-0.7) -- (0.25,0.7);
            \depthor{0,0}{k};
            \depthor{-0.7,0}{k \pm 1};
            \depthor{0.7,0}{k \pm 1};
            \region{0.8,0.5}{n};
        \end{tikzpicture}
        = \frac{\Delta_{k \pm 1}}{\Delta_k}\
        \begin{tikzpicture}[centerzero]
            \draw[S,<->] (-0.3,0.7) -- (-0.3,0.5) to[out=down,in=left] (0,0.2) to[out=right,in=down] (0.3,0.5) -- (0.3,0.7);
            \depthor{0,0.45}{k};
            \draw[S] (-0.3,-0.7) -- (-0.3,-0.5) to[out=up,in=left] (0,-0.2) to[out=right,in=up] (0.3,-0.5) -- (0.3,-0.7);
            \depthor{0,-0.45}{k};
            \draw[D] (0,-0.2) -- (0,0.2);
            \depthor{-0.6,0}{k \pm 1};
            \region{0.4,0}{n};
        \end{tikzpicture}
        \ .
    \end{gather}
\end{lem}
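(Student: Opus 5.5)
The plan is to obtain \cref{sushi1,sushi2} from \cref{camel2}, \cref{camel1} and \cref{camel3} respectively. I will rewrite those relations in depth notation \cref{depthor}, multiply by a depth projector in the outer region, and simplify using \cref{absorb}.

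\textbf{First relation in \cref{sushi1}.}
\begin{enumerate}
    \item Read off the region labels. The rightmost region is labelled $n$, the middle region (between the downward and upward black strands) is labelled $n+1$, and the left region is labelled $n$.
    \item Translate \cref{camel2}. A middle projector $\freeprojector{r}$ there is depth $k := n+1-2r$. The outer projectors $\freeprojector{r}$ and $\freeprojector{r-1}$ in regions labelled $n$ are depths $k-1$ and $k+1$. Since $\Delta_{n-2r+1} = \Delta_k$, $\Delta_{n-2r} = \Delta_{k-1}$ and $\Delta_{n-2r+2} = \Delta_{k+1}$, \cref{camel2} reads: the cap--cup diagram with depth $k$ inside the cup and cap equals
    \[
        \frac{\Delta_k}{\Delta_{k-1}} \cdot (\text{identity, middle depth } k,\ \text{outer depths } k-1)
        + \frac{\Delta_k}{\Delta_{k+1}} \cdot (\text{identity, middle depth } k,\ \text{outer depths } k+1).
    \]
    \item Multiply both sides by $\freedepthor{k\pm1}$ placed in the outer region. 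On the left-hand side this region is a single connected region, which I place on the left as in the statement. On the right-hand side it lands in both the left and right outer regions, because by \cref{smush2,portis2} and \cref{absorb} a projector may be slid into either of them.
    \item By the orthogonality \cref{absorb}, exactly one of the two terms survives. This gives (cap--cup with outer depth $k\pm1$) $= \frac{\Delta_k}{\Delta_{k\pm1}}$ (identity with depths $k\pm1, k, k\pm1$).
    \item Invert the scalar $\Delta_k/\Delta_{k\pm1}$, which is a unit by \cref{generic}. This is exactly the first relation in \cref{sushi1}.
\end{enumerate}

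\textbf{Remaining relations.} The second relation in \cref{sushi1} follows identically from \cref{camel1} with $r$ replaced by $r' = r+1$. The middle region is now labelled $n-1$ with projector $\freeprojector{r'-1}$, which is depth $n+1-2r' = k$, and the outer projectors $\freeprojector{r'}$ and $\freeprojector{r'-1}$ are depths $k-1$ and $k+1$. Relation \cref{sushi2} follows from \cref{camel3} in the same way. There the outer projectors of depth $k\pm1$ sit on the left, while the right region's depth is forced by \cref{portis2}.

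\textbf{Main obstacle: boundary cases.} I expect the only real difficulty to be the degenerate parameter values, where the camel relations carry restrictions or the convention that a term is zero when $2r = n+1$.
\begin{itemize}
    \item If $k\pm1 < 0$ or $k\pm1 > n$, or $k$ is out of range for its region, both sides of the target relation vanish by \cref{abyss2}. The same holds if $n+1-k$ has the wrong parity.
    \item If $2r = n+1$, i.e.\ $k = 0$, only the $k+1$ term exists in the camel relation. This is consistent with the $k-1$ case of \cref{sushi1}, which is then $0 = 0$ by \cref{abyss2}.
    \item If $2r > n+1$, the relevant depths are out of range, and \cref{abyss2} again gives zero on both sides.
\end{itemize}
I will check each of these cases explicitly so that no division by an undefined $\Delta_{-1}$ ratio is ever used.
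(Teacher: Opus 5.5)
Your proposal is correct and is essentially the paper's own argument. The paper likewise composes \cref{camel1,camel2,camel3} with the outer depth idempotents (with $k = n-2r+1$), so that \cref{absorb} kills one of the two terms, and then rescales by the unit $\Delta_{k\pm1}/\Delta_k$. Your matching of \cref{camel2} with the first ($\Sdown\Sup$) relation in \cref{sushi1} and of \cref{camel1} with the second is the right one. Your explicit treatment of the $k=0$ case, where the $\Delta_{-1}$ coefficient makes both sides vanish, fills in a detail the paper leaves implicit.
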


\begin{proof}
    Starting from the first relation in \cref{camel1}, tensoring on the left and right with $\freeprojectorreg{r-1}{n}$ and taking $k = n-2r+1$ gives the first equation in \cref{sushi1} where the $\pm$ there is chosen to be $+$.  The remaining relations are proved similarly, using \cref{camel1,camel2,camel3}.
\end{proof}

\begin{prop} \label{span}
    For all parallel 1-morphisms $f$ and $g$ in $\Dcat$, the set $\bB(f,g)$ spans $\Dcat(f,g)$.
\end{prop}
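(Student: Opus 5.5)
We need to show that every string diagram in $\Dcat(f,g)$ is a linear combination of elements of $\bB(f,g)$. The plan is to proceed in three stages: reduce to reduced diagrams decorated with depth projectors, use \cref{noire} to fix the underlying reduced diagram up to equivalence, and then use \cref{sushi1,sushi2,tire} to change the shadow to the scaffold $b(f,g)$ dictated by the bridge.

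\emph{Stage 1: reduction to decorated reduced diagrams.} By \cref{noloops}, we may assume the diagram $D$ has no black cycles. We induct on the number of trivalent vertices. If $D$ contains a subdiagram violating \ref{R1}, we apply the first two relations of \cref{fire} (or the relations of \cref{water}, after isotopy) to remove two vertices. Any closed subdiagram is, by \cref{galaxy}, a linear combination of the $\freeprojector{r}$. Using \cref{smush,portis,cyanfloat}, these free projectors can be moved to any region. Next we insert $1 = \sum_r \freeprojector{r}$ from \cref{spread} into every region. By \cref{absorb,smush2,portis2}, the only nonzero terms are those with a consistent depth assignment: depth is constant across cyan strings and changes by $\pm 1$ across black strings. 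Since $D$ has no black cycles (\cref{raspberry}), such an assignment is determined by its boundary values, which form a bridge $\beta \in \Lambda_{f,g}$. So $\Dcat(f,g)$ is spanned by decorated reduced diagrams $E$ with $\beta(E) = b$ for some $b \in \Lambda_{f,g}$.

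\emph{Stage 2: fixing the cyan part.} If two decorated reduced diagrams have the same shadow and the same bridge, they are equal. Indeed, by \cref{noire} their underlying reduced diagrams are equivalent, related by applications of \cref{spark}. That relation holds with any depth decoration, because the depths are forced and cyan strings do not change depth. It therefore suffices to show that a decorated reduced diagram $E$ with bridge $b$ and arbitrary shadow $T$ is a scalar multiple of $D_b(f,g)$, or is zero.

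\emph{Stage 3: changing the shadow.} This is the main step. A black arc of $T$ joining nodes $i$ and $j$ separates regions whose depths, read along the boundary, are $\bar b_{i-1}$ and $\bar b_i$. Compatibility of depths implies that the depth just inside an arc equals the depth on the other side at its partner endpoint. The arcs of $b(f,g)$ are exactly the ones satisfying \cref{pillarjoin}, i.e.\ those whose enclosed region is strictly deeper than its boundary.

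If $T \ne b(f,g)$, we find two adjacent black strands (adjacent after isotoping cyan strands out of the way via \cref{tire}) where the region between them has depth $k$ and the outer regions have depth $k\pm1$, chosen in the direction where $b(f,g)$ reconnects them. We then apply \cref{sushi1} or \cref{sushi2}, read in the appropriate direction, to reconnect the two strands at the cost of a nonzero scalar $\Delta_{k\pm1}/\Delta_k$ or its inverse, which is invertible by \cref{generic}. In the \cref{sushi2} case, where both strands are upward, this also introduces a cyan strand. Each move changes $T$ by a standard Temperley--Lieb reconnection that strictly decreases a suitable statistic, such as the number of arcs not satisfying \cref{pillarjoin}, or the total nesting discrepancy with $b(f,g)$.

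The main obstacle is showing that such a pair of adjacent strands always exists when $T \ne b(f,g)$, and that the resulting diagram can be re-reduced (Stage 1) without changing the bridge. The first point I would attempt by the Dyck-path argument: take an innermost arc of $T$ that violates \cref{pillarjoin} and compare it with its neighbouring arc. The second point holds because the reduction moves preserve boundary depths. If instead the minimal depth condition forces a region of negative depth or depth exceeding its label, the diagram is zero by \cref{abyss2}, which is also acceptable.
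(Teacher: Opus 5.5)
Your three stages follow the paper's proof. You use \cref{noloops}, the first two relations of \cref{fire} and \cref{galaxy} to reach reduced diagrams with one depth idempotent per region, \cref{noire} to identify diagrams with the same shadow and bridge, and \cref{tire} with \cref{sushi1,sushi2} to change the shadow. The gap is in Stage 3, exactly at what you call the main obstacle. You never show that when $T=\sh(D)\neq b(f,g)$ there are two black strands bounding a common region of depth $k$ whose outer regions \emph{both} have depth $k+1$ or both have depth $k-1$, with the reconnection moving toward $b(f,g)$. Your termination statistic is also only asserted.

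The heuristic you propose does not work. Take $f=g=\Sup\Sup\Sup\Sup 1_0$ and $D=E_{(0,1,2,1,0)}(f)$, the identity decorated with depths $0,1,2,1,0$ from right to left. Numbering vertices as in \cref{pillar}, we get $\bar b=(0,1,2,1,0,1,2,1,0)$, $T=\{(1,8),(2,7),(3,6),(4,5)\}$ and $b(f,g)=\{(1,4),(2,3),(5,8),(6,7)\}$. All four arcs of $T$ violate \cref{pillarjoin}. The innermost, $(4,5)$, and its neighbour $(3,6)$ bound a region of depth $1$ whose outer regions have depths $0$ and $2$, so neither \cref{sushi1} nor \cref{sushi2} applies. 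The only applicable pair is $(2,7),(3,6)$, with middle depth $2$ and outer depths $1,1$. This also shows that your gloss of \cref{pillarjoin} as ``the enclosed region is strictly deeper'' is wrong if read locally: $(2,7)$ has a deeper adjacent inner region but is not an arc of $b(f,g)$.

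The missing idea is to choose the pair through $b(f,g)$ rather than through $T$.

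\begin{itemize}
\item Take an arc $(i,j)$, $i\prec j$, of $b(f,g)$ that is not in $T$ and has minimal span. Then every arc of $b(f,g)$ nested inside it lies in $T$.
\item In $b(f,g)$ the vertices strictly between $i$ and $j$ are matched among themselves by such nested arcs. All of these are in $T$, so these vertices are matched among themselves in $T$ too.
\item Hence $i$ and $j$ are joined in $T$ to vertices $i',j'$ outside $[i,j]$. The boundary segments just inside $i$ and $j$ lie in one region $R$ of $T$ of depth $c+1$, where $c=\bar b_{i-1}=\bar b_j$. The regions on the far sides of the strands $(i,i')$ and $(j,j')$ both have depth $c$.
\item Slide one strand across the cyan strings in $R$ via \cref{tire}. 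Then apply the case $k=c+1$, minus sign, of \cref{sushi1} or \cref{sushi2}, rotated by a half-turn if both strands point down. This writes $D$ as $\Delta_c/\Delta_{c+1}$ times a diagram in which $(i,i'),(j,j')$ are replaced by $(i,j),(i',j')$.
\item Re-reducing with the first two relations of \cref{fire} and \cref{bluewind} changes neither shadow nor bridge.
\item Since $(i,i')$ and $(j,j')$ are not arcs of $b(f,g)$ while $(i,j)$ is, the number of arcs of $T$ not in $b(f,g)$ drops by at least one. This gives termination.
\end{itemize}

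This innermost choice is also what the paper's argument needs. Choosing $i$ merely $\prec$-minimal, as written there, gives $i=1$, $j=4$ in the example with $m=0$. The strands through $1$ and $4$ are then separated by black strands, not only by cyan ones, so \cref{tire} cannot make them adjacent.

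A minor point: in Stage 1 the $\sigma_r$ cannot be moved freely across black strands, since \cref{portis} changes the index. You do not need this; inserting \cref{spread} in every region and using \cref{absorb} suffices.
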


\begin{proof}
    Suppose $f$ and $g$ are parallel 1-morphisms in $\Dcat$.  By \cref{noloops}, it is enough to show that every string diagram in $\Dcat(f,g)$ with no black cycles is a linear combination of elements of $\bB(f,g)$.  Using the first two relations in \cref{fire}, it suffices to consider string diagrams satisfying \ref{R1}.  Any such diagram is a reduced diagram together with a closed diagram (possibly the empty diagram) in each region.  Thus, by \cref{galaxy,depthor}, it can be written as a linear combination of reduced diagrams with one $\freedepthor{k}$, $k \in \N$, added to each region.  Therefore, it remains to show that the latter diagrams are linear combinations of elements of $\bB(f,g)$.
    
    Suppose $D$ is a nonzero reduced diagram with one $\freedepthor{k}$, $k \in \N$, added to each region.  Let $b = \beta(D)$.  If $\sh(D) = b(f,g)$, then, by \cref{noire}, $D$ is equivalent to $D_b(f,g)$, and we are done, since equivalent diagrams are equal in $\Dcat$.  Now suppose that $\sh(D) \ne b(f,g)$.  Then, ordering the vertices of $\sh(D)$ as in \cref{pillar}, choose the minimal $i$ (with respect to the order $\prec$) such that there exists a $j \succ i$ such that $j$ is not connected to $i$ in $\sh(D)$, $b_{i-1} = b_j$, and \cref{pillarjoin} holds.  Let $i'$ and $j'$ be the vertices connected to $i$ and $j$, respectively, in $\sh(D)$.  Then, since we chose $i$ minimal, $\sh(D)$ is of the form
    \[
        \begin{tikzpicture}[centerzero]
            \draw[S] (-1,-1) node[anchor=north] {\strandlabel{j}} -- (-1,1) node[anchor=south] {\strandlabel{j'}};
            \draw[S] (1,-1) node[anchor=north] {\strandlabel{i}} -- (1,1) node[anchor=south] {\strandlabel{i'}};
            \filldraw[black!10!white] (-0.8,-1) -- (-0.8,-0.3) -- (0.8,-0.3) -- (0.8,-1) -- cycle;
            \filldraw[black!10!white] (-0.8,1) -- (-0.8,0.3) -- (0.8,0.3) -- (0.8,1) -- cycle;
            \node at (0,0.6) {$D_1$};
            \node at (0,-0.6) {$D_2$};
            \depthor{0,0}{b_j+1};
            \depthor{1.3,0}{b_j};
            \depthor{-1.3,0}{b_j};
            \region{1.3,0.5}{n};
        \end{tikzpicture}
    \]
    where $D_1,D_2$ are Temperley--Lieb diagrams, and we have indicated the $\freedepthor{k}$ appearing in $D$ in the regions adjacent to the $i$-$i'$ and $j$-$j'$ strings.  (We have drawn the case where $i,j$ are at the bottom of $D$ and $i',j'$ are at the top; the other cases are analogous.)  In $D$, the region between these two strings may contain cyan strings.  By \cref{tire}, we may move the $i$-$i'$ and $j$-$j'$ strings past these cyan strings until the $i$-$i'$ and $j$-$j'$ strings are adjacent.  Then, depending on their orientation, we use \cref{sushi1} or \cref{sushi2} to replace the $i$-$i'$ and $j$-$j'$ strings by $i$-$j$ and $i'$-$j'$ strings, up to a multiplicative scalar.  Then repeated use of the first two relations in \cref{fire}, together with \cref{bluewind}, allows us to write the resulting diagram as a scalar multiple of a reduced diagram with one $\freedepthor{k}$, $k \in \N$, in each region, without changing its scaffold.  If necessary, we repeat the above procedure until we obtain a diagram with shadow $b(f,g)$.
\end{proof}

%-----------------------------------
\subsection{Bimodule decompositions}
%-----------------------------------

Because of our assumption \cref{generic}, we have a decomposition 
\[
    \TLalg_n \cong \bigoplus_{k \in \Lambda_n} L^n_k \otimes_\kk (L^n_k)^\vee,
\]
as $(\TLalg_n,\TLalg_n)$-bimodules, where $(L_n^k)^\vee$ is the right $\TLalg_n$-module dual to $L_n^k$.  

\begin{lem} \label{silent}
    If $f \colon \obj{n} \to \obj{m}$ is a 1-morphism in $\Dcat$, then we have a bimodule decomposition
    \begin{equation} \label{coolj}
        \bF(f) = \bigoplus_{b \in \Lambda_f} V_b,\qquad
        V_b \cong L^m_{b_{\ell(f)}} \otimes_\kk \left( L^n_{b_0} \right)^\vee.
    \end{equation}
\end{lem}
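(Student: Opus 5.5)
We argue by induction on the length $\ell(f)$, building the decomposition from the one-step branching rules of \cref{monkey}. Since $\TLalg_n$ is split semisimple, a $(\TLalg_m,\TLalg_n)$-bimodule $M$ that is finitely generated projective on each side decomposes as
\[
    M \cong \bigoplus_{k,l} L^m_k \otimes_\kk \Hom_{\TLalg_m}(L^m_k, M \otimes_n L^n_l) \otimes_\kk (L^n_l)^\vee .
\]
This follows from $\TLalg_n \cong \bigoplus_l L^n_l \otimes_\kk (L^n_l)^\vee$ and $M \cong M \otimes_n \TLalg_n$. It therefore suffices to compute $\bF(f) \otimes_n L^n_l$ as a left $\TLalg_m$-module and show it is a direct sum of the modules $L^m_{b_{\ell(f)}}$, one summand for each $b \in \Lambda_f$ with $b_0 = l$. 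Note that the multiplicity spaces are free of the correct rank because each standard module is absolutely irreducible: $\End(L^m_k) = \kk$ and $\Hom(L^m_k, L^m_{k'}) = 0$ for $k \neq k'$. The latter holds via the matrix units \cref{mumult}.

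For the base case $\ell(f) = 0$, we have $f = 1_n$ and $\bF(1_n) = \TLalg_n$, which is exactly the decomposition above, indexed by $b = (l)$ with $l \in \Lambda_n$. For the inductive step, write $f = f_a f'$ with $f_a \in \{\Sobj_\pm, \Dupdown\}$. Since $\bF$ is a pseudofunctor, $\bF(f) \cong \bF(f_a) \otimes_{n_{a-1}} \bF(f')$. Hence
\[
    \bF(f) \otimes_n L^n_l \cong \bF(f_a) \otimes \bigoplus_{b'} L^{n_{a-1}}_{b'_{a-1}},
\]
and applying \cref{monkey+,monkey-} to each summand gives $L_{b'_{a-1} \pm 1}$ when $f_a = \Sobj_\pm$, or $L_{b'_{a-1}}$ when $f_a = \Dupdown$. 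We discard the terms $L^{n_a}_j$ with $j < 0$ or $j > n_a$, which are zero. The summands that survive correspond exactly to the extensions $b = (b', b_a)$ satisfying the step rule and the constraint $0 \le b_a \le n_a$; that is, to the elements of $\Lambda_f$ extending $b'$. The parity condition $b_i \in \Lambda_{n_i}$ holds automatically.

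To upgrade this to a bimodule decomposition, we work one level higher. Equivalently, decompose $\bF(f_a) \cong \bigoplus V$ as bimodules first (the $\ell = 1$ case), and then take the tensor product $(L^{m}_{k} \otimes (L^{n'}_{j})^\vee) \otimes_{n'} (L^{n'}_{j'} \otimes (L^n_l)^\vee)$. This product is $\delta_{j,j'}\, L^m_k \otimes (L^n_l)^\vee$, because $(L^{n'}_j)^\vee \otimes_{n'} L^{n'}_{j'} \cong \delta_{j,j'}\kk$ by split semisimplicity. With this in hand, the inductive step becomes pure bookkeeping on concatenated paths.

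The main obstacle is the one-step bimodule statement for each generator. For example, we must show that $\TLbim{n+1}{\TLalg_{n+1}}{n} \cong \bigoplus_{k, \pm} L^{n+1}_{k\pm1} \otimes (L^n_k)^\vee$, with the summands $L_{k\pm1}$ discarded when they fall out of range. We must also verify that the range constraint $b_i \le n_i$ is exactly what appears: in particular, $\Dup \otimes L^n_k \cong L^{n+2}_k$ never vanishes, while $\Ddown \otimes L^n_k$ vanishes precisely when $k > n-2$. Both follow from \cref{monkey} combined with the Hom-space formula above, and there is no multiplicity issue because the branching is multiplicity free.
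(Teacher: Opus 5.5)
Your argument is correct and is essentially the paper's proof, which says only that the decomposition follows from the branching rules of \cref{monkey}; you make explicit the induction over the factors of $f$ and the split-semisimplicity (Morita) step that turns the left-module computation of $\bF(f)\otimes_n L^n_l$ into a bimodule decomposition indexed by $\Lambda_f$. Your second ``upgrade'' route, which tensors one-step bimodule decompositions, is redundant once you have the Hom-space formula, and that formula does not need the finitely-generated-projective hypothesis you impose; neither point affects correctness.
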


\begin{proof}
    This follows from \cref{monkey}.
\end{proof}

Suppose
\[
    f = f_a \dotsb f_2 f_1 \colon \obj{n} \to \obj{m},\qquad
    a \in \N,\ f_1,f_2,\dotsc,f_a \in \{\Sobj_\pm, \Dupdown\}.
\]
For $b \in \Lambda_f$, define
\begin{equation} \label{Edef}
    E_b(f) := \freedepthor{b_a} \otimes 1_{f_a} \otimes \freedepthor{b_{a-1}} \otimes 1_{f_{a-1}} \dotsm 1_{f_2} \otimes \freedepthor{b_1} \otimes 1_{f_1} \otimes \freedepthorreg{b_0}{n}
    \in \Dcat(f,f).
\end{equation}

\begin{lem} \label{sunny}
    Suppose $f \colon \obj{n} \to \obj{m}$ is a 1-morphism in $\Dcat$ and $b \in \Lambda_f$.  Then
    \[
        \bF(E_b(f)) \colon \bF(f) \to \bF(f)
    \]
    is projection onto the summand $V_b$ in \cref{coolj}.
\end{lem}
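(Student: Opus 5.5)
The plan is to argue by induction on the length $a=\ell(f)$, using the fact that $\bF$ is a 2-functor, so that $\bF(E_b(f))$ is the horizontal composite (tensor product of bimodule maps) of the images of the pieces in \cref{Edef}. First, \cref{goosey} together with \cref{monkeybub} identifies $\bF(\freedepthorreg{k}{n})$: under the isomorphism $\TLalg_n \cong \bigoplus_{j \in \Lambda_n} L^n_j \otimes_\kk (L^n_j)^\vee$, it is right (equivalently left, since it is central) multiplication by the central idempotent $\sum_{p \in P^n_k} e_{p,p}$. This idempotent projects onto the isotypic summand $L^n_k \otimes (L^n_k)^\vee$. This settles the case $a=0$, where $\Lambda_f = \{(k,k)\}$ and $E_b(f) = \freedepthorreg{k}{n}$.

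For the induction step, write $f = f_a f'$ with $f' = f_{a-1}\dotsm f_1$, and $b=(b',b_a)$ with $b' \in \Lambda_{f'}$. Then
\[
E_b(f) = \big(\freedepthor{b_a} \otimes 1_{f_a} \otimes \freedepthor{b_{a-1}}\big) \circ \big(1_{f_a} \otimes E_{b'}(f')\big),
\]
using \cref{absorb} to duplicate the middle projector $\freedepthor{b_{a-1}}$. The decomposition \cref{coolj} is built iteratively, since the proof of \cref{silent} runs through \cref{monkey}. Concretely, $\bF(f) = \bF(f_a) \otimes_{n_{a-1}} \bF(f')$, and $\bF(f') = \bigoplus_{b'} V_{b'}$ with $V_{b'} \cong L^{n_{a-1}}_{b'_{a-1}} \otimes (L^n_{b'_0})^\vee$. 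Applying $\bF(f_a) \otimes_{n_{a-1}} -$ to each $V_{b'}$ and using \cref{monkey} splits $\bF(f_a) \otimes L^{n_{a-1}}_{b_{a-1}}$ into at most two standard modules $L^{n_a}_{b_a}$, with $b_a = b_{a-1}\pm 1$ or $b_a = b_{a-1}$. These pieces define $V_b$.

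By induction, $\bF(1_{f_a} \otimes E_{b'}(f')) = \mathrm{id} \otimes \bF(E_{b'}(f'))$ projects onto $\bF(f_a)\otimes V_{b'}$. The remaining factor $\freedepthor{b_a} \otimes 1 \otimes \freedepthor{b_{a-1}}$ acts on $\bF(f_a) \otimes_{n_{a-1}} M$ for a module $M$. There, the right-hand projector acts on $M$ as projection onto its $L_{b_{a-1}}$-isotypic part, which is the identity on $V_{b'}$. The left-hand projector is left multiplication by the central idempotent of $\TLalg_{n_a}$ for $L^{n_a}_{b_a}$, which picks out the $L^{n_a}_{b_a}$-isotypic component of $\bF(f_a) \otimes L^{n_{a-1}}_{b_{a-1}}$.

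The main obstacle is checking that this component is exactly the single summand labelled $b_a$. That holds because the branching in \cref{monkey} is multiplicity free, with the two summands $L_{b_{a-1}\pm1}$ distinct. It also requires that the summands $V_b$ in \cref{coolj} are genuinely these isotypic pieces, so that the projections are compatible with the chosen decomposition. If the decomposition in \cref{silent} is only defined up to isomorphism, I would \emph{define} $V_b$ as the image of $\bF(E_b(f))$. I would then use \cref{monkeybub}, \cref{spread} and orthogonality \cref{absorb} to see that these images are orthogonal, sum to $\bF(f)$, and are isomorphic to $L^m_{b_a} \otimes (L^n_{b_0})^\vee$. The key input for the sum is that the sum over $b \in \Lambda_f$ of the $E_b(f)$ equals $1_f$, which follows from \cref{spread} inserted in every region together with \cref{portis}, \cref{smush}, and \cref{abyss}, since these kill the non-bridge depth labelings.
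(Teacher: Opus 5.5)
Your proposal is correct and takes essentially the same approach as the paper, whose proof consists only of citing \cref{depthor,monkeybub}. The idea is that $\bF$ sends each depth idempotent in $E_b(f)$ to the central idempotent projecting onto the corresponding standard-module isotypic component, and the summands $V_b$ of \cref{coolj} are the iterated multiplicity-free branching pieces from \cref{monkey}. Your induction on $\ell(f)$, together with your fallback of defining $V_b$ as the image of $\bF(E_b(f))$ via \cref{spread,portis,smush,absorb,abyss}, fills in details that the paper leaves implicit.
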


\begin{proof}
    This follows from \cref{depthor,monkeybub}.
\end{proof}

%-------------------------------------
\subsection{Fullness and faithfulness}
%-------------------------------------

Suppose $f,g \colon \obj{n} \to \obj{m}$ are 1-morphisms in $\Dcat$.  For $b \in \Lambda_{f,g}$, define
\begin{gather*}
    b^f := \left( b_0,b_1,\dotsc,b_{\ell(f)} \right)
    \qquad \text{and} \qquad
    b^g := \left( b_{\ell(f)+\ell(g)}, b_{\ell(f)+\ell(g)-1}, \dotsc, b_{\ell(f)} \right).
\end{gather*}
It follows from \cref{Edef,Bdef} that
\begin{equation} \label{meat}
    E_{b^g}(g) D_b(f,g) E_{b^f}(f) = D_b(f,g).
\end{equation}

\begin{eg}
    In the setting of \cref{sandbox}, we have
    \begin{gather*}
        b^f = (1,1,1,1,0,1,0,1,1,2,2),\qquad
        b^g = (1,1,2,1,1,2,1,2,3,2),
        \\
        E_{b^f}(f) = \freedepthor{2} \upstrand[D] \freedepthor{2} \upstrand[S] \freedepthor{1} \downstrand[D] \freedepthor{1} \upstrand[S] \freedepthor{0} \upstrand[S] \freedepthor{1} \upstrand[S] \freedepthor{0} \upstrand[S] \freedepthor{1} \downstrand[D] \freedepthor{1} \upstrand[D] \freedepthor{1} \downstrand[D] \freedepthorreg{1}{3}
        \ , \qquad
        E_{b^g}(g) = \freedepthor{2} \upstrand[S] \freedepthor{3} \upstrand[S] \freedepthor{2} \downstrand[S] \freedepthor{1} \downstrand[S] \freedepthor{2} \upstrand[S] \freedepthor{1} \upstrand[S] \freedepthor{1} \upstrand[S] \freedepthor{2} \upstrand[S] \freedepthor{1} \downstrand[D] \freedepthorreg{1}{3}
        \ .
    \end{gather*}
\end{eg}

It follows from \cref{silent} that, for 1-morphisms $f,g \colon \obj{n} \to \obj{m}$ in $\Dcat$,
\begin{multline} \label{boxy}
    \Bcat(\bF(f), \bF(g))
    = \bigoplus_{b \in \Lambda_{f,g}} \Bcat(V_{b^f}, V_{b^g})
    \\
    \cong \bigoplus_{(k,l) \in \Lambda_m \times \Lambda_n} \bigoplus_{b \in \Lambda_{f,g}^{k,l}} \Bcat \Bigl( L^m_k \otimes (L^n_l)^\vee, L^m_k \otimes (L^n_l)^\vee \Bigr)
    \cong \bigoplus_{(k,l) \in \Lambda_m \times \Lambda_n} \Mat_{|\Lambda_g^{k,l}| \times |\Lambda_f^{k,l}|} (\kk).
\end{multline}

\begin{prop} \label{skunk}
    Suppose $f,g \colon \obj{n} \to \obj{m}$ are 1-morphisms in $\Dcat$, and $b \in \Lambda_{f,g}$.  Then
    \[
        \bF \big( D_b(f,g) \big) \colon V_{b^f} \to V_{b^g}
    \]
    is a nonzero bimodule homomorphism, where we use \cref{sunny,meat} to view $\bF \big( D_b(f,g) \big)$ as a bimodule homomorphism from $V_{b^f}$ to $V_{b^g}$.
\end{prop}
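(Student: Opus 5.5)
We need to show $\bF(D_b(f,g))$ is nonzero. The plan is to reduce to a statement about closing up diagrams: compose $D_b(f,g)$ with a suitable diagram to produce something whose image under $\bF$ is visibly nonzero, namely a nonzero multiple of some $\freedepthor{k}$ in $\Dcat(1_n,1_n)$. By \cref{monkeybub} we know $\bF(\freedepthorreg{k}{n})$ acts as the identity on $L^n_k$, hence is nonzero whenever $k\in\Lambda_n$.

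\textbf{Step 1: the dual diagram.} Let $D^* := \bR$-rotated (or vertically flipped) copy of $D_b(f,g)$, viewed as a 2-morphism from $g$ to $f$ with the same depth labelling. Equivalently, use $D_{\bar b}(g,f)$, where $\bar b$ is the reversed bridge. Then consider $D^* \circ D_b(f,g) \in \Dcat(f,f)$, which has depths $b^f$ along its top and bottom.

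\textbf{Step 2: collapse it.} I would simplify the composite inductively on the number of arcs in the scaffold $b(f,g)$, working in the ordering of \cref{pillar}. First take an innermost arc, meaning one joining consecutive vertices $i,i+1$ with $\bar b_{i-1}=\bar b_{i+1}=\bar b_i\mp1$. In the composite it meets its mirror image and forms a black cup/cap pair enclosing a region of known depth. Using \cref{tire} to clear intervening cyan strands, and then \cref{wind} or \cref{sushi1,sushi2} read backwards, this pair is replaced by a scalar times two through-strands or a bubble. The scalar is a ratio $\Delta_{k\pm1}/\Delta_k$, which is invertible by \cref{generic}. The first two relations of \cref{fire} handle cyan strands. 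Iterating this, the composite equals an invertible scalar times a diagram in which the remaining through-strings carry depths along the path. Finally, closing all strands with caps and cups on the sides and invoking \cref{wind} repeatedly reduces everything to $c\,\freedepthorreg{b_0}{n}$ with $c\in\kk^\times$.

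\textbf{Step 3: conclude.} We have $\bF(\text{closure})=\bF(D^*)$-related terms composed with $\bF(D_b(f,g))$, and the left-hand side equals $c\,\bF(\freedepthorreg{b_0}{n})\neq0$ by \cref{monkeybub}. Hence $\bF(D_b(f,g))\neq0$. Combined with \cref{sunny,meat}, this shows it is a nonzero map $V_{b^f}\to V_{b^g}$.

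\textbf{Main obstacle.} The main obstacle is Step 2: verifying that the bookkeeping of depths in the composite always lands in the situations covered by \cref{sushi1,sushi2,wind}, with $k\ge0$ and coefficient denominators nonzero, including the boundary case $2r=n+1$. I also need to check that cyan strands can always be slid away using \cref{tire}. If this proves messy, an alternative is to compute $\bF(D_b(f,g))$ directly on the path basis vectors $v_p$: the scaffold $b(f,g)$ is exactly the Temperley--Lieb diagram sending the highest-weight vector associated with $b^f$ to that of $b^g$ up to the factors $\Delta_p$ from \cref{crab}. Nonvanishing would then follow from \cref{crab} and \cref{JWtrace}.
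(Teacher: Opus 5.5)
Your strategy is viable: pair $D_b(f,g)$ with a mirror copy, reduce to a unit multiple of a depth idempotent, and invoke \cref{monkeybub}. However, Step~1 needs a correction and Step~2 fails as written.

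On Step~1: the rotation $\bR$ does not in general give a 2-morphism $g\Rightarrow f$. It sends $\Dcat(f,g)$ to $\Dcat(\bR(g),\bR(f))$, where $\bR(f)$ is the word $f$ reversed with all signs flipped, and the paper provides no reflection symmetry of $\Dcat$. Your fallback $D_{b'}(g,f)$, with $b'$ the reversed bridge, is the right choice: $b'^g=b^g$, $b'^f=b^f$, and $b'(g,f)$ is the mirror image of $b(f,g)$.

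The real problem is Step~2. In the open composite $D_{b'}(g,f)\circ D_b(f,g)$, only arcs with both ends on $g$ meet their mirrors; an innermost such cup and its mirror cap do close into a black loop. An arc with both ends on $f$ is a cap at the very bottom of $D_b(f,g)$, and its mirror is a cup at the very top of $D_{b'}(g,f)$. They share no endpoints. When $f$-arcs are nested, the inner cap is also separated from its mirror by the outer arcs, so the pairs that actually face each other are the outermost ones. Hence the ``innermost arc'' induction cannot be run on the open composite, and the claim that it collapses to a unit times through-strands is unjustified.

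There are two fixes. One is to unhook facing $f$-cap/cup pairs from the outermost inward, using \cref{sushi1,sushi2} read backwards after clearing cyan strands with \cref{tire}. The simpler one is to close up first. After joining the $f$-endpoints around the left, every black strand lies on a closed loop. Since $b(f,g)$ is noncrossing for the linear order $1,\dotsc,l$ of \cref{pillar}, some remaining arc joins consecutive vertices $i,i+1$, and its loop encloses no black strand. Peel such loops one at a time as in the proof of \cref{noloops}. With all depths fixed, the first two relations in \cref{fire}, \cref{smush2}, \cref{bluewind} and \cref{tire} all have coefficient $1$. The final application of \cref{wind} to a vertex-free loop with inner depth $k'$ and outer depth $k=k'\pm1$ contributes exactly $\Delta_{k'}/\Delta_k$. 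Both depths are valid — you need, as the paper does, that every region of $D_b(f,g)$ labelled $m$ has depth between $0$ and $m$. So this factor is a unit by \cref{generic}, and the closure is a unit times the depth-$b_0$ idempotent at label $n$, as you want.

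With that repair, your argument is a genuinely different, global route. The paper slices $D_b(f,g)$ into pieces $1_Y\otimes h\otimes 1_Z$, with $h$ a depth-decorated cap, cup or trivalent vertex. It checks $\bF(h)\neq 0$ by capping each $h$ with its mirror, obtaining $\Delta_{k\pm1}/\Delta_k$ times a depth idempotent via \cref{wind} or \cref{fire}. It then concludes because the spaces $\Bcat(V_c,V_{c'})$ are one-dimensional and compose like matrix units, so a composite of nonzero slices is nonzero.

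You perform the same mirror-capping once, for the whole diagram, entirely inside $\Dcat$. From the bimodule side you need only that $\bF$ of a valid depth idempotent is nonzero. This avoids the matrix-unit composition argument, including the step from $\bF(h)\neq 0$ to nonvanishing of the whiskered slice on the relevant summand. The price is the global reduction of a closed diagram, which is precisely the part your proposal left as an obstacle; the paper's local argument needs no such bookkeeping.
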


\begin{proof}
    The spaces $\Bcat(V_{b^f},V_{b^g})$ are one-dimensional, and, for parallel 1-morphisms $f,g,h$, vertical composition
    \[
        \Bcat(V_{b^g},V_{b^h}) \times \Bcat(V_{b^f},V_{b^g}) \to \Bcat(V_{b^f},V_{b^h})  
    \]
    takes any pair of nonzero morphisms to a nonzero morphism.  (This corresponds to multiplication of matrix entries in the decomposition \cref{boxy}.)  Any $D_b(f,g)$ can be written as a composition of 2-morphisms of the form $1_Y \otimes h \otimes 1_Z$ for 1-morphisms $Y,Z$, and $h$ one of the 2-morphisms
    \begin{gather} \label{wonder1}
        \begin{tikzpicture}[centerzero]
            \draw[S,->] (-0.5,-0.4) -- (-0.5,0) to[out=up,in=up,looseness=1.5] (0.5,0) -- (0.5,-0.4);
            \depthor{0,0}{k \pm 1};
            \depthor{0.8,0}{k};
            \region{0.65,0.45}{n};
        \end{tikzpicture}
        \ ,\quad
        \begin{tikzpicture}[centerzero]
            \draw[S,->] (-0.5,0.4) -- (-0.5,0) to[out=down,in=down,looseness=1.5] (0.5,0) -- (0.5,0.4);
            \depthor{0,0}{k \pm 1};
            \depthor{0.8,0}{k};
            \region{0.65,-0.45}{n};
        \end{tikzpicture}
        \ ,\quad
        \begin{tikzpicture}[centerzero]
            \draw[S,<-] (-0.5,-0.4) -- (-0.5,0) to[out=up,in=up,looseness=1.5] (0.5,0) -- (0.5,-0.4);
            \depthor{0,0}{k \pm 1};
            \depthor{0.8,0}{k};
            \region{0.65,0.45}{n};
        \end{tikzpicture}
        \ ,\quad
        \begin{tikzpicture}[centerzero]
            \draw[S,<-] (-0.5,0.4) -- (-0.5,0) to[out=down,in=down,looseness=1.5] (0.5,0) -- (0.5,0.4);
            \depthor{0,0}{k \pm 1};
            \depthor{0.8,0}{k};
            \region{0.65,-0.45}{n};
        \end{tikzpicture}
        \ ,
        \\ \label{wonder2}
        \begin{tikzpicture}[centerzero]
            \draw[D,->] (-0.3,-0.4) -- (-0.3,0) to[out=up,in=up,looseness=2] (0.3,0) -- (0.3,-0.4);
            \depthor{0,0}{k};
            \region{0.5,0.2}{n};
        \end{tikzpicture}
        \ ,\quad
        \begin{tikzpicture}[centerzero]
            \draw[D,->] (-0.3,0.4) -- (-0.3,0) to[out=down,in=down,looseness=2] (0.3,0) -- (0.3,0.4);
            \depthor{0,0}{k};
            \region{0.5,-0.2}{n};
        \end{tikzpicture}
        \ ,\quad
        \begin{tikzpicture}[centerzero]
            \draw[D,<-] (-0.3,-0.4) -- (-0.3,0) to[out=up,in=up,looseness=2] (0.3,0) -- (0.3,-0.4);
            \depthor{0,0}{k};
            \region{0.5,0.2}{n};
        \end{tikzpicture}
        \ ,\quad
        \begin{tikzpicture}[centerzero]
            \draw[D,<-] (-0.3,0.4) -- (-0.3,0) to[out=down,in=down,looseness=2] (0.3,0) -- (0.3,0.4);
            \depthor{0,0}{k};
            \region{0.5,-0.2}{n};
        \end{tikzpicture}
        \ ,\quad
        \begin{tikzpicture}[centerzero]
            \draw[S] (-0.5,-0.6) to[out=up,in=left,looseness=1.5] (0,0.2) to[out=right,in=up,looseness=1.5] (0.5,-0.6);
            \draw[D,->] (0,0.2) -- (0,0.6);
            \depthor{0,-0.2}{k \pm 1};
            \depthor{0.7,0.3}{k};
            \region{0.7,-0.3}{n};
        \end{tikzpicture}
        \ ,\quad
        \begin{tikzpicture}[centerzero]
            \draw[S,<->] (-0.5,0.6) to[out=down,in=left,looseness=1.5] (0,-0.2) to[out=right,in=down,looseness=1.5] (0.5,0.6);
            \draw[D] (0,-0.2) -- (0,-0.6);
            \depthor{0,0.2}{k \pm 1};
            \depthor{0.7,-0.3}{k};
            \region{0.7,0.3}{n};
        \end{tikzpicture}
        \ ,
    \end{gather}
    where $0 \le l \le m$ for any $\freedepthorreg{l}{m}$ appearing in the diagram.  Thus, it suffices to prove that $\bF(h) \ne 0$ for these $h$.  For that, it is enough to show that $\bF$ sends the composition of $h$ with some other 2-morphism in $\Dcat$ to something nonzero.
    
    Setting $r=(n-k)/2$, we have
    \begin{gather*}
        \rightbubmultreg[S]{\freedepthor{k + 1}}{n} \freedepthor{k}
        \overset{\cref{depthor}}{=} \rightbubmultreg[S]{\freeprojector{r-1}}{n} \freeprojector{r}
        \overset{\cref{wind}}{=} \frac{\Delta_{k+1}}{\Delta_k} \freeprojectorreg{r}{n}
        \xmapsto[\cref{fiction}]{\bF} \frac{\Delta_{k+1}}{\Delta_k} \sum_{p \in P^n_k} e_{p,p}
        \ne 0,
        \\
        \rightbubmultreg[S]{\freedepthor{k - 1}}{n} \freedepthor{k}
        \overset{\cref{depthor}}{=} \rightbubmultreg[S]{\freeprojector{r}}{n} \freeprojector{r}
        \overset{\cref{wind}}{=} \frac{\Delta_{k-1}}{\Delta_k} \freeprojectorreg{r}{n}
        \xmapsto[\cref{fiction}]{\bF} \frac{\Delta_{k-1}}{\Delta_k} \sum_{p \in P^n_k} e_{p,p}
        \ne 0,
        \\
        \leftbubmultreg[S]{\freedepthor{k + 1}}{n} \freedepthor{k}
        \overset{\cref{depthor}}{=} \leftbubmultreg[S]{\freeprojector{r}}{n} \freeprojector{r}
        \overset{\cref{wind}}{=} \frac{\Delta_{k+1}}{\Delta_k} \freeprojectorreg{r}{n}
        \xmapsto[\cref{fiction}]{\bF} \frac{\Delta_{k+1}}{\Delta_k} \sum_{p \in P^n_k} e_{p,p}
        \ne 0,
        \\
        \leftbubmultreg[S]{\freedepthor{k - 1}}{n} \freedepthor{k}
        \overset{\cref{depthor}}{=} \leftbubmultreg[S]{\freeprojector{r+1}}{n} \freeprojector{r}
        \overset{\cref{wind}}{=} \frac{\Delta_{k-1}}{\Delta_k} \freeprojectorreg{r}{n}
        \xmapsto[\cref{fiction}]{\bF} \frac{\Delta_{k-1}}{\Delta_k} \sum_{p \in P^n_k} e_{p,p}
        \ne 0.
    \end{gather*}
    Thus, $\bF(h) \ne 0$ for $h$ any of the 2-morphisms \cref{wonder1}.  Similarly, by \cref{depthor,bluewind}
    \[
        \rightbubmultreg[D]{\freedepthor{k}}{n}
        = \leftbubmultreg[D]{\freedepthor{k}}{n}
        = \freedepthorreg{k}{n}
        \xmapsto[\cref{fiction}]{\bF} \sum_{p \in P^n_k} e_{p,p}
        \ne 0,
    \]
    and so $\bF(h) \ne 0$ for $h$ any of the first four 2-morphisms in \cref{wonder2}.  Finally,
    \[
        \begin{tikzpicture}[centerzero]
            \draw[S] (0,-0.3) to[out=left,in=down,looseness=1.5] (-0.5,0) to[out=up,in=left,looseness=1.5] (0,0.3) to[out=right,in=up,looseness=1.5] (0.5,0) to[out=down,in=right,looseness=1.5] (0,-0.3);
            \depthor{0,0}{k \pm 1};
            \draw[D] (0,-0.6) -- (0,-0.3);
            \draw[D,->] (0,0.3) -- (0,0.6);
            \depthor{0.8,0}{k};
            \region{0.3,0.5}{n};
        \end{tikzpicture}
        \overset{\cref{fire}}{=}
        \begin{tikzpicture}[centerzero]
            \draw[D,->] (0,-0.6) -- (0,0.6);
            \bubleftmult[S]{0.8,-0.2}{\freedepthor{k \pm 1}};
            \depthor{0.8,0.4}{k};
            \region{0.3,0.5}{n};
        \end{tikzpicture}
        = \frac{\Delta_{k \pm 1}}{\Delta_k}\
        \begin{tikzpicture}[centerzero]
            \draw[D,->] (0,-0.6) -- (0,0.6);
            \depthor{0.4,0}{k};
            \region{0.3,0.5}{n};
        \end{tikzpicture}
        \xmapsto{\bF} \frac{\Delta_{k \pm 1}}{\Delta_k} e_{n+1} \sum_{p \in P^n_k} e_{p,p}
        = \frac{\Delta_{k \pm 1}}{\Delta_k} \sum_{p \in P^{n+2}_k} e_{p,p}
        \ne 0,
    \]
    and so $\bF(h) \ne 0$ for $h$ any of the last two 2-morphisms in \cref{wonder2}.
\end{proof}

\begin{theo}[Basis Theorem] \label{basisthm}
    For all parallel 1-morphisms $f$ and $g$ in $\Dcat$, the set $\bB(f,g)$ is a basis for $\Dcat(f,g)$.
\end{theo}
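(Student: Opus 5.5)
Spanning is already done: by \cref{span}, $\bB(f,g)$ spans $\Dcat(f,g)$. It remains to prove linear independence. We do this through the incarnation functor $\bF$, using the bimodule decomposition \cref{coolj} and the matrix description \cref{boxy}.

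Suppose $\sum_{b \in \Lambda_{f,g}} c_b D_b(f,g) = 0$ with $c_b \in \kk$. Fix $b' \in \Lambda_{f,g}$. Composing on top with $E_{b'^g}(g)$ and on the bottom with $E_{b'^f}(f)$ isolates $b'$. Indeed, the depth idempotents in each region are orthogonal by \cref{absorb}, so $E_{c^g}(g) E_{b^g}(g) = 0$ unless $c^g = b^g$, and similarly for $f$. By \cref{meat}, we therefore obtain
\[
    0 = \sum_{b \,:\, b^f = b'^f,\ b^g = b'^g} c_b D_b(f,g).
\]
A bridge is just the concatenation of $b^f$ and the reversal of $b^g$, with the shared endpoints matching. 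Hence $b^f = b'^f$ and $b^g = b'^g$ force $b = b'$, and the sum collapses to $c_{b'} D_{b'}(f,g) = 0$.

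Now apply $\bF$. By \cref{skunk}, $\bF(D_{b'}(f,g))$ is a nonzero homomorphism $V_{b'^f} \to V_{b'^g}$. These spaces are identified with $\kk$: by \cref{boxy}, $\Bcat(V_{b'^f}, V_{b'^g}) \cong \End(L^m_k \otimes (L^n_l)^\vee) \cong \kk$, i.e.\ the relevant matrix entry. So $\bF(D_{b'}(f,g))$ is a scalar $\lambda$ times a fixed generator. This gives $c_{b'} \lambda = 0$, and we need $\lambda \in \kk^\times$, not merely $\lambda \ne 0$, because $\kk$ need not be a domain.

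This unit claim is the main obstacle. The plan is to strengthen the proof of \cref{skunk}: each elementary piece $h$ in \cref{wonder1,wonder2} is shown nonzero by composing with a partner to obtain a multiple $\Delta_{k\pm1}/\Delta_k$ (or $1$) of an idempotent $\bF(\freedepthor{k})$. These ratios are units by \cref{generic}. Since $\bF(\freedepthor{k})$ acts as the identity on each $V$ with the matching depth, each piece acts between one-dimensional Hom-spaces by a unit, that is, as an isomorphism between the corresponding rank-one summands. The composite $\bF(D_{b'}(f,g))$ is then a product of units in the matrix-entry picture. If one prefers not to redo this, the fallback is to reduce to the case where $\kk$ is a field. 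Base change along a map $\kk \to F$ to a field preserves all the structure. Over $F$, "nonzero" suffices, and comparing dimensions ($|\bB(f,g)| = |\Lambda_{f,g}| = \dim \Bcat(\bF f, \bF g)$ by \cref{boxy}) shows $\bF$ is an isomorphism on 2-morphism spaces. However, passing back to general $\kk$ needs a freeness argument, so the unit argument is preferable.

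With $\lambda$ a unit, $c_{b'} = 0$ for every $b'$, so $\bB(f,g)$ is linearly independent and hence a basis. As a byproduct, the argument shows that $\bF$ maps $\bB(f,g)$ bijectively onto a basis of $\Bcat(\bF(f), \bF(g))$ with unit scalars. This is the local full faithfulness used later.
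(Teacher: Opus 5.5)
Your proposal is correct and follows the paper's route: spanning comes from \cref{span}, and linear independence comes from pushing through $\bF$ and combining \cref{skunk} with the rank-one decomposition \cref{boxy}. Your point that the scalar must be a unit, not merely nonzero, because $\kk$ is only a commutative ring, genuinely sharpens the paper's wording; the partner computations in the proof of \cref{skunk} already supply this, since the ratios $\Delta_{k\pm1}/\Delta_k$ are invertible by \cref{generic}.
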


\begin{proof}
    It follows from \cref{skunk,boxy} that the bimodule homomorphisms $\bF(D_b(f,g))$, $b \in \Lambda_{f,g}$, are linearly independent.  Thus, the result follows from \cref{span}
\end{proof}

Recall that a 2-functor is \emph{locally full} if the induced functors on morphism categories are all full.  Similarly, it is \emph{locally faithful} if the induced functors on morphism categories are all faithful.

\begin{theo} \label{fullfaithful}
    The 2-functor $\bF$ is locally full and faithful.
\end{theo}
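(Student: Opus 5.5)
The plan is to deduce both properties from \cref{basisthm} together with the dimension count \cref{boxy}. Fix parallel 1-morphisms $f,g \colon \obj{n} \to \obj{m}$. We must show that
\[
    \bF \colon \Dcat(f,g) \to \Bcat(\bF(f),\bF(g))
\]
is bijective. By \cref{basisthm}, the source is free with basis $\bB(f,g) = \{D_b(f,g) : b \in \Lambda_{f,g}\}$, so it has rank $|\Lambda_{f,g}|$.

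For the target, \cref{boxy} gives a decomposition
\[
    \Bcat(\bF(f),\bF(g)) = \bigoplus_{b \in \Lambda_{f,g}} \Bcat(V_{b^f}, V_{b^g}),
\]
where each summand is a free $\kk$-module of rank one. This holds because $\Bcat(L^m_k \otimes (L^n_l)^\vee, L^m_k \otimes (L^n_l)^\vee) \cong \kk$, the standard modules being absolutely irreducible forms of split simples.

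Next we locate the image of each basis element. By \cref{meat} and \cref{sunny}, $\bF(D_b(f,g))$ is supported in the single summand $\Bcat(V_{b^f},V_{b^g})$, since it is pre- and post-composed with the projections onto $V_{b^f}$ and $V_{b^g}$. By \cref{skunk}, this component is nonzero. Distinct $b$ index distinct summands, so $\bF$ maps the basis $\bB(f,g)$ to nonzero elements of distinct rank-one summands. This gives injectivity (faithfulness) at once, assuming each image is a nonzero-divisor multiple of a generator.

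Fullness is the delicate step, because over a general commutative ring $\kk$ "nonzero" does not mean "generator". We need $\bF(D_b(f,g))$ to be a unit multiple of a generator of $\Bcat(V_{b^f},V_{b^g}) \cong \kk$. To get this, I would refine the argument of \cref{skunk}. There each elementary piece $h$ from \cref{wonder1,wonder2}, composed with a suitable partner, was shown to map under $\bF$ to a scalar $\Delta_{k\pm1}/\Delta_k$ (or $1$) times an idempotent $\sum_p e_{p,p}$ that acts as the identity on the relevant summand. By assumption \cref{generic} all these scalars are units. Hence each $\bF(h)$, restricted to the relevant one-dimensional Hom spaces, is invertible, in the sense that it has a one-sided inverse up to a unit. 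Because composition of generators in the matrix-unit picture \cref{boxy} is again a generator (the product of matrix units is a matrix unit), the composite $\bF(D_b(f,g))$ is a unit times a matrix unit.

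Together these facts show that $\bF$ carries a basis of $\Dcat(f,g)$ to a basis of $\Bcat(\bF(f),\bF(g))$, which proves local fullness and faithfulness. I expect the main obstacle to be this unit bookkeeping: checking that each elementary piece maps to a generator of its rank-one Hom space, rather than merely to a nonzero element. When $\kk$ is a field, this step is immediate from \cref{skunk}.
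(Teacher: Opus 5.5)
Your argument is correct and is essentially the paper's: the paper deduces local fullness from \cref{skunk} and \cref{boxy}, and local faithfulness from these together with \cref{basisthm}, which is exactly what you do. Your extra unit bookkeeping is a genuine improvement. You use the partner compositions in the proof of \cref{skunk}, whose scalars $\Delta_{k\pm1}/\Delta_k$ are units by \cref{generic}, to show each image is a unit multiple of an isomorphism between rank-one summands rather than merely ``nonzero.'' The paper speaks of one-dimensional spaces and nonzero maps, which implicitly treats $\kk$ as a field, whereas your version works over an arbitrary commutative ring.
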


\begin{proof}
    Local fullness follows immediately from \cref{skunk,boxy}.  Local faithfulness follows from \cref{basisthm,skunk,boxy}.
\end{proof}

\begin{rem} \label{HSbasis}
    As explained in the introduction, the 2-category defined in \cite[\S5]{HS25} is the locally full 2-subcategory of $\Dcat$ generated by the 1-morphisms $\Sobj_\pm 1_n$, $n \in \N$, which are denoted $Q_\pm 1_n$ there.  In \cite[Prop.~7.16, Cor.~7.17]{HS25}, Harper and Samuelson conjecture a basis for the endomorphism space of $Q_+^{\otimes r} 1_n$.  Their proposed basis is different than the basis $\bB(f,g)$.  We do not discuss the precise relationship between $\bB(f,g)$ and their conjectural basis, since the basis $\bB(f,g)$ seems to the author to be the most natural one.  Note that \cite[Cor.~7.11]{HS25}, which claims to describe a basis for the endomorphism algebra of the unit object of their monoidal category appears to be false, since their negatively-labelled boxes do not seem to be in the image of the map considered there.
\end{rem}

%============================================
\section{Decategorification\label{sec:decat}}
%============================================

In this final section, we give a precise description of the Grothendieck ring of the additive envelope of the monoidal category $\Dmon$, the Grothendieck group of the category $\Mcat$ of Temperley--Lieb modules, and the action of the former on the latter.  We also describe a positive, integral basis for this action.  Finally, we show that the additive Karoubi envelope of the 2-category $\Dcat$ is equivalent to the category $\Bcat$ of Temperley--Lieb bimodules.

%----------------------------------------------------------------------
\subsection{The ring and its polynomial module\label{subsec:Grothring}}
%----------------------------------------------------------------------

We first study the ring $R$ that we will show, in \cref{sump}, is isomorphic to the Grothendieck ring of the additive envelope of $\Dmon$.

\begin{defin}
    Let $\Ring = \bigoplus_{n \in \Z} \Ring_n$ be the $\Z$-graded ring generated by $s_-$, $s_+$, $d_+$, $d_-$, with degrees
    \[
        \deg(s_\pm) = \pm 1,\qquad \deg(d_\pm) = \pm 2,
    \]
    and subject to the following relations:
    \begin{gather} \label{refuge1}
        s_+ d_+ = d_+ s_+,\qquad
        s_- d_- = d_- s_-,
        \\ \label{refuge2}
        d_- d_+ = 1,\qquad
        s_- d_+ = s_+,\qquad
        d_- s_+ = s_-,\qquad
        s_- s_+ = s_+ s_- - d_+ d_- + 1.
    \end{gather}
\end{defin}

\begin{lem} \label{Gring-basis}
    We have an isomorphism of $\Z$-graded additive groups
    \[
        \Ring \cong \Z[s_+,d_+] \otimes_\Z \Z[s_-,d_-],
    \]
    with the two factors corresponding to subrings of $\Ring$.
\end{lem}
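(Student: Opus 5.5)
We would show that the multiplication map $\mu\colon \Z[s_+,d_+]\otimes_\Z \Z[s_-,d_-]\to \Ring$, $x\otimes y\mapsto xy$, is an isomorphism of graded abelian groups. First we check that $s_+,d_+$ commute and that $s_-,d_-$ commute in $\Ring$; this is exactly \cref{refuge1}. Hence there are ring maps from the polynomial rings onto the subrings of $\Ring$ they generate, and $\mu$ is well defined and degree preserving.

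\emph{Surjectivity.} It suffices to show that every monomial in the generators can be rewritten as a $\Z$-combination of \enquote{normal ordered} words, with all of $s_+,d_+$ to the left of all of $s_-,d_-$. We read \cref{refuge2} as rewriting rules moving minus generators to the right past plus generators:
\[
  d_-d_+\to 1,\qquad s_-d_+\to s_+,\qquad d_-s_+\to s_-,\qquad s_-s_+\to s_+s_- - d_+d_- + 1 .
\]
Each rule removes a \enquote{minus before plus} adjacency. To see that rewriting terminates, induct on the total length and, at fixed length, on the number of inversions (pairs of a minus letter before a plus letter). Each rule either shortens the word or keeps the length and strictly lowers the inversion count; the term $d_+d_-$ has the same length as $s_-s_+$ but no inversion. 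So every word reduces to normal form.

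\emph{Injectivity.} This is the main obstacle. We would construct a representation in which normal-ordered monomials act independently, for instance via the diamond lemma applied to the rewriting system above, ordered by length and then inversions. For that we must resolve the overlap ambiguities, namely words $m\,x\,p$ where both $mx$ and $xp$ reduce, with $x$ a single letter. The relevant overlaps have the form (minus letter)(plus letter)(plus letter) or (minus)(minus)(plus), in which the middle letter belongs to both redexes. The cases to check are $d_-d_+s_+$, $d_-d_+d_+$, $s_-d_+s_+$, $s_-s_+d_+$, $s_-s_+s_+$, $d_-s_+s_+$, and their counterparts $d_-d_-d_+$, $s_-d_-d_+$, $d_-s_-s_+$, and so on.

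A cleaner alternative is to define an explicit action of $\Ring$ on $\Z[s_+,d_+]$. Here $s_+,d_+$ act by multiplication, and $d_-,s_-$ act by operators determined recursively by the rules: $d_-(1)=0$ or an appropriate base value, $d_-(d_+x)=x$, $d_-(s_+x)=s_-(x)$, $s_-(d_+x)=s_+x$, and $s_-(s_+x)=s_+s_-(x)-d_+d_-(x)+x$. The base values $s_-(1)$ and $d_-(1)$ must be chosen so that the relations of \cref{refuge1} hold. One would then verify these relations, for instance $s_-d_-=d_-s_-$, by induction on degree.

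Whichever route we take, the checks on the cubic overlaps are routine but fiddly, and the crucial one is $s_-s_+s_+$ combined with $d_-s_+s_+$ and the commutation $s_-d_-=d_-s_-$. Once all ambiguities resolve, the normal-ordered words $s_+^a d_+^b s_-^c d_-^e$ form a $\Z$-basis, which gives the claimed isomorphism $\Ring\cong \Z[s_+,d_+]\otimes_\Z\Z[s_-,d_-]$.
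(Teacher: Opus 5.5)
Your approach is the paper's: it proves this lemma only by citing Bergman's Diamond Lemma, and your normal-ordering and overlap plan is exactly that argument. To make it airtight, add \cref{refuge1} as oriented reductions $d_+ s_+ \to s_+ d_+$ and $d_- s_- \to s_- d_-$ (your normal form $s_+^a d_+^b s_-^c d_-^e$ needs these, and your length/inversion order needs a tie-breaker for them); the only ambiguities are then $d_- d_+ s_+$, $s_- d_+ s_+$, $d_- s_- d_+$ and $d_- s_- s_+$ (words such as $s_- s_+ s_+$ or $d_- d_- d_+$ in your list are not overlaps), and each of these resolves in two or three reductions.
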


\begin{proof}
    This is a standard application of Bergman's Diamond Lemma.
\end{proof}

Let $\triv = \Z 1$ be the trivial one-dimensional $\Z[s_-,d_-]$-module, concentrated in degree zero, with action determined by
\[
    s_- 1 = 0 = d_- 1.
\]

\begin{defin}
    We define the \emph{polynomial module} to be the induced graded $\Ring$-module
    \[
        \Poly = \bigoplus_{n \in \N} \Poly_n := \Ring \otimes_{\Z[s_-,d_-]} \triv.
    \]
\end{defin}

It follows from \cref{Gring-basis} that we have an isomorphism of graded $\Z$-modules
\begin{equation} \label{gravel}
    \Poly \cong \Z[s_+,d_+].
\end{equation}
We will use this isomorphism to identify $\Poly$ with $\Z[s_+,d_+]$ in what follows.

\begin{prop} \label{birch}
    The action of $\Ring$ on $\Poly$ is given as follows:  $s_+$ and $d_+$ act by multiplication, while
    \begin{gather} \label{birch1}
        s_- ( d_+^a s_+^b )
        =
        \begin{cases}
            d_+^{a-1} s_+^{b+1} & \text{if } a > 0, \\
            \sum_{k=0}^{\lfloor (b-1)/2 \rfloor} (-1)^k \binom{b-k}{k+1} d_+^k s_+^{b-1-2k} & \text{if } a = 0,
        \end{cases}
        \\ \label{birch2}
        d_- ( d_+^a s_+^b )
        =
        \begin{cases}
            d_+^{a-1} s_+^b & \text{if } a>0, \\
            \sum_{k=0}^{\lfloor b/2 \rfloor - 1} (-1)^k \binom{b-k-1}{k+1} d_+^k s_+^{b-2-2k} & \text{if } a = 0.
        \end{cases}
    \end{gather}
\end{prop}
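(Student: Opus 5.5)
The plan is to compute directly in $\Poly = \Ring \otimes_{\Z[s_-,d_-]} \triv$, using the relations \cref{refuge1,refuge2} to move $s_-$ and $d_-$ to the right, where they kill the generator $1$. By \cref{Gring-basis}, every element of $\Ring$ can be written as a sum of terms $x\,y$ with $x \in \Z[s_+,d_+]$ and $y \in \Z[s_-,d_-]$. Under \cref{gravel}, such a term acts on $1$ as $x\,y\cdot 1$, and this is zero unless $y$ is a constant. This immediately shows that $s_+$ and $d_+$ act by multiplication.

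\textbf{The case $a>0$.} Write $d_+^a s_+^b = d_+^{a-1} s_+^b d_+$; this uses that $s_+$ and $d_+$ commute by \cref{refuge1}.
\begin{itemize}
\item For $d_-$: since $d_-$ and $d_+$ do not commute, first move $d_-$ past the $d_+^{a-1} s_+^b$ factor, or, more simply, compute $d_- d_+ \cdot d_+^{a-1} s_+^b = d_+^{a-1}s_+^b$ using $d_-d_+=1$ after writing $d_+^a s_+^b = d_+ \cdot d_+^{a-1}s_+^b$. This gives \cref{birch2} for $a>0$.
\item For $s_-$: similarly, $s_- d_+ \cdot d_+^{a-1}s_+^b = s_+ d_+^{a-1} s_+^b = d_+^{a-1}s_+^{b+1}$, using $s_-d_+=s_+$ and commutativity. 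This gives \cref{birch1} for $a>0$.
\end{itemize}

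\textbf{The case $a=0$.} Here we need $s_-(s_+^b)$ and $d_-(s_+^b)$, and I would proceed by induction on $b$.
\begin{itemize}
\item For $d_-$: we have $d_- s_+^b = (d_- s_+) s_+^{b-1} = s_- s_+^{b-1}$. So $d_-(s_+^b) = s_-(s_+^{b-1})$, and the formula in \cref{birch2} becomes \cref{birch1} with $b$ replaced by $b-1$. The binomial expressions match after this substitution: $\binom{(b-1)-k}{k+1}$ and the upper limit $\lfloor (b-2)/2\rfloor = \lfloor b/2\rfloor -1$ both agree.
\item For $s_-$: the relation $s_-s_+ = s_+s_- - d_+d_- + 1$ gives the recursion
\[
s_-(s_+^{b}) = s_+\, s_-(s_+^{b-1}) - d_+\, d_-(s_+^{b-1}) + s_+^{b-1} = s_+\, s_-(s_+^{b-1}) - d_+\, s_-(s_+^{b-2}) + s_+^{b-1}.
\]
The initial values are $s_-(1)=0$ and $s_-(s_+) = 1$.
\end{itemize}

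The main obstacle is checking that the closed form $c_b := \sum_k (-1)^k\binom{b-k}{k+1} d_+^k s_+^{b-1-2k}$ satisfies the recursion $c_b = s_+ c_{b-1} - d_+ c_{b-2} + s_+^{b-1}$. To do this, compare the coefficients of $d_+^k s_+^{b-1-2k}$ on both sides. The right-hand side contributes $(-1)^k\big[\binom{b-1-k}{k+1} + \binom{b-1-k}{k}\big]$, where the second binomial comes from the $-d_+ c_{b-2}$ term (index $k-1$) and, when $k=0$, from the $s_+^{b-1}$ term. By Pascal's rule this sum equals $(-1)^k\binom{b-k}{k+1}$, which is the coefficient on the left-hand side. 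One also needs to check the boundary terms near $k=\lfloor (b-1)/2\rfloor$, where binomials with too-small upper index vanish consistently. This completes the induction and proves the proposition.
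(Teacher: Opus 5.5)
Your proposal is correct and follows essentially the same route as the paper. The $a>0$ cases come directly from $d_-d_+=1$ and $s_-d_+=s_+$. The $a=0$ case for $s_-$ is an induction on $b$, using $s_-s_+=s_+s_- - d_+d_- + 1$ and $d_-s_+=s_-$ to obtain the recursion, followed by a Pascal-rule check of the binomials. The $a=0$ case for $d_-$ then reduces to the $s_-$ formula via $d_-s_+^b=s_-s_+^{b-1}$; the case $b=0$, where both sides vanish, is left implicit but is trivial.
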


\begin{proof}
    Let $\mathbf{1} = 1 \otimes_{\Z[s_-,d_-]} \triv$.  First we prove \cref{birch1}.  The $a>0$ case follows immediately from the second equality in \cref{refuge2}.  When $a=0$, we prove the result by induction on $b$.  The base cases $b \in \{0,1\}$ follow from the last relation in \cref{refuge2}.  When $a=0$ and $b \ge 2$, we have
    \begin{align*}
        s_- &(s_+^b \mathbf{1})
        \overset{\cref{refuge2}}{=} ( s_+ s_- - d_+ d_- + 1 ) s_+^{b-1} \mathbf{1}
        \\
        &\overset{\mathclap{\cref{refuge2}}}{=}\ \sum_{k=0}^{\lfloor (b-2)/2 \rfloor} (-1)^k \binom{b-k-1}{k+1} d_+^k s_+^{b-1-2k} \mathbf{1} - d_+ s_- s_+^{b-2} \mathbf{1} + s_+^{b-1} \mathbf{1}
        \\
        &= \sum_{k=0}^{\lfloor (b-2)/2 \rfloor} (-1)^k \binom{b-k-1}{k+1} d_+^k s_+^{b-1-2k} \mathbf{1}
        - \sum_{k=0}^{\lfloor (b-3)/2 \rfloor} (-1)^k \binom{b-k-2}{k+1} d_+^{k+1} s_+^{b-3-2k} \mathbf{1} + s_+^{b-1} \mathbf{1}
        \\
        &= \sum_{k=0}^{\lfloor (b-2)/2 \rfloor} (-1)^k \binom{b-k-1}{k+1} d_+^k s_+^{b-1-2k} \mathbf{1}
        + \sum_{k=1}^{\lfloor (b-1)/2 \rfloor} (-1)^k \binom{b-k-1}{k} d_+^k s_+^{b-1-2k} \mathbf{1} + s_+^{b-1} \mathbf{1}
        \\
        &= \sum_{k=1}^{\lfloor (b-2)/2 \rfloor} (-1)^k \binom{b-k}{k+1} d_+^k s_+^{b-1-2k} \mathbf{1}
        + b s_+^{b-1} \mathbf{1} + \delta_{b \text{ odd}} (-1)^{(b-1)/2} d_+^{(b-1)/2} \mathbf{1}
        \\
        &= \sum_{k=0}^{\lfloor (b-1)/2 \rfloor} (-1)^k \binom{b-k}{k+1} d_+^k s_+^{b-1-2k} \mathbf{1},
    \end{align*}
    where we used the induction hypothesis in the second and third equalities.
    
    Next we prove \cref{birch2}.  The case $a>0$ follows immediately from the first relation in \cref{refuge2}.  When $a=0$ and $b=0$, the result is trivial.  When $a=0$ and $b>0$, we have
    \[
        d_- (s_+^b \mathbf{1})
        \overset{\cref{refuge2}}{=} s_- s_+^{b-1} \mathbf{1},
    \]
    and so the result follows from \cref{birch1}.
\end{proof}

%-------------------------------------------
\subsection{The homogenized Chebyshev basis}
%-------------------------------------------

Recall that the Chebyshev polynomials of the second kind $U_k(x) \in \Z[x]$, $k \ge -1$, are defined recursively by
\begin{equation} \label{Chebrec}
    U_{-1}(x) = 0,\qquad
    U_0(x) = 1,\qquad
    U_{k+1}(x) = x U_k - U_{k-1}(x).
\end{equation}
(We have replaced $x$ by $x/2$ compared to the usual definition of these polynomials.)  Explicitly, we have
\[
    U_k(x) = \sum_{j=0}^{\lfloor k/2 \rfloor} (-1)^j \binom{k-j}{j} x^{k-2j}.
\]
For later use, we note that
\begin{equation} \label{Cheblow}
    U_1(x) = x,\qquad
    U_2(x) = x^2 - 1.
\end{equation}
We have
\[
    x^b = \sum_{j=0}^{\lfloor b/2 \rfloor} C(b-j,j) U_{b-2j}(x),
\]
where
\[
    C(n,k) := \binom{n+k}{k} - \binom{n+k}{k-1}
    = \frac{n-k+1}{n+1} \binom{n+k}{k},\qquad n,k \in \N,
\]
are the entries of Catalan's triangle.  Now define the homogenized Chebyshev polynomials
\begin{equation} \label{canon}
    u_{n,k}
    := d_+^{n/2} U_k \left( \frac{s_+}{\sqrt{d_+}} \right)
    = \sum_{j=0}^{\lfloor k/2 \rfloor} \binom{k-j}{j} d_+^{\frac{n-k}{2}+j} s_+^{k-2j}
    \in \Poly_n,\quad k \in \N,\ n \in k + 2\N.
\end{equation}
We also define
\[
    u_{n,k} = 0 \text{ if } n < 0,\ k < 0, \text{ or } n - k \notin 2 \N.
\]

\begin{lem} \label{canbasis}
    The elements $u_{n,k}$, $k \in \N$, $n \in k + 2\N$, form a basis for $\Poly$.
\end{lem}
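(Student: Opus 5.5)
We identify $\Poly$ with $\Z[s_+,d_+]$ via \cref{gravel}. Under this identification, $\Poly_n$ is spanned by the monomials $d_+^a s_+^b$ with $2a+b=n$ and $a,b\in\N$. So it suffices to show that, for each fixed $n\in\N$, the elements $u_{n,k}$ with $k\in\N$ and $n-k\in 2\N$ form a $\Z$-basis of $\Poly_n$. We will do this with a unitriangularity argument with respect to the exponent of $s_+$.

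First, the two index sets are in bijection: $k\in\{n,n-2,\dots\}$ with $k\ge0$ corresponds to the monomial $d_+^{(n-k)/2}s_+^k$ (that is, $a=(n-k)/2$, $b=k$). Both sets have $\lfloor n/2\rfloor+1$ elements.

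Second, the explicit formula \cref{canon} expresses $u_{n,k}$ as
\[
u_{n,k}=d_+^{\frac{n-k}{2}}s_+^{k}+\sum_{j\ge1}\binom{k-j}{j}\,d_+^{\frac{n-k}{2}+j}s_+^{k-2j}.
\]
The $j=0$ term has coefficient $\binom{k}{0}=1$, and every other term involves a strictly smaller power of $s_+$, still with total degree $n$. (One should check that \cref{canon} is the homogenization of the stated explicit formula for $U_k$. The signs there do not matter for this argument, since only the leading coefficient enters.)

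Third, order the basis monomials of $\Poly_n$ by decreasing exponent $b$ of $s_+$. With respect to this order, the transition matrix from monomials to the elements $u_{n,k}$ is upper unitriangular with integer entries. It is therefore invertible over $\Z$. Concretely, one can solve inductively on $k$: $d_+^{(n-k)/2}s_+^k$ equals $u_{n,k}$ minus a $\Z$-combination of $u_{n,k'}$ with $k'<k$. Hence the $u_{n,k}$ span $\Poly_n$ over $\Z$ and are linearly independent. Taking the direct sum over $n$ gives the lemma.

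There is no real obstacle here. The only point needing care is making sure the leading coefficient is exactly $1$ (so invertibility holds over $\Z$ and not just over $\Q$), and this is immediate from $\binom{k}{0}=1$. The inverse matrix could alternatively be written using the Catalan triangle identity for $x^b$ stated above, homogenized, but that is not needed.
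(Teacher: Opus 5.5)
Your proposal is correct and is essentially the paper's proof: in both, $u_{n,k}$ equals $d_+^{(n-k)/2}s_+^k$ plus terms of strictly smaller $s_+$-degree, so the transition matrix to the monomial basis of $\Poly_n$ is unitriangular. Your explicit point that unitriangularity gives invertibility over $\Z$, and hence spanning and not merely linear independence, is worth keeping, since the paper leaves it implicit; you are also right that the missing sign $(-1)^j$ in the displayed formula \cref{canon} does not affect the argument.
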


\begin{proof}
    For each $n$, the set $\{u_{n,k} : k \in n - 2\N,\ k \ge 0\}$ is unitriangular with respect to the monomial basis elements of degree $n$.
    \details{
        The element $u_{n,k}$ is homogeneous of degree $n$, and the number of $k$ satisfying $k \ge 0$, $k \in n - 2\N$, is $\lfloor n/2 \rfloor + 1$, which is the dimension of $\Poly_n$.  Thus, for each $n$, it suffices to show that the $u_{n,k}$, $k \ge 0$, $k \in n-2\N$, are linearly independent.  By \cref{canon}, we have
        \[
            u_{n,k} = d_+^{(n-k)/2} s_+^k + \text{(terms with strictly smaller powers of $s_+$)}.
        \]
        So, ordering the monomial basis by decreasing $s_+$-degree, the change-of-basis matrix from the $u_{n,k}$ to the monomial basis is unitriangular.
    }
\end{proof}

\begin{prop} \label{canaction}
    In the basis $u_{n,k}$, the action of $\Ring$ on $\Poly$ is given by
    \begin{align} \label{can+}
        d_+ u_{n,k} &= u_{n+2,k},&
        s_+ u_{n,k} &= u_{n+1,k+1} + u_{n+1,k-1},
        \\ \label{can-}
        d_- u_{n,k} &= u_{n-2,k},&
        s_- u_{n,k} &= u_{n-1,k+1} + u_{n-1,k-1}.
    \end{align}
\end{prop}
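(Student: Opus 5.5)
The plan is to prove the four formulas in \cref{can+,can-} directly from the definition \cref{canon}, regarding $u_{n,k}$ as $d_+^{n/2}U_k(s_+/\sqrt{d_+})$. The formal square root is harmless: by \cref{canon}, $u_{n,k}$ lies in $\Z[s_+,d_+]$ whenever $n-k\in 2\N$.

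For the first formula in \cref{can+}, note that $d_+$ acts on $\Poly\cong\Z[s_+,d_+]$ by multiplication (\cref{birch}). Multiplying by $d_+$ shifts $n$ to $n+2$ in \cref{canon}, which gives $d_+u_{n,k}=u_{n+2,k}$ immediately.

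For the second formula in \cref{can+}, set $x=s_+/\sqrt{d_+}$, so that $s_+=\sqrt{d_+}\,x$. Then
\[
  s_+u_{n,k}=d_+^{(n+1)/2}\,x\,U_k(x)=d_+^{(n+1)/2}\bigl(U_{k+1}(x)+U_{k-1}(x)\bigr)
\]
by \cref{Chebrec}, and the right-hand side is $u_{n+1,k+1}+u_{n+1,k-1}$. For the boundary case $k=0$, we have $U_{-1}=0$, which matches the convention $u_{n+1,-1}=0$.

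For \cref{can-}, I would avoid the explicit binomial formulas in \cref{birch} and argue by induction using the relations of $\Ring$.

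First, the statement for $d_-$. If $n>k$, then $u_{n,k}=d_+u_{n-2,k}$, and $d_-d_+=1$ gives the claim. If $n=k$, induct on $k$. The cases $k=0,1$ are handled by $d_-\mathbf 1=0$ and $d_-s_+=s_-$ together with $s_-\mathbf 1=0$. For $k\ge2$, use $u_{k,k}=s_+u_{k-1,k-1}-d_+u_{k-2,k-2}$, which follows from \cref{Chebrec} in the homogenized form. Applying $d_-$ and the relations $d_-s_+=s_-$ and $d_-d_+=1$ gives
\[
  d_-u_{k,k}=s_-u_{k-1,k-1}-u_{k-2,k-2}.
\]
This reduces the $d_-$ statement to the $s_-$ statement at smaller degree.

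Next, the statement for $s_-$. If $n>k$, write $u_{n,k}=d_+u_{n-2,k}$ and use $s_-d_+=s_+$. This gives $s_-u_{n,k}=s_+u_{n-2,k}=u_{n-1,k+1}+u_{n-1,k-1}$ by the already proved formula \cref{can+}. If $n=k$, use $s_-s_+=s_+s_--d_+d_-+1$ on $u_{k,k}=s_+u_{k-1,k-1}-d_+u_{k-2,k-2}$, and expand with the induction hypotheses for $s_-$ and $d_-$ at degrees below $k$. The target is $u_{k-1,k+1}+u_{k-1,k-1}=u_{k-1,k-1}$, since $u_{k-1,k+1}=0$.

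So the two $s_-$ and $d_-$ statements are proved by a simultaneous induction on $n$. The main obstacle I expect is this diagonal case $n=k$. The expansion will produce terms such as $s_+u_{k-2,k-2}$ and $d_+u_{k-3,k-1}$, and one must check carefully that they telescope using \cref{can+}, with the boundary conventions $u_{m,k}=0$ for $k>m$ or $k<0$ handled consistently. The low cases $k\le2$, governed by \cref{Cheblow}, should be verified by hand. If the bookkeeping becomes messy, a fallback is to verify \cref{can-} for $s_-$ by direct comparison of coefficients with \cref{birch1}, using the Catalan inversion $x^b=\sum_j C(b-j,j)U_{b-2j}(x)$.
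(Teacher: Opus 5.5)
Your proof is correct. For \cref{can+} it is the same as the paper's argument, directly from \cref{canon} and \cref{Chebrec}. For \cref{can-} you take a different and more careful route.

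The paper handles $d_-$ in one line for all $n\ge 2$, writing $d_-u_{n,k}=d_+^{(n-2)/2}U_k(s_+d_+^{-1/2})$ via $d_-d_+=1$. It then proves the $s_-$ formula by induction on $k$ for all $n$ at once, using $u_{n+1,k+1}=s_+u_{n,k}-u_{n+1,k-1}$, the relation $s_-s_+=s_+s_--d_+d_-+1$, and hand-checked base cases $k=0,1$.

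You instead settle every off-diagonal case $n>k$ with no induction. You write $u_{n,k}=d_+u_{n-2,k}$ and use $d_-d_+=1$ and $s_-d_+=s_+$; the latter reduces $s_-$ to the already proved $s_+$ formula. You induct only along the diagonal $n=k$, using $u_{k,k}=s_+u_{k-1,k-1}-d_+u_{k-2,k-2}$.

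This isolates the only nontrivial case, and it also fills a gap. The paper's one-line $d_-$ computation is literally valid only when $d_+$ divides $u_{n,k}$, i.e.\ when $n>k$. For example, with $u_{2,2}=s_+^2-d_+$ one needs $d_-s_+^2=s_-s_+=1$ from the commutation relation. Yet the paper's $s_-$ induction step uses $d_+d_-u_{k,k}=0$ in its $\delta_{n>k}$ term. Your identity $d_-u_{k,k}=s_-u_{k-1,k-1}-u_{k-2,k-2}$, inside a simultaneous induction, is exactly what justifies this.

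The diagonal step you flagged as the main obstacle is one line:
\[
  s_-u_{k,k}=(s_+s_- - d_+d_- + 1)u_{k-1,k-1}-s_+u_{k-2,k-2}
  =s_+u_{k-2,k-2}+u_{k-1,k-1}-s_+u_{k-2,k-2}=u_{k-1,k-1}.
\]
For this you must substitute $u_{k-2,k}=0$ and $u_{k-3,k-1}=0$ before applying $s_+$ or $d_+$. The $s_+$ formula in \cref{can+} holds only for genuine $u_{n,k}$ with $k\in\N$ and $n\in k+2\N$. It fails for symbols that are zero by convention; for example, $s_+u_{m,m+2}=0\ne u_{m+1,m+1}$. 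With that care, the fallback through \cref{birch1} is not needed.
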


\begin{proof}
    We have
    \[
        d_+ u_{n,k}
        \overset{\cref{canon}}{=} d_+^{(n+2)/2} U_k \left( s_+ d_+^{-1/2} \right)
        \overset{\cref{canon}}{=} u_{n+2,k}.
    \]
    
    Next, we compute
    \begin{multline*}
        u_{n+1,k+1}
        \overset{\cref{canon}}{=} d_+^{(n+1)/2} U_{k+1}\left( s_+ d_+^{-1/2} \right)
        \overset{\cref{Chebrec}}{=} d_+^{(n+1)/2} \left( s_+ d_+^{-1/2} U_k\left( s_+ d_+^{-1/2} \right) -  U_{k-1}\left( s_+ d_+^{-1/2} \right) \right)
        \\
        \overset{\cref{canon}}{=} s_+ u_{n,k} - u_{n+1,k-1},
    \end{multline*}
    proving the second equation in \cref{can+}.
    
    When $n \le 1$, we have $d_- u_{n,k} = 0 = u_{n-2,k}$ for degree reasons.  When $n \ge 2$,
    \[
        d_- u_{n,k}
        \overset{\cref{canon}}{\underset{\cref{refuge2}}{=}} d_+^{(n-2)/2} U_k \left( s_+ d_+^{-1/2} \right)
        \overset{\cref{canon}}{=} u_{n-2,k}.
    \]
    
    Finally, we prove the second equation in \cref{can-} by induction on $k$.  Since the equation is trivial when $n=0$, we assume $n \ge 1$.  When $k=0$, we have
    \[
        s_- u_{n,0}
        \overset{\cref{canon}}{\underset{\cref{Chebrec}}{=}} s_- d_+^{n/2}
        \overset{\cref{refuge2}}{=} d_+^{(n-2)/2} s_+
        \overset{\cref{Chebrec}}{=} d_+^{(n-1)/2} U_1(s_+ d_+^{-1/2})
        \overset{\cref{canon}}{\underset{\cref{Cheblow}}{=}} u_{n-1,1}.
    \]
    When $k=1$ and $n=1$, we have
    \[
        s_- u_{1,1}
        \overset{\cref{canon}}{\underset{\cref{Cheblow}}{=}} s_- s_+
        \overset{\cref{birch1}}{=} 1
        \overset{\cref{canon}}{\underset{\cref{Chebrec}}{=}} u_{0,0}.
    \]
    When $k=1$ and $n \ge 3$, we have
    \begin{multline*}
        s_- u_{n,1}
        \overset{\cref{canon}}{\underset{\cref{Cheblow}}{=}} s_- d_+^{(n-1)/2} s_+
        \overset{\cref{refuge2}}{=} d_+^{(n-3)/2} s_+^2
        \\
        \overset{\cref{Cheblow}}{=} d_+^{(n-1)/2} U_2 \left( s_+ d_+^{-1/2} \right) + d^+{(n-1)/2}
        \overset{\cref{canon}}{=} u_{n-1,2} + u_{n-1,0}.
    \end{multline*}
    For the induction step, we compute
    \begin{multline*}
        s_- u_{n+1,k+1}
        = s_- \left( s_+ u_{n,k} - u_{n+1,k-1} \right)
        \overset{\cref{refuge2}}{=} s_+ s_- u_{n,k} - d_+ d_- u_{n,k} + u_{n,k} - s_- u_{n+1,k-1}
        \\
        = s_+ (u_{n-1,k+1} + u_{n-1,k-1}) - \delta_{n>k} u_{n,k} - u_{n,k-2}
        = u_{n,k+2} + u_{n,k},
    \end{multline*}
    as desired.
\end{proof}

\begin{prop} \label{Polyfaith}
    The action of $\Ring$ on $\Poly$ is faithful.
\end{prop}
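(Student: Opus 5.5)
Since $\Ring$ is graded and $\Poly$ is a graded module, it suffices to show that a nonzero homogeneous element $x \in \Ring_d$ acts nonzero on $\Poly$. By \cref{Gring-basis}, write $x = \sum_{\alpha,\beta} c_{\alpha,\beta}\, d_+^{\alpha_1} s_+^{\alpha_2} d_-^{\beta_1} s_-^{\beta_2}$, a finite $\Z$-combination of normally ordered monomials (positive part on the left). The plan is to find a basis vector $u_{n,k}$ of $\Poly$ (\cref{canbasis}) with $x u_{n,k} \ne 0$, using the explicit Chebyshev-type formulas of \cref{canaction}.

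The key observation is that, by \cref{canaction}, each generator acts on the basis $\{u_{n,k}\}$ by a nonnegative integer matrix which, for $n$ large compared with $k$, is the "free" action: $d_\pm$ shift $n$ by $\pm2$ fixing $k$, and $s_\pm$ shift $n$ by $\pm1$ and $k$ by $\pm1$ (with the boundary term $u_{n-1,k-1}$ vanishing only when $k=0$). Thus for $n \gg 0$ the span of $u_{n,k}$, $k$ fixed parity, behaves like $\Z[t^{\pm1}]$-tensor the standard $\mathfrak{sl}_2$-like branching on $k$. I will first handle the negative part: take $N$ huge and consider $y := d_-^{\beta_1}s_-^{\beta_2}$ acting on $u_{N,k}$; each such acts as an operator that is independent of $N$ in the $k$-variable (shift in $n$ only). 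So the action of $x$ on $u_{N,k}$ for $N\gg0$ is governed by an element of $\Z[d_+,d_-^{\pm}]\otimes \Z[S]$ where $S$ is the operator $u_k\mapsto u_{k+1}+u_{k-1}$ on the $k$-index. I would encode this by identifying, for $N \gg 0$, the span of $\{u_{N+j,k}\}$ with $\Z[t^{\pm1}] \otimes \Z[\text{Chebyshev basis in } y]$ via $u_{N+j,k}\mapsto t^{N+j} U_k(y)$, so that $d_\pm$ acts as $t^{\pm2}$ and $s_\pm$ act as $t^{\pm1}y$ (multiplication by $y=U_1$ gives the rule $U_1U_k = U_{k+1}+U_{k-1}$, valid with $U_{-1}=0$).

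Under this identification $x$ acts on $t^{N}U_k(y)$ as multiplication by the Laurent polynomial $\phi(x) := \sum c_{\alpha,\beta} t^{2\alpha_1+\alpha_2-2\beta_1-\beta_2} y^{\alpha_2+\beta_2}$ — valid as long as no intermediate index $n$ drops below $k$, which holds for $N$ larger than the total length of the monomials. Since $x$ is homogeneous, all terms have the same $t$-exponent, so $\phi(x) = t^d \sum c_{\alpha,\beta} y^{\alpha_2+\beta_2}$. The main obstacle is that this map $\Ring \to \Z[t^{\pm1},y]$ is \emph{not} injective (e.g.\ $d_+d_-$ and $1$ have the same image, yet $d_+d_-\ne1$ by \cref{Gring-basis}). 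So the large-$N$ regime alone cannot suffice; we must also use small $n$, where the truncations $u_{n,k}=0$ for $k>n$ and the $\delta_{n>k}$-type boundary effects distinguish $d_+^a d_-^a$ from $1$.

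To overcome this, I would order the normal monomials by $\beta$ (the "negative length" $2\beta_1+\beta_2$) and choose, among the terms of $x$ with nonzero coefficient, one with minimal $|\beta| := 2\beta_1+\beta_2$; then apply $x$ to vectors of the smallest possible degree $n = |\beta|$. Monomials with larger $|\beta|$ kill everything of degree $<|\beta|$ (their negative part lowers degree below zero, since $\Poly_m=0$ for $m<0$). On $\Poly_{|\beta|}$, the negative part $d_-^{\beta_1}s_-^{\beta_2}$ maps to $\Poly_0=\Z u_{0,0}$, giving a linear functional; by \cref{canaction} these functionals for distinct $(\beta_1,\beta_2)$ with the same $|\beta|$ should be linearly independent (test on $d_+^{j}s_+^{|\beta|-2j}\mathbf 1$ and use \cref{birch} to get a triangular matrix). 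Pick $v\in\Poly_{|\beta|}$ isolating one $\beta$; then $x v = \big(\sum_\alpha c_{\alpha,\beta} d_+^{\alpha_1}s_+^{\alpha_2}\big)\mathbf 1 \cdot(\text{nonzero scalar})$, which is nonzero in $\Poly\cong\Z[s_+,d_+]$ by \cref{gravel}. Verifying the triangularity (and the need to work over $\Q$ then clear denominators) is the step I expect to require the most care.
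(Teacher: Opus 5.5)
Your route works and is different from the paper's. The one step you left open is true, and it is easier than you expect if you test on the basis vectors $u_{m,m-2i}$ of \cref{canbasis} rather than on monomials.

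By \cref{canaction}, $s_-^{m-2j}u_{m,m-2i}$ is a combination of vectors $u_{2j,k'}$ with $k'\ge 2(j-i)$. Also $d_-^{j}u_{2j,k'}=u_{0,k'}$ vanishes unless $k'=0$. Hence $d_-^{j}s_-^{m-2j}u_{m,m-2i}=0$ for $j>i$. For $j=i$, only the straight path $u_{m,m-2i}\to u_{m-1,m-2i-1}\to\dotsb\to u_{2i,0}$ contributes, so the result is $u_{0,0}$.

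The pairing matrix is therefore unitriangular over $\Z$, so you need neither $\mathbb{Q}$ nor a dual basis. Among the terms with minimal $|\beta|=m$, take $\beta_0=(j_0,m-2j_0)$ with $j_0$ minimal. Then $x\,u_{m,m-2j_0}=\sum_\alpha c_{\alpha,\beta_0}d_+^{\alpha_1}s_+^{\alpha_2}\neq 0$. On monomials the matrix is not triangular: for $m=2$ it is $\begin{pmatrix}2&1\\1&1\end{pmatrix}$, which is still unimodular but does not give you triangularity. The $\Z[t^{\pm1},y]$ digression and the reduction to homogeneous $x$ can be dropped.

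The paper instead writes $x=\sum c_{a,b,n,k}u_{n,k}d_-^as_-^b$ and picks an extremal term by minimizing $b-k$ and then $2a+b-n$. It applies $x$ to the single family $u_{N,N}$, $N\gg0$, and uses triangularity of multiplication in the $u$-basis. A single family of test vectors is economical, but these vectors lie on the boundary $k=n$, where $d_-u_{N,N}=u_{N-2,N}=0$.

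So for $a>0$ the identity $d_-^as_-^bu_{N,N}=u_{N-2a-b,N-b}$ holds only because both sides vanish, and the claimed leading term of $xu_{N,N}$ fails. For example, $x=d_+d_-=u_{2,0}d_-$ kills every $u_{N,N}$, although the displayed leading term would be $u_{N,N}$.

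This is the truncation phenomenon you noticed yourself. In the regime $n\gg k$, $d_+d_-$ acts like $1$; on the boundary $k=n$, it acts like $0$. Your bottom-degree test vectors isolate each negative monomial through its map to $\Poly_0$, so they handle every term uniformly and give a complete proof.
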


\begin{proof}
    Let $0 \ne x \in \Ring$. Using \cref{Gring-basis,canbasis}, we may write
    \[
        x = \sum_{a,b,n,k} c_{a,b,n,k}\, u_{n,k}\, d_-^a s_-^b,
    \]
    with $c_{a,b,n,k} \in \mathbb{Z}$ and all but finitely many zero. Choose a quadruple $(a,b,n,k)$ with $c_{a,b,n,k} \neq 0$ such that $b-k$ is minimal, and, subject to that condition, $2a+b-n$ is minimal.
    
    By \cref{canaction}, for $N \gg 0$ we have
    \[
        d_-^a s_-^b\, u_{N,N} = u_{N-2a-b,\,N-b}.
    \]
    It follows from \cref{canon} that multiplication in the $\{u_{n,k}\}$-basis is triangular:
    \[
        u_{n,k}\, u_{p,q}
        = u_{n+p,\,k+q} + \text{(terms with smaller second index)}.
    \]
    It follows from this and the choice of $(a,b,n,k)$ that
    \[
        x\,u_{N,N}
        = c_{a,b,n,k}\,u_{N+n-2a-b,\,N-b+k}
        + \sum_{(p,q)\prec (N+n-2a-b,\,N-b+k)} c_{p,q}\,u_{p,q},
    \]
    for some $c_{p,q} \in \Z$, where $\prec$ denotes the lexicographic order defined by
    \[
        (p',q')  \prec (p,q) \iff (q' < q) \text{ or } (q'=q \text{ and } p'<p).
    \]
    Therefore $x$ does not act by zero on $\mathrm{Poly}$, and the action of $R$ on $\mathrm{Poly}$ is faithful.
\end{proof}

%------------------------------
\subsection{Decategorification}
%------------------------------

Let $K_\oplus(\cC)$ denote the split Grothendieck group of an additive category $\cC$.  For an object $X$ of $\cC$, we let $[X]$ denote its class in $K_\oplus(\cC)$.  If $\cC$ is also monoidal, then $K_\oplus(\cC)$ is a ring, with multiplication induced by $[X] [Y] = [X \otimes Y]$.  For $\Mcat$, defined in \cref{Mdef}, we have a natural grading
\[
    K_\oplus(\Mcat) = \bigoplus_{n \in \N} K_\oplus(\Mcat)_n,\qquad K_\oplus(\Mcat)_n := K_\oplus(\TLalg_n\md).
\]
If $\fC$ is an additive 2-category, then we let $K_\oplus(\fC)$ denote its split Grothendieck category.  This is the $\Z$-linear category with the same objects as $\fC$, and with the morphisms between objects given by the split Grothendieck group of the corresponding morphism categories of $\fC$.

\begin{lem} \label{Polyisom}
    We have an isomorphism of graded $\Z$-modules
    \[
        \Phi \colon \Poly \to K_\oplus(\Mcat),\qquad
        u_{n,k} \mapsto \left[ L^n_k \right],\qquad k \in \N,\ n \in k + 2\N.
    \]
\end{lem}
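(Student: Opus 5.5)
Both sides are free abelian groups with explicit bases, so the plan is to check that $\Phi$ matches these bases bijectively and respects degrees. On the left, \cref{canbasis} says the elements $u_{n,k}$, with $k \in \N$ and $n \in k + 2\N$, form a $\Z$-basis of $\Poly$. Each $u_{n,k}$ is homogeneous of degree $n$ by \cref{canon}, since $\deg s_+ = 1$ and $\deg d_+ = 2$. So $\Poly_n$ has basis $\{u_{n,k} : k \in \Lambda_n\}$, where $\Lambda_n$ is as in \cref{Lambda}.

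On the right, $K_\oplus(\Mcat)_n = K_\oplus(\TLalg_n\md)$, and I will identify a basis of it. The assumption \cref{generic} makes $\TLalg_n$ split semisimple. Its standard modules $L^n_k$, $k \in \Lambda_n$, are the absolutely irreducible $\kk$-forms described in \cref{subsec:TLstandards}. By the bimodule decomposition $\TLalg_n \cong \bigoplus_{k} L^n_k \otimes_\kk (L^n_k)^\vee$, every module in the relevant category (finitely generated projective, or whatever $\TLalg_n\md$ denotes) decomposes as a finite direct sum of the $L^n_k$. Concretely, $M \cong \bigoplus_k L^n_k \otimes_\kk \Hom_{\TLalg_n}(L^n_k, M)$, and in the intended setting the multiplicity spaces are free. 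The multiplicities are well defined, so the classes $[L^n_k]$, $k \in \Lambda_n$, form a $\Z$-basis of $K_\oplus(\TLalg_n\md)$. To see this I would use that $\End_{\TLalg_n}(L^n_k) = \kk$ and $\Hom(L^n_k, L^n_{k'}) = 0$ for $k \ne k'$, which follows from the matrix units $e_{p,r}$ of \cref{matrixunits,mumult}. Applying the exact functors $\Hom(L^n_k,-)$ then recovers the ranks. This is the step I expect to need the most care, since $\kk$ is a general commutative ring. If $\kk$ is a field it is standard Wedderburn theory. In general I would rely on the convention, implicit in the paper, that $\TLalg_n\md$ is the additive category generated by the standard modules.

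Once both bases are in hand, the statement is immediate. The map $\Phi$ sends the basis $\{u_{n,k}\}$ bijectively onto the basis $\{[L^n_k]\}$, indexed by the same set $\{(n,k) : k \in \Lambda_n\}$, and it preserves the $n$-grading. Defining $\Phi$ on the basis and extending $\Z$-linearly therefore gives an isomorphism of graded $\Z$-modules. Any compatibility with the $\Ring$-action, via \cref{canaction} and \cref{monkey}, belongs to later results and is not needed here.
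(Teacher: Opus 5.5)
Your proposal is correct and is essentially the paper's argument. The paper likewise combines \cref{canbasis} with the split semisimplicity of the $\TLalg_n$, so that the classes $[L^n_k]$, $k \in \Lambda_n$, form a $\Z$-basis of $K_\oplus(\TLalg_n\md)$, and then matches the two bases degree by degree. Your extra caution about how $\TLalg_n\md$ should be read when $\kk$ is not a field is left implicit in the paper, and your resolution of it is reasonable.
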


\begin{proof}
    This follows immediately from \cref{canbasis} and the fact that the algebras $\TLalg_n$ are split semisimple, with standard modules as described in \cref{subsec:TLstandards}.
\end{proof}

\begin{prop} \label{Grothsurj}
    We have a surjective homomorphism of graded rings
    \[
        \Psi \colon \Ring \to K_\oplus(\Add(\Dmon)),\qquad
        s_\pm \mapsto [\Sobj_\pm],\qquad
        d_\pm \mapsto [\Dupdown].
    \]
\end{prop}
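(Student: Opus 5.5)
To construct $\Psi$, I will define it on the generators $s_\pm, d_\pm$ by the stated formulas. I then check that each defining relation \cref{refuge1,refuge2} holds among the classes $[\Sobj_\pm],[\Dupdown]$ in $K_\oplus(\Add(\Dmon))$. Since $\Ring$ is presented by generators and relations, this yields a ring homomorphism. It is graded because $\deg$ on 1-morphisms matches the degrees of $s_\pm,d_\pm$; the split Grothendieck ring is graded by the homogeneous degree of objects, which is well defined since $\Dcat(f,g)=0$ when $\deg f\ne\deg g$.

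The relations come from the isomorphisms proved earlier, each of which holds in $\Dmon$ or $\Add(\Dmon)$, not merely in the Karoubi envelope.
\begin{itemize}
    \item \cref{refuge1}: this is \cref{isoms1}.
    \item $s_-d_+=s_+$ and $d_-s_+=s_-$: these are \cref{isoms2}.
    \item $d_-d_+=1$: this is the first isomorphism of \cref{hacky} with $r=1$. Its proof uses only the cyan cap and cup being mutually inverse by \cref{fire,smoke}, so it already holds in $\Dmon$.
    \item The commutation relation $s_-s_+=s_+s_--d_+d_-+1$: this follows from \cref{ccrmon}, $\Sdown\Sup\oplus\Dup\Ddown\cong\Sup\Sdown\oplus\one$ in $\Add(\Dmon)$. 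Passing to classes gives $[\Sdown][\Sup]+[\Dup][\Ddown]=[\Sup][\Sdown]+1$, and since $K_\oplus$ is a group we may subtract.
\end{itemize}

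Surjectivity is the main remaining point. Every object of $\Add(\Dmon)$ is a finite direct sum of objects of $\Dmon$, and every object of $\Dmon$ is a tensor product of the generators $\Sobj_\pm,\Dupdown$, possibly with the unit $\one$ and with zero objects. Hence $K_\oplus(\Add(\Dmon))$ is additively generated by classes of words in $\Sobj_\pm,\Dupdown$, and these are products of the images of generators.

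One subtlety needs care: $\Dmon$ is the full monoidal subcategory \emph{generated} by the four objects, so its objects are exactly the tensor words in them together with $\one$. I would state this explicitly. With that in place, every class is a $\Z$-linear combination of monomials $[\Sobj_\pm]\cdots[\Dupdown]$, which lie in the image of $\Psi$. No Karoubi-envelope summands such as $\Sigma_0$ enter, since we work in $\Add$ rather than $\Kar$; this is why \cref{ccrmon} rather than \cref{ccr} is the relation to use.
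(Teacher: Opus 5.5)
Your proposal is correct and matches the paper's proof: the relations come from \cref{isoms1}, \cref{isoms2}, the first isomorphism of \cref{hacky} with $r=1$, and \cref{ccrmon}, and surjectivity holds because $\Dmon$ is generated by $\Sobj_\pm$ and $\Dupdown$. You also make explicit two points the paper leaves implicit, and both are right: the cyan cap/cup isomorphism already holds in $\Dmon$, and \cref{ccrmon} rather than \cref{ccr} is the relation available in $\Add(\Dmon)$.
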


\begin{proof}
    We must show that $\Psi$ respects the defining relations \cref{refuge1,refuge2}.
    \begin{itemize}
        \item Relations \cref{refuge1} follow from \cref{isoms1}.
        \item The first relation in \cref{refuge2} follows from the first isomorphism in \cref{hacky} with $r=1$.
        \item The second and third relations in \cref{refuge2} follow from \cref{isoms2}.
        \item The last relation in \cref{refuge2} follows from \cref{ccrmon}.
    \end{itemize}  
    Thus, $\Psi$ does indeed define a ring homomorphism.  The fact that it preserves gradings is clear.  Surjectivity follows from the fact that $\Sobj_\pm$ and $\Dupdown$ generate $\Dmon$ by definition.
\end{proof}

We can now prove one of the main results of the paper.

\begin{theo} \label{sump}
    The homomorphism $\Psi$ of \cref{Grothsurj} is an isomorphism of graded rings.  Furthermore, the diagram
    \[
        \begin{tikzcd}
            \Ring \times \Poly \arrow[d,"\cong","\Psi \times \Phi"'] \arrow[r] & \Poly \arrow[d,"\Phi"',"\cong"]
            \\
            K_\oplus(\Add(\Dmon)) \times K_\oplus(\Mcat) \arrow[r] & K_\oplus(\Mcat)
        \end{tikzcd}
    \]
    commutes, where the top horizontal map is the action of $\Ring$ on $\Poly$ and the bottom horizontal map is the action of $K_\oplus(\Add(\Dmon))$ on $K_\oplus(\Mcat)$ induced by the action of $\Dmon$ on $\Mcat$.
\end{theo}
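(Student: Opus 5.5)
We first check that the square commutes, and then use it together with \cref{Polyfaith} to prove that $\Psi$ is injective.

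\textbf{Step 1: commutativity.} Both horizontal maps are biadditive. In the top row, $\Ring$ is generated by $s_\pm,d_\pm$, and $\Poly$ has the $\Z$-basis $u_{n,k}$ by \cref{canbasis}. By \cref{Grothsurj} the classes $[\Sobj_\pm],[\Dupdown]$ generate $K_\oplus(\Add(\Dmon))$. So it suffices to check, for each generator $x \in \{s_\pm,d_\pm\}$ and each $u_{n,k}$, that
\[
  \Phi(x u_{n,k}) = \Psi(x)\,\Phi(u_{n,k}).
\]
The right side is the class of the corresponding functor applied to $L^n_k$. By \cref{monkey} this class is $[L^{n+1}_{k+1}]+[L^{n+1}_{k-1}]$ for $\Sup$, $[L^{n+2}_k]$ for $\Dup$, and the analogous expressions for $\Sdown,\Ddown$. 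We use the conventions $L^m_l=0$ for $l<0$ or $l>m$, and note that $\Sdown,\Ddown$ kill modules over $\TLalg_0$ (respectively $\TLalg_0,\TLalg_1$) because those components of the objects are absent.

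The left side is given by \cref{canaction}, using the vanishing conventions for $u_{n,k}$. We must check that the boundary conventions match. For example, $u_{n-1,k+1}=0$ when $k=n$, and correspondingly $L^{n-1}_{n+1}=0$. Also, $d_-u_{n,k}=u_{n-2,k}$ vanishes exactly when $n-2<k$, matching $L^{n-2}_k=0$.

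The action on the Grothendieck group is well defined because the action of $\Dmon$ on $\Mcat$ is additive. Its compatibility with multiplication in $K_\oplus(\Add(\Dmon))$ comes from the monoidal action. Hence the bottom map is a module structure, and the generator check propagates to all of $\Ring\times\Poly$.

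\textbf{Step 2: injectivity of $\Psi$.} Suppose $\Psi(x)=0$. By commutativity, $\Phi(x\cdot p)=\Psi(x)\Phi(p)=0$ for all $p\in\Poly$. Since $\Phi$ is an isomorphism by \cref{Polyisom}, $x$ acts by zero on $\Poly$. By \cref{Polyfaith}, $x=0$. Combined with the surjectivity in \cref{Grothsurj}, this shows $\Psi$ is an isomorphism of graded rings.

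\textbf{Main obstacle.} The delicate point is the boundary bookkeeping in Step 1. One must verify that the vanishing conventions for $u_{n,k}$ when $k>n$ or $n<0$ agree with the conventions for $L^n_k$ and for undefined components of $\Sdown,\Ddown$. In particular, $s_-u_{n,n}$ should give only $u_{n-1,n-1}$.

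If one prefers to avoid relying on \cref{Polyfaith}, there is an alternative route: use the basis theorem and \cref{fullfaithful} to compute ranks of the graded pieces. We expect the route through faithfulness to be the intended and shorter one.
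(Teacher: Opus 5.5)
Your proposal is correct and follows the paper's proof. In both, commutativity of the square is checked on the generators $s_\pm, d_\pm$ acting on the basis $u_{n,k}$ by comparing \cref{canaction} with \cref{monkey}; injectivity of $\Psi$ then follows from $\Phi$ being an isomorphism (\cref{Polyisom}) together with faithfulness of $\Poly$ (\cref{Polyfaith}), and surjectivity comes from \cref{Grothsurj}. Your explicit check of the boundary conventions (for example, $s_- u_{n,n} = u_{n-1,n-1}$ matching the restriction of $L^n_n$) and of the module structure on $K_\oplus(\Mcat)$ supplies details the paper leaves implicit.
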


\begin{proof}
    Commutativity of the diagram follows from \cref{canaction,monkey}.  Then, since $\Phi$ is an isomorphism by \cref{Polyisom}, it follows from \cref{Polyfaith} that $\Psi$ is injective.  By \cref{Grothsurj}, it is also surjective.
\end{proof}

%---------------------------------
\subsection{Idempotent completion}
%---------------------------------

Recall the definition of the additive Karoubi envelope from \cref{sec:isoms}.  Since the morphism categories in $\Bcat$ are idempotent complete, we have an induced 2-functor
\[
    \Kar(\bF) \colon \Kar(\Dcat) \to \Bcat.
\]

\begin{prop} \label{paint}
    The 2-functor $\Kar(\bF)$ is a 2-equivalence of 2-categories.
\end{prop}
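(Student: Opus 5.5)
We must show three things: $\Kar(\bF)$ is bijective on objects, locally fully faithful, and locally essentially surjective. The first is immediate, since both 2-categories have object set $\{\obj{n} : n \in \N\}$ and $\bF$ is the identity on objects.

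\textbf{Local full faithfulness.} By \cref{fullfaithful}, each functor $\Dcat(\obj{n},\obj{m}) \to \Bcat(\obj{n},\obj{m})$ is full and faithful. Passing to additive envelopes preserves full faithfulness, because morphisms between formal direct sums are matrices of morphisms. Passing to idempotent completions also preserves it: a morphism $(X,e) \to (Y,e')$ is a morphism $\phi$ with $e'\phi e = \phi$, and full faithfulness identifies these sets compatibly with the idempotents. Moreover, $\Bcat(\obj{n},\obj{m})$ is already additive and idempotent complete, so the induced functor lands there. Hence each functor on morphism categories of $\Kar(\bF)$ is fully faithful.

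\textbf{Local essential surjectivity.} This is the main point. Since $\TLalg_m \otimes_\kk \TLalg_n^{\op}$ is split semisimple by \cref{generic}, every finitely generated $(\TLalg_m,\TLalg_n)$-bimodule is a finite direct sum of the simple bimodules $L^m_k \otimes_\kk (L^n_l)^\vee$ with $k \in \Lambda_m$, $l \in \Lambda_n$. So it suffices to realize each of these as the image of an object of $\Kar(\Dcat)(\obj{n},\obj{m})$.

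We will choose a 1-morphism $f \colon \obj{n} \to \obj{m}$ together with a path $b \in \Lambda_f^{k,l}$. Then \cref{silent} gives $V_b \cong L^m_k \otimes (L^n_l)^\vee$ as a summand of $\bF(f)$, and \cref{sunny} shows that $\bF(E_b(f))$ is the projection onto $V_b$. It follows that $\Kar(\bF)(f, E_b(f)) \cong V_b$. Here $E_b(f)$ is an idempotent by \cref{absorb} and \cref{smush2,portis2}; alternatively, this follows from faithfulness, since its image is an idempotent.

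For existence, take $f = \Dup^{a} \Sup^{j} \Sdown^{i} 1_n$. First go down from $\obj{n}$ with $i$ copies of $\Sdown$, steering the depth from $l$ toward $0$ while staying within the bounds $b \le n_i$. Then go up with $j$ copies of $\Sup$, steering the depth to $k$ with $k \le$ the current index. Finally use $\Dup$ to adjust the index by even amounts without changing the depth. Concretely, take $i = n$ steps down, along which the depth decreases from $l$ to $0$ and then oscillates between $0$ and $1$. Then take $k$ steps up, raising the depth from $0$ to $k$. Finally apply $(m-k)/2$ copies of $\Dup$, which is legitimate because $m \equiv k \pmod 2$ and $k \le m$.

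One must check that each path is admissible. Along the $\Sdown$ steps the index is $n-t$ and the depth is $\le \max(l-t, 1)$. Parity forces depth $\le$ index, which must be verified at index $0$: there the depth is $0$ because $n \equiv l$. The $\Sup$ steps are fine since depth equals index along them. The only mildly fiddly point is this bookkeeping of the bridge conditions. Everything else is immediate from \cref{fullfaithful,silent,sunny}.
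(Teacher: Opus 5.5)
Your argument is correct and follows the paper's proof. Local full faithfulness comes from \cref{fullfaithful}, and local essential surjectivity comes from realizing each $L^m_k \otimes (L^n_l)^\vee$ as the image of $(f,E_b(f))$ for a suitable $f$ and $b \in \Lambda_f^{k,l}$, via \cref{silent,sunny}. The only difference is the choice of $f$, and there yours is better. Your $f=\Dup[(m-k)/2]\Sup^k\Sdown^n 1_n$ works uniformly. The paper's case-by-case choice instead begins with $\Ddown[r]$ when $m<n$; along those steps the depth stays at $l$ while the index drops to $n-2r$, so no bridge exists when $l>n-2r$. For example, $n=l=2$, $m=k=0$ needs something like your $\Sdown\Sdown 1_2$ with $b=(2,1,0)$.
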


\begin{proof}
    By definition, $\Kar(\bF)$ is an isomorphism on objects.  By \cref{fullfaithful}, it is full and faithful on morphism categories.  It remains to prove that it is essentially surjective on morphism categories.  For $m,n \in \N$, $\Bcat(\obj{n},\obj{m})$ is the category of $(\TLalg_m, \TLalg_n)$-bimodules.  Since every $(\TLalg_m,\TLalg_n)$-bimodule is a direct sum of bimodules of the form $L^m_k \otimes (L^n_l)^\vee$, $k,l \ge 0$, $k \in m - 2\N$, $l \in n - 2\N$, it suffices to show that these bimodules are in the image of $\Kar(\bF)$, up to isomorphism.
    
    Define
    \[
        f
        =
        \begin{cases}
            (\Sdown \Sup)^{|k-l|/2} \Dup[r] 1_n  & \text{if } m = n + 2r,\ r \in \N, \\
            (\Sdown \Sup)^{(|k-l|-1)/2} \Sup \Dup[r] 1_n & \text{if } m = n + 2r + 1,\ r \in \N, \\
            (\Sdown \Sup)^{|k-l|/2} \Ddown[r] 1_n & \text{if } m = n - 2r,\ r \in \N, \\
            (\Sdown \Sup)^{(|k-l|-1)/2} \Sdown \Ddown[r] 1_n & \text{if } m = n - 2r - 1,\ r \in \N.
        \end{cases}
    \]
    Then define
    \[
        b =
        \begin{cases}
            (l, \dotsc, l, l+1, \dotsc, k-1,k) & \text{if } k \ge l, \\
            (l, \dotsc, l, l-1, \dotsc, k+1,k) & \text{if } k < l,
        \end{cases}
    \]
    where the entry $l$ appears $r$ times.  Then $b \in \Lambda_f$ and $\Kar(\bF)$ sends $[(f,E_b(f))]$ to a bimodule isomorphic to $L^m_k \otimes (L^n_l)^\vee$ by \cref{sunny}.
\end{proof}

\begin{cor}
    The composite 2-functor $\bA \Kar(\bF)$ is a 2-equivalence from $\Kar(\Dcat)$ to the 2-category whose objects are $\obj{n}$, $n \in \N$, and whose category of morphisms from $\obj{m}$ to $\obj{n}$ is the category of additive, right-exact, direct-sum-preserving functors from $\TLalg_m$-mod to $\TLalg_n$-mod.
\end{cor}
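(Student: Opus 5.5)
The plan is to factor the composite and use two equivalences already available. By \cref{paint}, $\Kar(\bF) \colon \Kar(\Dcat) \to \Bcat$ is a 2-equivalence. It therefore suffices to show that $\bA \colon \Bcat \to \tEnd(\Mcat)$, with its target restricted to the locally full 2-subcategory of additive, right-exact, direct-sum-preserving functors, is a 2-equivalence. A composite of 2-equivalences is a 2-equivalence, so the corollary will follow.

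First I will check that $\bA$ lands in this subcategory. For a $(\TLalg_n,\TLalg_m)$-bimodule $M$, the functor $M \otimes_m -$ is additive and right exact, and it commutes with arbitrary direct sums because tensor products do. On objects $\bA$ is the identity.

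Second, I will show that on each hom-category $\bA$ is an equivalence. Full faithfulness was already noted in the paper via Eilenberg--Watts: natural transformations $M\otimes_m - \Rightarrow N \otimes_m -$ correspond to bimodule maps, obtained by evaluating at the regular module $\TLalg_m$. For essential surjectivity, let $G \colon \TLalg_m\md \to \TLalg_n\md$ be additive, right exact and direct-sum preserving. Set $M = G(\TLalg_m)$. Right multiplication by elements of $\TLalg_m$ gives endomorphisms of the left module $\TLalg_m$, and $G$ carries these to a right $\TLalg_m$-action on $M$ commuting with the left $\TLalg_n$-action; here additivity gives $\kk$-linearity of $G$ on hom-spaces. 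The standard Eilenberg--Watts argument then gives $G \cong M \otimes_m -$. One first constructs the comparison map on free modules using direct-sum preservation, then extends it to all modules using a free presentation and right exactness.

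The only point needing care is the interpretation of $\TLalg_m$-mod: it should mean all left modules, or at least a category closed under the free modules and cokernels used above. Otherwise Eilenberg--Watts needs the finitely generated version, and then direct-sum preservation reduces to finite sums, which is automatic from additivity. In either reading the theorem applies. Since $\TLalg_m$ is split semisimple, right exactness holds automatically there, which simplifies things further. I expect no genuine obstacle; the main task is making sure the module category conventions match Eilenberg--Watts.
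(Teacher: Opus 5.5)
Your route is the paper's own: the paper proves the corollary by composing the 2-equivalence $\Kar(\bF)$ of \cref{paint} with $\bA$, and Eilenberg--Watts makes $\bA$ a local equivalence onto the stated functor categories. Your sketch of Eilenberg--Watts (free modules first, then presentations and right exactness) is the standard one. However, two of your remarks on conventions are false, and each affects whether $\bA\Kar(\bF)$ is essentially surjective.

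First, additivity does not give $\kk$-linearity. Take $m=n=0$, where $\TLalg_0=\kk$. Twisting the scalar action by a nontrivial ring automorphism of $\kk$ (e.g.\ complex conjugation when $\kk$ is the field of complex numbers) gives an additive, exact, sum-preserving autoequivalence of $\kk\md$ that is not $\kk$-linear on hom-spaces. It is therefore not isomorphic to any $\bA(M)$. Correspondingly, $G(\TLalg_0)$ with its induced right action is not a $\kk$-bimodule in $\Bcat$, because its left and right $\kk$-actions differ. You need $G$ to be $\kk$-linear, as in the definition of $\tEnd(\Mcat)$, and the corollary must be read that way.

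Second, the two readings of $\TLalg_m\md$ are not interchangeable. A 1-morphism of $\Kar(\Dcat)$ is a summand of a finite sum of 1-morphisms of $\Dcat$, so by \cref{silent} every bimodule in the essential image of $\Kar(\bF)$ is finitely generated. Suppose $\TLalg_m\md$ means all modules, and take $M=\bigoplus_{i\in\N}L^n_k\otimes(L^m_l)^\vee$. Then $M\otimes_m-$ is $\kk$-linear, right exact and preserves arbitrary direct sums, yet it is not in the essential image of $\bA\Kar(\bF)$. So only the finitely generated reading is compatible with \cref{paint}. In that reading, as you say, direct-sum preservation is automatic and the finitely generated form of Eilenberg--Watts applies.

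Separately, your aside that right exactness is automatic needs $\kk$ to be a field, or at least semisimple. In general $\TLalg_m\md$ is equivalent to a finite product of copies of $\kk\md$, whose short exact sequences need not split.
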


\begin{proof}
    This follows from \cref{paint} and the Eilenberg--Watts Theorem.
\end{proof}

Our final two results state that passing to the Karoubi envelope does not enlarge the Grothendieck category of $\Dcat$ or the Grothendieck ring of $\Dmon$.  As the proof shows, one of the key reasons for this is the second isomorphism in \cref{hacky}, which shows that the idempotent $\rightbubmult[D]{r}$ corresponds to projection onto the object $\Dup[r] \Ddown[r]$.

\begin{prop} \label{convocation1}
    The natural inclusion $\Add(\Dcat) \hookrightarrow \Kar(\Dcat)$ induces an isomorphism of categories
    \[
        K_\oplus(\Add(\Dcat)) \cong K_\oplus(\Kar(\Dcat)).
    \]
\end{prop}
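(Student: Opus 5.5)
The plan is to compare both Grothendieck groups with a third, explicitly known group, using the equivalence $\Kar(\bF)$ of \cref{paint}. Objects are the same on both sides, so we must show that for all $m,n \in \N$ the natural map
\[
    \iota \colon K_\oplus(\Add(\Dcat)(\obj{n},\obj{m})) \to K_\oplus(\Kar(\Dcat)(\obj{n},\obj{m}))
\]
is bijective; compatibility with composition is automatic, since $\iota$ is induced by a 2-functor.

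\emph{Injectivity.} By \cref{paint}, $K_\oplus(\Kar(\Dcat)(\obj{n},\obj{m}))$ is isomorphic to the split Grothendieck group of $(\TLalg_m,\TLalg_n)$-bimodules. Since these algebras are split semisimple, that group is free abelian on the classes $[L^m_k \otimes (L^n_l)^\vee]$. Suppose $\iota([X]-[Y])=0$ for objects $X,Y$ of $\Add(\Dcat)$. Then $X \oplus Z \cong Y \oplus Z$ in $\Kar(\Dcat)$ for some $Z$. Any $Z$ is a summand of some object $W$ of $\Add(\Dcat)$, say $Z \oplus Z' \cong W$, so $X \oplus W \cong Y \oplus W$ in $\Kar(\Dcat)$. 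Both sides of this isomorphism lie in $\Add(\Dcat)$, and $\Add(\Dcat)$ is a full subcategory of $\Kar(\Dcat)$, so the isomorphism already holds in $\Add(\Dcat)$. Hence $[X]=[Y]$ there, and $\iota$ is injective.

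\emph{Surjectivity.} This is the real content, and it is the step I expect to be the main obstacle. It suffices to show that, for each generator of the target, the class of some preimage under $\Kar(\bF)$ lies in the image of $\iota$. I will argue by induction, ordering pairs $(k,l)$ by $|k-l|$ and then by some secondary measure. Let $f_{k,l}$ be the 1-morphism built in the proof of \cref{paint}. By \cref{silent}, $\bF(f_{k,l})$ decomposes as a direct sum over $\Lambda_{f_{k,l}}$. The summand $V_b$ with $b$ the monotone bridge from that proof is $L^m_k\otimes(L^n_l)^\vee$, and every other summand has the form $L^m_{k'}\otimes(L^n_{l'})^\vee$ with $(k',l')$ smaller in the chosen order. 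Assuming this, $[f_{k,l}] = [L^m_k\otimes(L^n_l)^\vee] + (\text{classes already in the image})$, which closes the induction.

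The delicate point is verifying that the extra summands really are smaller; in particular one must check that $\Dup[r]$ and $\Ddown[r]$ contribute no extra terms. The second isomorphism in \cref{hacky}, $\Dup[r]\Ddown[r]\cong\Theta_r$, handles this: it shows that every idempotent $\rightbubmult[D]{r}$, and hence each $\freeprojector{r}=\rightbubmult[D]{r}-\rightbubmult[D]{r+1}$ by \cref{disect}, is split in $\Add(\Dcat)$ up to Grothendieck classes. Concretely,
\[
    [\Sigma_r 1_n] = [\Dup[r]\Ddown[r]1_n] - [\Dup[r+1]\Ddown[r+1]1_n].
\]
If the inductive ordering proves awkward, I would instead argue directly that each $(f,E_b(f))$ has class in the image. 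To do this I would write $E_b(f)$ as a tensor product of identity strands and projectors $\freeprojector{r}$, as in \cref{Edef}. Using \cref{frisbee2}, the projectors can be slid to the far right, so that $(f,E_b(f))$ becomes isomorphic to a summand of the form $f'\Sigma_r 1_n$. The displayed formula for $[\Sigma_r 1_n]$ then expresses its class as a difference of classes of objects of $\Add(\Dcat)$.
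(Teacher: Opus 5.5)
Your injectivity argument is fine (the paper treats it as standard). The surjectivity half, however, has a genuine gap in both of its versions.

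The inductive scheme cannot work. The 1-morphism $f_{k,l}$ from the proof of \cref{paint} depends only on $m$, $n$ and $|k-l|$. Via monotone bridges, $\bF(f_{k,l})$ contains a summand $L^m_{k'}\otimes(L^n_{l'})^\vee$ for every admissible pair with $|k'-l'|=|k-l|$, not just for $(k,l)$. For example, $\Sdown\Sup 1_4$ is $f_{k,l}$ for each of $(k,l)=(2,0),(0,2),(4,2),(2,4)$, and $\bF(\Sdown\Sup 1_4)$ contains each of the four corresponding summands exactly once. So no ordering of pairs makes $(k,l)$ the unique top summand. Similarly, $\bF(\Dup[r]1_n)\cong\bigoplus_{l'\in\Lambda_n}L^{n+2r}_{l'}\otimes(L^n_{l'})^\vee$ has a summand for every starting label, and \cref{hacky} does nothing to remove these.

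The fallback breaks at the sliding step. \cref{frisbee2} moves the $\Sigma$'s only across cyan strands. Across a black strand, \cref{portis} gives only $\Sigma_r\Sup 1_n\cong\Sigma_r\Sup\Sigma_r 1_n\oplus\Sigma_r\Sup\Sigma_{r-1}1_n$, so the projectors in $E_b(f)$ cannot be gathered into a single one on the right. In fact $(f,E_b(f))$ is in general not isomorphic to any $f'\Sigma_r 1_n$ with $f'$ a 1-morphism of $\Dcat$. For $k=l+2$ this is impossible, since any 1-morphism with a bridge from $l$ to $l+2$ also has one from $l$ to $l$. And knowing the class of an object of which $(f,E_b(f))$ is merely a summand does not determine its class.

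The missing idea is that neither sliding nor induction is needed. In $\Kar(\Dcat)$, the object $(f,E_b(f))$ is literally the horizontal composite $\Sigma_{r_a}1_{n_a}\,f_a\,\Sigma_{r_{a-1}}1_{n_{a-1}}\cdots f_1\,\Sigma_{r_0}1_n$, with $r_i=(n_i-b_i)/2$. Composition in the Grothendieck category $K_\oplus(\Kar(\Dcat))$ is bilinear, so $[(f,E_b(f))]$ is the composite of the classes $[\Sigma_{r_i}1_{n_i}]$ and $[f_i]$. Substitute your (correct) formula $[\Sigma_r 1_n]=[\Dup[r]\Ddown[r]1_n]-[\Dup[r+1]\Ddown[r+1]1_n]$, obtained from \cref{disect,hacky}, into each slot and expand. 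This writes $[(f,E_b(f))]$ as an integer combination of classes of 1-morphisms of $\Dcat$. By \cref{sunny,fullfaithful}, the $E_b(f)$, $b\in\Lambda_f$, form a complete set of primitive orthogonal idempotents of $\Dcat(f,f)$. Hence every object of $\Kar(\Dcat)$ is a direct sum of such objects $(f,E_b(f))$, which gives surjectivity. This is exactly the paper's argument, and it does not need \cref{paint} at all.
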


\begin{proof}
    It suffices to show that the induced functor $K_\oplus(\Add(\Dcat)) \to K_\oplus(\Kar(\Dcat))$ is surjective.  Let
    \[
        f = f_a \dotsm f_2 f_1 1_n \colon \obj{n} \to \obj{m},\qquad
        a \in \N,\ f_1,f_2,\dotsc,f_a \in \{\Sobj_\pm, \Dupdown\},
    \]
    be an arbitrary 1-morphism in $\Dcat$.  It follows from \cref{sunny,fullfaithful} that a complete set of primitive idempotents of $\Dcat(f,f)$ is given by $E_b(f)$, $b \in \Lambda_f$.  For $b \in \Lambda_f$, the class of $(f,E_b(f))$ in $K_\oplus(\Kar(\Dcat))$ is
    \begin{align*}
        [ f,& \freedepthor{b_a} \otimes 1_{f_a} \otimes \freedepthor{b_{a-1}} \otimes 1_{f_{a-1}} \dotsm 1_{f_2} \otimes \freedepthor{b_1} \otimes 1_{f_1} \otimes \freedepthorreg{b_0}{n} ]
        \\
        &\overset{\cref{depthor}}{\underset{\cref{sigma}}{=}} \Bigl( [\Dup[r_a] \Ddown[r_a] 1_{n_a}] - [\Dup[r_a-1] \Ddown[r_a-1] 1_{n_a}] \Bigr) [f_a] \dotsm
        \\
        &\qquad \qquad \qquad \dotsm \Bigl( [\Dup[r_1] \Ddown[r_1] 1_{n_1}] - [\Dup[r_1-1] \Ddown[r_1-1] 1_{n_1}] \Bigr) [f_1] \Bigl( [\Dup[r_0] \Ddown[r_0] 1_{n_0}] - [\Dup[r_0-1] \Ddown[r_0-1] 1_{n_0}] \Bigr),
    \end{align*}
    where
    \[
        n_i = n_0 + \sum_{k=1}^i \deg f_k,\qquad
        r_i = \frac{n_i - b_i}{2},\qquad
        0 \le i \le a.
    \]
    Thus, the class of $(f,E_b(f))$ is a linear combination of classes of 1-morphisms in $\Dcat$, and hence it lies in $K_\oplus(\Add(\Dcat))$.
\end{proof}

\begin{cor} \label{convocation2}
    The natural inclusion $\Add(\Dmon) \hookrightarrow \Kar(\Dmon)$ induces an isomorphism of rings
    \[
        K_\oplus(\Add(\Dmon)) \cong K_\oplus(\Kar(\Dmon)).
    \]
\end{cor}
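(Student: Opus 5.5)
The plan is to deduce \cref{convocation2} from \cref{convocation1} by passing between $\Dmon$ and $\Dcat$ componentwise. Concretely, I would show that the ring homomorphism $\iota \colon K_\oplus(\Add(\Dmon)) \to K_\oplus(\Kar(\Dmon))$ induced by the inclusion is both surjective and injective.

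For surjectivity, take an object of $\Kar(\Dmon)$. It is a summand $(X,e)$ of a finite direct sum $X$ of tensor products of $\Sobj_\pm$ and $\Dupdown$, where $e$ is an idempotent in $\Dmon(X,X)$. Morphisms in $\Dmon$ are matrices of 2-morphisms in $\Dcat$, and $X1_n$ is a finite direct sum of 1-morphisms in $\Dcat$ because $X$ is row finite. Hence $e$ decomposes as a family of idempotents $e1_n$ in $\Add(\Dcat)$, one for each $n$. By \cref{sunny} and \cref{fullfaithful}, each $e1_n$ splits as a sum of conjugates of the primitive idempotents $E_b(f)$.

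The key input is the formula in the proof of \cref{convocation1}. There, $[(f,E_b(f))]$ is written as a product of the classes $[f_i]$ and differences $[\Dup[r]\Ddown[r]1_{n_i}]-[\Dup[r-1]\Ddown[r-1]1_{n_i}]$. These depend on $n_i$ only through $r_i=(n_i-b_i)/2$, that is, through the depth $b_i$. I would lift this uniformly to $\Dmon$ by using the objects $\Sigma_k$ and $\Theta_k$ of $\Kar(\Dmon)$ together with \cref{disect}. The same local identities then hold after summing over $n$, with the depth idempotents $\freedepthor{k}$ replaced by the corresponding objects of $\Kar(\Dmon)$.

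The main obstacle is that an idempotent of $\Dmon$ summed over all $n$ can split into infinitely many primitive pieces. Its class may therefore not be a finite combination of classes of the form $\Sigma$-decorated words. I expect this is handled by \cref{hacky} and \cref{disect}, which give $\Sigma_r \cong \Dup[r]\Ddown[r] \ominus \Dup[r+1]\Ddown[r+1]$ in $K_\oplus$. Under this identification each depth idempotent $\freedepthor{k}$ has region label $n$ and $r=(n-k)/2$, so summed over $n$ it is not a single $\Sigma_r$ but a shift mixing $\Dupdown$. I would rewrite $\bigoplus_n \freedepthorreg{k}{n}$ on a word $f$ using \cref{frisbee2} and the isomorphisms $\Ddown[r]\Dup[r]\cong\one$. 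The idea is to express each isotypic summand as a tensor product of generators conjugated by powers of $\Dupdown$, giving a finite alternating sum. By \cref{Gring-basis} and \cref{sump}, the classes of all such objects lie in the image of $\Psi$.

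For injectivity I would use \cref{sump}. The composite $\Ring \to K_\oplus(\Add(\Dmon)) \to K_\oplus(\Kar(\Dmon)) \to \End(K_\oplus(\Mcat))$ is the action of $\Ring$ on $\Poly$, which is faithful by \cref{Polyfaith}. This action factors through $K_\oplus(\Kar(\Dmon))$, since the action of $\Dmon$ on $\Mcat$ extends to the Karoubi envelope because $\Mcat$ is idempotent complete. Therefore $\iota\Psi$ is injective, and since $\Psi$ is an isomorphism, $\iota$ is injective.
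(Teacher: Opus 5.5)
Your injectivity half is correct. It is in fact formal: $\Add(\Dmon)$ is full in $\Kar(\Dmon)$, and every object of $\Kar(\Dmon)$ is a summand of an object of $\Add(\Dmon)$, so $K_\oplus(\Add(\Dmon))\to K_\oplus(\Kar(\Dmon))$ is always injective.

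The gap is in surjectivity, exactly at the obstacle you flag, and it cannot be closed. The isomorphisms \cref{hacky,disect,frisbee2} only control idempotents that are uniform in the region label, such as those defining $\Sigma_r=\sum_n\Sigma_r1_n$ and $\Theta_r$. But morphisms in $\Dmon'$ are arbitrary matrices of 2-morphisms; infinite ones are already needed to define $\Sigma_r$ and $\Theta_r$. Hence $\Dmon(\one,\one)=\prod_{n\in\N}\Dcat(1_n,1_n)$, and an idempotent is an arbitrary family $(e_n)_n$ with no compatibility between different $n$. For instance, for every subset $T\subseteq\N$,
\[
    e_T:=\sum_{n\in T}\freeprojectorreg{0}{n}
\]
is an idempotent endomorphism of $\one$ by \cref{absorb}. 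By \cref{monkeybub}, the class of $(\one,e_T)$ acts on $K_\oplus(\Mcat)$ by $[L^n_k]\mapsto\delta_{n\in T}\delta_{k,n}[L^n_n]$; here we use the extension of the action to $\Kar(\Dmon)$ that you invoke yourself. By \cref{Polyisom}, distinct $T$ give distinct operators, hence distinct classes, so $K_\oplus(\Kar(\Dmon))$ is uncountable. On the other hand, $K_\oplus(\Add(\Dmon))\cong\Ring$ is countable, since the objects of $\Add(\Dmon)$ are finite sums of words in four letters. So the map is not surjective, and no rewriting via \cref{frisbee2} and $\Ddown[r]\Dup[r]\cong\one$ can produce finite alternating sums in general.

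Even the single fixed-depth projector $(\one,\sum_n\freedepthorreg{0}{n})$ you propose to rewrite is not in the image. Its class acts on $K_\oplus(\Mcat)\cong\Poly$ as the projection onto the span of the $u_{n,0}$. By \cref{canaction}, a degree-zero element of $\Ring$ acts on $u_{n,j}$ with $n-j\gg0$ by $u_{n,j}\mapsto\sum_i c_{ij}u_{n,i}$. Here $(c_{ij})$ is the matrix of $q(A)$ for a polynomial $q$ independent of $n,j$, and $A$ is the operator $v_k\mapsto v_{k+1}+v_{k-1}$ on $\bigoplus_{k\in\N}\Z v_k$ (with $v_{-1}=0$). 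Comparing the coefficient of $v_{1+\deg q}$ in $q(A)v_1$ shows that no $q(A)$ is the projection onto $\Z v_0$.

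The paper gives no separate argument and presents the statement as an immediate consequence of \cref{convocation1}. That deduction breaks at the same point: \cref{convocation1} concerns the categories $\Kar(\Dcat)(\obj{n},\obj{m})$, each involving a single $n$, and says nothing about idempotents that vary with $n$. With $\Dmon$ defined as in \cref{Dmondef}, the corollary is therefore false. A repaired version would require restricting the morphisms of $\Dmon$, for example to finite linear combinations of region-label-free diagrams, and would then need its own proof.
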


\Cref{convocation1,convocation2} are in sharp contrast to many other categorifications appearing in the literature, such as the categorification of Heisenberg algebras and Kac--Moody algebras, where one \emph{needs} to pass to the Karoubi envelope to get the full categorification.

%===========
% References
%===========

\bibliographystyle{alphaurl}
\bibliography{TLtower}

\end{document}

%% file: TLmacros.tex
\usepackage{
    amsmath,
    amsfonts,
    amssymb,
    amsthm,
    amscd,
    comment,
    enumitem,
    etoolbox,
    textcomp,   % To avoid warning in gensymb
    gensymb,    % For \degree
    mathtools,
    mathdots,
    stmaryrd    % For \inplus
}
\usepackage[dvipsnames]{xcolor}

\usepackage[letterpaper,margin=1in]{geometry}
\usepackage[T1]{fontenc}
\usepackage{lmodern}            % To get better quality text
\usepackage{dsfont}             % For \mathbbm{1} (unit in a monoidal category)
\usepackage[
    unicode=true,
    pdfencoding=auto,
    psdextra,
    colorlinks=true,
    linkcolor=blue,
    citecolor=blue,
    urlcolor=blue,
    breaklinks=true
]{hyperref}

\DeclareFontFamily{OT1}{pzc}{}
\DeclareFontShape{OT1}{pzc}{m}{it}{<-> s * [1.10] pzcmi7t}{}
\DeclareMathAlphabet{\mathpzc}{OT1}{pzc}{m}{it}

\usepackage{zref-clever}

\zcsetup{
  cap,                      % To capitalize names of theorems, etc.
  nameinlink=false,         % Options are true, false, single, and tsingle
  lastsep = {, and }        % Oxford comma
}

\zcRefTypeSetup{assumption}{
    Name-sg = Assumption ,
    name-sg = assumption ,
    Name-pl = Assumptions ,
    name-pl = assumptions ,
}
\zcRefTypeSetup{cor}{
    Name-sg = Corollary ,
    name-sg = corollary ,
    Name-pl = Corollaries ,
    name-pl = corollaries ,
}
\zcRefTypeSetup{defin}{
    Name-sg = Definition ,
    name-sg = definition ,
    Name-pl = Definitions ,
    name-pl = definitions ,
}
\zcRefTypeSetup{enumi}{
    Name-sg = {} ,
    name-sg = {} ,
    Name-pl = {} ,
    name-pl = {} ,
}
\zcRefTypeSetup{equation}{
    Name-sg = {} ,
    name-sg = {} ,
    Name-pl = {} ,
    name-pl = {} ,
}
\zcRefTypeSetup{eg}{
    Name-sg = Example ,
    name-sg = example ,
    Name-pl = Examples ,
    name-pl = examples ,
}
\zcRefTypeSetup{lem}{
    Name-sg = Lemma ,
    name-sg = lemma ,
    Name-pl = Lemmas ,
    name-pl = lemmas ,
}
\zcRefTypeSetup{prop}{
    Name-sg = Proposition ,
    name-sg = proposition ,
    Name-pl = Propositions ,
    name-pl = propositions ,
}
\zcRefTypeSetup{rem}{
    Name-sg = Remark ,
    name-sg = remark ,
    Name-pl = Remarks ,
    name-pl = remarks ,
}
\zcRefTypeSetup{theo}{
    Name-sg = Theorem ,
    name-sg = theorem ,
    Name-pl = Theorems ,
    name-pl = theorems ,
}

\zcsetup{
  countertype = {
    enumi=item, enumii=item, enumiii=item, enumiv=item
  },
  counterresetby = {
    enumii=enumi, enumiii=enumii, enumiv=enumiii
  }
}
\zcRefTypeSetup{item}{
    Name-sg = {} ,
    name-sg = {} ,
    Name-pl = {} ,
    name-pl = {} ,
}

\AddToHook{env/assumption/begin}{\zcsetup{countertype={theo=assumption}}}
\AddToHook{env/cor/begin}{\zcsetup{countertype={theo=cor}}}
\AddToHook{env/defin/begin}{\zcsetup{countertype={theo=defin}}}
\AddToHook{env/eg/begin}{\zcsetup{countertype={theo=eg}}}
\AddToHook{env/lem/begin}{\zcsetup{countertype={theo=lem}}}
\AddToHook{env/prop/begin}{\zcsetup{countertype={theo=prop}}}
\AddToHook{env/rem/begin}{\zcsetup{countertype={theo=rem}}}

\newcommand{\cref}[1]{\zcref{#1}}
\newcommand{\Cref}[1]{\zcref[S]{#1}}

\makeatletter
\def\namedlabel#1#2{\begingroup
    (#2)%
    \def\@currentlabel{(#2)}%
    \phantomsection\label{#1}\endgroup
}
\makeatother

\newcommand\N{\mathbb{N}}

\newcommand\Z{\mathbb{Z}}
\newcommand\kk{\Bbbk}
\newcommand\one{\mathds{1}}
\newcommand\zero{\mathbf{0}}

\newcommand\bA{\mathbf{A}}

\newcommand\bB{\mathbf{B}}

\newcommand\cC{\mathcal{C}}

\newcommand\op{\textup{op}}
\newcommand\rev{\textup{rev}}

\newcommand\triv{\textup{triv}}

\newcommand{\fC}{\mathfrak{C}}

\newcommand\md{\textup{-mod}}
\newcommand\bimod{\textup{-bimod}}

\newcommand{\Ring}{R}
\newcommand{\Poly}{\mathrm{Poly}}
\newcommand{\TLalg}{\mathrm{TL}}
\newcommand{\TLbim}[3]{\sideset{_{#1}}{_{#3}}{\mathop{(#2)}\nolimits}}
\newcommand{\TLbimr}[2]{{\mathop{(#1)}\nolimits}_{#2}}
\newcommand{\JW}{J}

\newcommand{\tEnd}{\text{2-}\mathpzc{End}}         % 2-category of endofunctors
\newcommand{\Bcat}{\mathfrak{B}}         % Bimodules 2-category
\newcommand{\Mcat}{\mathcal{M}}         % Module ategory
\newcommand{\TLcat}{\mathpzc{TL}}        % Temperly-Lieb category
\newcommand{\Dcat}{\mathfrak{D}}        % Diagrammatic 2-category
\newcommand{\Dmon}{\mathcal{D}}         % Diagrammatic monoidal category

\newcommand{\obj}[1]{X_{#1}}
\newcommand{\Sobj}{\mathtt{S}}
\newcommand{\Sup}{{\Sobj_+}}
\newcommand{\Sdown}{{\Sobj_-}}
\newcommand{\Dobj}{{\color{cyan} \mathtt{D}}}
\newcommand{\Dup}[1][]{\Dobj^{#1}_{\color{cyan} +}}
\newcommand{\Ddown}[1][]{{\Dobj^{#1}_{\color{cyan} -}}}
\newcommand{\Dupdown}{{\Dobj_{\color{cyan} \pm}}}
\newcommand{\Tobj}{{\color{orange} \mathtt{T}}}

\newcommand\bF{\mathbf{F}}

\newcommand\bR{\mathbf{R}}

\DeclareMathOperator{\Add}{Add}                 % additive envelope

\DeclareMathOperator{\End}{End}

\DeclareMathOperator{\Ind}{Ind}                 % induction functor
\DeclareMathOperator{\Mat}{Mat}
\DeclareMathOperator{\Kar}{Kar}
\DeclareMathOperator{\Res}{Res}                 % restriction functor

\DeclareMathOperator{\sh}{sh}                   % shadow
\newcommand{\tsh}{\widetilde{\sh}}

\DeclareMathOperator{\tr}{tr}

\usepackage{tikz}
\usetikzlibrary{arrows.meta,patterns}
\usetikzlibrary{shapes.geometric,decorations.markings,decorations.pathreplacing}
\usepackage{tikz-cd}

\tikzset{multi/.style={very thick}}
\tikzset{S/.style={black}}
\tikzset{D/.style={thick,cyan}}
\tikzset{TL/.style={orange}}
\tikzset{
    anchorbase/.style={
        >=To,
        line cap = round, line join = round,
        baseline={([yshift=-0.5ex]current bounding box.center)},
    }
}
\tikzset{% Syntax: \begin{tikzpicture}[centerzero={0,0.2}]
    centerzero/.style={
        >=To,
        line cap = round, line join = round,
        baseline={([yshift=-0.5ex](#1))},
        },
    centerzero/.default={0,0}
}
\tikzset{   % To wipe a path for a string
    wipe/.style={
        white,
        line cap = butt,    % So that wipe doesn't extend beyond the endpoint
        line width=4pt
    }
} 
\tikzset{% For one string passing over another
    over/.style={
        preaction={
            draw=white,
            line cap = butt,    % So that wipe doesn't extend beyond the endpoint
            line width=4pt,
            -{},                % To omit arrowheads on wipe
        }
    },
}
\tikzset{->-/.style={decoration={
    markings,
    mark=at position #1 with {\arrow{>}}},postaction={decorate}}
}
\tikzset{-<-/.style={decoration={
    markings,
    mark=at position #1 with {\arrow{<}}},postaction={decorate}}
}

\newcommand{\strandlabel}[1]{$\scriptstyle{#1}$}
\newcommand{\regionlabel}[1]{\color{purple} $\scriptstyle{#1}$}
\newcommand{\region}[2]{ % \region{position}{label}
    \node at (#1) {\regionlabel{#2}}
}
\newcommand{\botlabel}[1]{node[anchor=north] {\strandlabel{#1}}}
\newcommand{\toplabel}[1]{node[anchor=south] {\strandlabel{#1}}}

\newcommand{\coupon}[2]{ % \coupon{position}{label}
    \draw (#1) node[minimum height=9pt,minimum width=4pt,inner sep=2pt,draw=black,rectangle,rounded corners,fill=black!20!white] (coup) {\color{black} $\scriptstyle{#2}$}
}
\newcommand{\projector}[2]{ % \projector{position}{subscript}
    \coupon{#1}{\sigma_{#2}}
}
\newcommand{\depthor}[2]{ % \depthor{position}{depth}
    \draw (#1) node[minimum height=9pt,minimum width=4pt,inner sep=2pt,rectangle,rounded corners,fill=yellow] (coup) {\color{black} $\scriptstyle{#2}$}
}
\newcommand{\genbox}[3]{% \genbox{lower left corner}{upper right corner}{label}
    \filldraw[draw=black,fill=black!20!white,rounded corners] (#1) rectangle (#2) node[midway] {\color{black} \strandlabel{#3}}
}

\newcommand{\bubright}[2][S]{% \bubright[S/D]{position}
    \draw[#1,->] (#2)+(0.2,0) arc(360:0:0.2)
}
\newcommand{\bubrightmult}[3][S]{ % \bubrightmult[S/D]{position}{number in center}
    \draw[#1] (#2)
        node[
            ellipse,draw,fill=white,
            inner sep=2pt,
            minimum height=12pt,
            minimum width=12pt,
            postaction={decorate,       % <- put an arrow on the border
                decoration={markings,
                mark=at position 0 with {\arrow{<}}}},
        ] (bub) {$\scriptstyle{#3}$}
}
\newcommand{\bubleftmult}[3][S]{ % \bubleftmult[S/D]{position}{number in center}
    \draw[#1] (#2)
        node[
            ellipse,draw,fill=white,
            inner sep=2pt,
            minimum height=12pt,
            minimum width=12pt,
            postaction={decorate,       % <- put an arrow on the border
                decoration={markings,
                mark=at position 0.5 with {\arrow{>}}}},
        ] (bub) {$\scriptstyle{#3}$}
}
\newcommand{\bubleft}[2][S]{% \bubleft[S/D]{position}
    \draw[#1,->] (#2)+(-0.2,0) arc(-180:180:0.2)
}
\newcommand{\bubun}[2][S]{% \bubun[S/D]{position}
    \draw[#1] (#2)+(0.2,0) arc(0:360:0.2)
}

\newcommand{\posupcross}{
    \begin{tikzpicture}[centerzero]
        \draw[->] (0.2,-0.2) -- (-0.2,0.2);
        \draw[over,->] (-0.2,-0.2) -- (0.2,0.2);
    \end{tikzpicture}
}
\newcommand{\negupcross}{
    \begin{tikzpicture}[centerzero]
        \draw[->] (-0.2,-0.2) -- (0.2,0.2);
        \draw[over,->] (0.2,-0.2) -- (-0.2,0.2);
    \end{tikzpicture}
}

\newcommand{\upcross}[2]{
    \begin{tikzpicture}[centerzero]
        \draw[#1,->] (-0.2,-0.2) -- (0.2,0.2);
        \draw[#2,->] (0.2,-0.2) -- (-0.2,0.2);
    \end{tikzpicture}
}
\newcommand{\leftcross}[2]{
    \begin{tikzpicture}[centerzero]
        \draw[#1,<-] (-0.2,-0.2) -- (0.2,0.2);
        \draw[#2,->] (0.2,-0.2) -- (-0.2,0.2);
    \end{tikzpicture}
}
\newcommand{\rightcross}[2]{
    \begin{tikzpicture}[centerzero]
        \draw[#1,->] (-0.2,-0.2) -- (0.2,0.2);
        \draw[#2,<-] (0.2,-0.2) -- (-0.2,0.2);
    \end{tikzpicture}
}
\newcommand{\downcross}[2]{
    \begin{tikzpicture}[centerzero]
        \draw[#1,<-] (-0.2,-0.2) -- (0.2,0.2);
        \draw[#2,<-] (0.2,-0.2) -- (-0.2,0.2);
    \end{tikzpicture}
}

\newcommand{\upcrossreg}[3]{
    \begin{tikzpicture}[centerzero]
        \draw[#1,->] (-0.2,-0.2) -- (0.2,0.2);
        \draw[#2,->] (0.2,-0.2) -- (-0.2,0.2);
        \region{0.35,0}{#3};
    \end{tikzpicture}
}

\newcommand{\upstrand}[1][S]{
    \begin{tikzpicture}[centerzero]
        \draw[->,#1] (0,-0.2) -- (0,0.2);
    \end{tikzpicture}
}
\newcommand{\downstrand}[1][S]{
    \begin{tikzpicture}[centerzero]
        \draw[<-,#1] (0,-0.2) -- (0,0.2);
    \end{tikzpicture}
}

\newcommand{\upstrandreg}[2][S]{
    \begin{tikzpicture}[centerzero]
        \draw[->,#1] (0,-0.2) -- (0,0.2);
        \node[anchor=west] at (0,0) {\regionlabel{#2}};
    \end{tikzpicture}
}
\newcommand{\downstrandreg}[2][S]{
    \begin{tikzpicture}[centerzero]
        \draw[<-,#1] (0,-0.2) -- (0,0.2);
        \node[anchor=west] at (0,0) {\regionlabel{#2}};
    \end{tikzpicture}
}

\newcommand{\leftcup}[1][S]{
    \begin{tikzpicture}[anchorbase]
        \draw[<-,#1] (-0.15,0.15) -- (-0.15,0) arc(180:360:0.15) -- (0.15,0.15);
    \end{tikzpicture}
}
\newcommand{\rightcap}[1][S]{
    \begin{tikzpicture}[anchorbase]
        \draw[->,#1] (-0.15,-0.15) -- (-0.15,0) arc(180:0:0.15) -- (0.15,-0.15);
    \end{tikzpicture}
}

\newcommand{\Tcup}{
    \begin{tikzpicture}[anchorbase]
        \draw[TL] (-0.15,0.15) -- (-0.15,0) arc(180:360:0.15) -- (0.15,0.15);
    \end{tikzpicture}
}
\newcommand{\Tcap}{
    \begin{tikzpicture}[anchorbase]
        \draw[TL] (-0.15,-0.15) -- (-0.15,0) arc(180:0:0.15) -- (0.15,-0.15);
    \end{tikzpicture}
}

\newcommand{\rightcupreg}[2][S]{
    \begin{tikzpicture}[anchorbase]
        \draw[->,#1] (-0.15,0.15) -- (-0.15,0) arc(180:360:0.15) -- (0.15,0.15);
        \node[anchor=west] at (0.15,0) {\regionlabel{#2}};
    \end{tikzpicture}
}
\newcommand{\leftcupreg}[2][S]{
    \begin{tikzpicture}[anchorbase]
        \draw[<-,#1] (-0.15,0.15) -- (-0.15,0) arc(180:360:0.15) -- (0.15,0.15);
        \node[anchor=west] at (0.15,0) {\regionlabel{#2}};
    \end{tikzpicture}
}
\newcommand{\rightcapreg}[2][S]{
    \begin{tikzpicture}[anchorbase]
        \draw[->,#1] (-0.15,-0.15) -- (-0.15,0) arc(180:0:0.15) -- (0.15,-0.15);
        \node[anchor=west] at (0.15,0) {\regionlabel{#2}};
    \end{tikzpicture}
}
\newcommand{\leftcapreg}[2][S]{
    \begin{tikzpicture}[anchorbase]
        \draw[<-,#1] (-0.15,-0.15) -- (-0.15,0) arc(180:0:0.15) -- (0.15,-0.15);
        \node[anchor=west] at (0.15,0) {\regionlabel{#2}};
    \end{tikzpicture}
}

\newcommand{\rightbub}[1][S]{
    \begin{tikzpicture}[centerzero]
        \bubright[#1]{0,0};
    \end{tikzpicture}
}
\newcommand{\rightbubmult}[2][S]{% \rightbubmult[S/D]{number in center}
    \begin{tikzpicture}[centerzero]
        \bubrightmult[#1]{0,0}{#2};
    \end{tikzpicture}
}

\newcommand{\leftbubmult}[2][S]{% \leftbubmult[S/D]{number in center}
    \begin{tikzpicture}[centerzero]
        \bubleftmult[#1]{0,0}{#2};
    \end{tikzpicture}
}
\newcommand{\unbub}[1][S]{
    \begin{tikzpicture}[centerzero]
        \bubun[#1]{0,0};
    \end{tikzpicture}
}

\newcommand{\rightbubreg}[2][S]{
    \begin{tikzpicture}[centerzero]
        \bubright[#1]{0,0};
        \node[anchor=west] at (0.2,0) {\regionlabel{#2}};
    \end{tikzpicture}
}
\newcommand{\rightbubmultreg}[3][S]{% \rightbubmult[S/D]{number in center}
    \begin{tikzpicture}[centerzero]
        \bubrightmult[#1]{0,0}{#2};
        \node[anchor=west] at (bub.east) {\regionlabel{#3}};
    \end{tikzpicture}
}
\newcommand{\leftbubreg}[2][S]{
    \begin{tikzpicture}[centerzero]
        \bubleft[#1]{0,0};
        \node[anchor=west] at (0.2,0) {\regionlabel{#2}};
    \end{tikzpicture}
}
\newcommand{\leftbubmultreg}[3][S]{% \leftbubmult[S/D]{number in center}
    \begin{tikzpicture}[centerzero]
        \bubleftmult[#1]{0,0}{#2};
        \node[anchor=west] at (bub.east) {\regionlabel{#3}};
    \end{tikzpicture}
}

\newcommand{\freecoupon}[1]{% Free-floating coupon
    \begin{tikzpicture}[centerzero]
        \coupon{0,0}{#1};
    \end{tikzpicture}
}
\newcommand{\freeprojector}[1]{% orthogonal idempotent
    \begin{tikzpicture}[centerzero]
        \projector{0,0}{#1};
    \end{tikzpicture}
}
\newcommand{\freedepthor}[1]{%
    \begin{tikzpicture}[centerzero]
        \depthor{0,0}{#1};
    \end{tikzpicture}
}

\newcommand{\freeprojectorreg}[2]{% orthogonal idempotent
    \begin{tikzpicture}[centerzero]
        \projector{0,0}{#1};
        \node[anchor=west,xshift=-2pt] at (coup.east) {\regionlabel{#2}};
    \end{tikzpicture}
}
\newcommand{\freedepthorreg}[2]{%
    \begin{tikzpicture}[centerzero]
        \depthor{0,0}{#1};
        \node[anchor=west,xshift=-2pt] at (coup.east) {\regionlabel{#2}};
    \end{tikzpicture}
}

\newcommand{\mergeup}{
    \begin{tikzpicture}[centerzero]
        \draw[S] (-0.3,-0.3) -- (0,0);
        \draw[S] (0.3,-0.3) -- (0,0);
        \draw[D,->] (0,0) -- (0,0.4);
    \end{tikzpicture}
}
\newcommand{\mergedown}{
    \begin{tikzpicture}[centerzero]
        \draw[D,<-] (0,-0.4) -- (0,0);
        \draw[S] (0,0) -- (-0.3,0.3);
        \draw[S] (0,0) -- (0.3,0.3);
    \end{tikzpicture}
}
\newcommand{\mergeSW}{
    \begin{tikzpicture}[centerzero]
        \draw[D,<-] (-0.3,-0.3) -- (0,0);
        \draw[S] (0.3,-0.3) -- (0,0);
        \draw[S] (0,0) -- (0,0.4);
    \end{tikzpicture}
}
\newcommand{\mergeSE}{
    \begin{tikzpicture}[centerzero]
        \draw[S] (-0.3,-0.3) -- (0,0);
        \draw[D,<-] (0.3,-0.3) -- (0,0);
        \draw[S] (0,0) -- (0,0.4);
    \end{tikzpicture}
}
\newcommand{\mergeNW}{
    \begin{tikzpicture}[centerzero]
        \draw[S] (0,-0.4) -- (0,0);
        \draw[D,->] (0,0) -- (-0.3,0.3);
        \draw[S] (0,0) -- (0.3,0.3);
    \end{tikzpicture}
}
\newcommand{\mergeNE}{
    \begin{tikzpicture}[centerzero]
        \draw[S] (0,-0.4) -- (0,0);
        \draw[S] (0,0) -- (-0.3,0.3);
        \draw[D,->] (0,0) -- (0.3,0.3);
    \end{tikzpicture}
}

\newcommand{\mergeupreg}[1]{
    \begin{tikzpicture}[centerzero]
        \draw[S] (-0.3,-0.3) -- (0,0);
        \draw[S] (0.3,-0.3) -- (0,0);
        \draw[D,->] (0,0) -- (0,0.4);
        \region{0.2,0.1}{#1};
    \end{tikzpicture}
}

\newcommand{\mergeSWreg}[1]{
    \begin{tikzpicture}[centerzero]
        \draw[D,<-] (-0.3,-0.3) -- (0,0);
        \draw[S] (0.3,-0.3) -- (0,0);
        \draw[S] (0,0) -- (0,0.4);
        \region{0.2,0.1}{n};
    \end{tikzpicture}
}
\newcommand{\mergeSEreg}[1]{
    \begin{tikzpicture}[centerzero]
        \draw[S] (-0.3,-0.3) -- (0,0);
        \draw[D,<-] (0.3,-0.3) -- (0,0);
        \draw[S] (0,0) -- (0,0.4);
        \region{0.2,0.1}{n};
    \end{tikzpicture}
}
\newcommand{\mergeNWreg}[1]{
    \begin{tikzpicture}[centerzero]
        \draw[S] (0,-0.4) -- (0,0);
        \draw[D,->] (0,0) -- (-0.3,0.3);
        \draw[S] (0,0) -- (0.3,0.3);
        \region{0.2,-0.1}{n};
    \end{tikzpicture}
}
\newcommand{\mergeNEreg}[1]{
    \begin{tikzpicture}[centerzero]
        \draw[S] (0,-0.4) -- (0,0);
        \draw[S] (0,0) -- (-0.3,0.3);
        \draw[D,->] (0,0) -- (0.3,0.3);
        \region{0.2,-0.1}{n};
    \end{tikzpicture}
}

\newcommand{\splitdown}{
    \begin{tikzpicture}[centerzero]
        \draw[S,<-] (-0.3,-0.3) -- (0,0);
        \draw[S,<-] (0.3,-0.3) -- (0,0);
        \draw[D] (0,0) -- (0,0.4);
    \end{tikzpicture}
}
\newcommand{\splitSE}{
    \begin{tikzpicture}[centerzero]
        \draw[S,<-] (0,-0.4) -- (0,0);
        \draw[D] (0,0) -- (-0.3,0.3);
        \draw[S,->] (0,0) -- (0.3,0.3);
    \end{tikzpicture}
}
\newcommand{\splitSW}{
    \begin{tikzpicture}[centerzero]
        \draw[S,<-] (0,-0.4) -- (0,0);
        \draw[S,->] (0,0) -- (-0.3,0.3);
        \draw[D] (0,0) -- (0.3,0.3);
    \end{tikzpicture}
}
\newcommand{\splitNE}{
    \begin{tikzpicture}[centerzero]
        \draw[D] (-0.3,-0.3) -- (0,0);
        \draw[S,<-] (0.3,-0.3) -- (0,0);
        \draw[S,->] (0,0) -- (0,0.4);
    \end{tikzpicture}
}
\newcommand{\splitNW}{
    \begin{tikzpicture}[centerzero]
        \draw[S,<-] (-0.3,-0.3) -- (0,0);
        \draw[D] (0.3,-0.3) -- (0,0);
        \draw[S,->] (0,0) -- (0,0.4);
    \end{tikzpicture}
}

\newcommand{\splitupreg}[1]{
    \begin{tikzpicture}[centerzero]
        \draw[D] (0,-0.4) -- (0,0);
        \draw[S,->] (0,0) -- (-0.3,0.3);
        \draw[S,->] (0,0) -- (0.3,0.3);
        \region{0.2,-0.1}{n};
    \end{tikzpicture}
}
\newcommand{\splitdownreg}[1]{
    \begin{tikzpicture}[centerzero]
        \draw[S,<-] (-0.3,-0.3) -- (0,0);
        \draw[S,<-] (0.3,-0.3) -- (0,0);
        \draw[D] (0,0) -- (0,0.4);
        \region{0.2,0.1}{n};
    \end{tikzpicture}
}
\newcommand{\splitSEreg}[1]{
    \begin{tikzpicture}[centerzero]
        \draw[S,<-] (0,-0.4) -- (0,0);
        \draw[D] (0,0) -- (-0.3,0.3);
        \draw[S,->] (0,0) -- (0.3,0.3);
        \region{0.2,-0.1}{n};
    \end{tikzpicture}
}
\newcommand{\splitSWreg}[1]{
    \begin{tikzpicture}[centerzero]
        \draw[S,<-] (0,-0.4) -- (0,0);
        \draw[S,->] (0,0) -- (-0.3,0.3);
        \draw[D] (0,0) -- (0.3,0.3);
        \region{0.2,-0.1}{n};
    \end{tikzpicture}
}
\newcommand{\splitNEreg}[1]{
    \begin{tikzpicture}[centerzero]
        \draw[D] (-0.3,-0.3) -- (0,0);
        \draw[S,<-] (0.3,-0.3) -- (0,0);
        \draw[S,->] (0,0) -- (0,0.4);
        \region{0.2,0.1}{n};
    \end{tikzpicture}
}
\newcommand{\splitNWreg}[1]{
    \begin{tikzpicture}[centerzero]
        \draw[S,<-] (-0.3,-0.3) -- (0,0);
        \draw[D] (0.3,-0.3) -- (0,0);
        \draw[S,->] (0,0) -- (0,0.4);
        \region{0.2,0.1}{n};
    \end{tikzpicture}
}

\newtheorem{theo}{Theorem}[section]

\newtheorem{prop}[theo]{Proposition}
\newtheorem{lem}[theo]{Lemma}
\newtheorem{cor}[theo]{Corollary}

\theoremstyle{definition}

\newtheorem{defin}[theo]{Definition}
\newtheorem{rem}[theo]{Remark}
\newtheorem{eg}[theo]{Example}

\numberwithin{equation}{section}
\allowdisplaybreaks

\setenumerate[1]{label=(\alph*)}          % Only need to reference enumerate items with \ref{}

\newtoggle{comments}
\newtoggle{details}
\newtoggle{detailsnote}

\iftoggle{comments}{%
    \usepackage[notref,notcite]{showkeys}   % To show labels
    \newcommand{\acomments}[1]{
        \ \\
        {\color{red}
            \textbf{AS:} #1
        }
        \ \\
    }

    \newcommand{\ycomments}[1]{
        \ \\
        {\color{blue}
            \textbf{YS:} #1
        }
        \ \\
    }
    \newcommand{\question}[1]{
        \ \\
        {\color{blue}
            \textbf{Question:} #1
        }
        \ \\
    }
    }{%
        \newcommand{\acomments}[1]{}
        \newcommand{\ycomments}[1]{}
        \newcommand{\question}[1]{}
    }

\iftoggle{details}{%
    \newcommand{\details}[1]{
        \ \\
        {\color{OliveGreen}
            \textbf{Details:} #1
        }
        \\
    }
}{%
    \newcommand{\details}[1]{\ignorespaces}
}